\renewcommand*\@makechapterhead[1]{%
  \vspace*{.75in}%
  {\parindent \z@ \raggedright \normalfont
    \ifnum \c@secnumdepth >\m@ne
        \large\bfseries \@chapapp\space \thechapter
        \par\nobreak
        \vskip 20\p@
    \fi
    \interlinepenalty\@M 
    \large \bfseries #1\par\nobreak
    \vskip 40\p@
  }}
\renewcommand*\@makeschapterhead[1]{%
  \vspace*{.75in}%
  {\parindent \z@ \raggedright
    \normalfont
    \interlinepenalty\@M
    \large \bfseries  #1\par\nobreak
    \vskip 40\p@
  }}
\titleformat*{\section}{\large\bfseries}
\titleformat*{\subsection}{\large\bfseries}
\theoremstyle{plain}
\newtheorem{thm}{Theorem}[chapter]
\newtheorem{lem}[thm]{Lemma}
\newtheorem{prop}[thm]{Proposition}
\theoremstyle{definition}
\newtheorem{dfn}[thm]{Definition}
\newcommand \mylabel[1]{\label{#1}}
\def\bigskip{\vspace{30pt}}
\DeclareMathSymbol{\N}{\mathbin}{AMSb}{"4E} 
\DeclareMathSymbol{\Z}{\mathbin}{AMSb}{"5A} 
\DeclareMathSymbol{\R}{\mathbin}{AMSb}{"52} 
\DeclareMathSymbol{\Q}{\mathbin}{AMSb}{"51} 
\DeclareMathSymbol{\C}{\mathbin}{AMSb}{"43} 
\def\alp{\alpha}
\def\eps{\varepsilon}
\def\vph{\varphi}
\def\lam{\lambda}
\def\Lam{\Lambda}
\def\omg{\omega}
\def\Omg{\Omega}
\def\gam{\gamma}
\def\ds{\displaystyle}
\def\ni{\noindent}
\def\nn{\nonumber}
\def\F{{\mathcal F}}
\def\V{{\mathcal Vac}}
\def\fg{\mathfrak{g}}
\def\fgl{\mathfrak{gl}}
\def\fsl{\mathfrak{sl}}
\def\fh{\mathfrak{h}}
\def\fb{\mathfrak{b}}
\def\fa{\mathfrak{a}}
\def\fc{\mathfrak{c}}
\def\fp{\mathfrak{p}}
\def\fgh{\hat{\fg}}
\def\fgt{\tilde{\fg}}
\def\fhh{\hat{\fh}}
\def\fht{\tilde{\fh}}
\def\fpt{\tilde{\fp}}
\def\fat{\tilde{\fa}}
\def\fbt{\tilde{\fb}}
\def\fah{\hat{\fa}}
\def\fhhz{\fhh_{\Z}}
\def\PHI{\hat{\Phi}}
\def\DELH{\hat{\Delta}}
\def\cvir{c_{Vir}}
\def\sset{\subseteq}
\def\herfc{\langle \cdot \, , \cdot \rangle}
\def\bs{\backslash}
\def\veps{\varepsilon}
\def\hat{\widehat}
\def\bi{\bold{i}}
\def\Es{E_6^{(1)}}
\def\Ff{F_4^{(1)}}
\def\bo{\bold{1}}
\def\bold{\textbf}
\def\mylabel{\label}
\newcommand{\herf}[2]{\langle #1 , #2 \rangle}
\newcommand{\yvoz}[1]{Y(#1,z)}
\newcommand{\iyvoz}[3]{ \mathcal{Y}_{#3_{#2}}(#1,z)}
\newcommand{\ywvoz}[1]{Y_W(#1,z)}
\newcommand{\yvozr}[1]{E^-(-#1,z)E^+(-#1,z)e_{#1}z^{#1(0)}}
\newcommand{\yvow}[1]{Y(#1,w)}
\newcommand{\yvomn}[2]{Y_{#2}\left(#1\right)}
\newcommand{\ywtvomn}[3]{\sum_{#3 \in #1} \yvomn{#2}{#3}z^{-#3-wt(#2)}}
\newcommand{\bfor}[2]{( #1 , #2 )}
\newcommand{\otz}{1\otimes e^0}
\newcommand{\otea}[1]{1\otimes e^{#1}}
\begin{document}

\frontmatter

\thispagestyle{empty}

\vbox to .875truein{}
{\Large
\centerline{Branching Rule Decomposition of Irreducible Level-1 } 
\centerline{ $\Es$-modules with respect to $\Ff$}}
\vskip 174.5pt

\centerline{BY}
\vskip 10pt

\centerline{CHRISTOPHER A MAURIELLO}
\vskip 10pt

\centerline{B.S., University of Texas at San Antonio, 2007}
\centerline{M.A., Binghamton University State University of New York, 2009}

\vskip 174.5pt

\centerline{DISSERTATION}
\vskip 10pt

\centerline{Submitted in partial fulfillment of the requirements for}
\centerline{the degree of Doctor of Philosophy in Mathematical Sciences}
\centerline{in the Graduate School of}
\centerline{Binghamton University}
\centerline{State University of New York}
\centerline{2013}

\newpage

\thispagestyle{empty}

\vbox to 8.30truein{}

\centerline{\copyright\ Copyright by Christopher Mauriello 2013}

\

\centerline{All Rights Reserved}

\newpage

{\baselineskip = 10pt

\vbox to 2.0truein{}

\centerline{Accepted in partial fulfillment of the requirements for}
\centerline{the degree of Doctor of Philosophy in Mathematical Sciences}
\centerline{in the Graduate School of}
\centerline{Binghamton University}
\centerline{State University of New York}
\centerline{2013}
\vskip 129.5pt

\centerline{April 24, 2013}
\vskip 129.5pt

\centerline{Alex J Feingold, Chair and Faculty Advisor}
\centerline{Department of Mathematical Sciences, Binghamton University}
\vskip 20pt

\centerline{Benjamin Brewster, Member}
\centerline{Department of Mathematical Sciences, Binghamton University}
\vskip 20pt

\centerline{Marcin Mazur, Member}
\centerline{Department of Mathematical Sciences, Binghamton University}
\vskip 20pt

\centerline{Antun Milas, Outside Examiner}
\centerline{Department of Mathematics and Statistics, University at Albany}

}

\newpage
\fontsize{11}{20pt} \selectfont  
\chapter*{Abstract}

It is well known that using the weight lattice of type $E_6$, $P$, and the lattice construction for vertex operator algebras one can obtain all three level 1 irreducible $\fgt$-modules with $V_P = V^{\Lam_0} \oplus V^{\Lam_1} \oplus V^{\Lam_6}$.  The Dynkin diagram of type $E_6$ has an order 2 automorphism, $\tau$, which can be lifted to $\tilde{\tau}$, a Lie algebra automorphism of $\fgt$ of type $\Es$.  The fixed points of $\tilde{\tau}$ are a subalgebra $\fat$ of type $\Ff$.  The automorphism $\tau$ lifts further to $\hat{\tau}$ a vertex operator algebra automorphism of $V_P$.

We investigate the branching rules, how these three modules for the affine Lie algebra $\fgt$ decompose as a direct sum of irreducible $\fat$-modules.  To complete the decomposition we use the Godard-Kent-Olive coset construction \cite{GKO} which gives a $c = \frac{4}{5}$ module for the Virasoro algebra on $V_P$ which commutes with $\fat$.  We use the irreducible modules for this coset Virasoro to give the space of highest weight vectors for $\fat$ in each $V^{\Lam_i}$.  The character theory related to this decomposition is examined and we make a connection to one of the famous Ramanujan identities.  This dissertation constructs coset Virasoro operators $\yvoz{\omg}$ by explicitly determining the generator $\omg$.  We also give the explicit highest weight vectors for each $Vir \otimes \Ff$-module in the decomposition of each $V^{\Lam_i}$.  This explicit work on determining highest weight vectors gives some insight into the relationship to the Zamolodchikov $\mathcal{W}_3$-algebra. 
\newpage

\newpage

\tableofcontents

\mainmatter


%
%
%
%
%

\chapter*{Introduction}

\addcontentsline{toc}{chapter}{Introduction}

This dissertation solves a problem in the representation theory of infinite dimensional affine Kac-Moody Lie algebras.  Given an irreducible unitary highest weight representation $V^{\Lam}$ of such an algebra $\fgt$, and a subalgebra $\fat$, branching rules give the decomposition of $V^{\Lam}$ into a direct sum of irreducible representations of $\fat$. The (infinite number of) summands in that decomposition correspond to highest weight vectors (HWVs) with respect to $\fat$, and the space of such HWVs forms a module for another Lie algebra, the Virasoro algebra, that commutes with $\fat$.   The decomposition can then be written as a finite sum of tensor products of the form $V(c,h) \otimes W$ where $V(c,h)$ is an irreducible highest weight Virasoro module and $W$ is a irreducible highest weight $\fat$-module. 

The specific decompositions investigated here are for the three level 1 irreducible representations, $V^{\Lam_0}, V^{\Lam_1}, V^{\Lam_6}$, of the affine algebra $\fgt$ of type $\Es$ with respect to its affine subalgebra $\fat$ of type $\Ff$.  We use the lattice (or bosonic) representations of these three modules given in \cite{FLM} (see also \cite{FK}) which gives a vertex operator algebra (VOA) structure on $V^{\Lam_0}$, and gives the $\fgt$-modules $V^{\Lam_1}$ and $V^{\Lam_6}$ a VOA module structure.  
The VOA structure on $V^{\Lam_0}$ includes vertex operators $Y(v,z)$ for $v \in V^{\Lam_0}$ which represent the affine Lie algebras $\fgt$ and $\fat$.  For both $\fgt$ and $\fat$ there is a representation of the Virasoro algebra whose operators are provided by the vertex operators $Y(\omg_{E_6},z)$ and $Y(\omg_{F_4},z)$.  Using the Goddard-Kent-Olive coset construction \cite{GKO}, we get a representation of the Virasoro algebra with central charge $\frac{4}{5}$, given by the vertex operators $Y(\omg_{E_6}-\omg_{F_4},z)$ which commute with $\fat$.  Each irreducible $\fgt$-module, $V^{\Lam_i}$ for $i \in \{0,1,6\}$, decomposes into a direct sum of tensor products of the form $Vir\left(\frac{4}{5},h\right) \otimes W^{\Omg_j}$, where $Vir(c,h)$ is a irreducible highest weight Virasoro module with central charge $c$, $h$ is the eigenvalue of the $L(0)$ Virasoro operator on the highest weight vector, and $W^{\Omg_j}$ is an irreducible highest weight $\fat$-module with highest weight $\Omg_j$. 

Our first result, in Theorems \ref{thm br1} and \ref{thm br2}, expresses the principally graded dimension of each $V^{\Lam_i}$ as a sum of products, $gr_{princ}\left(Vir\left(\frac{4}{5},h\right)\right)gr_{princ}(W^{\Omg_j})$ of principally graded dimensions. This proof uses in an essential way one of the forty famous Ramanujan identities for the Rogers-Ramanujan series (See \cite{BCC}, Entry 3.6). Our second result gives the explicit formulas, in Theorems \ref{ch6thm1} and \ref{ch6thm2}, for the eight HWVs for the $Vir \otimes \fat$-modules that occur as summands in the decompositions of the $\fgt$-modules $V^{\Lam_i}$, $i \in \{0,1,6\}$.  Combining these two results proves that no other summands appear, and completes the branching decompositions.  The decompositions we obtained are
$$
V^{\Lam_0} = \left(Vir\left(\frac{4}{5},0\right) \oplus Vir\left(\frac{4}{5},3\right)\right) \otimes W^{\Omg_0} \bigoplus \left(Vir\left(\frac{4}{5},\frac{2}{5}\right) \oplus Vir\left(\frac{4}{5},\frac{7}{5}\right) \right) \otimes W^{\Omg_4} $$
$$
V^{\Lam_1} = Vir\left(\frac{4}{5},\frac{2}{3}\right) \otimes W^{\Omg_0} \bigoplus Vir\left(\frac{4}{5},\frac{1}{15}\right) \otimes W^{\Omg_4} $$
and
$$
V^{\Lam_6} = Vir\left(\frac{4}{5},\frac{2}{3}\right) \otimes W^{\Omg_0} \bigoplus Vir\left(\frac{4}{5},\frac{1}{15}\right) \otimes W^{\Omg_4} 
$$

It is known that $Vir\left(\frac{4}{5},0\right) \oplus Vir\left(\frac{4}{5},3\right) $ has the structure of a VOA, also known as the Zamolodchikov $\mathcal{W}_3$-algebra.  In fact, we can see that this is a sub-VOA of $V^{\Lam_0}$ which is the commutant of the sub-VOA $W^{\Omg_0}$.  So these decompositions can also be viewed as $\mathcal{W}_3 \otimes W^{\Omg_0}$-module decompositions.


This dissertation is organized as follows.  First, necessary background material on semisimple finite and infinite dimensional Lie algebras is provided.  Next, we present the theory of characters and graded dimensions of modules for affine Lie algebras used in this work.  The principally specialized graded dimensions of $V^{\Lam_i}$ are expressed in terms of the principally specialized graded dimensions of $W^{\Omg_j}$ and $Vir\left(\frac{4}{5},h\right)$.  The theory of vertex operator algebras and their modules is introduced.  Next, the conformal vectors related to representations of the Virasoro algebras used in this work are given explicitly.  Finally, the verifications of the highest weight vectors are given explicitly for each module $Vir\left(\frac{4}{5},h\right) \otimes W^{\Omg_j}$ that occurs in each decomposition.

%
%
%
%
%
%
\chapter{Finite Dimensional and Affine Lie Algebras}
\label{ch 1}
This chapter contains the necessary background material on finite dimensional Lie algebras, and some theory on infinite dimensional Lie algebras including affine Lie algebras, Virasoro algebras, and the vertex operator representation of certain affine Lie algebras.

\section{Finite Dimensional Semisimple Lie Algebras}
\mylabel{s 1.1}

This section is an introduction to finite dimensional semisimple Lie algebras.  This is a classic theory and it is not all covered here, but the essentials to be used later are presented for continuity. 

\begin{dfn}
\mylabel{Def: 1.1}
 A \bold{Lie algebra} is a finite dimensional vector space $\fg$ over a field $\mathbb{F}$, with a bilinear operation $[\cdot , \cdot ] : \fg \times \fg \to \fg$, denoted $(x,y) \mapsto [x,y]$,  which satisfies the following axioms:
\begin{description} \itemsep-5pt \parskip0pt \parsep0pt
\item{(L1)} \, $[x,x] = 0$, $\forall x \in \fg$ 
\item{(L2)} \, $[x,[y,z]] + [y,[z,x]] + [z,[x,y]] = 0$, $\forall x,y,z \in \fg$ 
\end{description}
\end{dfn}

The symbol $[\cdot , \cdot ]$ is commonly called the Lie bracket, and it is common to say $[x,y]$ as "$x$ bracket $y$".  Axiom $(L2)$ is commonly called the Jacobi identity, and the application of $(L1)$ and bilinearity to $[x + y, x+y]$ will imply $[x,y] = -[y,x]$.  Hence the Lie bracket is anticommutative.  Even though a Lie algebra can be defined over any field, unless noted, the field under consideration will be $\C$.

An example of a Lie algebra is $\fgl (V)$, which is the associative algebra $End(V)$ with the Lie bracket given by $[A, B] = AB - BA$, for any $A,B \in End(V)$.  The associativity of the product in $End(V)$ implies the axioms for this Lie bracket.

Define the dimension of a Lie algebra as its dimension as a vector space.  If for any two elements $x,y \in \fg$, $[x,y] = 0$, then we call $\fg$ a commutative Lie algebra or an abelian Lie algebra.

Let $A$ and $B$ be subsets of $\fg$ and denote by $[A,B]$, the vector subspace of $\fg$ spanned by $\{ [x,y] | x \in A, y \in B \}$.  There are two special cases of this notation.

\begin{dfn} \mylabel{Def: 1.2}
Let $\fg$ be a Lie algebra and let $\fa$ be a vector subspace of $\fg$.
\begin{enumerate} \itemsep-5pt \parskip0pt \parsep0pt
\item If $\fa$ satisfies $[\fa, \fa] \sset \fa$, then $\fa$ is called a \bold{Lie subalgebra} of $\fg$.
\item If $\fa$ satisfies $[\fa,\fg] \sset \fa$, then $\fa$ is called an \bold{ideal} of $\fg$.
\end{enumerate}
\end{dfn}

Two ideals that are always present are $\fg$ itself and the vector subspace $\{ 0 \}$.  These are called the \bold{trivial ideals} of $\fg$.  A Lie algebra that has no other ideals except the trivial ideals and is not abelian is called a \bold{simple} Lie algebra.  A Lie algebra that is a direct sum of simple Lie algebras is called a \bold{semi-simple} Lie algebra.  These definitions imply that a simple Lie algebra is also semi-simple.

For a Lie algebra $\fg$ define a derivation to be a linear map $\delta : \fg \to \fg$ that satisfies: $\delta([x,y]) = [x, \delta(y)] + [\delta(x), y]$.  In particular for any $x \in \fg$, $ad_x$, defined by $y \mapsto [x,y]$ is a derivation.  The Jacobi identity can be used to show that $ad_x$ is a derivation.

Given a linear transformation $\phi : \fg \to \fg'$, call $\phi$ a Lie algebra homomorphism if $\phi([x,y]) = [\phi(x), \phi(y)]$.  The notions of isomorphism and automorphism are defined accordingly.  Also, a representation of a Lie algebra on a vector space $V$ is defined as a Lie algebra homomorphism $\rho: \fg \to \fgl (V)$. 

One of the most important representations of a Lie algebra is called the adjoint representation.  The representation $ad: \fg \to \fgl (\fg)$, is given by $x \mapsto ad_x$.  The verification that $ad$ is a Lie algebra representation follows from the Jacobi identity as follows:
\begin{eqnarray*}
ad([x,y])(z) &=& ad_{[x,y]}(z) = [[x,y],z] \\
&=& [x,[y,z]] + [[x,z],y] \\
&=& [x,[y,z]] - [y,[x,z]] \\
&=& ad_x([y,z]) - ad_y([x,z]) \\
&=& ad_xad_y(z) - ad_y ad_x(z) \\
&=& [ad_x, ad_y](z) = [ad(x), ad(y)](z)
\end{eqnarray*}

It will be useful to also consider the equivalent language of modules along with the theory of representations. 

\begin{dfn} \mylabel{Def: 1.3}
A vector space $V$ along with the map from $\fg \times V$ to $V$, which is denoted $(x,v) \mapsto x \cdot v$, is called a \bold{$\fg$-module} if for $a,b \in \C$, $x,y \in \fg$, and $v,w \in V$, the following conditions are satisfied:
\begin{description} \itemsep-5pt \parskip0pt \parsep0pt
\item{(M1)} \, $(ax + by) \cdot v = a(x \cdot v) + b (y \cdot v)$,
\item{(M2)} \, $x \cdot (av + bw) = a(x \cdot v) = b (x \cdot w)$,
\item{(M3)} \, $[x,y] \cdot v = x \cdot (y \cdot v) - y \cdot (x \cdot v) $.
\end{description}
\end{dfn} 

A vector subspace $W$ of a $\fg$-module $V$ is called a \bold{$\fg$-submodule} if $W$ is a $\fg$-module.  If the only $\fg$-submodules of a $\fg$-module V are either 0 or $V$, call the module \bold{irreducible}.

\begin{dfn} \mylabel{Def: 1.4}
If $x,y \in \fg$, define the symmetric bilinear form called the \bold{Killing form}, as $\kappa(x,y) = Tr(ad_x \circ ad_y)$, the trace on $V$ of $ad_x \circ ad_y$ .
\end{dfn} 
The bilinearity of $\kappa$ follows from linearity of $ad$ and is symmetric since $Tr$ is symmetric.  $\kappa$ is also invariant, in the sense that $\kappa([x,y],z) = \kappa(x,[y,z])$.  This property is seen from the following identity, for $X,Y,Z \in \fgl(V) $, $Tr([X,Y]Z) = Tr(X[Y,Z])$.

\begin{thm}\mylabel{Thm: 1.5}(Cartan's Criterion)
For a finite dimensional Lie algebra $\fg$, $\fg$ is semisimple if and only if the Killing form is non-degenerate.
\end{thm}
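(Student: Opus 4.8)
\noindent I would prove the two implications separately, relying throughout on two elementary properties of the Killing form relative to an ideal $\fa \sset \fg$. First, the orthogonal complement $\fa^{\perp} = \{ x \in \fg : \kappa(x,y) = 0 \text{ for all } y \in \fa \}$ is again an ideal: if $x \in \fa^{\perp}$, $y \in \fg$, $z \in \fa$, then $[y,z] \in \fa$, so $\kappa([x,y],z) = \kappa(x,[y,z]) = 0$ by invariance, whence $[x,y] \in \fa^{\perp}$. Second, the Killing form of $\fa$, viewed as a Lie algebra in its own right, agrees with the restriction of $\kappa$ to $\fa$, because for $x,y \in \fa$ the operator $ad_x \circ ad_y$ maps $\fg$ into $\fa$ and hence has the same trace on $\fg$ as on $\fa$. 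The one genuinely nontrivial external input is Cartan's solvability criterion: if $\kappa$ vanishes identically on $\fa \times \fa$ (equivalently if $\kappa([\fa,\fa],\fa) = 0$), then $\fa$ is solvable; I would cite this as a standard preliminary.

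\noindent\emph{Semisimple implies nondegenerate.} Write $\fg = \fg_1 \oplus \cdots \oplus \fg_k$ with each $\fg_i$ simple. Distinct summands commute, $[\fg_i,\fg_j] = 0$ for $i \neq j$, and this forces $ad_x \circ ad_y = 0$ on $\fg$ whenever $x \in \fg_i$, $y \in \fg_j$ with $i \neq j$; hence the $\fg_i$ are mutually $\kappa$-orthogonal, and it suffices to show each $\kappa|_{\fg_i \times \fg_i}$ is nondegenerate. By the second property this restriction is the Killing form of $\fg_i$, and by the first property its radical is an ideal of $\fg_i$, so it is $0$ or $\fg_i$; it cannot be $\fg_i$, since then $\kappa$ would vanish on $\fg_i$, making $\fg_i$ solvable by Cartan's criterion and contradicting simplicity. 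Thus $\kappa$ is an orthogonal direct sum of nondegenerate forms, hence nondegenerate.

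\noindent\emph{Nondegenerate implies semisimple.} I would first show $\fg$ has no nonzero solvable ideal. Otherwise the last nonzero term of the derived series of such an ideal is a nonzero \emph{abelian} ideal $\fa$ of $\fg$ (each term of the derived series is stable under every $ad_x$, $x \in \fg$, hence an ideal). For $x \in \fa$ and $y \in \fg$, the square of $ad_x \circ ad_y$ sends $\fg$ into $[\fa,\fa] = 0$, so $ad_x \circ ad_y$ is nilpotent and $\kappa(x,y) = Tr(ad_x \circ ad_y) = 0$; therefore $\fa \sset \fg^{\perp} = 0$, a contradiction. It then remains to deduce that a Lie algebra with nondegenerate Killing form (and so, by what was just shown, with no nonzero solvable ideal) is a direct sum of simple ideals, which I would prove by induction on $\dim \fg$: if $\fg$ has no proper nonzero ideal it is simple (it is non-abelian, an abelian Lie algebra having identically zero Killing form); otherwise take a proper nonzero ideal $\fa$, observe that $\fa \cap \fa^{\perp}$ is an ideal of $\fg$ on which $\kappa$ vanishes identically, hence solvable by Cartan's criterion, hence $0$, so by the dimension identity $\dim \fa^{\perp} = \dim \fg - \dim \fa$ one gets $\fg = \fa \oplus \fa^{\perp}$ as a direct sum of ideals whose Killing forms are the (nondegenerate) restrictions of $\kappa$; the inductive hypothesis then applies to each summand.

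\noindent\emph{Main obstacle.} Granting Cartan's solvability criterion, everything above is routine manipulation of ideals and orthogonal complements. The criterion itself — whose proof passes through the Jordan decomposition of $ad_x$ and a trace estimate in $\fgl$ of an auxiliary space — is the substantive step, and the one I would expect to require the most care to set up from scratch.
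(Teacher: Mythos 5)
The paper states Cartan's Criterion as classical background (Theorem \ref{Thm: 1.5}) and supplies no proof of its own, so there is no in-paper argument to compare against; your proof is the standard textbook one and is correct. Both directions are soundly executed — the orthogonality of distinct simple summands, the identification of the Killing form of an ideal with the restriction of $\kappa$, the passage from a solvable ideal to a nonzero abelian ideal via the derived series, the nilpotency of $ad_x \circ ad_y$ for $x$ in an abelian ideal, and the inductive splitting $\fg = \fa \oplus \fa^{\perp}$ — and the one substantive external input, Cartan's solvability criterion, is properly isolated and flagged rather than silently assumed.
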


\begin{thm} \mylabel{Thm: 1.6} If $\fg$ is a simple finite dimensional Lie algebra, then any symmetric invariant bilinear form is a scalar multiple of $\kappa$.
\end{thm}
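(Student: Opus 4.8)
The plan is to encode a symmetric invariant bilinear form $\beta$ on $\fg$ as a single linear operator, show that operator commutes with the adjoint action, and then finish by Schur's Lemma. Since $\fg$ is simple it is in particular semisimple, so by Cartan's Criterion (Theorem \ref{Thm: 1.5}) the Killing form $\kappa$ is nondegenerate; hence there is a unique linear map $T : \fg \to \fg$ with $\beta(x,y) = \kappa(Tx,y)$ for all $x,y \in \fg$. The whole problem then reduces to showing that $T$ is a scalar operator.

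First I would record the infinitesimal form of invariance: the condition $\beta([x,y],z) = \beta(x,[y,z])$ is equivalent, using anticommutativity of the bracket and symmetry of $\beta$, to $\beta([x,y],z) = -\beta(y,[x,z])$ for all $x,y,z$, and the same identity holds for $\kappa$, which is itself invariant. Next I would check that $T$ intertwines the adjoint representation, i.e. $T\,ad_x = ad_x\,T$ for every $x \in \fg$. For all $x,y,z$ one computes
$$
\kappa(T[x,y],z) = \beta([x,y],z) = -\beta(y,[x,z]) = -\kappa(Ty,[x,z]) = \kappa([x,Ty],z),
$$
where the outer equalities are the definition of $T$ and the inner ones are invariance of $\beta$ and of $\kappa$. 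Since $\kappa$ is nondegenerate this forces $T[x,y] = [x,Ty]$, so $T$ is an endomorphism of $\fg$ viewed as the adjoint module.

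Finally, by Definition \ref{Def: 1.2} the $\fg$-submodules of the adjoint module are exactly the ideals of $\fg$, and a simple Lie algebra is nonzero with only the trivial ideals, so the adjoint module is irreducible. Working over $\C$, Schur's Lemma then gives $T = c\,\mathrm{id}_\fg$ for some $c \in \C$, and therefore $\beta(x,y) = \kappa(cx,y) = c\,\kappa(x,y)$, i.e. $\beta = c\kappa$ (with $c=0$ allowed, which covers the degenerate case $\beta = 0$). The only design choice is how much machinery to set up: one could instead phrase this using $\fg^{*}$ with the coadjoint action and the module map $x \mapsto \beta(x,\cdot)$, but the content is identical. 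The single genuine external ingredient is Schur's Lemma over an algebraically closed field; there is no serious obstacle, and the one thing to be careful about is the sign bookkeeping in the invariance identities.
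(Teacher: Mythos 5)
Your argument is correct and is the standard one: represent $\beta$ via an operator $T$ with $\beta(x,y)=\kappa(Tx,y)$ using nondegeneracy of $\kappa$ (Cartan's Criterion), verify $T$ commutes with $ad_x$ from the invariance identities, and apply Schur's Lemma to the adjoint module, which is irreducible because its submodules are exactly the ideals of the simple algebra $\fg$. The paper states Theorem \ref{Thm: 1.6} as classical background without proof, so there is nothing to compare against; your write-up fills that gap correctly, including the sign bookkeeping and the $c=0$ case.
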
 

In fact since simple Lie algebras are semisimple both of these theorems are useful when studying simple Lie algebras.  If $\fg$ is thought of as just a vector space, denote the dual vector space of $\fg$ by $\fg^*$.  Since on a simple Lie algebra, $\fg$, $\kappa$ is non-degenerate, then $\fg$ is naturally isomorphic as a vector space to $\fg^*$.
  
\begin{dfn} \mylabel{Def: 1.7} A Lie subalgebra $\fh$ of a simple Lie algebra $\fg$ which satisfies the following properties is called a \bold{Cartan subalgebra} (CSA):
\begin{enumerate} \itemsep-5pt \parskip0pt \parsep0pt
\item $\fh$ is a maximal abelian subalgebra of $\fg$.
\item The linear transformation $ad(h)$ is diagonalizable, $\forall \,\, h \in \fh$.
\end{enumerate}
\end{dfn}

Note that Cartan subalgebras are in general not unique.  The next theorem says something about their conjugacy.
\begin{thm} \mylabel{Thm: 1.8} Let $\fg$ is a simple finite dimensional Lie algebra, then Cartan subalgebras exist and are conjugate under the group of inner automorphisms of $\fg$.
\end{thm}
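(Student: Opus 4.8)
The plan is to establish the two assertions separately: existence of a Cartan subalgebra is mostly bookkeeping with the Killing form, whereas the conjugacy statement is the substantive part and rests on a density argument carried out inside $\fg$ regarded as an affine variety (or, over $\C$, as a connected manifold).

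\emph{Existence.} First note that $\fg$ contains an element $x$ with $ad_x$ semisimple and nonzero: by the abstract Jordan decomposition each $x \in \fg$ splits as $x = x_s + x_n$ with $ad_{x_s}$ semisimple and $ad_{x_n}$ nilpotent, and if $ad_x$ were nilpotent for every $x$, Engel's theorem would force $\fg$ to be nilpotent, contradicting simplicity. Call a subalgebra \emph{toral} if all of its elements are $ad$-semisimple; such a subalgebra is automatically abelian, which one checks by simultaneously diagonalizing the commuting operators $ad_h$ and observing that a nonzero bracket would contradict $[h,h]=0$. Choose a toral subalgebra $\fh$ of maximal dimension. I would then show $\fh$ is self-centralizing: form the decomposition $\fg = \bigoplus_{\alpha \in \fh^*}\fg_\alpha$ into weight spaces for $ad(\fh)$, take $y$ in the centralizer $\fg_0$ of $\fh$, and write $y = y_s + y_n$ with both summands again in $\fg_0$; the subalgebra spanned by $\fh$ together with $y_s$ is toral, so $y_s \in \fh$ by maximality, and a nonzero nilpotent part $y_n$ is excluded using invariance of $\kappa$ together with its nondegeneracy (Theorem \ref{Thm: 1.5}) restricted to $\fg_0$. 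A self-centralizing toral subalgebra is maximal abelian with every $ad(h)$ diagonalizable, i.e. a Cartan subalgebra in the sense of Definition \ref{Def: 1.7}.

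\emph{Conjugacy.} Let $G$ denote the group of inner automorphisms, generated by the maps $\exp(ad_x)$ with $ad_x$ nilpotent; over $\C$ this is the identity component of $\mathrm{Aut}(\fg)$, and in particular connected, with Lie algebra $ad(\fg)\cong\fg$. For $x \in \fg$ let $E(x)$ be the generalized $0$-eigenspace of $ad_x$, and call $x$ \emph{regular} if $\dim E(x)$ is minimal. From the standard theory of Engel subalgebras I would use that, for a Cartan subalgebra $\fh$, the elements $h$ with $\alpha(h)\neq 0$ for every nonzero root $\alpha$ are regular (they form the complement of finitely many hyperplanes, hence are dense in $\fh$) and satisfy $E(h)=\fh$. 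Given Cartan subalgebras $\fh_1,\fh_2$, consider the morphisms $\psi_i : G \times \fh_i \to \fg$, $(\sigma,h)\mapsto \sigma(h)$. At a point $(\mathrm{id},h_i)$ with $h_i$ regular, the differential of $\psi_i$ has image $[\fg,h_i]+\fh_i$, and since $ad_{h_i}$ is invertible on each $\fg_\alpha$ with $\alpha\neq 0$ this equals $\bigoplus_{\alpha\neq 0}\fg_\alpha \oplus \fh_i = \fg$. Hence each $\psi_i$ is dominant, so its image contains a nonempty open subset of the irreducible variety $\fg$; intersecting these two open sets together with the dense open locus of regular elements yields $\sigma_1,\sigma_2 \in G$ and regular $h_1 \in \fh_1$, $h_2 \in \fh_2$ with $\sigma_1(h_1)=\sigma_2(h_2)$. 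Setting $\sigma = \sigma_2^{-1}\sigma_1 \in G$ and using that automorphisms carry Engel subalgebras to Engel subalgebras gives $\sigma(\fh_1)=\sigma(E(h_1))=E(\sigma(h_1))=E(h_2)=\fh_2$.

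The hard part is this conjugacy half, and within it the density step: one needs that $G$ is connected so that ``dominant morphism $\Rightarrow$ dense image'' is legitimate, that $\fg$ is irreducible as a variety, and a little care to arrange the common value $\sigma_1(h_1)=\sigma_2(h_2)$ to lie in the regular locus, where $E(\,\cdot\,)$ recovers the Cartan subalgebra exactly. Over $\C$ one may replace the algebro-geometric language by an analytic one: $G$ is a connected Lie group, the maps $\psi_i$ are submersions near $(\mathrm{id},h_i)$ by the same differential computation, hence have open image, and two nonempty open subsets of the connected set of regular elements must meet. The existence half, by contrast, is routine once the toral-subalgebra and Killing-form bookkeeping is in place.
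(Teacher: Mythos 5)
The paper does not actually prove Theorem \ref{Thm: 1.8}: it is stated as classical background in Section \ref{s 1.1}, which explicitly defers such standard facts to the literature, so there is no in-paper argument to compare yours against. Judged on its own, your outline is the standard one (maximal toral subalgebras for existence; regular elements, Engel subalgebras, and a dominance/openness argument for the orbit maps $G\times\fh_i\to\fg$ for conjugacy) and the overall architecture is sound: the differential computation $[\fg,h_i]+\fh_i=\fg$ at a regular $h_i$, the use of irreducibility of $\fg$ to intersect the two open images with the regular locus, and the identification $\sigma(E(h_1))=E(\sigma(h_1))$ are exactly the right ingredients.

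One step is justified circularly as written: you claim a toral subalgebra $T$ is abelian ``by simultaneously diagonalizing the commuting operators $ad_h$,'' but those operators are only known to commute once you know $T$ is abelian, which is what you are proving. The correct argument uses only the semisimplicity of a single $ad_y\mid_T$: if $[x,y]=ay$ with $a\neq 0$ for $x,y\in T$, then $ad_y(x)=-ay$ is a nonzero vector lying in the $0$-eigenspace of $ad_y\mid_T$ (since $ad_y(ad_y(x))=-a[y,y]=0$), while expanding $x$ in an eigenbasis of $ad_y\mid_T$ shows $ad_y(x)$ has no component in that eigenspace --- a contradiction. This is a local repair, not a flaw in the approach; with it, and with the standard fact (which you correctly flag as an input) that the elements $h\in\fh$ with $\alpha(h)\neq 0$ for all roots are precisely the regular elements and satisfy $E(h)=\fh$, your sketch fills in the theorem the paper leaves to the references.
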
 

Hence, the dimension of a Cartan subalgebra is independent of the choice, and is therefore well defined.  This dimension of any Cartan subalgebra of a Lie algebra $\fg$, is called the \bold{rank} of $\fg$. 

Let $\fg$ be a finite dimensional semisimple complex Lie algebra of rank $l$, and fix a Cartan subalgebra $\fh$ of $\fg$.  For any $h \in \fh$, $\fg$ decomposes as a direct sum of eigenspaces of $ad(h)$, since this operator is diagonalizable.  Let $h_1, h_2, \dots, h_l$ be a basis for $\fh$, and since $\fh$ is abelian, $ad(h_i)$ commutes with $ad(h_j)$ for each $i,j \in \{1,2, \dots, l \}$.  Therefore $\fg$ decomposes into the direct sum of simultaneous eigenspaces of the $ad(h_i)$'s.  Let $x$ be a simultaneous eigenvector for each $ad(h_i), 1\leq i \leq l$, that is $ad(h_i)(x) = c_ix$, then for any linear combination $h = \sum_{i=1}^l a_ih_i$  
\begin{eqnarray*}
ad(h)(x) &=& \sum_{i = 1}^l a_i ad(h_i)(x) \\
&=& \left( \sum_{i = 1}^l a_ic_i\right)x \\
&=&\alp(h)x, \mbox{ for some } \alp \in \fh^*.  
\end{eqnarray*}

For $\alp \in \fh^*$ any linear functional, define the subspace $$\fg_{\alp} = \{ x \in \fg \mid [h,x] = \alp(h)x, \forall h \in \fh \}.$$
Note that $\fg_0 = \fh$, and we have the following definition.

\begin{dfn} \mylabel{Def: 1.9}
If $0 \neq \alp \in \fh^*$ satisfies $\fg_{\alp} \neq \{ 0 \}$, then $\alp$ is called a \bold{root} of $\fg$ with respect to $\fh$.  The set of roots of $\fg$ is denoted $\Phi$.  The subspace $\fg_{\alp}$ is called a \bold{root space} and a non-zero element of $\fg_{\alp}$ is called a \bold{root vector}.
\end{dfn}

This gives the following \bold{root space decomposition} of $\fg$ (as a vector space) with respect to $\fh$:
\begin{eqnarray}
\fg = \fh \oplus \bigoplus_{\alp \in \Phi} \fg_{\alp}.
\end{eqnarray}

\begin{prop} \mylabel{Prop: 1.10}For $\alp, \beta \in \Phi \cup \{ 0\}$ the following are true:
\begin{enumerate} \itemsep-5pt \parskip0pt \parsep0pt
\item $[\fg_{\alp}, \fg_{\beta}] \sset \fg_{\alp+\beta}$,
\item $[\fg_{\alp}, \fg_{-\alp}] \sset \fh$,
\item For $x \in \fg_{\alp}$, $y \in \fg_{\beta}$, $\kappa(x,y) = 0$ if $\alp+\beta \neq 0$.
\end{enumerate}
\end{prop}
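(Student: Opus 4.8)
The plan is to prove each of the three assertions directly from the defining property of the root spaces $\fg_\alp = \{x \in \fg \mid [h,x] = \alp(h)x \ \forall h \in \fh\}$, using only the Jacobi identity, the invariance of the Killing form $\kappa$ (established just before Theorem \ref{Thm: 1.6}), and the fact that $\fg_0 = \fh$.

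For part (1), I would take $x \in \fg_\alp$, $y \in \fg_\beta$, and an arbitrary $h \in \fh$, then compute $[h,[x,y]]$ by the Jacobi identity: $[h,[x,y]] = [[h,x],y] + [x,[h,y]] = \alp(h)[x,y] + \beta(h)[x,y] = (\alp+\beta)(h)[x,y]$. Since this holds for all $h \in \fh$, we get $[x,y] \in \fg_{\alp+\beta}$, hence $[\fg_\alp,\fg_\beta] \sset \fg_{\alp+\beta}$. Part (2) is then the special case $\beta = -\alp$: we get $[\fg_\alp,\fg_{-\alp}] \sset \fg_0 = \fh$, recalling the remark that $\fg_0 = \fh$.

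For part (3), suppose $x \in \fg_\alp$, $y \in \fg_\beta$ with $\alp + \beta \neq 0$. Then there exists $h \in \fh$ with $(\alp+\beta)(h) \neq 0$, equivalently $\alp(h) \neq -\beta(h)$. Using invariance of $\kappa$, namely $\kappa([h,x],y) = \kappa(x,[h,y])$ (the identity $\kappa([a,b],c)=\kappa(a,[b,c])$ applied with a sign adjustment, or directly $\kappa([h,x],y) = -\kappa([x,h],y) = -\kappa(x,[h,y])$... ), I would instead write it cleanly as $\kappa([h,x],y) = \kappa(h,[x,y])$ and $\kappa([h,y],x) = \kappa(h,[y,x]) = -\kappa(h,[x,y])$, so $\kappa([h,x],y) + \kappa([h,y],x) = 0$. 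Expanding the left side using the eigenvalue conditions gives $\alp(h)\kappa(x,y) + \beta(h)\kappa(x,y) = (\alp+\beta)(h)\kappa(x,y) = 0$. Since $(\alp+\beta)(h) \neq 0$, we conclude $\kappa(x,y) = 0$.

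None of these steps presents a serious obstacle; the whole proposition is a routine consequence of the Jacobi identity and the invariance of $\kappa$. The only point requiring the slightest care is keeping the sign conventions straight when invoking invariance of the Killing form in part (3), and making sure to note explicitly that a functional annihilated by nothing nonzero in $\fh$ must be zero — i.e., that $\alp + \beta \neq 0$ genuinely produces an $h$ with $(\alp+\beta)(h) \neq 0$, which is immediate since $\alp+\beta$ is a nonzero element of $\fh^*$.
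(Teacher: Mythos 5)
Your proof is correct and is the standard argument; the paper itself states Proposition \ref{Prop: 1.10} without proof as background material, and your use of the Jacobi identity for parts (1)--(2) and the invariance plus symmetry of $\kappa$ for part (3) is exactly the routine derivation any reference would supply.
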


The Lie algebra $\fsl(2,\C)$ is defined as the $2 \times 2$ complex matrices of trace zero.  The Lie bracket is given by $[A,B] = AB - BA$, and the "standard" basis below satisfies the following brackets, $[h,f] = 2e$, $[h,f] = -2f$, and $[e,f] = h$.  Where
$$\begin{array}{c c c}
h = \left[ \begin{array}{c c}
1 & 0 \\
0 & -1
\end{array} \right], &

e = \left[ \begin{array}{c c}
0 & 1 \\
0 & 0
\end{array} \right],&

f = \left[ \begin{array}{c c}
0 & 0 \\
1 & 0
\end{array} \right] 
\end{array}.
$$

\begin{lem} \mylabel{Lem: 1.11}The following are true.
\begin{enumerate} \itemsep-5pt \parskip0pt \parsep0pt
\item The restriction of $\kappa$ to $\fh$, $\kappa \mid_{\fh \times \fh} $, is non-degenerate.
\item For $\alp \in \Phi$, $\kappa$ gives a non-degenerate pairing between $\fg_{\alp}$ and $\fg_{-\alp}$.
\item If $\alp \in \Phi$, then $-\alp \in \Phi$ and $dim(\fg_{\alp}) = dim(\fg_{-\alp})$.
\end{enumerate}
\end{lem}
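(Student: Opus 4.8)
The plan is to deduce all three statements from two facts already at hand: by Cartan's Criterion (Theorem~\ref{Thm: 1.5}) the Killing form $\kappa$ is non-degenerate on all of $\fg$, and by part (3) of Proposition~\ref{Prop: 1.10} the subspaces $\fg_\alp$ and $\fg_\beta$ are $\kappa$-orthogonal whenever $\alp + \beta \neq 0$. In the language of the root space decomposition $\fg = \fh \oplus \bigoplus_{\alp \in \Phi}\fg_\alp$, this says that the only summand pairing non-trivially against $\fh = \fg_0$ is $\fh$ itself, and the only summand pairing non-trivially against $\fg_\alp$ is $\fg_{-\alp}$. Every part of the lemma is then a short consequence of combining this block-orthogonality with global non-degeneracy.

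For (1), I would take $h \in \fh$ with $\kappa(h,\fh) = 0$ and argue it must be zero. By the orthogonality just noted, $\kappa(h,\fg_\alp) = 0$ for every $\alp \in \Phi$ as well, since $0 + \alp \neq 0$; hence $\kappa(h,\fg) = 0$, and non-degeneracy of $\kappa$ on $\fg$ forces $h = 0$. Therefore $\kappa$ restricted to $\fh \times \fh$ is non-degenerate.

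For (2) and (3), I would first show $-\alp$ is a root. If $\fg_{-\alp} = \{0\}$, then for any nonzero $x \in \fg_\alp$ the orthogonality relations give $\kappa(x,\fg_\beta) = 0$ for all $\beta \in \Phi \cup \{0\}$ except possibly $\beta = -\alp$, and that summand is zero by assumption; so $\kappa(x,\fg) = 0$, contradicting non-degeneracy. Hence $-\alp \in \Phi$. Now consider the bilinear pairing $\fg_\alp \times \fg_{-\alp} \to \C$ obtained by restricting $\kappa$. If some $x \in \fg_\alp$ pairs to zero with all of $\fg_{-\alp}$, then again by orthogonality $\kappa(x,\fg) = 0$, so $x = 0$; the symmetric argument on the $\fg_{-\alp}$ side shows the pairing is non-degenerate in the other variable too. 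Since a non-degenerate bilinear pairing between finite-dimensional vector spaces identifies each with the dual of the other, we conclude $\dim \fg_\alp = \dim \fg_{-\alp}$.

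There is no genuinely hard step here; the content is entirely the interplay between the global non-degeneracy supplied by Cartan's Criterion and the summand-orthogonality of Proposition~\ref{Prop: 1.10}. The only point requiring a little care is the logical order: one must establish $-\alp \in \Phi$ (so that $\fg_{-\alp}$ is the true ``partner'' of $\fg_\alp$) before speaking of the pairing in (2), and this is exactly the first half of (3), so I would prove that first and then read off non-degeneracy of the pairing and the equality of dimensions.
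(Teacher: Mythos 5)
Your proof is correct and is exactly the standard argument (the one found in, e.g., Humphreys): combine the global non-degeneracy of $\kappa$ from Cartan's Criterion with the orthogonality $\kappa(\fg_{\alp},\fg_{\beta})=0$ for $\alp+\beta\neq 0$ from Proposition \ref{Prop: 1.10}, taking care to establish $-\alp\in\Phi$ before discussing the pairing. The paper states Lemma \ref{Lem: 1.11} without proof as standard background material, so there is nothing to contrast with; your write-up fills that gap correctly, including the right logical ordering of parts (2) and (3).
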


The non-degeneracy of $\kappa \mid_{\fh \times \fh}$, allows the identification of $\fh$ with $\fh^*$.  For $\alp \in \Phi$, the corresponding vector $h_{\alp} \in \fh$ is uniquely determined by the conditions $\kappa(h_{\alp}, h) = \alp(h)$, $\forall h \in \fh$.  Using this isomorphism, $\kappa$ induces on $\fh^*$ a bilinear form also denoted $\kappa$ and defined by $\kappa(\alp, \beta) := \kappa(h_{\alp}, h_{\beta}) = \alp(h_{\beta})$, for any $\alp, \beta \in \fh^*$. 

The next lemma contains some common properties for roots and root spaces.  This lemma is presented without a proof, since all of these properties can be found in most introductory books on Lie algebras. 

\begin{lem} \mylabel{Lem: 1.12}
For $\alp, \beta \in \Phi$ we have,
\begin{enumerate} \itemsep-5pt \parskip0pt \parsep0pt
\item If $x \in \fg_{\alp}$ and $y \in \fg_{-\alp}$, then $[x,y] = \kappa(x,y)h_{\alp} $,
\item $dim(\fg_{\alp}) = 1$,
\item $[\fg_{\alp}, \fg_{-\alp}] = \C h_{\alp}$,
\item If $\alp+\beta \in \Phi$, then $[\fg_{\alp}, \fg_{\beta}] = \fg_{\alp+\beta}$,
\item $2\alp \notin \Phi$.
\end{enumerate}
\end{lem}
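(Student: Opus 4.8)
The plan is to reduce everything to the representation theory of $\fsl(2,\C)$, handling (1) first, then the key estimate $\kappa(\alpha,\alpha)\neq 0$, then (2) and (5) together, and finally (3) and (4). For (1), observe that Proposition~\ref{Prop: 1.10}(2) already gives $[x,y]\in\fh$ when $x\in\fg_\alpha$ and $y\in\fg_{-\alpha}$, so it is enough to match $\kappa$-pairings against $\fh$: using invariance of $\kappa$, the relation $[y,h]=\alpha(h)\,y$ for $h\in\fh$, and the defining property $\kappa(h_\alpha,h)=\alpha(h)$,
\[
\kappa([x,y],h)=\kappa(x,[y,h])=\alpha(h)\,\kappa(x,y)=\kappa\big(\kappa(x,y)\,h_\alpha,\,h\big)\qquad(h\in\fh).
\]
Hence $[x,y]-\kappa(x,y)h_\alpha\in\fh$ is $\kappa$-orthogonal to all of $\fh$, so it vanishes by the non-degeneracy of $\kappa\mid_{\fh\times\fh}$ (Lemma~\ref{Lem: 1.11}(1)).

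Next I would build a root $\fsl(2,\C)$-triple. Fix $0\neq e\in\fg_\alpha$; by Lemma~\ref{Lem: 1.11}(2) choose $f_0\in\fg_{-\alpha}$ with $\kappa(e,f_0)\neq 0$, and set $t:=[e,f_0]=\kappa(e,f_0)\,h_\alpha$ by (1), a nonzero scalar multiple of $h_\alpha$. \textbf{The main obstacle is proving $\kappa(\alpha,\alpha)=\alpha(h_\alpha)\neq 0$}, which I would do by contradiction: if $\alpha(h_\alpha)=0$ then $[t,e]=\alpha(t)\,e=0$ and $[t,f_0]=0$, so $\mathrm{span}\{e,f_0,t\}$ is a solvable subalgebra of $\fg$; by Lie's theorem its image under $ad$ is simultaneously upper-triangularizable, so $ad(t)=[ad(e),ad(f_0)]$ is strictly upper triangular and hence nilpotent, whereas $t\in\fh$ forces $ad(t)$ to be semisimple, so $ad(t)=0$ and $t$ lies in the (trivial) center of the semisimple $\fg$, contradicting $t\neq 0$. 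With $\kappa(\alpha,\alpha)\neq 0$ secured, I rescale to $h^\vee:=\tfrac{2}{\kappa(\alpha,\alpha)}\,t$ and $f:=\tfrac{2}{\kappa(\alpha,\alpha)\kappa(e,f_0)}\,f_0$, so that $\{e,h^\vee,f\}$ satisfies the standard $\fsl(2,\C)$ relations and spans a subalgebra $\mathfrak{s}_\alpha\cong\fsl(2,\C)$ acting on $\fg$ by $ad$.

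For (2) and (5), let $M$ be the span of $\fh$ together with all root spaces $\fg_{c\alpha}$, $c\in\C$, $c\alpha\in\Phi$. Since $[\fg_{c\alpha},\fg_{d\alpha}]\sset\fg_{(c+d)\alpha}$ and $[\fh,\fg_{c\alpha}]\sset\fg_{c\alpha}$, $M$ is an $\mathfrak{s}_\alpha$-submodule of the adjoint module, and $ad(h^\vee)$ acts on it by $0$ on $\fh$ and by the integer $2c$ on $\fg_{c\alpha}$; in particular every root proportional to $\alpha$ is a half-integer multiple of $\alpha$. Now $\ker\alpha\sset\fh$ is a trivial $\mathfrak{s}_\alpha$-submodule of dimension $l-1$, $\mathfrak{s}_\alpha$ is a $3$-dimensional submodule, and together these two submodules exhaust the full zero-weight space $\fh$ of $M$. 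By complete reducibility write $M=\ker\alpha\oplus\mathfrak{s}_\alpha\oplus M'$; then $M'$ has zero zero-weight space, so each irreducible summand of $M'$ has odd highest weight and hence only odd weights. If $M'\neq 0$ then weight $1$ occurs in $M$, forcing $\tfrac12\alpha\in\Phi$; but then, setting $\gamma=\tfrac12\alpha$ and repeating the construction for $\gamma$, the nonzero space $\fg_\alpha=\fg_{2\gamma}$ would have $ad(h_\gamma^\vee)$-weight $4$ and would necessarily lie in the analogous complement $M'_\gamma$ (since $\ker\gamma\oplus\mathfrak{s}_\gamma$ carries only weights $0,\pm 2$), contradicting that $M'_\gamma$ has only odd weights. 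Therefore $M'=0$ and $M=\ker\alpha\oplus\mathfrak{s}_\alpha$, which carries only the $ad(h^\vee)$-weights $0$ and $\pm 2$. Comparing the weight-$2$ space $\fg_\alpha$ of $M$ with that of $\ker\alpha\oplus\mathfrak{s}_\alpha$, namely $\C e$, gives $\dim\fg_\alpha=1$, which is (2); the absence of a weight-$4$ vector gives $\fg_{2\alpha}=0$, which is (5); and the absence of any weight other than $0,\pm 2$ shows that $\pm\alpha$ are the only roots proportional to $\alpha$.

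Finally, (3) follows at once from (2): $\fg_\alpha=\C e$ and $\fg_{-\alpha}=\C f$, so $[\fg_\alpha,\fg_{-\alpha}]=\C[e,f]=\C h^\vee=\C h_\alpha\neq 0$. For (4), suppose $\alpha+\beta\in\Phi$; then $\beta\neq\pm\alpha$, since otherwise $\alpha+\beta$ would be $0$ or $2\alpha$, neither of which is a root (Definition~\ref{Def: 1.9} and (5)), and hence by the previous paragraph $\beta$ is not a multiple of $\alpha$, so $\beta+i\alpha\neq 0$ for all $i\in\Z$ and $V:=\bigoplus_{i\in\Z}\fg_{\beta+i\alpha}$ is an $\mathfrak{s}_\alpha$-submodule of $\fg$ on which $ad(h^\vee)$ has the pairwise distinct eigenvalues $\langle\beta,\alpha^\vee\rangle+2i$. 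Thus every weight space of $V$ is at most $1$-dimensional while all weights are congruent modulo $2$, which forces $V$ to be a single irreducible $\fsl(2,\C)$-module. In an irreducible $\fsl(2,\C)$-module the raising operator carries each weight space onto the next nonzero one, so $[\fg_\alpha,\fg_\beta]=e\cdot\fg_\beta=\fg_{\beta+\alpha}=\fg_{\alpha+\beta}$, which is (4).
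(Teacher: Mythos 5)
Your proof is correct, but note that the paper does not actually prove Lemma \ref{Lem: 1.12} at all: it explicitly defers to standard references ("this lemma is presented without a proof, since all of these properties can be found in most introductory books on Lie algebras"). What you have written is essentially the classical Humphreys-style argument, and it hangs together: part (1) correctly combines Proposition \ref{Prop: 1.10}(2), invariance of $\kappa$, and Lemma \ref{Lem: 1.11}(1); the contradiction proving $\kappa(\alp,\alp)\neq 0$ via Lie's theorem (nilpotent versus semisimple forces $ad(t)=0$, hence $t$ central, impossible in a semisimple algebra) is the standard and correct route; and the decomposition $M=\ker\alp\oplus\mathfrak{s}_\alp\oplus M'$ with the ``weight $4$ in the $\tfrac12\alp$-string'' contradiction cleanly kills both $M'$ and $\fg_{2\alp}$ at once, yielding (2) and (5) together, after which (3) is immediate and (4) follows from irreducibility of the $\alp$-string through $\beta$ (which you justify correctly: one-dimensional weight spaces, all weights congruent mod $2$, hence a single string, and the raising operator is injective below the top). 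The only external inputs you use beyond what the paper states are Weyl's complete reducibility theorem and Lie's theorem, neither of which appears in the paper; since the paper itself treats this lemma as standard background, that is an acceptable trade, but if you wanted the argument fully self-contained relative to this document you would need to cite or prove those two facts.
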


Since $dim(\fg_{\alp}) = 1$, $\kappa(x,y) \neq 0$ and there exists a rescaling of $x \in \fg_{\alp}$ and $y \in \fg_{-\alp}$ so that the triple $\{x, y, h = [x,y]\}$, is a standard basis for $\fsl(2,\C)$. 

Define the real vector space $\fh_{\R}$ as the $\R$-span of $\{ h_{\alp} | \alp \in \Phi \}$, and also note $\fh^*_{\R}$ is the $\R$-span of $\Phi$.

Denote by $\sigma_{\alp}$ the reflection determined by $\alp$, so $\forall \beta \in E$, $\sigma_{\alp}(\beta) = \beta - \frac{2(\beta,\alp)}{(\alp,\alp)}\alp$.  Then $\sigma_{\alp}(\alp) = - \alp$ and $P_{\alp} = \{ \beta \in \mbox{E} | (\beta, \alp) = 0 \}$ is the hyperplane fixed by $\sigma_{\alp}$.

\begin{dfn} \mylabel{Def: 1.13} A subset $\Phi$ of E, is called a \bold{finite root system} in E if the following axioms are satisfied:
\begin{description} \itemsep-5pt \parskip0pt \parsep0pt
\item{(R1)} \, $\Phi$ is finite, spans E, and does not contain 0.
\item{(R2)} \, If $\alp \in \Phi$, then the only multiples of $\alp \in \Phi$ are $\pm \alp$.
\item{(R3)} \, If $\alp \in \Phi$, then $\sigma_{\alp}$ leaves $\Phi$ invariant. 
\item{(R4)} \, If $\alp, \beta \in \Phi$, then $\herf{\alp}{\beta} = \frac{2(\beta, \alp)}{(\alp,\alp)} \in \Z$. 
\end{description}
\end{dfn}

If $\Phi$ is a set of roots for a semi-simple Lie algebra $\fg$ as in Definition \ref{Def: 1.9}, it can be shown that $\Phi$ forms a root system in $\fh^*_{\R} = E$ with positive definite bilinear form $\kappa$, and $dim(E) = l$.

\begin{dfn} \mylabel{Def: 1.14} A subset $\Delta$ of $\Phi$, is called a \bold{base} if:
\begin{description} \itemsep-5pt \parskip0pt \parsep0pt
\item{(B1)}\, $\Delta$ is a basis of E.
\item{(B2)}\, Each root $\beta$ can be written $\sum_{\alp \in \Delta} c_{\alp} \alp$, with $c_{\alp} \in \Z$ and all $c_{\alp} \geq 0$ or all $c_{\alp} \leq 0$. 
\end{description}
\end{dfn}

Choose a base, $\Delta = \{\alp_1, \alp_2, \dots, \alp_l \}$, for a root system and call this set of roots simple.  Denote the positive roots $\Phi^+ = \{ \beta = \sum_{i=1}^l c_i \alp_i \in \Phi \mid c_i \geq 0 \}$ and the negative roots $\Phi^- = \{ \beta = \sum_{i=1}^l c_i \alp_i \in \Phi \mid c_i \leq 0 \}$.  By Definition \ref{Def: 1.14}, $|\Delta| = dim(E)$.  This number coincides with rank($\fg$) because both $\{\alp | \alp \in \Delta\}$ and $\{ h_{\alp} |  \alp \in \Delta\}$ form bases for $\fh^*_{\R}$ and $\fh_{\R}$, respectively. 

\begin{dfn} \mylabel{Def: 1.15} The integral span of the simple roots, $\sum_{i = 1}^l \Z \alp_i$, is called the \bold{root lattice}, and is denoted $Q_{\Phi}$.  Write $Q_{\Phi}^+ = \left\{ \sum_{i = 1}^l m_i \alp_i \mid 0 \leq m_i \in \Z \right\}$.  A partial order on $\fh_{\R}^*$ defined by $x \leq y$ when $y - x \in Q_{\Phi}^+$. 
\end{dfn}

For any positive root $\alp$, rescale the non-zero vectors $x \in \fg_{\alp}$, $y \in \fg_{-\alp}$, and $h_{\alp} = [x,y]$, associated to this root as follows: $e := x$, $f := \frac{2}{\alp(h_{\alp})}y$, and $h := \frac{2}{\alp(h_{\alp})}h_{\alp}$.  It can be checked that $\{e,f,h \}$ forms a standard basis for $sl(2,\C)$. 

Using the above rescaling of vectors, define the following: $$a_{ij} = \alp_j(h_i) = \frac{2 \alp_j(h_{\alp_i})}{\alp_i(h_{\alp_i})} = \frac{2 \kappa(h_{\alp_i}, h_{\alp_j})}{\kappa(h_{\alp_i}, h_{\alp_i})}, 1\leq i,j \leq l.$$

The following relations are satisfied for $1 \leq i,j \leq l$, $$[h_i, e_j] = a_{ij}e_j, \, \, \,  [h_i, f_j] = -a_{ij}f_j, \, \, \, [e_i, f_j] = \delta_{ij}h_i.$$


\begin{dfn} \mylabel{Def: 1.16}The matrix $A := (a_{ij})$ for $i,j \in \{ 1,2, \dots, l\}$, is called a \bold{Cartan matrix} of a simple finite dimensional Lie algebra, $\fg$.  $A$ has the following properties: 
\begin{description} \itemsep-5pt \parskip0pt \parsep0pt
\item{(C1)} \, $a_{ij} \in \Z$. 
\item{(C2)} \, $a_{ii} = 2$.
\item{(C3)} \, $i \neq j \Rightarrow a_{ij} \leq 0$.
\item{(C4)} \, $a_{ij} = 0 \Leftrightarrow a_{ji} = 0$.
\item{(C5)} \, $A = DB$, where $D= (d_{ij})$ is a diagonal matrix, and $B = (b_{ij})$ is a positive definite symmetric matrix.
\item{(C6)} \, $A$ can not be written as a block diagonal matrix with more than one non-zero block, even if you are allowed to reorder rows and columns.
\end{description}
\end{dfn}

In particular, if we set $b_{ij} = \kappa(\alp_i, \alp_j)$, and $d_{ii} = \frac{2}{\kappa(\alp_i, \alp_i)}$, then $A = DB$.  Also, a convention is to rescale the Killing form so that the diagonal entries of $B$ have largest value 2.  For example the first equation below follows the convention,\\

 $\begin{bmatrix} 2 & -2 \\ -1 & 2 \end{bmatrix} = \begin{bmatrix} 2 & 0 \\ 0 & 1 \end{bmatrix}\begin{bmatrix} 1 & -1 \\ -1 & 2 \end{bmatrix}$,\\
 
\ni but this next equation does not. \\
 
 $\begin{bmatrix} 2 & -2 \\ -1 & 2 \end{bmatrix} = \begin{bmatrix} 1 & 0 \\ 0 & 1/2 \end{bmatrix}\begin{bmatrix} 2 & -2 \\ -2 & 4 \end{bmatrix}$. \\
 
Since the matrix $B$ represents the bilinear form with respect to $\Delta$ on $E$, this forces all square lengths of roots in this root system to be less than or equal to 2.  Now, drop the name $\kappa$ and just use $(\cdot, \cdot)$ for this new standardized bilinear form.

Given an $l \times l$ Cartan matrix satisfying the above properties, we can form a directed graph, called the Dynkin diagram, with vertices and multi-lines, as follows:

\begin{description} \itemsep-5pt \parskip0pt \parsep0pt
\item{(D1)} \, For each $1 \leq i \leq l$ there is a vertex $\bullet$ labeled by $i$.
\item{(D2)} \, For $1\leq i \neq j \leq l$ connect two vertices $i$ and $j$ by max$\{ |a_{ij}|,  |a_{ji}| \}$ lines.
\item{(D3)} \, If $i \neq j$ and $|a_{ij}| \geq 2$, put an arrow on the $\{i,j\}$-multi-line pointing from $j$ to $i$.
\end{description}

Using the matrix in the example above, the Dynkin diagram is:

\begin{figure}[h!]
\centering
\begin{picture}(150,50)
\thicklines
\multiput(30,25)(50,0){2}{\circle*{8}}
\put(30,28){\line(1,0){50}}
\put(30,22){\line(1,0){50}}
\put(50,25){\line(1,1){10}}
\put(50,25){\line(1,-1){10}}
\put(25,-5){$1$}
\put(75,-5){$2$}
\end{picture}
\end{figure}

Using the Cartan-Killing theorem of the classification of finite dimensional simple Lie algebras over $\C$, simple Lie algebras can be organized into four infinite families and five exceptional Lie algebras.  This classification is given either by Cartan matrices or Dynkin diagrams, which are in one-to-one correspondence.  Below are the Dynkin diagrams associated to the classification.

\begin{tabular}{l l}
$A_l$: $(l \geq 1)$ & \begin{picture}(250,50)
\thicklines
\multiput(10,10)(50,0){5}{\circle*{8}}
\multiput(125,10)(10,0){3}{\circle*{3}}
\put(10,10){\line(1,0){100}}
\put(160,10){\line(1,0){50}}
\put(7,-7){$1$}
\put(57,-7){$2$}
\put(107,-7){$3$}
\put(150,-7){$l-1$}
\put(208,-7){$l$}
\end{picture}

 \\
$B_l$: $(l \geq 2)$ & \begin{picture}(250,50)
\thicklines
\multiput(10,10)(50,0){5}{\circle*{8}}
\multiput(75,10)(10,0){3}{\circle*{3}}
\put(10,10){\line(1,0){50}}
\put(110,10){\line(1,0){50}}
\put(160,7){\line(1,0){50}}
\put(160,13){\line(1,0){50}}
\put(180,20){\line(1,-1){10}}
\put(180,0){\line(1,1){10}}
\put(7,-7){$1$}
\put(57,-7){$2$}
\put(100,-7){$l-2$}
\put(150,-7){$l-1$}
\put(210,-7){$l$}
\end{picture}
 \\
$C_l$: $(l \geq 3)$ & \begin{picture}(250,50)
\thicklines
\multiput(10,10)(50,0){5}{\circle*{8}}
\multiput(75,10)(10,0){3}{\circle*{3}}
\put(10,10){\line(1,0){50}}
\put(110,10){\line(1,0){50}}
\put(160,7){\line(1,0){50}}
\put(160,13){\line(1,0){50}}
\put(180,10){\line(1,-1){10}}
\put(180,10){\line(1,1){10}}
\put(7,-7){$1$}
\put(57,-7){$2$}
\put(100,-7){$l-2$}
\put(150,-7){$l-1$}
\put(210,-7){$l$}
\end{picture}
 \\
$D_l$: $(l \geq 4)$ & \begin{picture}(250,50)
\thicklines
\multiput(10,10)(50,0){4}{\circle*{8}}
\multiput(210,-9)(0,34){2}{\circle*{8}}
\multiput(75,10)(10,0){3}{\circle*{3}}
\put(10,10){\line(1,0){50}}
\put(110,10){\line(1,0){50}}
\put(160,10){\line(3,1){50}}
\put(160,10){\line(3,-1){50}}
\put(7,-7){$1$}
\put(57,-7){$2$}
\put(100,-7){$l-3$}
\put(150,-7){$l-2$}
\put(218,20){$l-1$}
\put(218,-14){$l$}
\end{picture}
 \\
$E_6$: & \begin{picture}(250,50)
\thicklines
\put(110,25){\circle*{8}}
\multiput(10,-5)(50,0){5}{\circle*{8}}
\put(10,-5){\line(1,0){200}}
\put(110,-5){\line(0,1){30}}
\put(7,-22){$1$}
\put(117,18){$2$}
\put(57,-22){$3$}
\put(107,-22){$4$}
\put(157,-22){$5$}
\put(207,-22){$6$}
\end{picture}
 \\
$E_7$:  & \begin{picture}(300,50)
\thicklines
\put(110,25){\circle*{8}}
\multiput(10,-5)(50,0){6}{\circle*{8}}
\put(10,-5){\line(1,0){250}}
\put(110,-5){\line(0,1){30}}
\put(7,-22){$1$}
\put(117,18){$2$}
\put(57,-22){$3$}
\put(107,-22){$4$}
\put(157,-22){$5$}
\put(207,-22){$6$}
\put(257,-22){$7$}
\end{picture} 
\end{tabular} 

\begin{tabular}{l l}
$E_8$:  & \begin{picture}(400,50)
\thicklines
\put(110,25){\circle*{8}}
\multiput(10,-5)(50,0){7}{\circle*{8}}
\put(10,-5){\line(1,0){300}}
\put(110,-5){\line(0,1){30}}
\put(7,-22){$1$}
\put(117,18){$2$}
\put(57,-22){$3$}
\put(107,-22){$4$}
\put(157,-22){$5$}
\put(207,-22){$6$}
\put(257,-22){$7$}
\put(307,-22){$8$}
\end{picture} \\
$F_4$:  & \begin{picture}(150,50)
\thicklines
\multiput(10,10)(50,0){4}{\circle*{8}}
\put(10,10){\line(1,0){50}}
\put(110,10){\line(1,0){50}}
\put(60,13){\line(1,0){50}}
\put(60,7){\line(1,0){50}}
\put(80,20){\line(1,-1){10}}
\put(80,0){\line(1,1){10}}
\put(7,-5){$1$}
\put(57,-5){$2$}
\put(107,-5){$3$}
\put(157,-5){$4$}
\end{picture} \\
$G_2$:  & \begin{picture}(150,50)
\thicklines
\multiput(10,10)(50,0){2}{\circle*{8}}
\put(10,10){\line(1,0){50}}
\put(10,13){\line(1,0){50}}
\put(10,7){\line(1,0){50}}
\put(30,20){\line(1,-1){10}}
\put(30,0){\line(1,1){10}}
\put(7,-5){$1$}
\put(57,-5){$2$}
\end{picture} \\
\end{tabular} 
\\

Some of these diagrams have automorphisms which correspond to Lie algebra automorphisms.  We will need to consider a diagram automorphism for type $E_6$.  This diagram has an obvious order two symmetry that is given by interchanging the vertices labeled one and six, three and five, and leaving vertices two and four fixed.  Looking at the fixed points of the corresponding Lie algebra automorphism, $\tau$, will give a Lie subalgebra $\fa$ of type $F_4$ inside $\fg$ of type $E_6$.  The following roots of $\fg$ are fixed: $$\alp_1 + \alp_6, \quad \alp_3 + \alp_5, \quad \alp_4, \quad \alp_2,$$ and give a set of simple roots $\fa$, which we relabel as $$\beta_1 = \alp_2, \quad \beta_2 = \alp_4, \quad \beta_3 = \frac{\alp_3 + \alp_5}{2}, \quad\beta_4 = \frac{\alp_1 + \alp_6}{2} .$$  The Cartan matrix of type $F_4$ can be recovered here by using the Killing form from $E_6$.

Once this identification of simple roots is made, then the following vectors are identified inside $\fg$ to give a set of generators for $\fa$.  Notice the switch in notation from $e_i$ and $f_i$ to $x_{\alp_i}$ and $x_{-\alp_i}$. 
\begin{align*}
h_{\beta_1} &= h_{\alp_2}, &\quad  h_{\beta_2} &= h_{\alp_4}, &\quad h_{\beta_3} &= h_{\alp_3} + h_{\alp_5}, &\quad h_{\beta_4} &= h_{\alp_1} + h_{\alp_6} \\
x_{\beta_1} &= x_{\alp_2}, &\quad  x_{\beta_2} &= x_{\alp_4}, &\quad x_{\beta_3} &= x_{\alp_3} + x_{\alp_5}, &\quad x_{\beta_4} &= x_{\alp_1} + x_{\alp_6} \\
x_{-\beta_1} &= x_{-\alp_2}, &\quad x_{-\alp_2} &= x_{-\alp_4}, &\quad  x_{-\beta_3} &= x_{-\alp_3} + x_{-\alp_5}, &\quad x_{-\beta_4} &= x_{-\alp_1} + x_{-\alp_6}
\end{align*}

Let $\Phi$ be a root system for a finite dimensional semisimple Lie algebra $\fg$.  The \bold{Weyl group}, $\mathcal{W}$, is the group generated by the reflections associated to the simple roots, $\mathcal{W} = \langle \sigma_{\alp}| \alp \in \Delta \rangle$.

Let $V$ be a finite dimensional $\fg$-module such that the CSA $\fh$ acts simultaneously diagonalizably on $V$.  The simultaneous eigenspaces, $V_{\mu} = \left\{ v \in V | h \cdot v = \mu(h)v \right\}$ $\neq 0$ with $\mu \in \fh^*$, are called \bold{weight spaces}, and $\mu$ is called a \bold{weight} of $V$.   For a module $V$, $\Pi(V) = \left\{ \mu \in \fh^* | V_{\mu} \neq 0 \right\}$ denotes the set of weights.  If $0 \neq v \in V$ and $x \cdot v = 0$ for any $x \in \fg_{\alp}$, $\alp \in \Phi^+$, then call $v$ a highest weight vector (HWV) of $V$.  It is enough to check this property for $x \in \fg_{\alp}$, $\alp \in \Delta$.

If $V$ is an finite dimensional irreducible $\fg$-module, then $V$ contains, up to scalar multiples, exactly one HWV, and $V$ is uniquely determined by the weight of this HWV.  If the highest weight is $\lam \in \fh^*$, then label such a highest weight module $V^{\lam}$.  If $\mu \in \Pi(V^{\lam})$, then $\mu \leq \lam$.  $V$ has a \bold{weight space decomposition} as a direct sum of weight spaces, $\ds V^{\lam} = \bigoplus_{\mu \in \Pi(V^{\lam})} V^{\lam}_{\mu}$.  If $\fg$ is simple then $\fg$ is an irreducible $\fg$-module, $V^{\theta}$, whose weights are the roots, $\Phi$, so $\Pi(V^{\theta}) = \Phi$.  We call $\theta = \theta(\fg)$ the highest root of $\Phi$.

Given a finite dimensional irreducible highest weight $\fg$-module, $V^{\lam}$, the \bold{character} of $V^{\lam}$ is given as a specific element of the group ring 
$$\Z[\fh^*] = \left\{ \sum_{\mu \in S} m_{\mu} e^{\mu} | m_{\mu} \in \Z, S \sset \fh^*, |S| < \infty\right\},$$ 
namely, $$ch(V^{\lam}) = \sum_{\mu \in \Pi(V^{\lam})} dim(V^{\lam}_{\mu})e^{\mu}.$$

\begin{dfn} \mylabel{Def: 1.17} The \bold{weight lattice} associated to a root system $\Phi$ is defined as $P_{\Phi} = \left\{ \lam \in \fh^* | \herf{\lam}{\alp_i} \in \Z, \alp_i \in \Delta\right\}. $
\end{dfn}

It can be shown that $P_{\Phi} = \sum_{i = 1}^l \Z \lam_i$, where the $\lam_i$ are defined by $\herf{\lam_i}{\alp_j} = \delta_{i,j}$, $1 \leq i,j \leq l$.  Note that $Q_{\Phi} \sset P_{\Phi}$.  Call the set $P_{\Phi}^+ = \left\{ \lam \in \fh^* | 0 \leq \herf{\lam}{\alp_i} \in \Z, \alp_i \in \Delta \right\} $, the \bold{dominant integral weights}.  For any $\lam \in P_{\Phi}^+$, there exists a finite dimensional irreducible $\fg$-module $V^{\lam}$ with highest weight $\lam$ and set of weights $\Pi(V^{\lam})=$ \\$  \left\{ w(\mu) | \mu \in P_{\Phi}^+, \mu \leq \lam, w \in \mathcal{W} \right\}.$ 

\begin{thm} \mylabel{Thm: 1.18} For any $\lam \in P_{\Phi}^+$ and the associated irreducible finite dimensional highest weight module $V^{\lam}$,  the Weyl character formula gives the following form for the character of $V^{\lam}$, using $\rho = \sum_{i = 1}^l \lam_i$:
$$ch(V^{\lam}) = \frac{S_{\lam + \rho}}{S_{\rho}}, \mbox{  with  } S_{\mu} = \sum_{w \in \mathcal{W}} sgn(w)e^{w\mu - \rho}.$$
\end{thm}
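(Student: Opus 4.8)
The plan is to reduce the formula to the single identity $ch(V^\lam)\cdot S_\rho=S_{\lam+\rho}$ in the group ring (which is an integral domain) and then divide by $S_\rho$. The first ingredient is a handful of elementary facts about $\rho=\sum_i\lam_i$: since $\herf{\rho}{\alp_i}=1$ one has $\sigma_{\alp_i}\rho=\rho-\alp_i$, and more generally $\rho-w\rho\in Q_\Phi^+$ for every $w\in\mathcal{W}$, with $\rho-w\rho=0$ precisely when $w=e$, because $\rho$ is strictly dominant, hence the unique maximal element of its own $\mathcal{W}$-orbit for the partial order of Definition \ref{Def: 1.15}. The same holds for $\lam+\rho$ in place of $\rho$ whenever $\lam\in P_\Phi^+$, since $\lam+\rho$ is then again strictly dominant. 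Writing $A_\mu:=\sum_{w\in\mathcal{W}}sgn(w)e^{w\mu}$ (so that $S_\mu=e^{-\rho}A_\mu$ and $S_{\lam+\rho}/S_\rho=A_{\lam+\rho}/A_\rho$), it follows that $A_\rho$ has top weight $\rho$ with coefficient $1$ and $A_{\lam+\rho}$ has top weight $\lam+\rho$ with coefficient $1$, every other weight being strictly smaller.

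The argument then rests on two structural facts. First, $ch(V^\lam)$ is $\mathcal{W}$-invariant: the reflections $\sigma_{\alp_i}$ are realized on $V^\lam$ by linear automorphisms built from the $\fsl(2,\C)$-triples attached to the roots (Lemma \ref{Lem: 1.12}), carrying $V^\lam_\mu$ isomorphically onto $V^\lam_{\sigma_{\alp_i}\mu}$, so $\dim V^\lam_\mu$ depends only on $\mathcal{W}\mu$; consequently $ch(V^\lam)$ has top weight $\lam$ with coefficient $1$ and all weights $\leq\lam$. Second, $A_\rho$, and hence $f:=A_\rho\cdot ch(V^\lam)$, is $\mathcal{W}$-anti-invariant. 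Now I would classify the anti-invariant elements of the group ring: a weight fixed by some reflection has coefficient $0$, while the remaining (regular) weights fall into free $\mathcal{W}$-orbits, each with a unique strictly dominant representative of the form $\mu+\rho$ for a unique $\mu\in P_\Phi^+$, on which the coefficients are $sgn(w)$ times that of $e^{\mu+\rho}$; hence $f=\sum_{\mu\in P_\Phi^+}n_\mu A_{\mu+\rho}$ with $n_\mu\in\Z$. Since every weight of $f$ is $\leq\lam+\rho$ and $e^{\lam+\rho}$ occurs in $f$ with coefficient exactly $1$ (only the product $e^\rho\cdot e^\lam$ contributes), the sum is supported on $\{\mu\leq\lam\}$ with $n_\lam=1$.

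What remains, and is the \emph{genuine obstacle} (everything before it being formal bookkeeping with leading terms), is to show $n_\mu=0$ for $\mu<\lam$, i.e. that $f=A_{\lam+\rho}$ on the nose. I would close this gap by one of the standard devices: (a) equip the invariant part of the group ring with the Hermitian form $\langle g,h\rangle=|\mathcal{W}|^{-1}\cdot(\text{constant term of }g\,\bar h\,A_\rho\overline{A_\rho})$, note that both $\{ch(V^\mu)\}$ (Schur orthogonality) and $\{A_{\mu+\rho}/A_\rho\}$ (a one-line computation) are orthonormal, so the transition matrix between them, unipotent upper-triangular by the leading-term analysis above, must be the identity; (b) verify directly, via Freudenthal's recursive multiplicity formula, that $A_{\lam+\rho}/A_\rho$ obeys the same recursion and the same leading term as $ch(V^\lam)$; or (c) combine the Weyl denominator identity $S_\rho=\prod_{\alp\in\Phi^+}(1-e^{-\alp})$ (itself the same alternation argument applied to the manifestly anti-invariant $e^\rho\prod_{\alp>0}(1-e^{-\alp})$) with the Bernstein--Gelfand--Gelfand resolution, which yields $ch(V^\lam)=\sum_w sgn(w)\,ch(M(w\cdot\lam))$ and $ch(M(\mu))=e^\mu\prod_{\alp>0}(1-e^{-\alp})^{-1}$ in a suitable completion, so that $ch(V^\lam)=S_{\lam+\rho}/S_\rho$ directly. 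Any one of these finishes the proof; dividing the resulting identity $ch(V^\lam)S_\rho=S_{\lam+\rho}$ by $S_\rho$ gives the stated formula, with the conventions $\rho=\sum_i\lam_i$ and $S_\mu=\sum_w sgn(w)e^{w\mu-\rho}$ matching verbatim.
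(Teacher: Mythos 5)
The paper offers no proof of Theorem \ref{Thm: 1.18}: it is stated in Chapter 1 as classical background (like Cartan's criterion and Lemma \ref{Lem: 1.12}), so there is no in-paper argument to compare yours against. Your outline is the standard algebraic proof and its skeleton is sound: the facts about $\rho$ (that $\sigma_{\alp_i}\rho = \rho - \alp_i$ and that $\rho$, hence $\lam+\rho$, is the unique strictly dominant element of its orbit), the $\mathcal{W}$-invariance of $ch(V^{\lam})$ via the $\fsl(2,\C)$-triples, the classification of anti-invariant elements as $\Z$-combinations of the $A_{\mu+\rho}$, and the leading-term computation giving $n_{\lam}=1$ with support in $\{\mu \leq \lam\}$ are all correct and correctly ordered. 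You are also right to flag that the entire content of the theorem is concentrated in showing $n_{\mu}=0$ for $\mu < \lam$; everything before that is formal. The one criticism is that this decisive step is delegated rather than executed, and the three devices you list are not equally cheap: (a) requires Schur orthogonality for the compact form together with the Weyl integration formula, and (c) requires the linkage principle or the BGG resolution, both of which are heavier theorems than the one being proved in a first course; option (b), deriving the result from Freudenthal's formula (itself a Casimir computation), is the most self-contained purely algebraic closure and is the one a reader of this background chapter could most plausibly verify. As submitted, then, your argument is a correct and well-organized reduction to a cited standard fact rather than a complete proof — which is still strictly more than the paper provides.
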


The Weyl denominator formula says:
$$S_{\rho} = \prod_{\alp \in \Phi^+}(1-e^{-\alp})$$

Define the universal enveloping algebra of a Lie algebra, $\fg$, to be the associative algebra $U(\fg)$ (with 1), with a map $\mu:\fg \to U(\fg)$ satisfying $\mu([x,y]) = \mu(x)\mu(y) - \mu(y)\mu(x)$, with the property that if $V$ is any associative algebra (with 1), and $\eta:\fg \to V$ satisfies the same property as $\mu$, then there exists a unique map $\phi: U(\fg) \to V$, such that $\phi \, \circ \, \mu = \eta$.

At this point we fix some notation for the Lie algebras of type $E_6$ and $F_4$.  The fundamental weights for the Lie algebra of type $E_6$ are $\lam_1, \lam_2, \dots,  \lam_6$, corresponding to the roots $\alp_1, \alp_2, \dots, \alp_6 $.  Under our identification of $F_4$ inside of $E_6$ we get the following fundamental weights for $F_4$, $\omg_1 = \lam_2, \omg_2 = \lam_4, \omg_3 = \frac{\lam_3 + \lam_5}{2}$, and $\omg_4 = \frac{\lam_1 + \lam_6}{2}$.

If $\fg$ is of type $E_6$ and $\fa \sset \fg$ is of type $F_4$, then each of the three $\fg$-modules, $V^{\theta(\fg)} = V^{\lam_2}, V^{\lam_1}, V^{\lam_6}$ can be decomposed as a direct sum of irreducible highest weight $\fa$-modules, written as $W^{\omg}$ where $\omg$ is a weight of $\fa$.  First, $V^{\theta(\fg)} = \fg$ and therefore $\fg = U(\fg)\cdot x_{\theta(\fg)}$.  Also, $U(\fa) \cdot x_{\theta(\fg)} = \fa \subset \fg$ and therefore we have the branching $\fg = \fa \oplus \fc$, where $\fa = W^{\theta(\fa)} = W^{\omg_2}$ and $\fc = W^{\omg_4}$.  We know $dim(\fg) = 78$ and $dim(\fa) = 52$, therefore $dim(\fc) = 26$.   Letting $\fh$ and $\fb$ be the CSAs of $\fg$ and $\fa$ respectively, we know $\fb \sset \fh$ and $\fb^* \sset \fh^*$, therefore we have $Proj: \fh^* \to \fb^*$ where $Proj\left(\sum_{i = 1}^6 m_i \lam_i\right) = \sum_{i = 1}^4 n_i \omg_i$ determined by 
$$\lam_1 \mapsto \omg_4, \quad \lam_2 \mapsto \omg_1, \quad \lam_3 \mapsto \omg_3, \quad\lam_4 \mapsto \omg_2, \quad \lam_5 \mapsto \omg_3, \quad \lam_6 \mapsto \omg_4.$$
The projection allows us to describe the $\fa$-module $\fc = U(\fa) \cdot x_{\alp}$, where $x_{\alp} \in \fg$ is such that $Proj(\alp) = \omg_4$.  The branchings of  $V^{\lam_1}$ and $V^{\lam_6}$ are the same since $Proj(\lam_1) = Proj(\lam_6) = \omg_4$, therefore we focus on $V^{\lam_1}$.   A HWV $x_{\lam_1}$ for $V^{\lam_1}$ has weight $\omg_4$ for $\fa$ and therefore $U(\fa) \cdot x_{\lam_1} = \fc$.  Since $dim(V^{\lam_1}) = 27$ and $dim(\fc) = 26$ we must have $V^{\lam_1} = \fc \oplus W^0$, where $W^0$ is the one dimensional trivial $\fa$-module.



\section{Affine Lie Algebras}
\mylabel{s 1.2}

This section contains the most important aspects from the theory of affine Lie algebras that are needed for this work.  It starts with the general theory of affine Lie algebras, then introduces the Virasoro algebra which plays a vital role in the decomposition in this dissertation.  Finally, the specifics of the affine Lie algebra $\Es$ are given along with an introduction to the specifics of its vertex operator representation.

\subsection{General Theory of Affine Lie Algebras}
\mylabel{ss 1.2.1}

Let $\fg$ be a finite dimensional Lie algebra with an invariant symmetric bilinear form $\bfor{\cdot}{\cdot}$, so $\bfor{[x,y]}{z} = \bfor{x}{[y,z]}$, for $x,y,z \in \fg$.
Let $\C[t,t^{-1}]$ be the algebra of Laurent polynomials in the variable $t$. Now consider the vector space, $ \ds \fgh = \fg \otimes_{\C} \C[t,t^{-1}] \oplus \C c$,   and the alternating bilinear map $ \ds [\cdot, \cdot] : \fgh \times \fgh \to \fgh$ determined by the following conditions: 
\begin{eqnarray}
&&[c, \fgh] = 0 \nonumber \\
&&[x \otimes t^m, y \otimes t^n] = [x,y] \otimes t^{m+n} + \bfor{x}{y}m \delta_{m, -n} c
\end{eqnarray} 
for all $x,y \in \fg$ and $m,n \in \Z$.   Then $\fgh$ with this Lie bracket is called  the \bold{untwisted affine Lie algebra associated to $\fg$} .  Write $x(n)$ for $x \otimes t^n \in \fgh$. 

Now consider the vector space $\fgt = \fgh \oplus \C d$ and extend the definition of brackets of $\fgh$ to $\fgt$ by also defining, $[d,x(n)] = nx(n)$ and $[d,c] = 0$, so that $\fgh$ is a subalgebra of $\fgt$.  Then $\fgt$ with these brackets is called the \bold{extended affine Lie algebra associated to $\fg$}.  Identify $\fg$ with $\fg \otimes t^0 \sset \fgh \sset \fgt$, hence $\fg$ is a subalgebra of both of these algebras.  

The Lie algebra, $\fgt$, has as a Cartan subalgebra (CSA), $\fht = \fh \oplus \C c \oplus \C d$, and therefore $\fgt$ has a root space decomposition.  Denote the affine root system of $\fgt$, by $\PHI$.  This root system is given by identifying $\Phi \sset \PHI$, then defining $\alp_i(c) = 0 = \alp_i(d)$, for $1 \leq i \leq l$.  Define the elements $\delta$ and $\Lambda_0$ of $\fht^*$ as follows.  If $h \in \fh$, then $\delta(h) = 0$ and  $\Lambda_0(h) = 0$.  Also, $\delta(c) = 0$, $\delta(d) = 1$, $\Lambda_0(c) = 1$, $\Lambda_0(d) = 0$, so $\left\{ \alp_1, \alp_2, \dots, \alp_l, \delta, \Lambda_0 \right\}$ is a basis for $\fht^*$.  If $h \in \fh$, $n\in \Z$ and $x \in \fg_{\alp}$, $\alp \in \Phi$, then we have:
\begin{eqnarray*}
[h, x(n)] = \alp(h) x(n), \quad [c, x(n)] = 0, \quad [d, x(n)] = nx(n). 
\end{eqnarray*}

So for $h \in \fht$, $[h, x(n)] = (n\delta+\alp)(h)x(n)$, hence $n\delta+\alp \in \PHI$.  Now consider $h_1(n)$, $n \neq 0$ and $h_1 \in \fh$, then $[h, h_1(n)] = (n\delta)(h)h_1(n)$, so for $n \neq 0$, $n\delta \in \PHI$.  Hence we have $\PHI = \left\{n\delta + \alp | n\in \Z, \alp \in \Phi \right\} \cup \left\{n\delta | 0 \neq n\in \Z \right\}$.  This gives a root space decomposition of $\fgt$.  Notice that the dimension of the $n\delta + \alp$ root space is one, but the dimension of the $n\delta$ root space is $l$, the rank $\fg$.  The roots of type $n\delta + \alp$ are called real roots and the roots of type $n\delta$ are called imaginary.  Recall, $\theta$ is the highest root in $\Phi$, set $\alp_0 := \delta - \theta$, then there is a base of $\PHI$, given by $\DELH = \left\{ \alp_0, \alp_1, \alp_2, \dots, \alp_l \right\}$.

Extend the non-degenerate invariant bilinear form on $\fg$ to such a form on $\fgt$ as follows:
\begin{eqnarray*}
&&\bfor{x(m)}{y(n)} = \delta_{m,-n} \bfor{x}{y}, \\
&&\bfor{x(m)}{c} = 0 = \bfor{x(m)}{d},\\
&&\bfor{c}{d} = 1, \\
&& \bfor{c}{c} = 0 = \bfor{d}{d}. 
\end{eqnarray*}

The bilinear form on $\fh^*$ also extends to a bilinear form on $\fht^*$ by defining, $\bfor{\fh^*}{c}$, $\bfor{\fh^*}{\delta}$, $\bfor{\Lambda_0}{\Lambda_0}$, and $\bfor{\delta}{\delta}$ all equal to 0, and $\bfor{\Lam_0}{\delta} = 1$.  The partial order on $\fh^*$ also extends to $\fht^*$ by defining $\mu \leq \lambda$ if $\lam - \mu = \sum_{i=0}^l a_i \alp_i$, with $0 \leq a_i \in \Z$.

The affine Lie algebra also has a Weyl group, $\hat{\mathcal{W}}= \, < \sigma_{\alp_i} \mid 0\leq i \leq l>$.  The fundamental weights for the affine Lie algebra are given by $\Lam_0$ and $\Lam_i = m_i\Lam_0 + \lam_i, 1\leq i \leq l$, where the coefficients $m_i$ are determined by $\herf{\Lam_i}{\alp_j} = \delta_{i,j}, 0 \leq i,j \leq l$.  The weight lattice is defined as in Definition \ref{Def: 1.17}, and is given by $\hat{P}_{\Phi} = \sum_{i = 0}^l \Z \Lam_i$ and $\hat{P}_{\Phi}^+ = \left\{ \sum_{i = 0}^l n_i \Lam_i | 0 \leq n_i \in \Z \right\}$. 

For each $\Lam = \sum_{i = 0}^l n_i \Lam_i \in \hat{P}_{\Phi}^+$ there is an irreducible highest weight $\fgt$-module denoted by $V^{\Lam}$.  The \bold{level} of $\Lam$, as well as $V^{\Lam}$, is $\Lam(c) = \sum_{i = 0}^l n_i \Lam_i(c) = n_0 + \sum_{i = 1}^l m_in_i$.  $V^{\Lam}$ has a weight space decomposition similar to the decomposition given for $\fg$-modules.  Hence, 
$$V^{\Lam} = \bigoplus_{\mu \in \Pi(V^{\Lam})} V^{\Lam}_{\mu},$$ 
with $$V^{\Lam}_{\mu} = \left\{ v \in V^{\Lam} | h \cdot v = \mu(h)v, \forall h \in \fht \right\}$$ and $\Pi(V^{\Lam}) = \left\{ \mu \in \hat{P}_{\Phi} | V_{\mu}^{\Lam} \neq 0 \right\}$. Define the vacuum space of $V^{\Lam}$ to be $\V(V^{\Lam}) =$ $\{ v \in V^{\Lam} \mid  h(m) \cdot v = 0, m > 0 \}$.  

There are three level 1 fundamental weights for the Lie algebra of type $\Es$, $\Lam_0, \Lam_1$, and $\Lam_6$.  There are two level 1 fundamental weights for the Lie algebra of type $\Ff$, $\Omg_0$ and $\Omg_4$.

There is a character of the module $V^{\Lam}$.  First define the graded dimension of $V^{\Lam}$ as the formal power series in the ring $Z[[ e^{-\alp_i} \mid 0\leq i \leq l ]]$,
$$gr(V^{\Lam}) = e^{-\Lam} \sum_{\mu \in \Pi(V^{\Lam})} dim(V_{\mu}^{\Lam})e^{\mu} .$$
Since $\Pi(V^{\Lam}) \sset \left\{  \mu \in \hat{P}_{\Phi} | \mu \leq \Lam \right\}$, we define the character $ch(V^{\Lam}) = e^{\Lam}gr(V^{\Lam})$.

\begin{thm} \mylabel{Thm: 1.19}For any $\Lam \in \hat{P}^+_{\Phi}$, the character of the irreducible highest weight module $V^{\Lam}$ is given by the Weyl-Kac character formula.  This formula is given as:
$$ch(V^{\Lam}) = \frac{1}{e^{\hat{\rho}}R}\sum_{w \in \hat{\mathcal{W}}} sgn(w)e^{w(\Lam + \hat{\rho})}$$
with $\hat{\rho} = \sum_{i = 0}^l \Lam_i$ and $R = \prod_{\alp \in \PHI^+} (1-e^{-\alp})^{dim(\fgt_{\alp})}$ is called the denominator.
\end{thm}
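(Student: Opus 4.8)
\emph{Proof strategy.} The plan is to give the standard Casimir-operator argument (as in Kac's \emph{Infinite Dimensional Lie Algebras}, Ch.~10), working formally in the power series ring $\Z[[e^{-\alp_i}\mid 0\leq i\leq l]]$ (into which all the characters below, after factoring out $e^{\Lam}$, are understood to map). For $\mu\in\fht^*$ let $M(\mu)$ denote the Verma module, i.e.\ the universal highest weight $\fgt$-module of highest weight $\mu$; by the Poincar\'e--Birkhoff--Witt theorem its character is $ch(M(\mu)) = e^\mu\prod_{\alp\in\PHI^+}(1-e^{-\alp})^{-dim(\fgt_{\alp})} = e^\mu/R$, so that $e^{\hat{\rho}}R\cdot ch(M(\mu)) = e^{\mu+\hat{\rho}}$. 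Since $V^\Lam$ lies in category $\mathcal{O}$ for $\fgt$ (finite-dimensional weight spaces, all weights $\leq\Lam$), a Jordan--H\"older-type argument on weight spaces expresses $ch(V^\Lam) = \sum_{\mu\leq\Lam} c_\mu\, ch(M(\mu))$ as a $\Z$-linear combination with $c_\Lam = 1$.

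Next I would bring in the generalized Casimir operator $\Omega$, a canonically normalized element built from the invariant bilinear form on $\fgt$, which is well defined on every module in category $\mathcal{O}$, commutes with the $\fgt$-action, and acts on any highest weight module of highest weight $\mu$ by the scalar $\herf{\mu+\hat{\rho}}{\mu+\hat{\rho}} - \herf{\hat{\rho}}{\hat{\rho}}$. Applying $\Omega$ to the identity $ch(V^\Lam) = \sum c_\mu\, ch(M(\mu))$ and comparing eigenvalues shows that $c_\mu\neq 0$ forces $\herf{\mu+\hat{\rho}}{\mu+\hat{\rho}} = \herf{\Lam+\hat{\rho}}{\Lam+\hat{\rho}}$ in addition to $\mu\leq\Lam$. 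A direct estimate with the form on the cone $\Lam - Q_{\Phi}^+$ shows only finitely many $\mu$ satisfy both conditions, so the sum is finite, and multiplying through by $e^{\hat{\rho}}R$ gives
$$e^{\hat{\rho}}R\cdot ch(V^\Lam) = \sum_{\mu} c_\mu\, e^{\mu+\hat{\rho}}.$$

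It remains to identify the right-hand side with $\sum_{w\in\hat{\mathcal{W}}} sgn(w)e^{w(\Lam+\hat{\rho})}$. Here I would use that $V^\Lam$ is integrable: the Chevalley generators $e_i,f_i$ ($0\leq i\leq l$) act locally nilpotently, so $V^\Lam$ restricts to a direct sum of finite-dimensional $\fsl(2,\C)$-modules in each simple-root direction, whence $ch(V^\Lam)$ is invariant under each $\sigma_{\alp_i}$ and therefore under all of $\hat{\mathcal{W}}$. A direct check gives $\sigma_{\alp_i}(e^{\hat{\rho}}R) = -e^{\hat{\rho}}R$, so $e^{\hat{\rho}}R\cdot ch(V^\Lam)$, hence $D:=\sum_\mu c_\mu e^{\mu+\hat{\rho}}$, is $\hat{\mathcal{W}}$-anti-invariant. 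Now take a maximal $\mu$ with $c_\mu\neq 0$: anti-invariance rules out $\herf{\mu+\hat{\rho}}{\sigma_{\alp_i}}=0$ and maximality rules out $\herf{\mu+\hat{\rho}}{\alp_i^\vee}<0$, so $\mu+\hat{\rho}$ is dominant regular; combined with $\herf{\mu+\hat{\rho}}{\mu+\hat{\rho}}=\herf{\Lam+\hat{\rho}}{\Lam+\hat{\rho}}$ and $\mu\leq\Lam$ this forces $\mu=\Lam$. Subtracting $\sum_w sgn(w)e^{w(\Lam+\hat{\rho})}$ (its $\hat{\mathcal{W}}$-orbit terms, distinct since $\Lam+\hat{\rho}$ is regular) from $D$ and repeating the argument shows the difference is $0$, giving the Weyl--Kac formula.

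The main obstacle is making the Casimir step rigorous in the infinite-dimensional setting: one must set up the topological completion in which the characters, the product $R$, and the operator $\Omega$ all make sense, verify that $\Omega$ is a well-defined $\fgt$-endomorphism of category $\mathcal{O}$ modules acting by the stated scalar, and prove finiteness of $\{\mu\leq\Lam : \herf{\mu+\hat{\rho}}{\mu+\hat{\rho}} = \herf{\Lam+\hat{\rho}}{\Lam+\hat{\rho}}\}$. The integrability input needed for $\hat{\mathcal{W}}$-invariance of $ch(V^\Lam)$ (local nilpotence of the $e_i,f_i$ on $V^\Lam$, and finite-dimensional $\fsl(2,\C)$ theory) is routine but must be established first; everything else is formal bookkeeping with characters.
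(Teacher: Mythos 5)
The paper states Theorem \ref{Thm: 1.19} as standard background (it is the Weyl--Kac character formula) and offers no proof of its own, so there is nothing to compare against line by line. Your outline is the classical Casimir-operator argument from Kac's book (Ch.~10): expand $ch(V^{\Lam})$ in Verma characters, cut the sum down using the eigenvalue of the generalized Casimir, and then pin down the surviving terms via $\hat{\mathcal{W}}$-anti-invariance of $e^{\hat{\rho}}R\cdot ch(V^{\Lam})$ together with integrability of $V^{\Lam}$. That strategy is correct, and you have correctly identified where the real work lies (the completion in which $\Omega$ and the infinite products live, the scalar action of $\Omega$, and finiteness of the relevant set of weights). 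One small slip: in the anti-invariance step the condition you want to exclude is $\herf{\mu+\hat{\rho}}{\alp_i^{\vee}}=0$ (a fixed point of $\sigma_{\alp_i}$ forces the coefficient to vanish), not the expression as you wrote it; with that repaired the sketch matches the standard proof.
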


\subsection{The Virasoro Algebra}
\mylabel{ss 1.2.2}

The \bold{Witt algebra} is defined to be the infinite dimensional Lie algebra with basis  \\$\left\{ d_m | m \in \Z \right\}$ and brackets given by $[d_m, d_n] = (n-m)d_{m+n}$ for $m,n \in \Z$.  There is a representation of this algebra on the Laurent polynomials, with action of $d_m$ on a polynomial in the variable $t$ given by $t^{m+1}\frac{d}{dt}$.  Using this action on the Laurent polynomials extend to an action on $\fgh$ by $d_m \cdot x(n) = nx(m+n)$.  Hence the $d_0$ equals $d$ given in section \ref{ss 1.2.1}.  Define the Virasoro algebra, $Vir$, as the following central extension of the Witt algebra.  Let a basis for $Vir$ be $\left\{ L_m, c_{Vir} | m \in \Z \right\}$, where $c_{Vir}$ is a central element.  For all $m,n \in \Z$ the brackets 
$$[L_m, L_n] = (m-n)L_{m+n} + \frac{m^3-m}{12}\delta_{m,-n}c_{Vir} \quad \mbox{  and  } \quad [L_m, c_{Vir}] = 0.$$ 

\begin{thm} \mylabel{Thm: 1.20}
For each $(c, h) \in \C^2$ there exists an irreducible $Vir$-module, denoted by $Vir(c, h)$, such that $c_{Vir}$ acts as the scalar value $c$, called the central charge, and such that there is a highest weight vector $v$ satisfying $L_m \cdot v = 0$ for $m > 0$ and $L_0 \cdot v = hv$. For certain 
values of $c$ and $h$, $Vir(c, h)$ admits a positive definite Hermitian form such that $(L_m \cdot v_1, v_2) = (v_1, L_{-m}\cdot v_2)$ for any $v_1,v_2 \in Vir(c, h)$, and in these cases the module is called unitary.
\end{thm}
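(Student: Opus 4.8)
The plan is to follow the standard highest--weight (Verma module) construction and then study the contravariant Hermitian form. First I would build the Verma module: decompose $Vir = Vir_- \oplus Vir_0 \oplus Vir_+$, where $Vir_{\pm} = \bigoplus_{\pm m > 0} \C L_m$ and $Vir_0 = \C L_0 \oplus \C c_{Vir}$, note that $Vir_{\geq 0} := Vir_0 \oplus Vir_+$ is a subalgebra, and let $\C_{c,h}$ be the one--dimensional $Vir_{\geq 0}$--module on which $c_{Vir}$ acts by $c$, $L_0$ acts by $h$, and $L_m$ acts by $0$ for $m > 0$. Set $M(c,h) = U(Vir) \otimes_{U(Vir_{\geq 0})} \C_{c,h}$ with distinguished generator $v = 1 \otimes 1$. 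By the Poincar\'e--Birkhoff--Witt theorem the vectors $L_{-n_1} L_{-n_2} \cdots L_{-n_k} v$ with $n_1 \geq n_2 \geq \cdots \geq n_k \geq 1$ (together with $v$ itself) form a basis of $M(c,h)$; the bracket relations then force $c_{Vir}$ to act by $c$ on all of $M(c,h)$ and give a grading $M(c,h) = \bigoplus_{N \geq 0} M(c,h)_{h+N}$ by $L_0$--eigenvalue, with each $M(c,h)_{h+N}$ finite--dimensional (of dimension $p(N)$, the number of partitions of $N$), $M(c,h)_h = \C v$, and $Vir_+$ raising, $Vir_-$ lowering the grading.

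Next I would pass to the irreducible quotient. Any proper submodule of $M(c,h)$ is $L_0$--graded and cannot contain $v$, hence is contained in $\bigoplus_{N \geq 1} M(c,h)_{h+N}$; consequently the sum $J(c,h)$ of all proper submodules is itself proper, so it is the unique maximal proper submodule and $Vir(c,h) := M(c,h)/J(c,h)$ is irreducible. By construction $c_{Vir}$ acts on $Vir(c,h)$ by $c$, and the image $\bar v$ of $v$ satisfies $L_m \bar v = 0$ for $m > 0$, $L_0 \bar v = h \bar v$, and generates $Vir(c,h)$; any irreducible $Vir$--module possessing such a highest weight vector of weight $(c,h)$ is a nonzero quotient of $M(c,h)$ and hence isomorphic to $Vir(c,h)$, which gives uniqueness.

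For the Hermitian form, assume $c, h \in \R$ and let $\omega$ be the conjugate--linear anti--automorphism of $Vir$ determined by $\omega(L_m) = L_{-m}$ and $\omega(c_{Vir}) = c_{Vir}$; one checks this is compatible with the bracket. There is then a unique Hermitian form $\herf{\cdot}{\cdot}$ (the contravariant, or Shapovalov, form) on $M(c,h)$ with $\herf{v}{v} = 1$ and $\herf{x u_1}{u_2} = \herf{u_1}{\omega(x) u_2}$ for all $x \in Vir$: it is forced, graded piece by graded piece, by commuting lowering operators to the right until only the scalar $\herf{v}{v}$ remains, and distinct graded pieces are orthogonal. The radical of $\herf{\cdot}{\cdot}$ is a submodule that does not contain $v$, and a short argument with the grading shows $M(c,h)/\mathrm{rad}$ is irreducible, so $\mathrm{rad} = J(c,h)$ and $\herf{\cdot}{\cdot}$ descends to a nondegenerate contravariant Hermitian form on $Vir(c,h)$, which is precisely the stated identity $(L_m u_1, u_2) = (u_1, L_{-m} u_2)$.

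Finally, unitarity. On each $M(c,h)_{h+N}$ the Gram matrix of $\herf{\cdot}{\cdot}$ in the PBW basis has entries polynomial in $c$ and $h$ (the Kac matrix), and the induced form on $Vir(c,h)$ is positive definite exactly when every such matrix is positive semidefinite. To see that \emph{some} pairs $(c,h)$ are unitary I would exhibit explicit positive--definite realizations: the bosonic (Heisenberg) Fock space with the usual normally--ordered quadratic Virasoro operators gives a unitary $Vir(1,h)$ for every $h \geq 0$, and the Goddard--Kent--Olive coset construction \cite{GKO} produces unitary modules at $c = 1 - \tfrac{6}{m(m+1)}$ for every integer $m \geq 2$ --- in particular the $c = \tfrac{4}{5}$ modules used throughout this work. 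I expect the main obstacle to be the \emph{complete} determination of the unitary locus: controlling the sign of the Kac determinant $\det M(c,h)_{h+N}$ for all $N$ requires the Feigin--Fuchs determinant formula and the Friedan--Qiu--Shenker analysis of its vanishing curves, but since the statement only asserts that unitary $(c,h)$ exist, the explicit constructions above already suffice.
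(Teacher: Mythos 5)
The paper states Theorem \ref{Thm: 1.20} as standard background and gives no proof of its own, so there is nothing internal to compare against. Your outline is the standard and correct argument: the Verma module $M(c,h)$ via induction from the subalgebra spanned by $c_{Vir}$, $L_0$, and the $L_m$ with $m>0$, the unique maximal proper (graded) submodule giving the irreducible quotient, the contravariant Shapovalov form whose radical is that submodule, and explicit positive-definite realizations (Heisenberg Fock space at $c=1$, the Goddard--Kent--Olive cosets at $c = 1 - \frac{6}{m(m+1)}$, including the $c=\frac{4}{5}$ case used throughout this dissertation) to witness that unitary $(c,h)$ exist. Since the theorem only asserts existence of unitary cases rather than classifying them, your remark that the full Friedan--Qiu--Shenker/Kac-determinant analysis is not needed is exactly right.
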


For $0 < c_{Vir} < 1$, there is a discrete series of minimal models for which the characters have special behavior.  For each $c_{Vir}$ in the discrete series there are only finitely many $h$ values parameterized as follows.  For $2 \leq s,t \in \Z$, $s,t$ relatively prime, and $1 \leq m < s$ and $1 \leq n < t$, set
$$c_{s,t} = 1 - \frac{6(s-t)^2}{st}, \quad h_{s,t}^{m,n} = \frac{(mt-ns)^2 - (s-t)^2}{4st}.$$
Given the four parameters as above, $m,n,s,t$, the characters $\chi_{s,t}^{m,n}(q)$ for the modules \\ $Vir(c_{s,t},h_{s,t}^{m,n})$ were given in Fe{\u\i}gin-Fuchs\cite{FF} to be
$$\chi_{s,t}^{m,n}(q) = \frac{q^{h_{s,t}^{m,n} - c_{s,t}/24}}{\vph(q)} \sum_{k\in \Z} q^{stk^2}\left( q^{k(mt-ns)} - q^{(mt+ns)k + mn}\right).$$

These Virasoro modules are unitary when $t = s+1$ so $c_{Vir} = 1 - \frac{6}{s( s+1)}$ for $3 \leq s \in \Z$, and $h = h^{m,n}_s = \frac{(( s +1)m - sn)^2 - 1}{4s(s+1)}$ , where $1 \leq m \leq n < s + 1$.  Later we will only need the case when $(s,t) = (5,6)$ and $c_{5,6} = \frac{4}{5}$.  

Let $V^{\Lam}$ be an irreducible highest weight $\fgh$-module.  Let $\{ u_i | 1 \leq i \leq dim(\fg)\}$ be a basis of $\fg$, and let $\{ u^i | 1 \leq i \leq dim(\fg) \}$ be the dual basis with respect to the bilinear form on $\fg$.  Let $k^{\vee}(\fg)$ be the dual Coxeter number of $\fg$.  The following Sugawara operators give a representation of $Vir$ on $V^{\Lam}$ denoted $Vir_{\fg}(V^{\Lam})$:
\begin{eqnarray}
L_m = \frac{1}{2(k^{\vee}(\fg) + \Lam(c))} \sum_{n \in \Z} \sum_{i = 1}^{dim(\fg)}\mbox{:}u_i(-n)u^i(m+n)\mbox{:} \quad \mbox{ for } m \in \Z
\end{eqnarray}
with $:$ $:$ indicating the bosonic normal ordering of the operators.  The bosonic normal ordering of two operators is given by:

$$\mbox{:}u_i(-n)u^i(m+n)\mbox{:} = \left\{ \begin{array}{r l}  
    u_i(-n)u^i(m+n) & \mbox{ if } -n \leq m+n \\
    u^i(m+n)u_i(-n) & \mbox{ if } m+n < -n 
    \end{array} \right.$$ 
        
\ni and $c_{Vir} = c = \frac{dim(\fg)\Lam(c)}{k^{\vee}(\fg) + \Lam(c)}$.  Note that $L_0$ represents $-d$ on $V^{\Lam}$.

The next theorem plays an important role in the decomposition of the Lie algebra $\Es$ with respect to $\Ff$.  
\begin{thm} \mylabel{thm: coset}Let $\fp$ be a simple Lie subalgebra of a finite dimensional simple Lie algbera $\fg$, and let $V^{\Lam}$ be an irreducible representation of $\fgt$ with $Vir_{\fp}(V^{\Lam})$ and $Vir_{\fg}(V^{\Lam})$ two representations of the Virasoro algebra on $V^{\Lam}$ which are provided by the Sugawara operators for $\fpt$ and $\fgt$ respectively.  Denote the operators of these representations by $\{ L_n^{\fp}, c^{\fp} \mid n\in \Z\}$  and $\{ L_n^{\fg}, c^{\fg} \mid n\in\Z\}$.  Then the differences $L_n^{\fg}-L_n^{\fp}$ provide a representation of the Virasoro algebra, $Vir_{\fg-\fp}(V^{\Lam})$, with central charge, $c = c^{\fg} - c^{\fp}$.  Furthermore, this representation commutes with $\fpt$ and with $Vir_{\fp}$, meaning $[L_n^{\fg}-L_n^{\fp}, \fpt] = 0$ and $[L_n^{\fg}-L_n^{\fp},L_m^{\fp}] = 0$, $\forall m,n \in \Z$.
\end{thm}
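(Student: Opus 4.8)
\quad The plan is to run the Goddard--Kent--Olive argument, arranged so that everything reduces to one commutation relation for Sugawara operators. Write $L_n^{\fg-\fp}:=L_n^{\fg}-L_n^{\fp}$. The only computational input I would establish at the outset is the commutation relation between Sugawara operators and currents: on $V^{\Lam}$,
$$[L_m^{\fg},x(k)] = -k\,x(m+k)\quad(x\in\fg),\qquad [L_m^{\fp},y(k)] = -k\,y(m+k)\quad(y\in\fp),$$
for all $m,k\in\Z$. Here the $L_m^{\fp}$ are the Sugawara operators built from a basis of $\fp$ and its dual basis with respect to the restriction to $\fp$ of the invariant form on $\fg$; that restriction is again a nondegenerate invariant form on $\fp$ (invariance is immediate, and it is nonzero, hence nondegenerate because $\fp$ is simple), so $V^{\Lam}$ acquires a level over $\hat{\fp}$ and the numbers $k^{\vee}(\fp)$, $c^{\fp}$ are defined. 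To prove the displayed identity I would expand $[\,:u_i(-n)u^i(m+n):\,,\,x(k)\,]$ by the Leibniz rule: the pieces coming from the central term of $[\,\cdot\,,\,\cdot\,]$ reassemble, using $\sum_i\bfor{u^i}{x}u_i=x$, into a multiple of $\Lam(c)\,x(m+k)$, while the remaining pieces collapse, using invariance of the form and the fact that the quadratic Casimir acts on the adjoint module as $2k^{\vee}(\fg)$, into a multiple of $k^{\vee}(\fg)\,x(m+k)$; the normalizing constant $1/(2(k^{\vee}(\fg)+\Lam(c)))$ is exactly what makes the sum of the two equal $-k\,x(m+k)$. The a priori infinite sums are harmless since each acts with only finitely many nonzero terms on a fixed vector of $V^{\Lam}$.

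Granting this, the commutation assertions are essentially formal. Subtracting the two relations above for $y\in\fp$ gives $[L_n^{\fg-\fp},y(k)]=0$ for every $y\in\fp$ and $k\in\Z$, and $[L_n^{\fg-\fp},c]=0$ is trivial; thus $L_n^{\fg-\fp}$ commutes with the whole current subalgebra $\fp\otimes\C[t,t^{-1}]\oplus\C c$. Next, each $L_m^{\fp}$ is a normal-ordered sum of products of operators $y(k)$ with $y\in\fp$, and this sum is finite on any given vector; since $L_n^{\fg-\fp}$ commutes with every factor, it follows that $[L_n^{\fg-\fp},L_m^{\fp}]=0$ for all $m,n$, which is the commutation with $Vir_{\fp}$, and which may be rewritten as $[L_n^{\fg},L_m^{\fp}]=[L_n^{\fp},L_m^{\fp}]$. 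It should be noted that $L_n^{\fg-\fp}$ does \emph{not} commute with the degree derivation $d$ of $\fgt$, because $[L_n^{\fg},d]=-nL_n^{\fg}$ and $[L_n^{\fp},d]=-nL_n^{\fp}$ force $[L_n^{\fg-\fp},d]=-nL_n^{\fg-\fp}$; commutation with all of $\fpt$ holds once one uses $-L_0^{\fp}$ as the degree derivation of $\fpt$ on $V^{\Lam}$ (a legitimate choice, since $[-L_0^{\fp},y(k)]=k\,y(k)$ for $y\in\fp$), and then $[L_n^{\fg-\fp},-L_0^{\fp}]=0$ is a special case of what was just proved.

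Finally I would check the Virasoro relations for the differences. Using $[L_n^{\fg-\fp},L_m^{\fp}]=0$,
$$[L_n^{\fg-\fp},L_m^{\fg-\fp}] = [L_n^{\fg-\fp},L_m^{\fg}] = [L_n^{\fg},L_m^{\fg}]-[L_n^{\fp},L_m^{\fg}],$$
and $[L_n^{\fp},L_m^{\fg}] = -[L_m^{\fg},L_n^{\fp}] = -[L_m^{\fp},L_n^{\fp}] = [L_n^{\fp},L_m^{\fp}]$ by the identity of the previous paragraph with $m$ and $n$ interchanged. Hence
$$[L_n^{\fg-\fp},L_m^{\fg-\fp}] = [L_n^{\fg},L_m^{\fg}]-[L_n^{\fp},L_m^{\fp}].$$
Substituting the Virasoro relations already known for $Vir_{\fg}$ and for $Vir_{\fp}$, the operator terms combine to $(n-m)L_{n+m}^{\fg-\fp}$ and the central terms to $\frac{n^3-n}{12}\delta_{n,-m}(c^{\fg}-c^{\fp})$, where $c^{\fg}=\dim(\fg)\Lam(c)/(k^{\vee}(\fg)+\Lam(c))$ and $c^{\fp}$ is the corresponding Sugawara central charge for $\fp$ at the level $V^{\Lam}$ carries over $\hat{\fp}$. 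Since $c^{\fg}$ and $c^{\fp}$ act as scalars, so does $c^{\fg}-c^{\fp}$, and $\{L_n^{\fg-\fp}\}$ is therefore a representation of the Virasoro algebra with central charge $c^{\fg}-c^{\fp}$, as claimed.

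I expect the one genuine obstacle to be the first step: the Leibniz expansion must be carried out carefully because the bosonic normal ordering contributes additional current and central terms that have to be tracked, and the coefficient of $x(m+k)$ is pinned down only by invoking the adjoint Casimir eigenvalue $2k^{\vee}(\fg)$ --- this is exactly where the Sugawara normalization is used. Everything past that identity, including the emergence of the central charge $c^{\fg}-c^{\fp}$, is a short formal manipulation; the only auxiliary fact that deserves explicit mention is that the form on $\fg$ restricts to a nondegenerate invariant form on the simple subalgebra $\fp$, so that $k^{\vee}(\fp)$ and the level of $V^{\Lam}$ over $\hat{\fp}$ are well defined.
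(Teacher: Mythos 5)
Your proof is correct, and it is the standard Goddard--Kent--Olive argument: the paper itself states this theorem without proof, citing \cite{GKO}, so there is no in-paper argument to compare against. The key reduction you identify --- everything follows formally once $[L_m^{\fg},x(k)]=-k\,x(m+k)$ and its $\fp$-analogue are established, since $L_m^{\fp}$ is built entirely from currents $y(k)$ with $y\in\fp$ --- is exactly the right one, and your bookkeeping of the central charge $c^{\fg}-c^{\fp}$ via $[L_n^{\fg-\fp},L_m^{\fg-\fp}]=[L_n^{\fg},L_m^{\fg}]-[L_n^{\fp},L_m^{\fp}]$ is sound. Your caveat about the degree derivation is also warranted and is a genuine refinement of the statement as literally written: since $[d,y(k)]=k\,y(k)$ for $y\in\fp$ forces $[L_n^{\fg}-L_n^{\fp},d]=-n\bigl(L_n^{\fg}-L_n^{\fp}\bigr)$, the commutation with $\fpt$ holds for the loop-plus-center part $\hat{\fp}$, and extends to a derivation only if one takes $-L_0^{\fp}$ (rather than the ambient $d$ of $\fgt$) as the degree operator of $\fpt$ on $V^{\Lam}$, exactly as you note.
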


\subsection{The Affine Lie Algebra $\Es$}
\mylabel{ss 1.2.3}
The purpose of this section is to construct the adjoint representation of the affine Lie algebra $\fgt$ of type $\Es$ from the root lattice $Q = Q_{\Phi}$ of the finite dimensional simple Lie algebra $\fg$ of type $E_6$.  We discuss the construction of $\fg$ using a 2-cocyle on its root lattice which works for any lattice of type A,D,E.  We begin with a more general situation which will lead to the lattice construction of vertex operators.

A \bold{lattice of rank $r$} is a rank $r$ free abelian group $L$, with a rational valued symmetric $\Z$-bilinear form, denoted $\herfc$.  Call a lattice non-degenerate if the form is non-degenerate. Assume that $L$ is positive definite and even, that is the bilinear form is positive definite and $\herf{x}{x} \in 2\Z$, for $x \in L$, which implies that $\herf{x}{y} \in \Z$, for $x,y \in L$.

The construction of $\fg$ begins with its CSA defined to be $\fh = L \otimes_{\Z} \C$ with trivial Lie brackets making it an Abelian Lie algebra.  Identify $L$ with $L \otimes 1 \subset \fh$ and extend $\herfc$ linearly from $L$ to $\fh$.  

Consider the alternating $\Z$-bilinear map:
$$c_0 : L \times L \to \Z/2\Z$$
$$(\alp,\beta) \mapsto \, \herf{\alp}{\beta} + \, \,  2\Z.$$ 

Let $\hat{L}$ be the central extension of $L$ by the the cyclic group $<\kappa> = \{ \pm1\}$, 
$$ 1 \rightarrow \{ \pm1\} \rightarrow \hat{L} \bar{\rightarrow} L \rightarrow 0$$
determined by $c_0$ as follows.  For $a,b \in \hat{L}$ the commutator is
$$aba^{-1}b^{-1} = (-1)^{c_0(\bar{a},\bar{b})} = (-1)^{\veps_0(\bar{a},\bar{b})-\veps_0(\bar{b},\bar{a})}$$

\ni where $\, \,  \bar{}\, \,$  is the projection from $\hat{L}$ to $L$, and $\veps_{0}: L \times L \to \Z/2\Z$ denotes a corresponding bilinear 2-cocycle.

Choose the following section, $e:L \to \hat{L}$, such that $\alp \mapsto e_{\alp}$ and additionally $e_0 = 1$.
Then we have the following properties:
$$e_{\alp}e_{\beta} = e_{\alp + \beta}(-1)^{\veps_0(\alp,\beta)}$$ 
$$\veps_0(\alp,\beta) + \veps_0(\alp + \beta, \gamma) = \veps_0(\beta, \gamma) + \veps_0(\alp, \beta + \gamma)$$
$$\veps_0(\alp, \beta) - \veps_0(\beta, \alp) = \, \, \herf{\alp}{\beta} + \, \, 2\Z = c_0(\alp,\beta)$$
and because of the extra condition on the section,
$$\veps_0(\alp,0) = \veps_0(0,\alp) = 0$$
and since $\veps_0$ is bilinear,
$$\veps_0(\alp,\beta) = \veps_0(-\alp,\beta) = \veps_0(\alp,-\beta) = \veps_0(-\alp,-\beta).$$ 

To change the $\veps_0$ notation to a multiplicative view, define $\veps(\alp,\beta) = (-1)^{\veps_0(\alp,\beta)}$.  This gives the properties above as:
$$e_{\alp}e_{\beta} = e_{\alp + \beta}\veps(\alp,\beta)$$ 
$$\veps(\alp,\beta)\veps(\alp + \beta, \gamma) = \veps(\beta, \gamma)\veps(\alp, \beta + \gamma)$$
$$\veps(\alp, \beta)/\veps(\beta, \alp) = (-1)^{\herf{\alp}{\beta}}$$
$$\veps(\alp,0) = \veps(0,\alp) = 1$$ 
Denote $L_2 =\{ \alp \in L \mid \herf{\alp}{\alp} = 2\}$ and consider the finite dimensional vector space
$$\fg = \fh \oplus \coprod_{\alp \in L_2} \C x_{\alp}$$ 
where $\{x_{\alp} \mid \alp \in L_2\}$ is linearly independent.  Define a Lie algebra bracket on $\fg$ by:
$$[\fh,\fh] = 0$$
$$[h, x_{\alp}] = -[x_{\alp}, h] = \,\,\herf{h}{\alp}x_{\alp} \quad \mbox{ for } \quad h \in \fh, \alp \in L_2.$$
and for $\alp, \beta \in L_2$ we have
\[ [x_{\alp}, x_\beta] = \left\{ \begin{array}{lcll }
		\veps(\alp,-\alp)\alp & \mbox{if} & \alp + \beta = 0 & (\herf{\alp}{\beta} = -2) \\
		\veps(\alp,\beta)x_{\alp + \beta} & \mbox{if} & \alp + \beta \in L_2 & (\herf{\alp}{\beta} = -1)  \\
		0 & \mbox{if} & \alp + \beta \notin L_2 \cup \{ 0\} & (\herf{\alp}{\beta} \geq 0).  \end{array} \right. \] 
		
Also, we define a non-degenerate invariant symmetric bilinear form $\herf{\cdot}{\cdot}$ on $\fg$ extending $\herfc$ on $\fh$ as follows:
$$\herf{\fh}{x_{\alp}} \, = \, \herf{x_{\alp}}{\fh} \, = \, 0, \mbox{ for } \alp \in L_2$$
\[ \herf{x_{\alp}}{x_{\beta}} = \left\{ \begin{array}{lcl}
				\veps(\alp,-\alp) & \mbox{if} & \alp + \beta = 0 \\
				0 & \mbox{if} & \alp + \beta \neq 0.	 \end{array} \right.		
\] 

The diagram automorphism $\tau$ as a permutation of the simple roots determines an automorphism of $\fh^*$ by linear extension, and through the identification of $\fh^*$ with $\fh$ we have $\tau(h_{\alp}) = h_{\tau\alp}$.  We further extend $\tau$ from $\fh$ to $\fg$ by defining $\tau(x_{\alp}) = x_{\tau\alp}$ for $\alp \in L_2$, and choosing $\veps$ to be $\tau$ invariant in the sense that $\veps(\alp,\beta) = \veps(\tau\alp,\tau\beta)$ gives that this extension is a Lie algebra automorphism.  We have

\[ \tau([x_{\alp}, x_\beta]) = \left\{ \begin{array}{lcll }
		\veps(\alp,-\alp)\tau\alp & \mbox{if} & \alp + \beta = 0 & (\herf{\alp}{\beta} = -2) \\
		\veps(\alp,\beta)x_{\tau\alp + \tau\beta} & \mbox{if} & \alp + \beta \in L_2 & (\herf{\alp}{\beta} = -1)  \\
		0 & \mbox{if} & \alp + \beta \notin L_2 \cup \{ 0\} & (\herf{\alp}{\beta} \geq 0)  \end{array} \right. \]
and this is clearly equal to $[x_{\tau\alp}, x_{\tau\beta}]$.

One can check that the generators for the Lie subalgebra $\fa$ of type $F_4$, given in Section 1.1.1, are fixed under $\tau$.

\begin{thm} \mylabel{thm: 1.21} The nonassociative algebra $\fg$ above is a Lie algebra with root system $\Phi = L_2$, and $\herfc$ on $\fg$ is a non-degenerate invariant symmetric bilinear form.  Also, $\tau$ is an automorphism of $\fg$ and $\fa$ is the Lie subalgebra of fixed points under $\tau$.
\end{thm}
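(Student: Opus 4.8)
The plan is to follow the standard Frenkel--Kac lattice construction (compare \cite{FLM}), verifying in turn: (i) that $\fg$ satisfies the Lie algebra axioms; (ii) that $\fh$ is a Cartan subalgebra whose nonzero weights are exactly $L_2$; (iii) that $\herfc$ is symmetric, invariant and non-degenerate; (iv) that $\tau$ is an automorphism; and (v) that $\fa = \fg^\tau$. Throughout I will use the cocycle identities recorded just above the theorem, together with the arithmetic consequence of $L$ being positive definite and even: for $\alpha, \beta \in L_2$ one has $\herf{\alpha}{\beta} \in \{0, \pm 1, \pm 2\}$ by Cauchy--Schwarz, with $\herf{\alpha}{\beta} = \pm 2$ only when $\beta = \pm \alpha$. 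In particular the three alternatives $\alpha + \beta \in L_2$, $\alpha + \beta = 0$, $\alpha + \beta \notin L_2 \cup \{0\}$ appearing in the bracket definition are exhaustive and mutually exclusive, and whenever $\herf{\alpha}{\beta} = -1$ we automatically have $\herf{\alpha + \beta}{\alpha + \beta} = 2$.

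For antisymmetry, a case check on the defining formulas reduces to the single point $\alpha + \beta = 0$: there $[x_\alpha, x_{-\alpha}] = \veps(\alpha, -\alpha)\alpha$ must equal $-[x_{-\alpha}, x_\alpha] = -\veps(-\alpha, \alpha)(-\alpha) = \veps(-\alpha, \alpha)\alpha$, and $\veps(\alpha, -\alpha) = \veps(-\alpha, \alpha)$ follows from $\veps(\alpha, \beta)/\veps(\beta, \alpha) = (-1)^{\herf{\alpha}{\beta}}$ with $\herf{\alpha}{-\alpha} = -2$. For the Jacobi identity I split according to how many of the three arguments lie in $\fh$. If two or more lie in $\fh$ the identity is immediate from $[\fh, \fh] = 0$ and linearity of $h \mapsto \herf{h}{\alpha}$. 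If exactly one, say $z = h$, the identity reduces in each bracket case to $\herf{h}{\alpha + \beta}\,[x_\alpha, x_\beta] = \herf{h}{\alpha}\,[x_\alpha, x_\beta] + \herf{h}{\beta}\,[x_\alpha, x_\beta]$, which holds by bilinearity (when $\alpha + \beta = 0$ both the term $[h, \alpha]$ and the coefficient $\herf{h}{\alpha + \beta}$ vanish, matching). The substantial case is three root vectors $x_\alpha, x_\beta, x_\gamma$: using the inner-product constraint one sees the sum of the three double brackets vanishes unless, after a cyclic relabelling, $\alpha + \beta + \gamma = 0$ with all pairwise sums in $L_2$, in which case each double bracket lies in $\fh$ and the identity becomes
$$\veps(\beta, \gamma)\veps(\alpha, -\alpha)\,\alpha + \veps(\gamma, \alpha)\veps(\beta, -\beta)\,\beta + \veps(\alpha, \beta)\veps(\gamma, -\gamma)\,\gamma = 0.$$
Since $\gamma = -\alpha - \beta$, this follows once the three scalar coefficients are shown to be equal, which is a direct manipulation of the 2-cocycle identity $\veps(\alpha, \beta)\veps(\alpha + \beta, \gamma) = \veps(\beta, \gamma)\veps(\alpha, \beta + \gamma)$ together with $\veps(\cdot, 0) = \veps(0, \cdot) = 1$ and $\veps(-\mu, -\nu) = \veps(\mu, \nu)$. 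I expect this $\veps$-bookkeeping to be the main obstacle; it is the one genuinely nontrivial computation in the proof.

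Next, $\fh$ is abelian by construction, and from $[h, x_\alpha] = \herf{h}{\alpha}x_\alpha$ together with the non-degeneracy of $\herfc$ on $\fh$ (a consequence of positive-definiteness) one gets that $\fh$ is self-normalizing, hence a Cartan subalgebra, and that $\fg = \fh \oplus \coprod_{\alpha \in L_2} \C x_\alpha$ is precisely its root space decomposition with $\Phi = L_2$ and every root space one-dimensional. Symmetry of $\herfc$ is built into the definition; non-degeneracy holds because $\herfc$ is non-degenerate on $\fh$, vanishes between $\fh$ and each $\C x_\alpha$, and pairs $\C x_\alpha$ with $\C x_{-\alpha}$ via $\veps(\alpha, -\alpha) = \pm 1 \neq 0$, so the Gram matrix is block diagonal with invertible blocks. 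Invariance $\herf{[x, y]}{z} = \herf{x}{[y, z]}$ is again a short case check in which the only nonzero contributions arise (for root-vector arguments) when $\alpha + \beta + \gamma = 0$, and matching the two sides reduces to exactly the same equality of $\veps$-products established above, so no new work is required.

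Finally, $\tau$ is an automorphism: the excerpt already verifies $\tau([x_\alpha, x_\beta]) = [x_{\tau\alpha}, x_{\tau\beta}]$ using the $\tau$-invariance of $\veps$, and the remaining brackets follow from $\tau([h, x_\alpha]) = \herf{h}{\alpha}x_{\tau\alpha} = \herf{\tau h}{\tau\alpha}x_{\tau\alpha} = [\tau h, \tau x_\alpha]$, where $\herf{\tau h}{\tau\alpha} = \herf{h}{\alpha}$ since $\tau$ permutes the simple roots and hence preserves $\herfc$, while $[\fh, \fh] = 0$ is preserved trivially. For $\fa = \fg^\tau$: the inclusion $\fa \sset \fg^\tau$ is the check (recorded after the bracket formulas) that the generators $h_{\beta_i}, x_{\beta_i}, x_{-\beta_i}$ of $\fa$ are $\tau$-fixed, so $\fa$, being generated by fixed elements, lies in the subalgebra $\fg^\tau$. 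For equality I count dimensions: $\fg^\tau = \fh^\tau \oplus \coprod_{\tau\alpha = \alpha} \C x_\alpha \oplus \coprod_{\tau\alpha \neq \alpha} \C(x_\alpha + x_{\tau\alpha})$ (the last sum over $\tau$-orbits of size two); since $\tau$ fixes $\alpha_2, \alpha_4$ and a one-dimensional subspace of each of $\mathrm{span}\{\alpha_1, \alpha_6\}$ and $\mathrm{span}\{\alpha_3, \alpha_5\}$ we get $\dim \fh^\tau = 4$, and since exactly $24$ of the $72$ roots of $E_6$ are $\tau$-fixed (an elementary check on the $E_6$ root system), the remaining $48$ split into $24$ orbits of size two, whence $\dim \fg^\tau = 4 + 24 + 24 = 52 = \dim \fa$. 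Therefore $\fa = \fg^\tau$, completing the proof.
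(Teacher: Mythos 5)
The paper itself offers no proof of this theorem: it states the result and immediately defers to the Frenkel--Kac and FLM lattice constructions, so your direct verification follows exactly the standard route those references take. Most of your steps are sound: the antisymmetry check via $\veps(\alp,\beta)/\veps(\beta,\alp)=(-1)^{\herf{\alp}{\beta}}$, non-degeneracy of the form via the block-diagonal Gram matrix pairing $\C x_{\alp}$ with $\C x_{-\alp}$, the invariance computation, the automorphism property of $\tau$ (the paper already records the root-vector case), and the fixed-point identification by the dimension count $4+24+24=52$ are all correct.

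The one genuine gap is in the Jacobi identity for three root vectors. You assert that ``using the inner-product constraint one sees the sum of the three double brackets vanishes unless $\alp+\beta+\gamma=0$,'' and you localize all of the $\veps$-bookkeeping to that last case. That is not where the hardest cancellation lives. When $\alp+\beta+\gamma\in L_2$ one has $\herf{\alp}{\beta}+\herf{\beta}{\gamma}+\herf{\gamma}{\alp}=-2$, and in the subcase $\herf{\alp}{\beta}=\herf{\beta}{\gamma}=-1$, $\herf{\gamma}{\alp}=0$ (and likewise when one pair sums to zero, e.g.\ $\beta=-\alp$ with $\herf{\alp}{\gamma}=-1$, where a term of the form $[x_{\gamma},\veps(\alp,-\alp)\alp]$ must cancel against a genuine double root-vector bracket) two of the three terms are individually nonzero multiples of $x_{\alp+\beta+\gamma}$. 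Their sum vanishes only after invoking the cocycle identity $\veps(\alp,\beta)\veps(\alp+\beta,\gamma)=\veps(\beta,\gamma)\veps(\alp,\beta+\gamma)$ together with $\veps(\gamma,\alp+\beta)=(-1)^{\herf{\gamma}{\alp+\beta}}\veps(\alp+\beta,\gamma)=-\veps(\alp+\beta,\gamma)$; no inner-product argument makes these terms vanish separately. The computation does close up, so your proof is repairable with exactly the tools you already deploy for the $\alp+\beta+\gamma=0$ case, but as written the claim that only that case requires the $\veps$-identities is false, and the omitted subcases are precisely the ones that force $\veps_0$ to be a $2$-cocycle rather than an arbitrary sign function.
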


The construction above gives an explicit realization of the Lie algebra of type $E_6$ when a lattice of rank 6 is used and the positive definite bilinear form is determined  by the Cartan matrix of type $E_6$.  In fact, for any even root lattice of type $A_n$, $D_n$, $E_n$, the above construction of $\fg$ follows from the Frenkel-Kac \cite{FK} construction of the basic representation of the affine Lie algebra $\fgt$ of type $A_n^{(1)}$, $D_n^{(1)}$, $E_n^{(1)}$.  

We now focus on the affine Lie algebra $\fgt$ of type $\Es$ constructed as above from $\fg$, expressing its brackets in terms of formal variables. The algebra $\fgt$ contains the following subalgebras: 
$$\fht = \coprod_{n \in \Z} \fh \otimes t^n \oplus \C c \oplus \C d$$ 
and the derived algebra of $\fht$ is the following Heisenberg subalgebra
$$\fht'= \fhhz = \coprod_{\substack{ n \in \Z  \\ n \neq 0}} \fh \otimes t^n \oplus \C c.$$ 

The next step is to express the bracket relations of the Lie algebra, $\fgt$ using the technique of formal variables.  Given any Lie algebra $\fg$, we have the bilinear map
$$\fg[[z,z^{-1}]] \times \fg[[w,w^{-1}]] \to \fg[[z,z^{-1},w,w^{-1}]]$$
$$(x(z),y(w)) \mapsto [x(z),y(w)]$$
defined by
$$\left[\,  \sum_{m\in \Z} x_mz^m, \sum_{n \in \Z} y_n w^n\right] = \sum_{m,n \in \Z} [x_m,y_n]z^m w^n.$$ 
For $x \in \fg$, set 
$$x(z) = \sum_{n \in \Z} x(n)z^{-n} \in \tilde{\fg}[[z,z^{-1}]].$$
We fix some notation by writing
$$\delta(z) = \sum_{n \in \Z} z^n, \quad (D\delta)(z) = \sum_{n \in \Z} nz^n, \quad Dx(z) = \sum_{n\in \Z} -nx(n)z^{-n}.$$
We see that formally $D$ is the application of $z\frac{d}{dz}$. 

\begin{lem} \mylabel{lem: 1.22}
For $x,y \in \fg$, the brackets in $\fgt$ are expressed as
$$[x(z),y(w)] = [x,y](w)\delta(z/w) \, - \herf{x}{y}(D\delta)(z/w)c,$$
$$[c,x(z)] = 0, \quad [d,x(z)] = -Dx(z), \quad [c,d] = 0.$$ 
\end{lem}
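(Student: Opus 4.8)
The plan is to expand both sides as formal series and compare coefficients, using only the defining relations of $\fgt$ recalled in Section~\ref{ss 1.2.1}. Writing $x(z) = \sum_{m\in\Z} x(m) z^{-m}$ and $y(w) = \sum_{n\in\Z} y(n) w^{-n}$ and applying the bilinear pairing on formal series defined just above the lemma, one obtains
$$[x(z),y(w)] = \sum_{m,n\in\Z} [x(m),y(n)]\, z^{-m} w^{-n}.$$
Substitute $[x(m),y(n)] = [x,y](m+n) + \herf{x}{y}\, m\, \delta_{m,-n}\, c$ and split the resulting double sum into an ``$[x,y]$-part'' and a ``central part.''

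For the $[x,y]$-part, reindex by $k = m+n$: since $z^{-m} w^{-n} = w^{-k}(w/z)^{m}$, the sum factors as $\big(\sum_k [x,y](k) w^{-k}\big)\big(\sum_m (w/z)^m\big) = [x,y](w)\,\delta(w/z)$, and $\delta(w/z) = \delta(z/w)$ since the reindexing $r\mapsto -r$ carries one sum onto the other. For the central part, the factor $\delta_{m,-n}$ collapses the double sum to $\herf{x}{y}\, c\, \sum_m m\,(w/z)^m$; reindexing $m\mapsto -m$ rewrites $\sum_m m\,(w/z)^m$ as $-\sum_m m\,(z/w)^m = -(D\delta)(z/w)$, so the central part equals $-\herf{x}{y}\,(D\delta)(z/w)\, c$. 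Adding the two parts yields the first displayed identity of the lemma.

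The three remaining identities follow immediately from the relations extending $\fgh$ to $\fgt$: $[c,x(z)] = \sum_n [c,x(n)] z^{-n} = 0$ because $c$ is central; $[d,x(z)] = \sum_n [d,x(n)] z^{-n} = \sum_n n\,x(n)\, z^{-n} = -Dx(z)$ by $[d,x(n)] = n x(n)$ together with the definition $Dx(z) = \sum_n -n\,x(n)\, z^{-n}$; and $[c,d] = 0$ by definition of $\fgt$.

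There is no substantive obstacle here --- the lemma is a bookkeeping identity in formal variables --- but the one point requiring care is the sign in the central term. Unlike $\delta$, the series $D\delta$ is antisymmetric under inverting its argument, so one must keep track of whether the factor $m$ in $m\,\delta_{m,-n}$ pairs with $(z/w)^m$ or with $(w/z)^m$ after setting $n=-m$, and check the outcome against the orientation conventions $\delta(z/w)=\sum_{r\in\Z}(z/w)^r$ and $(D\delta)(z/w)=\sum_{r\in\Z} r\,(z/w)^r$ fixed just before the lemma.
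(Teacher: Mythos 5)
Your proof is correct. The paper states Lemma \ref{lem: 1.22} without proof, so there is no argument to compare against; your computation is the standard direct verification one would supply: expand the double sum, substitute the defining bracket $[x(m),y(n)] = [x,y](m+n) + \herf{x}{y}\,m\,\delta_{m,-n}\,c$, reindex, and use $\delta(w/z)=\delta(z/w)$ together with the antisymmetry $(D\delta)(w/z) = -(D\delta)(z/w)$ to land the sign on the central term. Your explicit attention to that sign is exactly the one point where such a verification can go wrong, and you handle it correctly.
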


We extend the automorphism $\tau$ of $\fg$ to an automorphism $\tilde{\tau}$ on the Lie algebra $\fgt$ by defining for any $x \in \fg$, $n\in \Z$,
$$\tilde{\tau}(x(n)) = (\tau x)(n), \quad \tilde{\tau}(c) = c, \quad \tilde{\tau}(d) = d.$$
If $x,y \in \fg$ and $m,n \in \Z$, then 
\begin{eqnarray*}
\tilde{\tau}[x(m), y(n)] &=& (\tau[x,y])(m+n) - \herf{\tau x}{\tau y}c \\
&=& [\tau x, \tau y](m+n) - \herf{\tau x}{\tau y}c
\end{eqnarray*}
which is readily seen to be $[\tilde{\tau}(x(m)), \tilde{\tau}(y(n))] = [(\tau x)(m), (\tau y)(n)]$.  The fixed points of $\tilde{\tau}$ give the Lie algebra $\fat$ of type $\Ff$ as a Lie subalgebra of $\fgt$ of type $\Es$.

Let us now express the the bracket relations of $\fgt$ in terms of the 2-cocycle, $\veps$, and the section chosen earlier. 
$$[ \, h(m), x_{\alp}(z) \,] = \herf{h}{\alp}z^mx_{\alp}(z),$$
\[ [x_{\alp}(z), x_\beta(w)] = \left\{ \begin{array}{lcl}
		\veps(\alp,-\alp)(\alp(w)\delta(z/w) - (D\delta)(z/w)c) &\mbox{if} & \alp + \beta = 0 \\
		\veps(\alp,\beta)x_{\alp + \beta}(w)\delta(z/w) & \mbox{if} & \alp + \beta \in L_2 \\
		0 & \mbox{if} & \alp + \beta \notin L_2 \cup \{ 0\}, \end{array} \right. \]
$$[d,x_{\alp}(z)] = -Dx_{\alp}(z),$$
$$[c,x_{\alp}(z)] = 0.$$ 


\subsection{The Vertex Operators Representing $\Es$} \mylabel{ss 1.2.4}

Let $\fht$ and $\fhhz$ be as in Section \ref{ss 1.2.3}, and also define 
$$\ds \fhhz^+ = \coprod_{n > 0} \fh \otimes t^n, \quad  \ds \fhhz^- = \coprod_{n < 0} \fh \otimes t^n \quad \mbox{ and } \quad \fb = \fhhz^+ \oplus \C c.$$ 
 Note that $\fb$ is a maximal abelian subalgebra of $\fhhz$.  
Denote by $\C_1$ the one dimensional space $\C$, with basis $\bo$ viewed as a $\fb$-module by: $\ds c\cdot \bo = \bo$ and  $\fhhz^+\cdot\bo = 0$.  These actions define the $\fhhz$-irreducible $\fht$-module,  $M(1) := U(\fhhz)\otimes_{U(\fb)}\C_1$, where $c$ acts as the identity, $d$ is determined by its brackets with $\fhhz$ and $d \cdot 1 \otimes \bo = 0$, and $\fh$ acts as 0.  $U(\fhhz)$ and $U(\fb)$ are the universal enveloping algebra of $\fhhz$ and $\fb$ defined in Section \ref{s 1.1}.  The Poincare-Birkoff-Witt theorem gives us the linear isomorphism $M(1) \cong S(\fhhz^-)$, where $S(\fhhz^-)$ is the symmetric algebra with commuting generators from $\fhhz^-$.   For $v = h_1(-n_1)h_2(-n_2) \dots h_k(-n_k)1 \in S(\fhhz^-)$ define  
$$wt(v)= n_1+ n_2 + \dots + n_k$$
so that, $d \cdot v = -wt(v)v$.  Then $S(\fhhz^-)$ is graded according to weight.

Because $\fh$ can be identified with $\fh^*$, given by $h_{\alp} \leftrightarrow \alp$, the operator $h_{\alp}(n)$ on $S(\fhhz^-)$ will be written as $\alp(n)$.  Since $\fh$ is Abelian and $c$ acts as 1, for $\alp, \beta \in \fh$ and $m,n \in \Z$, the operators acting on $S(\fhhz^-)$ satisfy the following
$$[\alp(m), \beta(n)] = \herf{\alp}{\beta} m \delta_{m,n} \quad \mbox{ and } \quad [d,\alp(m)] = m \alp(m).$$

Recall the definitions and conventions used in Section \ref{ss 1.2.3} for $\veps_0$, $\veps$ and the choice of section for the extension $\hat{L}$ of $L$, where $L$ is the root lattice of type $E_6$.  The property $e_{\alp}e_{\beta} = e_{\alp + \beta}\veps(\alp,\beta)$ gives motivation for the following action of $\hat{L}$ on $\C[L]$, the group algebra on $L$.  For $\alp, \beta \in L$, define the action as
$$e_{\alp} \cdot e^{\beta} = \veps(\alp, \beta) e^{\alp + \beta} \quad \mbox{  and  } \quad (-1) \cdot e^{\beta} = -e^{\beta}.$$

Set $V_L = S(\fhhz^-) \otimes \C[L]$, and regard $S(\fhhz^-)$ as a trivial $\hat{L}$-module and $\C[L]$ as a trivial $\fhhz$-module.  For $h \in \fh$ define the operator $h(0)$ on $\C[L]$, by $h(0) \cdot e^{\alp} = \herf{h}{\alp}e^{\alp}$.  For $h \in \fh$ define $z^{h(0)} \in (End\C[L])[[z, z^{-1}]]$ by $z^{h(0)} \cdot e^{\alp} = z^{\herf{h}{\alp}}e^{\alp}$.  As operators on $\C[L]$, for each $h,h_i \in \fh$ and $\alp \in L$, the following operator equalities are true:
$$[h(0), e_{\alp}] = \herf{h}{\alp}e_{\alp},$$
$$[h_1(0), z^{h_2(0)}] = 0,$$
$$[h_1(0), e_{\alp}z^{h_2(0)}] = \herf{h_1}{\alp}e_{\alp}z^{h_2(0)},$$
$$z^{h(0)}e_{\alp} = z^{\herf{h}{\alp}}e_{\alp}z^{h(0)} = e_{\alp}z^{h(0) + \herf{h}{\alp}}.$$  

\ni $\hat{L}$ has a grading given by: 
$wt(e^{\alp}) = \frac{1}{2}\herf{\alp}{\alp}$, and define the degree operator on $\C[L]$ by $d(e^{\alp}) = deg(e^{\alp})e^{\alp} = -\frac{1}{2}\herf{\alp}{\alp} e^{\alp}$.

For reference the actions of $\fht, \hat{L}$, and $z^{h(0)}$ (for $h \in \fh$) on $V_L$ are recorded as: 
\begin{align*}
c &\mapsto I_{V_L} &\\
d &\mapsto d = d \otimes 1 + 1 \otimes d  & \\
h = h \otimes t^0 &\mapsto h(0) = 1 \otimes h(0) &\mbox{ for } h \in \fh \\
h \otimes t^n &\mapsto h(n) = h(n) \otimes 1 &\mbox{ for } h \in \fh, n \in \Z\bs\{ 0\} \\
e_{\alp} &\mapsto 1 \otimes e_{\alp} &\mbox{ for } \alp \in L \\
z^{h(0)} &\mapsto 1 \otimes z^{h(0)} &\mbox{ for } h \in \fh
\end{align*}  

Before continuing this construction, as in the work of Dong-Lepowsky (\cite{DL}) to extend $\tau$ to $\hat{\tau}$ on $V_L$.  For $v = \alp_{i_1}(-n_{i_1}) 1\cdots\alp_{i_k}(-n_{i_k}) 1\otimes e^{\beta} \in V_L$, define 
$$\hat{\tau}(v) = (\tau\alp_{i_1})(-n_{i_1}) \cdots (\tau\alp_{i_k})(-n_{i_k}) 1\otimes e^{\tau\beta}.$$ 
If $x \in \fgt$, $v \in V_L$, then $\hat{\tau}(x \cdot v) = \tilde{\tau}(x) \cdot \hat{\tau}(v)$.  For $\alp, \beta \in L$ and $v$ as above,$$\hat{\tau}(e_{\alp} \cdot v) = e_{\tau\alp} \cdot \hat{\tau}(v)$$ since $\eps$ is $\tau$ invariant
and $$\hat{\tau}(z^{\alp(0)} \cdot v) = z^{(\tau\alp)(0)}\cdot \hat{\tau}(v).$$ 

For $\alp \in \fh$, define:
$$\ds E^{\pm}(\alp, z) = \exp \left(\sum_{n\in \pm \Z_+} \frac{\alp(n)}{n}z^{-n}\right) \in (End \, S(\fhhz^-))[[z^{\mp1}]].$$
\begin{lem} \mylabel{lemma 1.24}
The following properties for hold for $\alp, \beta \in \fh$:
\begin{eqnarray}
&E^{\pm}(0, z) = I_{S(\fhhz^-)} &\\
&E^{\pm}(\alp + \beta, z) = E^{\pm}(\alp, z)E^{\pm}(\beta, z) &\\
&[d,E^{\pm}(\alp, z)] = -DE^{\pm}(\alp, z) = \left( \sum_{n\in \pm \Z_+} \alp(n)z^{-n} \right)E^{\pm}(\alp, z) &\\
&[\beta(m), E^{+}(\alp, z)] = 0 &\mbox{ if } m \in \N \\
&[\beta(m), E^{-}(\alp, z)] = 0 & \mbox{ if } m \in -\N \\
&[\beta(m), E^{-}(\alp, z)] = -\herf{\beta}{\alp}z^mE^{-}(\alp, z) & \mbox{ if } m \in \Z_+\\
&[\beta(m), E^{+}(\alp, z)] = -\herf{\beta}{\alp}z^mE^{+}(\alp, z) & \mbox{ if } m \in -\Z_+ 
\end{eqnarray} 
\end{lem}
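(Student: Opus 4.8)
The plan is to verify each identity by direct computation using the definition $E^{\pm}(\alp,z) = \exp\left(\sum_{n\in\pm\Z_+}\frac{\alp(n)}{n}z^{-n}\right)$ together with the Heisenberg commutation relations $[\alp(m),\beta(n)] = \herf{\alp}{\beta}m\delta_{m,-n}$ (stated in Section \ref{ss 1.2.4}) and the fact that $\fhhz^-$ is commutative. The first two identities are immediate: identity (1.23) holds because when $\alp = 0$ the exponent is $0$, and identity (1.24) holds because $\alp(n)$ and $\beta(n')$ commute for $n,n'$ of the same sign (both in $\pm\Z_+$ gives $n \neq -n'$), so $\exp(A)\exp(B) = \exp(A+B)$ applies with $A = \sum \frac{\alp(n)}{n}z^{-n}$ and $B = \sum \frac{\beta(n)}{n}z^{-n}$.

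For identity (1.25), I would use that $d$ acts as a derivation on products (since $[d, \alp(n)] = n\alp(n)$, $d$ is an inner-like derivation on the operator algebra), hence $[d, \exp(X)] = \sum_{k\geq 1}\frac{1}{k!}\sum_{j}X^j [d,X] X^{k-1-j}$; but $[d,X]$ commutes with $X$ here (both are built from $\fhhz^{\mp}$ in the same degree sign), so this collapses to $[d,X]\exp(X) = \left(\sum_{n\in\pm\Z_+}n\cdot\frac{\alp(n)}{n}z^{-n}\right)E^{\pm}(\alp,z) = \left(\sum_{n\in\pm\Z_+}\alp(n)z^{-n}\right)E^{\pm}(\alp,z)$, which also equals $-DE^{\pm}(\alp,z)$ by the definition $Dx(z) = \sum_{n}-nx(n)z^{-n}$ applied formally. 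Identities (1.26) and (1.27) are the statement that $\beta(m)$ commutes with $E^{+}(\alp,z)$ when $m \geq 0$ and with $E^{-}(\alp,z)$ when $m \leq 0$: in each case $[\beta(m),\alp(n)] = \herf{\alp}{\beta}m\delta_{m,-n} = 0$ because $m$ and $-n$ have the same sign (or $m = 0$), so $\beta(m)$ commutes with every factor in the exponent, hence with the exponential.

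The only identities requiring a genuine (if short) commutator-with-exponential computation are (1.28) and (1.29). For (1.28), take $m \in \Z_+$ and $E^{-}(\alp,z)$: then $[\beta(m),\alp(n)] = \herf{\alp}{\beta}m\delta_{m,-n}$, which is nonzero only for $n = -m \in -\Z_+$, a term that does appear in the exponent of $E^{-}$. Since $[\beta(m),X]$ is a scalar (central), I would use the identity $[\beta(m),\exp(X)] = [\beta(m),X]\exp(X)$ valid when $[\beta(m),X]$ is central, giving $[\beta(m),X] = \herf{\alp}{\beta}m\cdot\frac{1}{-m}z^{m} = -\herf{\beta}{\alp}z^m$, hence $[\beta(m),E^{-}(\alp,z)] = -\herf{\beta}{\alp}z^m E^{-}(\alp,z)$. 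Identity (1.29) is the mirror image with $m \in -\Z_+$ and $E^{+}$. I expect the main (mild) obstacle to be purely bookkeeping: tracking the sign conventions between $D$, the normal-ordering directions, and the $\pm$ in the exponent ranges, and making sure the formal-series manipulations of infinite exponentials are justified termwise by grading (each fixed power of $z$ involves only finitely many terms acting on any given vector of $S(\fhhz^-)$, so convergence is not an issue). Once the scalar-commutator identity $[\beta(m),\exp(X)] = [\beta(m),X]\exp(X)$ is invoked, everything reduces to evaluating a single Kronecker-delta sum.
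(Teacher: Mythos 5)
Your proposal is correct: all seven identities follow exactly as you describe from the Heisenberg relation $[\alp(m),\beta(n)]=\herf{\alp}{\beta}m\delta_{m,-n}$, the commutativity of modes of the same sign, and the identity $[A,e^{X}]=[A,X]e^{X}$ when $[A,X]$ is central (or commutes with $X$, as in the $d$-bracket case). The paper states this lemma without proof, as it is a standard collection of facts from the lattice construction in \cite{FLM}, and your direct verification is precisely the standard argument.
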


\begin{dfn} For $\alp \in L$ define an \bold{(untwisted) vertex operator} to be the following formal series  
$$\yvoz{\alp} = \yvozr{\alp} = \ywtvomn{\Z}{\alp}{n} \in (\mbox{End } V_L)[[z,z^{-1}]].$$ 
\end{dfn}

\begin{lem}The following equalities hold as operators in $ (\mbox{End } V_L)[[z,z^{-1}]]$.  If $h \in \fh$, $m, n \in \Z$, and $\alp \in L$, then:
\begin{eqnarray}
&[h(m), \yvoz{\alp}] = \herf{h}{\alp}z^m \yvoz{\alp}\\
&[h(m), \yvomn{\alp}{n}] =  \herf{h}{\alp} \yvomn{\alp}{m+n}\\
&\mbox{ deg } \yvomn{\alp}{n} = n. 
\end{eqnarray} 
\end{lem}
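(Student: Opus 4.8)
The plan is to compute each bracket directly from the definition
$\yvoz{\alp} = E^-(-\alp,z)E^+(-\alp,z)e_\alp z^{\alp(0)}$, using the operator identities already recorded in Lemma \ref{lemma 1.24} together with the listed commutation relations of $h(m)$ with $E^\pm$, with $e_\alp$, and with $z^{h(0)}$. For the first equality, I would split into the cases $m>0$, $m<0$, and $m=0$. For $m>0$: $h(m)$ commutes with $E^-(-\alp,z)$ only up to the correction term $[h(m),E^-(-\alp,z)] = \herf{h}{\alp}z^m E^-(-\alp,z)$ (note the sign flip coming from the argument $-\alp$), while $h(m)$ commutes with $E^+(-\alp,z)$, with $e_\alp$, and with $z^{\alp(0)}$; collecting the single surviving term gives $\herf{h}{\alp}z^m\yvoz{\alp}$. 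For $m<0$ the roles of $E^+$ and $E^-$ are interchanged and the same computation goes through. For $m=0$, $h(0)$ commutes with both $E^\pm$ (they only involve $\alp(n)$ with $n\neq 0$) and with $z^{\alp(0)}$, but $[h(0),e_\alp] = \herf{h}{\alp}e_\alp$, so again we pick up exactly $\herf{h}{\alp}\yvoz{\alp}$. In all three cases the coefficient is the same, which is exactly the content of the first formula.

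The second equality is then just the first one read off coefficient-by-coefficient in $z$. Writing $\yvoz{\alp} = \sum_{n\in\Z}\yvomn{\alp}{n}z^{-n-wt(\alp)}$ and using that $[h(m),-]$ is applied termwise, the left side of the first identity is $\sum_n [h(m),\yvomn{\alp}{n}]z^{-n-wt(\alp)}$ and the right side is $\herf{h}{\alp}\sum_n \yvomn{\alp}{n} z^{-n-wt(\alp)+m} = \herf{h}{\alp}\sum_n \yvomn{\alp}{n-m}z^{-n-wt(\alp)}$ after reindexing $n\mapsto n-m$. Matching coefficients of $z^{-n-wt(\alp)}$ gives $[h(m),\yvomn{\alp}{n}] = \herf{h}{\alp}\yvomn{\alp}{m+n}$ (after relabeling), which is the claim.

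For the third equality, $\deg\yvomn{\alp}{n} = n$, I would use that $d$ acts on $V_L$ as $d\otimes 1 + 1\otimes d$, compute $[d,\yvoz{\alp}]$, and extract components. Using $[d,E^\pm(-\alp,z)] = -DE^\pm(-\alp,z)$, the Leibniz rule for the bracket, $[d,e_\alp] = 0$, and $z^{h(0)}$ being built so that $d$-degree is recorded by the exponent, one gets $[d,\yvoz{\alp}] = -D\yvoz{\alp} = \sum_n (n+wt(\alp))\yvomn{\alp}{n}z^{-n-wt(\alp)}$; but also $[d,\yvoz{\alp}] = \sum_n [d,\yvomn{\alp}{n}]z^{-n-wt(\alp)}$, and since $d$ represents $-\deg$, comparing coefficients gives $-\deg(\yvomn{\alp}{n}) = -(n)$, i.e.\ $\deg\yvomn{\alp}{n} = n$. (Alternatively one can bypass the operator manipulation: $E^-(-\alp,z)$ raises weight, $e_\alp z^{\alp(0)}$ shifts the lattice grading by $wt(e^\alp) = \tfrac12\herf{\alp}{\alp} = wt(\alp)$, and bookkeeping of the powers of $z$ forces the coefficient of $z^{-n-wt(\alp)}$ to sit in degree $n$.) The main obstacle is purely notational bookkeeping: keeping straight the sign flips coming from the $-\alp$ in $E^\pm(-\alp,z)$, and keeping the shift by $wt(\alp)$ consistent between the two conventions $\ysvomn{}{}{}$ and $\ywtvomn{}{}{}$ used for expanding vertex operators — there is no real analytic or algebraic difficulty once those conventions are pinned down.
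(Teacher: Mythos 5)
The paper states this lemma without proof, so there is no argument of its own to compare against; your strategy --- expand $\yvoz{\alp}=E^-(-\alp,z)E^+(-\alp,z)e_{\alp}z^{\alp(0)}$ and push $h(m)$ through the four factors using Lemma \ref{lemma 1.24} together with the relations $[h(0),e_{\alp}]=\herf{h}{\alp}e_{\alp}$ and $[h_1(0),z^{h_2(0)}]=0$ --- is the standard one, and your case analysis $m>0$, $m<0$, $m=0$ for the first identity is correct (including the sign flip from the argument $-\alp$). One cosmetic slip in the second identity: rewriting $\herf{h}{\alp}\sum_n\yvomn{\alp}{n}z^{m-n-wt(\alp)}$ as $\sum_k(\cdot)\,z^{-k-wt(\alp)}$ requires $n=k+m$, so the coefficient of $z^{-k-wt(\alp)}$ is $\herf{h}{\alp}\yvomn{\alp}{k+m}$; your intermediate expression $\yvomn{\alp}{n-m}$ has the wrong sign on $m$, even though the formula you finally assert is the right one.

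The genuine error is in your main route to $\deg\yvomn{\alp}{n}=n$. The identity $[d,\yvoz{\alp}]=-D\yvoz{\alp}$ is false: the factor $e_{\alp}z^{\alp(0)}$ satisfies $[d,e_{\alp}z^{\alp(0)}]=-D\left(e_{\alp}z^{\alp(0)}\right)-\tfrac12\herf{\alp}{\alp}\,e_{\alp}z^{\alp(0)}$, because $d$ acts on $\C[L]$ by $d(e^{\beta})=-\tfrac12\herf{\beta}{\beta}e^{\beta}$ and the jump in lattice degree is $\herf{\alp}{\beta}+\tfrac12\herf{\alp}{\alp}$, of which $z^{\alp(0)}$ records only the $\herf{\alp}{\beta}$ part. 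The correct statement is $[d,\yvoz{\alp}]=-D\yvoz{\alp}-wt(\alp)\yvoz{\alp}$, whose coefficient of $z^{-n-wt(\alp)}$ is $n\,\yvomn{\alp}{n}$, as required. Your version $-D\yvoz{\alp}=\sum_n(n+wt(\alp))\yvomn{\alp}{n}z^{-n-wt(\alp)}$ would give $\deg\yvomn{\alp}{n}=n+wt(\alp)$, and the appeal to ``$d$ represents $-\deg$'' does not fix this: with the paper's conventions ($d$ acting as $-wt$ on vectors, and an operator of degree $k$ lowering $wt$ by $k$, as in (\ref{wtformula})) one has $[d,X]=(\deg X)X$ with a plus sign. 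Your parenthetical alternative --- track the weight shift of each factor of the vertex operator against the exponent of $z$ --- is correct and is the cleanest way to finish part (3); I would promote it to the main argument and drop the $[d,\cdot]$ computation, or else carry the extra $-wt(\alp)$ term through.
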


Also note, $\yvoz{0} = I_{V_{L}}$.  From (1.3), for $\alp = 0$, $z^{\alp(0)} = I_{\C[L]}$ and $e_0 = I_{\C[L]}$.  Hence, $\yvomn{0}{n} = \delta_{n,0}I_{V_L}$ for $n \in \Z$. 

Under the identification of $\fh$ with $\fh^*$ the Heisenberg operators $h_{\alp}(n)$ are written $\alp(n)$.

\begin{dfn} \mylabel{def 1.27} For $\alp_1,\alp_2 \in \fh$ and $n_1,n_2 \in \Z$, define the \bold{normal ordered product of two Heisenberg operators} as:
$$ \mbox{:}\alp_1(n_1)\alp_2(n_2)\mbox{:} = \left\{ \begin{array}{lcl}
		\alp_1(n_1)\alp_2(n_2) & \mbox{if} & n_1 \leq n_2 \\
		\alp_2(n_2)\alp_1(n_1) & \mbox{if} & n_1 > n_2. \end{array} \right. $$ 
\end{dfn}
\begin{dfn} \mylabel{def 1.28} Given $\alp_i \in \fh$, $n_i \in \Z$, $1\leq i \leq k$,  extend Definition \ref{def 1.27} to the \bold{normal ordered product of $k$ Heisenberg operators} as: $$ \mbox{:}\alp_1(n_1)\alp_2(n_2) \cdots \alp_k(n_k)\mbox{:} = \alp_{\pi(1)}(n_{\pi(1)})\alp_{\pi(2)}(n_{\pi(2)}) \cdots \alp_{\pi(k)}(n_{\pi(k)})$$
where $\pi$ is a permutation of $\{ 1,2, \dots, k\}$ so that $n_{\pi(1)} \leq n_{\pi(2)} \leq \cdots \leq n_{\pi(k)}$.
\end{dfn}

We further extend the definition of normal ordered product for more operators. 

\begin{dfn}\mylabel{dfn 1.29} Define for $\alp, \beta \in \fh$,
\begin{align*}
&\mbox{:}z^{\alp(0)}e_{\alp}\mbox{:} = \mbox{:}e_\alp z^{\alp(0)}\mbox{:} = e_{\alp} z^{\alp(0)}, \\
&\mbox{:}\yvoz{\alp} \yvow{\beta}\mbox{:} =  E^-(-\alp, z)E^-(-\beta, w)E^{+}(-\alp, z)E^{+}(-\beta, w)e_{\alp}e_{\beta}z^{\alp(0)}w^{\beta(0)}.
\end{align*}
\end{dfn}



The theorem and lemmas below give the brackets showing that these vertex operators represent the affine Lie algebra. \\

\begin{thm} \mylabel{thm 1.30}
For $\alp, \beta \in \Phi_{E_6} = L_2$, since $\veps(\alp,\beta)/\veps(\beta,\alp) = (-1)^{\herf{\alp}{\beta}}$, we have

\[ \left[ \yvoz{\alp}, \yvow{\beta}\right] = \left\{ \begin{array}{lcl}
		0 & \mbox{if} & \herf{\alp}{\beta} \geq 0 \\
		\veps(\alp,\beta) \yvow{\alp + \beta}\delta(z/w)& \mbox{if} & \herf{\alp}{\beta} = -1\\ 
		\veps(\alp,\beta) (\alp(w)\delta(z/w) - (D\delta)(z/w)) & \mbox{if} & \herf{\alp}{\beta} = -2.  \end{array} \right. \] 
\end{thm}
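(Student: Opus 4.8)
The plan is to compute the two products $\yvoz{\alp}\yvow{\beta}$ and $\yvow{\beta}\yvoz{\alp}$ as operators on $V_L$, to show that each equals a scalar ``contraction factor'' — one of the two expansions of $(z-w)^{\herf{\alp}{\beta}}$ — times the single normal-ordered product ${:}\,\yvoz{\alp}\yvow{\beta}\,{:}$ of Definition \ref{dfn 1.29}, and then to subtract. After that the theorem follows from a short case analysis on $\herf{\alp}{\beta}\in\{\,\geq 0,\ -1,\ -2\,\}$ together with the formal $\delta$-function identities.

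For the first product, write $\yvoz{\alp}=E^-(-\alp,z)E^+(-\alp,z)e_{\alp}z^{\alp(0)}$ and likewise for $\beta$. The only pairs of factors that fail to commute are $E^+(-\alp,z)$ with $E^-(-\beta,w)$, and $z^{\alp(0)}$ with $e_{\beta}$. For the first, the commutator of the two exponents is the scalar $\herf{\alp}{\beta}\log(1-w/z)$ — this comes from $[\alp(m),\beta(n)]=\herf{\alp}{\beta}\,m\,\delta_{m,-n}$ and $\sum_{n\geq 1}\frac1n(w/z)^n=-\log(1-w/z)$ — so $e^{A}e^{B}=e^{B}e^{A}e^{[A,B]}$ (valid since $[A,B]$ is a scalar) gives
\[
E^+(-\alp,z)E^-(-\beta,w)=E^-(-\beta,w)E^+(-\alp,z)\,(1-w/z)^{\herf{\alp}{\beta}}.
\]
For the second I use $z^{h(0)}e_{\alp}=z^{\herf{h}{\alp}}e_{\alp}z^{h(0)}$ and $e_{\alp}e_{\beta}=\veps(\alp,\beta)e_{\alp+\beta}$ from the construction of $V_L$. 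Moving all factors into the order of Definition \ref{dfn 1.29} and combining the scalars $(1-w/z)^{\herf{\alp}{\beta}}$ and $z^{\herf{\alp}{\beta}}$ into $(z-w)^{\herf{\alp}{\beta}}$ expanded in nonnegative powers of $w/z$ (write $\iota_{z,w}$ for this expansion), one obtains $\yvoz{\alp}\yvow{\beta}=\bigl(\iota_{z,w}(z-w)^{\herf{\alp}{\beta}}\bigr)\,{:}\,\yvoz{\alp}\yvow{\beta}\,{:}$. Running the same computation with $(\alp,z)$ and $(\beta,w)$ interchanged, and using the stated identity $\veps(\alp,\beta)/\veps(\beta,\alp)=(-1)^{\herf{\alp}{\beta}}$ to rewrite ${:}\,\yvow{\beta}\yvoz{\alp}\,{:}=(-1)^{\herf{\alp}{\beta}}{:}\,\yvoz{\alp}\yvow{\beta}\,{:}$ (the sign being absorbed via $(z-w)^{\herf{\alp}{\beta}}=(-1)^{\herf{\alp}{\beta}}(w-z)^{\herf{\alp}{\beta}}$), one gets $\yvow{\beta}\yvoz{\alp}=\bigl(\iota_{w,z}(z-w)^{\herf{\alp}{\beta}}\bigr)\,{:}\,\yvoz{\alp}\yvow{\beta}\,{:}$ with $\iota_{w,z}$ the expansion in nonnegative powers of $z/w$. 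Subtracting gives
\[
\bigl[\yvoz{\alp},\yvow{\beta}\bigr]=\bigl(\iota_{z,w}-\iota_{w,z}\bigr)(z-w)^{\herf{\alp}{\beta}}\cdot{:}\,\yvoz{\alp}\yvow{\beta}\,{:}.
\]

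Now the cases. If $\herf{\alp}{\beta}\geq 0$, then $(z-w)^{\herf{\alp}{\beta}}$ is a polynomial, the two expansions coincide, and the bracket is $0$. If $\herf{\alp}{\beta}=-1$, then $(\iota_{z,w}-\iota_{w,z})(z-w)^{-1}$ is (a multiple of) $\delta(z/w)$, and the substitution property $f(z)\delta(z/w)=f(w)\delta(z/w)$ lets me replace ${:}\,\yvoz{\alp}\yvow{\beta}\,{:}$ by its value at $z=w$; by Lemma \ref{lemma 1.24} ($E^{\pm}(-\alp,w)E^{\pm}(-\beta,w)=E^{\pm}(-(\alp+\beta),w)$), by $w^{\alp(0)}w^{\beta(0)}=w^{(\alp+\beta)(0)}$ and $e_{\alp}e_{\beta}=\veps(\alp,\beta)e_{\alp+\beta}$, that value is $\veps(\alp,\beta)\yvow{\alp+\beta}$, giving the middle line. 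If $\herf{\alp}{\beta}=-2$, then, since the lattice is positive definite with $\herf{\alp}{\alp}=\herf{\beta}{\beta}=2$, necessarily $\beta=-\alp$; here $(\iota_{z,w}-\iota_{w,z})(z-w)^{-2}$ is a first derivative of the $\delta$-function, and one needs the refined rule $f(z)\,\partial_w\!\bigl(w^{-1}\delta(z/w)\bigr)=f(w)\,\partial_w\!\bigl(w^{-1}\delta(z/w)\bigr)+f'(w)\,w^{-1}\delta(z/w)$. Here $f(z)={:}\,\yvoz{\alp}\yvow{-\alp}\,{:}$ collapses at $z=w$ to the scalar $\veps(\alp,-\alp)$ (all $E^{\pm}$ and $z^{\alp(0)}w^{-\alp(0)}$ factors become the identity), while $f'(w)$ — obtained by differentiating the $E^{\pm}$ factors and $z^{\alp(0)}$ using the $d$-bracket formulas of Lemma \ref{lemma 1.24} — reassembles the full Heisenberg current $\alp(w)=\sum_{n}\alp(n)w^{-n}$; rewriting $\partial_w\!\bigl(w^{-1}\delta(z/w)\bigr)$ in terms of $\delta(z/w)$ and $(D\delta)(z/w)$ then produces the terms $\alp(w)\delta(z/w)$ and $-(D\delta)(z/w)c$ with $c$ acting as $1$, which is the last line.

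The main obstacle is the $\herf{\alp}{\beta}=-2$ case: organizing the second-order contraction together with the derivative-of-$\delta$ identity, checking that differentiating $E^{\pm}(-\alp,z)$ and $z^{\alp(0)}$ at $z=w$ really recovers all modes of $\alp(w)$ including the zero mode $\alp(0)$, and fixing the normalization so that the central term enters with coefficient exactly $-1$ relative to $(D\delta)(z/w)$. The first two cases, by contrast, are essentially bookkeeping once the contraction formula of the second paragraph is in hand.
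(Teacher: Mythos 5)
Your proposal is correct and follows essentially the same route the paper intends: the contraction and normal-ordering identities you derive are exactly the paper's Lemmas \ref{lem 1.31} and \ref{lem 1.32}, and the $\delta$-function substitution rules you invoke are Lemma \ref{lem 1.33}, all of which are stated immediately after the theorem precisely as the ingredients of its (otherwise omitted) proof. The only point worth flagging is a bookkeeping matter the paper itself elides: subtracting the two expansions of $(z-w)^{\herf{\alp}{\beta}}$ actually yields $w^{-1}\delta(z/w)$ rather than $\delta(z/w)$, a discrepancy that reflects the shift between $\yvoz{\alp}$ and the degree-zero operator $\xvoz{\alp}=z\,\yvoz{\alp}$ that literally matches the bracket $[x_{\alp}(z),x_{\beta}(w)]$ of Section \ref{ss 1.2.3}.
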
	

\begin{lem}\mylabel{lem 1.31} For $\alp, \beta \in \fh$, 
$$E^+(\alp,z)E^-(\beta,w) = E^-(\beta,w)E^+(\alp,z)\left(1- \frac{w}{z}\right)^{\herf{\alp}{\beta}} \in (End V_{L})[[z^{-1},w]].$$ 
In particular, $E^+(\alp,z)E^-(\beta,w) = E^-(\beta,w)E^+(\alp,z)$ if $\herf{\alp}{\beta} = 0$.
\end{lem}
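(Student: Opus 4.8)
The plan is to establish this by the standard exponential commutation (``Baker--Campbell--Hausdorff'') identity $e^{A}e^{B} = e^{B}e^{A}e^{[A,B]}$, which holds whenever $[A,B]$ commutes with both $A$ and $B$. Write $E^+(\alp,z) = \exp(A)$ with $A = \sum_{n>0}\frac{\alp(n)}{n}z^{-n}$, and $E^-(\beta,w) = \exp(B)$ with $B = \sum_{m>0}\frac{\beta(-m)}{-m}w^{m}$ (the latter obtained from $\sum_{n\in-\Z_+}\frac{\beta(n)}{n}w^{-n}$ by setting $n=-m$). These operators act only on the $S(\fhhz^-)$ factor of $V_L = S(\fhhz^-)\otimes\C[L]$, so the whole computation takes place in $\mathrm{End}\,S(\fhhz^-)$. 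Since $A$ has no constant term in $z^{-1}$ and $B$ none in $w$, the series $\exp(A)$ and $\exp(B)$ are genuine elements of $(\mathrm{End}\,S(\fhhz^-))[[z^{-1}]]$ and $(\mathrm{End}\,S(\fhhz^-))[[w]]$ respectively: only finitely many terms of each exponential series contribute to any fixed power of the formal variable.

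First I would compute $[A,B]$ term by term, using the Heisenberg commutation relation (the level-one specialization of the affine bracket from Section~\ref{ss 1.2.4}) $[\alp(p),\beta(q)] = \herf{\alp}{\beta}\,p\,\delta_{p,-q}$:
$$[A,B] = \sum_{n,m>0}\frac{1}{n}\cdot\frac{-1}{m}\,[\alp(n),\beta(-m)]\,z^{-n}w^{m} = -\herf{\alp}{\beta}\sum_{n>0}\frac{1}{n}\Big(\frac{w}{z}\Big)^{n} = \herf{\alp}{\beta}\,\log\!\Big(1-\frac{w}{z}\Big),$$
where the last step uses the formal identity $\sum_{n\geq1}x^{n}/n = -\log(1-x)$. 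In particular $[A,B]$ is a formal series of scalars, hence central, so $[A,[A,B]]=[B,[A,B]]=0$ and the exponential identity applies.

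Next I would apply that identity to conclude
$$E^+(\alp,z)E^-(\beta,w) = E^-(\beta,w)E^+(\alp,z)\,\exp\!\Big(\herf{\alp}{\beta}\log\big(1-\tfrac{w}{z}\big)\Big) = E^-(\beta,w)E^+(\alp,z)\Big(1-\frac{w}{z}\Big)^{\herf{\alp}{\beta}},$$
where, as always, $(1-w/z)^{\herf{\alp}{\beta}}$ denotes its binomial expansion $\sum_{j\geq0}\binom{\herf{\alp}{\beta}}{j}(-w/z)^{j}$ in nonnegative powers of $w/z$; this is exactly $\exp$ of the logarithm series above, and it places the product in $(\mathrm{End}\,V_L)[[z^{-1},w]]$ as claimed. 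When $\herf{\alp}{\beta}=0$ this correction factor equals $1$, which gives the final assertion of the lemma.

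I expect the one place requiring care to be bookkeeping rather than substance: getting the signs right in $B$ and in the commutator so that the exponent comes out as $+\herf{\alp}{\beta}\log(1-w/z)$ — and hence the factor $(1-w/z)^{\herf{\alp}{\beta}}$ rather than its reciprocal — and confirming that the identity $e^{A}e^{B}=e^{B}e^{A}e^{[A,B]}$ is legitimate in the formal-variable ring. The latter holds because it can be verified coefficient-by-coefficient in powers of $z^{-1}$ and $w$, where it reduces to the usual finite computation for operators that are nilpotent in each fixed degree; no genuinely hard step is involved.
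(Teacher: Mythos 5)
Your proposal is correct. The paper itself states Lemma \ref{lem 1.31} without proof, as part of the standard background quoted from the lattice construction of \cite{FLM}, so there is no in-paper argument to compare against; your argument is the standard one. The key computation checks out: with $A=\sum_{n>0}\frac{\alp(n)}{n}z^{-n}$ and $B=\sum_{m>0}\frac{\beta(-m)}{-m}w^{m}$, the Heisenberg relation $[\alp(p),\beta(q)]=\herf{\alp}{\beta}\,p\,\delta_{p,-q}$ (note the paper's own display of this relation in Section \ref{ss 1.2.4} has a typo, $\delta_{m,n}$ for $\delta_{m,-n}$; you use the correct form) gives $[A,B]=-\herf{\alp}{\beta}\sum_{n>0}\frac{1}{n}(w/z)^{n}=\herf{\alp}{\beta}\log\bigl(1-\frac{w}{z}\bigr)$, which is central, and the identity $e^{A}e^{B}=e^{B}e^{A}e^{[A,B]}$ is legitimate coefficient by coefficient in $z^{-1}$ and $w$, yielding the factor $\bigl(1-\frac{w}{z}\bigr)^{\herf{\alp}{\beta}}$ expanded in nonnegative powers of $w/z$ exactly as claimed.
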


\begin{lem} \mylabel{lem 1.32} For $\alp,\beta \in \fh$, 
\begin{align*}\mbox{:}\yvoz{\alp} \yvow{\beta}\mbox{:} &= \veps(\alp,\beta)/\veps(\beta,\alp)\mbox{:}\yvow{\beta} \yvoz{\alp}\mbox{:} \\
\yvoz{\alp} \yvow{\beta} &= \mbox{:}\yvoz{\alp} \yvow{\beta}\mbox{:}z^{\herf{\alp}{\beta}}\left(1 - \frac{w}{z}\right)^{\herf{\alp}{\beta}}.
\end{align*}
\end{lem}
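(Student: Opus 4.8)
For the first identity the plan is purely to rearrange the operators in the definition of $\mbox{:}\yvoz{\alp}\yvow{\beta}\mbox{:}$ using that operators acting on different tensor factors of $V_L = S(\fhhz^-)\otimes\C[L]$ commute, together with the cocycle identity. Concretely, I would first observe that $E^-(-\alp,z)$ and $E^-(-\beta,w)$ commute, since the commutator of their exponents is a sum over $m,n<0$ of terms proportional to $\delta_{m,-n}$, which vanishes; likewise $E^+(-\alp,z)$ and $E^+(-\beta,w)$ commute, and $z^{\alp(0)}$ commutes with $w^{\beta(0)}$ (both are diagonal on $\C[L]$). Finally, from $e_\alp e_\beta = \veps(\alp,\beta)e_{\alp+\beta}$ and $e_\beta e_\alp = \veps(\beta,\alp)e_{\alp+\beta}$ one gets $e_\alp e_\beta = \bigl(\veps(\alp,\beta)/\veps(\beta,\alp)\bigr)e_\beta e_\alp$. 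Substituting all of these into $E^-(-\alp,z)E^-(-\beta,w)E^+(-\alp,z)E^+(-\beta,w)e_\alp e_\beta z^{\alp(0)}w^{\beta(0)}$ reorders everything into $\bigl(\veps(\alp,\beta)/\veps(\beta,\alp)\bigr)\,\mbox{:}\yvow{\beta}\yvoz{\alp}\mbox{:}$.

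For the second identity I would start from
$$\yvoz{\alp}\yvow{\beta} = E^-(-\alp,z)E^+(-\alp,z)e_\alp z^{\alp(0)}\,E^-(-\beta,w)E^+(-\beta,w)e_\beta w^{\beta(0)},$$
and move the Heisenberg-side factors $E^\pm(-\beta,w)$ to the left past the $\C[L]$-side factors $e_\alp z^{\alp(0)}$, which is legal since they act on different tensor factors. This brings the expression to $E^-(-\alp,z)E^+(-\alp,z)E^-(-\beta,w)E^+(-\beta,w)\,e_\alp z^{\alp(0)}e_\beta w^{\beta(0)}$. The key step is then to apply Lemma~\ref{lem 1.31} with the substitution $\alp\mapsto -\alp$, $\beta\mapsto -\beta$ (noting $\herf{-\alp}{-\beta}=\herf{\alp}{\beta}$) to rewrite $E^+(-\alp,z)E^-(-\beta,w) = E^-(-\beta,w)E^+(-\alp,z)\bigl(1-\tfrac{w}{z}\bigr)^{\herf{\alp}{\beta}}$; the scalar series $\bigl(1-\tfrac{w}{z}\bigr)^{\herf{\alp}{\beta}}$ pulls out to the front. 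Separately, on the $\C[L]$ side the relation $z^{h(0)}e_\alp = z^{\herf{h}{\alp}}e_\alp z^{h(0)}$ gives $z^{\alp(0)}e_\beta = z^{\herf{\alp}{\beta}}e_\beta z^{\alp(0)}$, producing the factor $z^{\herf{\alp}{\beta}}$ and leaving $e_\alp e_\beta z^{\alp(0)}w^{\beta(0)}$. Collecting, the remaining operator string is exactly $\mbox{:}\yvoz{\alp}\yvow{\beta}\mbox{:}$, and since the two extracted factors are scalars they may be placed on either side, yielding $\yvoz{\alp}\yvow{\beta} = \mbox{:}\yvoz{\alp}\yvow{\beta}\mbox{:}\,z^{\herf{\alp}{\beta}}\bigl(1-\tfrac{w}{z}\bigr)^{\herf{\alp}{\beta}}$.

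I expect the only real obstacle to be bookkeeping: being scrupulous about which operators live on which tensor factor (and therefore commute freely), keeping the minus signs in $E^\pm(-\alp,z)$ straight when invoking Lemma~\ref{lem 1.31}, and tracking the single scalar $z^{\herf{\alp}{\beta}}$ that emerges from commuting $z^{\alp(0)}$ past $e_\beta$. There is no analytic or combinatorial difficulty beyond this; everything reduces to the already-established commutation relations in Lemma~\ref{lemma 1.24}, Lemma~\ref{lem 1.31}, and the operator identities for $z^{h(0)}$ and $e_\alp$ on $\C[L]$.
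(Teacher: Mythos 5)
Your proposal is correct. The paper states this lemma without proof (it is standard material quoted from the Frenkel--Lepowsky--Meurman lattice construction), and your argument is exactly the standard one: the first identity follows from the mutual commutativity of the $E^-$ factors, of the $E^+$ factors, and of $z^{\alp(0)}$ with $w^{\beta(0)}$, plus $e_\alp e_\beta = \bigl(\veps(\alp,\beta)/\veps(\beta,\alp)\bigr)e_\beta e_\alp$; the second follows from Lemma \ref{lem 1.31} applied to $E^+(-\alp,z)E^-(-\beta,w)$ together with $z^{\alp(0)}e_\beta = z^{\herf{\alp}{\beta}}e_\beta z^{\alp(0)}$. The bookkeeping you describe (tensor-factor separation, the sign cancellation $\herf{-\alp}{-\beta}=\herf{\alp}{\beta}$, and the single factor $z^{\herf{\alp}{\beta}}$) is all that is needed, and it is handled correctly.
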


\begin{lem} \mylabel{lem 1.33} Given $f(z,w)$ a function of $z$ and $w$ and if $D_z = z\frac{d}{dz}$, then the following equalities are true:
$$f(z,w)(D \delta)(w/z) = -f(w,w)(D\delta)(z/w) + (D_zf)(w,w)\delta(z/w)$$
and
$$f(z,w)(\delta)(z/w) = f(w,w)(\delta)(z/w).$$
\end{lem}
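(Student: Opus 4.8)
The plan is to reduce both formulas to the single elementary property of the formal delta function: for every $n\in\Z$,
\[
z^{n}\,\delta(z/w) = w^{n}\,\delta(z/w),
\]
which is immediate from $\delta(z/w)=\sum_{m\in\Z} z^{m}w^{-m}$ by the reindexing $m\mapsto m-n$. Since any $f(z,w)$ for which the products in question make sense is a finite $\C[w,w^{-1}]$-linear combination of monomials $z^{m}$, and since each term appearing in the two asserted identities is $\C[w,w^{-1}]$-linear in $f$ (a function of $w$ alone simply factors out), it suffices to check both identities for $f(z,w)=z^{m}$. For the second identity this is exactly the displayed property: $z^{m}\delta(z/w)=w^{m}\delta(z/w)=f(w,w)\delta(z/w)$.

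For the first identity I would begin by recording the antisymmetry
\[
(D\delta)(w/z) = -(D\delta)(z/w),
\]
obtained from $(D\delta)(x)=\sum_{n\in\Z} n x^{n}$ by the substitution $n\mapsto -n$, together with the easily checked fact that $D_{z}\delta(z/w) = (D\delta)(z/w)$. Applying $D_{z}=z\frac{d}{dz}$ to $z^{m}\delta(z/w)=w^{m}\delta(z/w)$ then gives
\[
m\,z^{m}\delta(z/w) + z^{m}(D\delta)(z/w) = w^{m}(D\delta)(z/w),
\]
and replacing $z^{m}\delta(z/w)$ by $w^{m}\delta(z/w)$ on the left yields
\[
z^{m}(D\delta)(z/w) = w^{m}(D\delta)(z/w) - m\,w^{m}\delta(z/w).
\]
Multiplying by $-1$ and using the antisymmetry to rewrite $-z^{m}(D\delta)(z/w)$ as $z^{m}(D\delta)(w/z)$, and noting that $f(w,w)=w^{m}$ and $(D_{z}f)(w,w)=m\,w^{m}$ when $f=z^{m}$, gives precisely
\[
f(z,w)(D\delta)(w/z) = -f(w,w)(D\delta)(z/w) + (D_{z}f)(w,w)\delta(z/w).
\]
Taking the corresponding $\C[w,w^{-1}]$-linear combination over the monomials $z^{m}$ occurring in $f$ recovers the general statement.

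The whole argument is mechanical once the fundamental delta identity is in hand. The only points needing a little care are the sign bookkeeping produced by the interchange $z\leftrightarrow w$ inside the argument of $D\delta$, and the convention — implicit in the notation $(D_{z}f)(w,w)$ — that one differentiates with respect to $z$ before specializing $z=w$. I do not expect any genuine obstacle here; the content is simply the compatibility of $\delta$ with multiplication by Laurent monomials and with the Euler operator $z\frac{d}{dz}$.
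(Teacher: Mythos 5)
Your proof is correct, and the sign bookkeeping in the first identity (the antisymmetry $(D\delta)(w/z)=-(D\delta)(z/w)$ combined with applying $D_z$ to the monomial case of the second identity) is handled properly. The paper states this lemma without proof, citing it implicitly as standard formal calculus in the style of Frenkel--Lepowsky--Meurman, so there is no in-text argument to compare against; your reduction to monomials $z^m$ via the fundamental substitution property $z^n\delta(z/w)=w^n\delta(z/w)$ is exactly the standard route, and your caveat that $f$ must be such that the products make sense is the right hypothesis to flag, since for general formal series (e.g.\ $(1-w/z)^{-1}$) the product with $\delta(z/w)$ need not exist.
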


\begin{thm} \mylabel{thm 1.34} Let $\fg$ be the Lie algebra of type $E_6$ and $L$ its root lattice so $\fgt$ is the affine Lie algebra of type $\Es$.  Using the operators  above, define the linear map $\pi:\fgt \to \mbox{End } V_L$ by
\begin{align*}
c &\mapsto I_{V_L} &\mbox{ }&\mbox{ } \\
d &\mapsto d &\mbox{ }&\mbox{ } \\
h \otimes t^n &\mapsto h(n) & \mbox{ for } &h \in \fh, n \in \Z\\
x_{\alp} \otimes t^n &\mapsto \yvomn{\alp}{n} & \mbox{ for } &\alp \in \Phi_{E_6} = L_2, n \in \Z\\
\mbox{or equivalently in the last case, } \\
x_{\alp}(z) &\mapsto \yvoz{\alp} & \mbox{ for } &\alp \in \Phi_{E_6} = L_2. 
\end{align*}

Then $\pi$ is a representation of $\fgt$ on $V_L$ isomorphic to the irreducible $\fgt$-module $V^{\Lam_0}$ and $$\hat{\tau} \yvoz{\alp} \hat{\tau} = \yvoz{\tau\alp}$$.
\end{thm}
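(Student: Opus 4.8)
The plan is to establish the theorem in three stages: first that $\pi$ respects every bracket relation of $\fgt$, so that $V_L$ becomes a $\fgt$-module; second that this module is irreducible with highest weight vector $1\otimes e^0$ of weight $\Lam_0$, hence isomorphic to $V^{\Lam_0}$; and third that $\hat\tau$ conjugates $\yvoz{\alp}$ to $\yvoz{\tau\alp}$.

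For the first stage I would run through the defining relations of $\fgt$ recorded in Lemma \ref{lem: 1.22} and, for the root vectors, in the $\veps$-form at the end of Section \ref{ss 1.2.3}. The relations $[c,\cdot]=0$ are immediate since $c\mapsto I_{V_L}$; the relations $[d,h(n)]=nh(n)$, $[d,x_\alp(n)]=nx_\alp(n)$ and $[h(m),x_\alp(n)]=\herf{h}{\alp}x_\alp(m+n)$ reduce to the action of $d$ on $V_L$ together with the operator identities $\deg\yvomn{\alp}{n}=n$ and $[h(m),\yvomn{\alp}{n}]=\herf{h}{\alp}\yvomn{\alp}{m+n}$ stated just before Definition \ref{def 1.27}; and the Heisenberg relations $[\alp(m),\beta(n)]=\herf{\alp}{\beta}m\delta_{m,-n}c$ hold because for $m\neq0$ the $\alp(m)$ satisfy the canonical commutation relations on $S(\fhhz^-)$ while $\alp(0)$ acts on the separate tensor factor $\C[L]$, and $m\delta_{m,-n}=0$ when $m=0$. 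The one substantive relation is $[x_\alp(z),x_\beta(w)]$, and this is exactly the content of Theorem \ref{thm 1.30}: for $\alp,\beta\in\Phi_{E_6}=L_2$ one has $\herf{\alp}{\beta}=-2\iff\alp+\beta=0$, $\herf{\alp}{\beta}=-1\iff\alp+\beta\in L_2$, and $\herf{\alp}{\beta}\geq0$ otherwise, so comparing the three cases of Theorem \ref{thm 1.30} (with $c\mapsto I_{V_L}$) against the three cases of the $\fgt$-bracket shows they agree term by term. All the sums involved act properly on $V_L$ since only finitely many modes fail to annihilate any given vector. Thus $\pi$ is a representation.

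For the second stage, set $v_0=1\otimes e^0$. Since $z^{\alp(0)}v_0=v_0$, $e_\alp v_0=1\otimes e^\alp$, and $E^+(-\alp,z)$ acts on the vacuum through annihilation operators, one gets $\yvoz{\alp}v_0=E^-(-\alp,z)(1\otimes e^\alp)$, a series in nonnegative powers of $z$ only; hence $\yvomn{\alp}{n}v_0=0$ for all $n\geq0$ and all $\alp\in\Phi_{E_6}$. Together with $h(n)v_0=0$ for $n>0$, $h(0)v_0=\herf{h}{0}v_0=0$, $cv_0=v_0$ and $dv_0=0$, this shows $v_0$ is annihilated by $\fgt_\alp$ for every positive root $\alp$ and has weight $\Lam_0$. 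It remains to prove irreducibility. Let $0\neq W\subseteq V_L$ be a submodule; stability under $h(0)$ forces $W=\bigoplus_{\gam\in L}(W\cap(S(\fhhz^-)\otimes e^\gam))$ since the weights $\herf{\cdot}{\gam}$ are pairwise distinct, so $W\cap(S(\fhhz^-)\otimes e^\gam)\neq0$ for some $\gam$, and as the Heisenberg algebra acts irreducibly (as a bosonic Fock module) on $S(\fhhz^-)\otimes e^\gam$ this gives $S(\fhhz^-)\otimes e^\gam\subseteq W$. Now the lowest-degree term of $\yvoz{\alp}(1\otimes e^\gam)$ is $\veps(\alp,\gam)z^{\herf{\alp}{\gam}}(1\otimes e^{\alp+\gam})$, so some mode of $x_\alp$ sends $1\otimes e^\gam$ to a nonzero multiple of $1\otimes e^{\alp+\gam}$; iterating over $\alp\in\Phi_{E_6}$ and using that any two points of $L$ are joined by a chain of single-root steps, we obtain $S(\fhhz^-)\otimes e^{\gam'}\subseteq W$ for all $\gam'\in L$, whence $W=V_L$. (Equivalently, one could match the graded dimension of $V_L$, which is $\Theta_L(q)\prod_{n\geq1}(1-q^n)^{-6}$, with the Weyl--Kac character of $V^{\Lam_0}$ from Theorem \ref{Thm: 1.19}.) Being irreducible with highest weight $\Lam_0$, $V_L\cong V^{\Lam_0}$; I expect this irreducibility step to be the only part needing genuine thought, since the bracket check reduces entirely to the already-proved Theorem \ref{thm 1.30} and the $\hat\tau$-formula is a one-line conjugation.

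For the third stage, $\hat\tau$ is an involution with $\hat\tau\,\alp(n)\,\hat\tau=(\tau\alp)(n)$ by construction, so exponentiating yields $\hat\tau\,E^{\pm}(-\alp,z)\,\hat\tau=E^{\pm}(-\tau\alp,z)$. Combining this with $\hat\tau\,e_\alp\,\hat\tau=e_{\tau\alp}$ (valid because $\veps$ is $\tau$-invariant) and $\hat\tau\,z^{\alp(0)}\,\hat\tau=z^{(\tau\alp)(0)}$, both noted in Section \ref{ss 1.2.4}, and inserting $\hat\tau\hat\tau=I_{V_L}$ between the four factors of $\yvoz{\alp}=E^-(-\alp,z)E^+(-\alp,z)e_\alp z^{\alp(0)}$, we obtain $\hat\tau\,\yvoz{\alp}\,\hat\tau=\yvoz{\tau\alp}$ at once.
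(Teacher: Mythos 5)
Your outline is correct, and it is worth noting that the paper itself offers no proof of Theorem \ref{thm 1.34}: the result is quoted as part of the Frenkel--Kac/FLM construction (\cite{FK}, \cite{FLM}), with only the bracket computations of Theorem \ref{thm 1.30} and Lemmas \ref{lem 1.31}--\ref{lem 1.33} appearing in the text. Your first stage therefore coincides exactly with the only argument the paper supplies -- the reduction of the root-vector brackets to the three cases of Theorem \ref{thm 1.30}, together with the Heisenberg and $d$-relations -- while your second and third stages fill in what the paper delegates to the references. The irreducibility argument you give is the standard one and is sound: the $h(0)$-weight decomposition separates the summands $S(\fhhz^-)\otimes e^{\gam}$ because the form on $L$ is nondegenerate, irreducibility of the Fock module over the Heisenberg algebra forces any nonzero submodule to contain a full summand, and the leading coefficient $\veps(\alp,\gam)\,\otea{\alp+\gam}$ of $\yvoz{\alp}(\otea{\gam})$ lets you walk from any coset representative to any other since the roots generate $Q$. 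The verification that $\otz$ is a singular vector of weight $\Lam_0$ and the conjugation formula $\hat{\tau}\yvoz{\alp}\hat{\tau}=\yvoz{\tau\alp}$ (obtained by inserting $\hat{\tau}\hat{\tau}=I_{V_L}$ between the factors $E^{\mp}(-\alp,z)$, $e_{\alp}$, $z^{\alp(0)}$ and using the $\tau$-invariance of $\veps$) are both as the paper intends; the latter is exactly the mechanism the paper invokes later when transporting highest weight vectors from $V^{\Lam_1}$ to $V^{\Lam_6}$. No gaps.
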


The appropriate linear combinations of these vertex operators will give a vertex operator representation of $\fat$ of type $\Ff$ acting on $V_L$, which will not be $\fat$-irreducible.  Note that all the vertex operator calculations for $\fat$ can be carried out using certain linear combinations of operators representing $\fgt$ given in Theorem \ref{thm 1.34}. 

For example, the vertex operator representing $x_{\beta_3}(n) = x_{\alp_3}(n)+x_{\alp_5}(n)$ is given by the corresponding sum, $\yvomn{\alp_3}{n} + \yvomn{\alp_5}{n}$.  Heisenberg operators are given in a similar manner, for example, $h_{\beta_3}(n) = h_{\alp_3}(n) + h_{\alp_5}(n) = (h_{\alp_3}+ h_{\alp_5})(n)$ which we write as $(\alp_3 + \alp_5)(n)$.  A choice of $\tau$ invariant bilinear 2-cocyle is now made and will be used throughout this text.  The chosen 2-cocycle is determined by its values on the simple roots given in the matrix 

$$
[\veps(\alp_i,\alp_j)] = \left[ \begin{array}{r  r  r  r  r  r } 
 1 & 1 & -1 & 1 & 1 & 1\\ 
 1 & 1 & 1 & -1 & 1 & 1\\  
 1 & 1 & 1 & -1 & 1 & 1\\  
 1 & 1 & 1 & 1 & 1 & 1\\ 
 1 & 1 & 1 & -1 & 1 & 1\\ 
 1 & 1 & 1 & 1 & -1 & 1 \\
\end{array} \right]. 
$$

%
%
%
%
%
%
\chapter{Characters and Graded Dimensions of Affine Lie Algebra Modules}
\label{ch2}

This chapter makes use of the characters and graded dimension formulas of affine Lie algebras from Section 1.2.1. First, the concept of specialized characters is applied to the characters of irreducible highest weight modules for the Lie algebras of type $\Es$ and $\Ff$, and then the characters and graded dimension of the irreducible highest weight $c = \frac{4}{5}$ modules for the Virasoro algebra are given and related to one of the famous Ramanujan identities.

\section{Level 1 Affine Lie Algebra Modules}
\label{s2.1}

The homogeneous character of a highest weight $\fgt$-module with highest weight $\Lam$ is given in Theorem \ref{Thm: 1.19} as,
$$ch(V^{\Lam}) = \frac{1}{e^{\hat{\rho}}R}\sum_{w \in \hat{\mathcal{W}}} sgn(w)e^{w(\Lam + \hat{\rho})}$$
where $\hat{\rho} = \sum_{i = 0}^l \Lam_i$ and $R = \prod_{\alp \in \PHI^+} (1-e^{-\alp})^{dim(\fgt_{\alp})}$ is called the denominator.  It is useful to write the numerator, $N(\Lam) =\sum_{w \in \hat{\mathcal{W}}} sgn(w)e^{w(\Lam + \hat{\rho}) - \hat{\rho}}$, so that $ch(V^{\Lam}) = N(\Lam)/R$.

An $\bold{s} = (s_0, s_1, \dots, s_l)$ specialization of a character is given by substituting $v^{s_i}$ for $e^{-\alp_i}$, for each simple root $\alp_i$, $0 \leq i \leq l $.  For this to be performed on a character it must be first shifted so that it will be an element of the power series ring in the variables $e^{-\alp_i}, 0 \leq i \leq l$.  It is true that $R$ is already an element of this ring, but $N(\Lam)$ must be shifted by $e^{-\Lam}$, hence using $N(\Lam)e^{-\Lam}$ instead.  This element is known as the graded dimension, as in Section 1.2.1, 
\[ gr(V^{\Lam}) = ch(V^{\Lam})e^{-\Lam}.\]

The principal specialization of characters will be needed for this investigation and it is given by Lepowsky's \cite{L} relating the $(1,1,\dots, 1)$ principal specialization of the numerator, $N(\Lam)e^{-\Lam}$, to a $(s_0, s_1, \dots, s_l)$ specialization of the denominator of the dual Lie algebra, $\fgt^{\vee}$, where $s_i = (\Lam + \hat{\rho})(h_i)$.  Denote the denominator of $\fgt^{\vee}$ by $R^{\vee}$.  This relationship is $\bold{S}_{(1,1,\dots, 1)} \left(N(\Lam)e^{-\Lam}\right) = \bold{S}_{(s_0, s_1, \dots, s_l)} \left(R^{\vee}\right) $ giving the principally graded dimension
$$gr_{(1,1,\dots, 1)}\left( V^{\Lam}\right) = gr_{princ}\left( V^{\Lam}\right) = \frac{\bold{S}_{(s_0, s_1, \dots, s_l)} \left(R^{\vee}\right)}{\bold{S}_{(1, 1, \dots, 1)} \left(R\right) }.$$

Remember the vacuum space of $V^{\Lam}$ is denoted by $\V(V^{\Lam})$ and the principally graded character of this space is denoted $\chi(\V(V^{\Lam}))$.  The character of the Fock-space of the principal Heisenberg for $\fgt$ of type $X_n^{(1)}$ is denoted by $\F\left(X_n^{(1)}\right)$, and the following holds

$$gr_{princ}\left( V^{\Lam}\right) = \F\left(X_n^{(1)}\right)\cdot \chi(\V(V^{\Lam})).$$

\subsection{Graded Dimensions for $\Es$-modules}
\label{ss2.1.1}

In Section 1.2.1 the level of a $\fgt$-module, $V^{\Lam}$, was given as the number $\Lam(c)$.  For the Lie algebra of type $\Es$ there exist three such modules with $\Lam(c) = 1$. These three modules are denoted $V^{\Lam_0}$, $V^{\Lam_1}$, and $V^{\Lam_6}$.  The Lie algebra of type $\Es$ is self-dual and hence the principally graded dimension (see \cite{MAN}) is given by:

$$gr_{princ}\left( V^{\Lam}\right) = \frac{Spec_{(s_0, s_1, \dots, s_6)} \left(R\right)}{Spec_{(1, 1, \dots, 1)} \left(R\right) }.$$

For $i \in \{0,1,6\}$, $\chi(\V(V^{\Lam_i})) = 1$ and therefore 
\begin{eqnarray}gr_{princ}\left( V^{\Lam_i}\right) = \F\left(\Es\right) = \prod_{\substack{
   0 < n \\ n \equiv \pm1,\pm4,\pm5\bmod{12}}}\frac{1}{(1-v^{n})} = \frac{\vph(v^2)\vph(v^3)\vph(v^{12})}{\vph(v)\vph(v^4)\vph(v^6)},
\end{eqnarray}
where $\vph(v) = \prod_{i=1}^\infty (1-v^i)$.

\subsection{Graded Dimensions for $\Ff$-modules}
\label{ss2.1.2}  
The Lie algebra of type $\Ff$ has two level one modules which will be denoted $W^{\Omg_0}$ and $W^{\Omg_4}$.  Lepowsky's theorem can be used to compute (see \cite{MAN}) the principally specialized characters for these two irreducible level one $\Ff$-modules, which will be denoted $W^{\Omg_0}$ and $W^{\Omg_4}$.  Let $R$ be the denominator of type $\Ff$, so $R^{\vee}$ is the denominator of its dual $E^{(2)}_6$, and we have for $j \in \{0,4\}$
$$gr_{princ}\left( W^{\Omg_j}\right) = \frac{\bold{S}_{(s_0, s_1, \dots, s_4)} \left(R^{\vee}\right)}{\bold{S}_{(1, 1, \dots, 1)} \left(R\right) }.$$

From Mandia we have,
\begin{eqnarray}\F(\Ff) = \prod_{\substack{
   0 < n \\ n \equiv \pm1,\pm5 \bmod{12}}}\frac{1}{(1-v^{n})} = \frac{\vph(v^2)\vph(v^3)}{\vph(v)\vph(v^6)}.
\end{eqnarray}

The characters $\chi(\V(W^{\Omg_0}))$ and $\chi(\V(W^{\Omg_4}))$ are not trivial and are computed in Mandia (p. 137).  These two characters are related to the famous Rogers-Ramanujan series $a(q)$ and $b(q)$ defined as
\begin{eqnarray}
a(q) = \prod_{n\geq 1} \frac{1}{(1-q^{5n-2})(1-q^{5n-3})},  \\
b(q) = \prod_{n\geq 1} \frac{1}{(1-q^{5n-1})(1-q^{5n-4})}.
\end{eqnarray} 

These characters are given using the following product forms
\begin{eqnarray}
\chi(\V(W^{\Omg_0})) =  \prod_{n\geq 1} \frac{1}{(1-(v^4)^{5n-2})(1-(v^4)^{5n-3})} = a(v^4), \\
\chi(\V(W^{\Omg_4})) = \prod_{n\geq 1} \frac{1}{(1-(v^4)^{5n-1})(1-(v^4)^{5n-4})} = b(v^4).
\end{eqnarray}  

Hence we have
\begin{eqnarray}
gr_{princ}\left( W^{\Omg_0}\right) = \F\left(\Ff\right) \cdot \chi(\V(W^{\Omg_0})) = \frac{\vph(v^2)\vph(v^3)}{\vph(v)\vph(v^6)}a(v^4), \\
gr_{princ}\left( W^{\Omg_4}\right) = \F\left(\Ff\right) \cdot \chi(\V(W^{\Omg_4})) = \frac{\vph(v^2)\vph(v^3)}{\vph(v)\vph(v^6)}b(v^4).
\end{eqnarray}

\section{The Virasoro Modules with c = 4/5}
\label{s2.3}

In Section 1.2.2 the Virasoro algebra, $Vir$, was defined to be the infinite dimensional Lie algebra with basis $\left\{ L_n, \cvir | n \in \Z \right\}$, and its brackets were given as
\begin{eqnarray*}
&& [L_n,\cvir] = 0 \\
&& [L_m, L_n] = (m-n)L_{m+n} + \frac{1}{12}(m^3-m)\delta_{m,-n}\cvir \quad \mbox{ for } m,n\in \Z.
\end{eqnarray*}

In this section we study the unitary irreducible highest weight $Vir$-modules, \\$Vir(c,h)$, where $\cvir$ acts as the scalar $c = \frac{4}{5}$.  The operator $L_0$ acts diagonally on $Vir(c,h)$ and hence the module $Vir(c, h)$ decomposes into a direct sum of finite dimensional $L_0$-eigenspaces.  

Denote the $L_0$-eigenspace with eigenvalue $m$ by $Vir\left(c,h\right)_{m}$, and define the graded dimension of  $Vir\left(c,h\right)$ as 
$$gr(c,h) = \sum_{n\geq 0} dim\left(Vir\left(c,h\right)_{h - n}\right)q^n.$$

\ni The character, $\chi(c,h)$, for $Vir(c,h)$ is given by a shift of the graded dimension, 
$$\chi(c,h) = q^{h - c/24}gr(c,h).$$

Recall from Section 1.2.2 the Fe{\u\i}gin-Fuchs formulas for the characters $\chi_{s,t}^{m,n}(q)$ of the modules $Vir(c_{s,t},h_{s,t}^{m,n})$.  If $s = 5$ and $t = 6$, then $c_{5,6} = \frac{4}{5}$ and the 10 associated $h$ values are 
$$h_{5,6}^{1,1} = 0, \quad h_{5,6}^{1,2} = \frac{1}{8}, \quad  h_{5,6}^{1,3} = \frac{2}{3}, \quad h_{5,6}^{1,4}= \frac{13}{8}, \quad  h_{5,6}^{1,5} = 3,$$ 
$$h_{5,6}^{2,1} = \frac{2}{5}, \quad h_{5,6}^{2,2} = \frac{1}{40}, \quad h_{5,6}^{2,3} = \frac{1}{15}, \quad h_{5,6}^{2,4} = \frac{21}{40}, \quad h_{5,6}^{2,5} = \frac{7}{5}.$$


Only six of these modules will be needed in this work, and their characters are
\begin{eqnarray}
\chi_{5,6}^{1,1}(q) &=& \frac{q^{-1/30}}{\vph(q)}\sum_{k \in \Z} q^{30k^2}(q^k - q^{11k + 1}), \\
\chi_{5,6}^{1,5}(q) &=& \frac{q^{89/30}}{\vph(q)}\sum_{k \in \Z} q^{30k^2}(q^{-19k} - q^{31k + 5}),\\
\chi_{5,6}^{2,1}(q) &=& \frac{q^{11/30}}{\vph(q)}\sum_{k \in \Z} q^{30k^2}(q^{7k} - q^{17k + 2}), \\
\chi_{5,6}^{2,5}(q) &=& \frac{q^{41/30}}{\vph(q)}\sum_{k \in \Z} q^{30k^2}(q^{-13k} - q^{37k + 10}), \\
\chi_{5,6}^{1,3}(q) &=& \frac{q^{19/30}}{\vph(q)}\sum_{k \in \Z} q^{30k^2}(q^{9k} - q^{-21k + 3}),\\
\chi_{5,6}^{2,3}(q) &=& \frac{q^{1/30}}{\vph(q)}\sum_{k \in \Z} q^{30k^2}(q^{3k} - q^{27k + 6}).
\end{eqnarray}

Feingold-Milas \cite{FM} used all $h$ values for $c = \frac{4}{5}$ in their work on modules and twisted modules for the Zamolodchikov $\mathcal{W}_3$-algebra.

We will need sums (sometimes shifted by powers of $q$) of these characters. The next few manipulations will be necessary later in this work and rely heavily on the famous Jacobi triple product (JTP) identity,
$$\prod_{n \geq 1} (1- u^nv^n)(1- u^nv^{n-1})(1- u^{n-1}v^n) = \sum_{k \in \Z} (-1)^ku^{k(k+1)/2}v^{k(k-1)/2}.$$
The first sum to work with is  
\begin{eqnarray*}
\vph(q)q^{1/30}\left(\chi_{5,6}^{1,1}(q) + \chi_{5,6}^{1,5}(q)  \right) &=& \sum_{k \in \Z} q^{30k^2}(q^k - q^{11k + 1}) + q^3\sum_{k \in \Z} q^{30k^2}(q^{-19k} - q^{31k + 5}) \\
&=&  \sum_{k \in \Z} q^{30k^2}(q^k - q^{11k + 1}) + \sum_{k \in \Z} q^{30k^2}(q^{-19k + 3} - q^{31k + 8}). 
\end{eqnarray*}
The JTP with $u = q^8$ and $v = q^7$,
\begin{eqnarray*}
\prod_{n \geq 1} (1- q^{15n})(1- q^{15n -7})(1- q^{15n - 8}) &=& \sum_{k \in \Z} (-1)^kq^{8k(k+1)/2}q^{7k(k-1)/2} \\
&=& \sum_{k \in \Z} (-1)^kq^{(15k^2+k)/2} \\
&=& \sum_{m \in \Z} q^{(60m^2+2m)/2} - \sum_{m \in \Z} q^{(60m^2+62m+16)/2} \\
&=& \sum_{m \in \Z} q^{30m^2+m} - \sum_{m \in \Z} q^{30m^2+31m+8}.
\end{eqnarray*}
Using JTP again, but with $u = q^{13}$ and $v = q^{2}$,
\begin{eqnarray*}
\prod_{n \geq 1} (1- q^{15n})(1- q^{15n - 13})(1- q^{15n - 2}) &=& \sum_{k \in \Z} (-1)^kq^{13k(k+1)/2}q^{2k(k-1)/2} \\
&=& \sum_{k \in \Z} (-1)^kq^{(15k^2+11k)/2} \\
&=& \sum_{m \in \Z} q^{(60m^2+22m)/2} - \sum_{m \in \Z} q^{(60m^2-38m+4)/2} \\
&=& \sum_{m \in \Z} q^{30m^2+11m} - \sum_{m \in \Z} q^{30m^2-19m+2}. 
\end{eqnarray*}
Using these two equalities and shifting the second by $-q$, we obtain 
\begin{eqnarray}
\vph(q)q^{1/30}\left(\chi_{5,6}^{1,1}(q) + \chi_{5,6}^{1,5}(q)  \right) &=& \prod_{n \geq 1} (1- q^{15n})(1- q^{15n -7})(1- q^{15n - 8}) \\
&&- q \prod_{n \geq 1} (1- q^{15n})(1- q^{15n - 13})(1- q^{15n - 2}). \nonumber
\end{eqnarray}

Similarly for the next sum of characters
\begin{eqnarray*}
&&\vph(q)q^{-11/30}\left(\chi_{5,6}^{2,1}(q) + \chi_{5,6}^{2,5}(q)  \right) \\
&=& \sum_{k \in \Z} q^{30k^2}(q^{7k} - q^{17k + 2}) + q \sum_{k \in \Z} q^{30k^2}(q^{-13k} - q^{37k + 10})\\
&=&  \sum_{k \in \Z} q^{30k^2}(q^{7k} - q^{17k + 2}) + \sum_{k \in \Z} q^{30k^2}(q^{-13k+1} - q^{37k + 11}). 
\end{eqnarray*}
The JTP with $u = q^{11}$ and $v = q^4$,
\begin{eqnarray*}
\prod_{n \geq 1} (1- q^{15n})(1- q^{15n - 4})(1- q^{15n - 11}) &=& \sum_{k \in \Z} (-1)^kq^{11k(k+1)/2}q^{4k(k-1)/2} \\
&=& \sum_{k \in \Z} (-1)^kq^{(15k^2+7k)/2} \\
&=& \sum_{m \in \Z} q^{(60m^2+14m)/2} - \sum_{m \in \Z} q^{(60m^2+74m+22)/2} \\
&=& \sum_{m \in \Z} q^{30m^2+7m} - \sum_{m \in \Z} q^{30m^2+37m+11}. 
\end{eqnarray*}
Also JTP with $u = q^{1}$ and $v = q^{14}$,
\begin{eqnarray*}
\prod_{n \geq 1} (1- q^{15n})(1- q^{15n - 14})(1- q^{15n - 1}) &=& \sum_{k \in \Z} (-1)^kq^{k(k+1)/2}q^{14k(k-1)/2} \\
&=& \sum_{k \in \Z} (-1)^kq^{(15k^2-13k)/2} \\
&=& \sum_{m \in \Z} q^{(60m^2-26m)/2} - \sum_{m \in \Z} q^{(60m^2+34m+2)/2} \\
&=& \sum_{m \in \Z} q^{30m^2-13m} - \sum_{m \in \Z} q^{30m^2+17m+1}. 
\end{eqnarray*}
Using these two equalities and shifting the second by $q$, we obtain 
\begin{eqnarray}
\vph(q)q^{-11/30}\left(\chi_{5,6}^{2,1}(q) + \chi_{5,6}^{2,5}(q)  \right) &=& \prod_{n \geq 1} (1- q^{15n})(1- q^{15n - 4})(1- q^{15n - 11}) \\
&&+ q \prod_{n \geq 1} (1- q^{15n})(1- q^{15n - 14})(1- q^{15n - 1}). \nonumber
\end{eqnarray}

In order to express the last two characters in a useful way, we use two more special cases of (JTP), one with $u = q^4, v = q$ and the other with $u = q^3, v = q^2$, and  related to the Rogers-Ramanujan series:
\begin{eqnarray}\vph(q)a(q) = \prod_{n\geq 1} (1-q^{5n})(1-q^{5n-1})(1-q^{5n-4}) &=& \sum_{k\in \Z} (-1)^kq^{k(5k+3)/2} \mylabel{aq} \\
\vph(q)b(q) = \prod_{n\geq 1} (1-q^{5n})(1-q^{5n-2})(1-q^{5n-3}) &=& \sum_{k\in \Z} (-1)^kq^{k(5k+1)/2}\mylabel{bq}.
\end{eqnarray}

We will use the above to rewrite the series
$$\vph(q)q^{-19/30}\chi_{5,6}^{1,3}(q) = \sum_{k \in \Z} q^{30k^2}(q^{9k} - q^{-21k + 3}).$$
Use JTP with $u = q^{12}$ and $v = q^3$
\begin{eqnarray*}
\prod_{n \geq 1} (1- q^{15n})(1- q^{15n -3})(1- q^{15n - 12}) &=& \sum_{k \in \Z} (-1)^kq^{12k(k+1)/2}q^{3k(k-1)/2} \\
&=& \sum_{k \in \Z} (-1)^kq^{(15k^2+9k)/2} \\
&=& \sum_{m \in \Z} q^{(60m^2+18m)/2} - \sum_{m \in \Z} q^{(60m^2-42m+6)/2} \\
&=& \sum_{m \in \Z} q^{30m^2+9m} - \sum_{m \in \Z} q^{30m^2-21m+3}. 
\end{eqnarray*}
Hence, 
\begin{eqnarray}
\vph(q)q^{-19/30}\chi_{5,6}^{1,3}(q) &=& \prod_{n \geq 1} (1- (q^3)^{5n})(1- (q^3)^{5n - 1})(1- (q^3)^{5n - 4}) \nonumber \\
&=& \vph(q^3)a(q^3). 
\end{eqnarray}
Also the series 
$$\vph(q)q^{-1/30}\chi_{5,6}^{2,3}(q) = \sum_{k \in \Z} q^{30k^2}(q^{3k} - q^{27k + 6}).$$
The JTP with $u = q^{9}$ and $v = q^6$
\begin{eqnarray*}
\prod_{n \geq 1} (1- q^{15n})(1- q^{15n -6})(1- q^{15n - 9}) &=& \sum_{k \in \Z} (-1)^kq^{9k(k+1)/2}q^{6k(k-1)/2} \\
&=& \sum_{k \in \Z} (-1)^kq^{(15k^2+3k)/2} \\
&=& \sum_{m \in \Z} q^{(60m^2+6m)/2} - \sum_{m \in \Z} q^{(60m^2+54m+12)/2} \\
&=& \sum_{m \in \Z} q^{30m^2+3m} - \sum_{m \in \Z} q^{30m^2+27m+6}. 
\end{eqnarray*} 
Hence, 
\begin{eqnarray}
\vph(q)q^{-1/30}\chi_{5,6}^{2,3}(q) &=& \prod_{n \geq 1} (1- q^{15n})(1- q^{15n -6})(1- q^{15n - 9})  \nonumber \\
&=& \vph(q^3)b(q^3).
\end{eqnarray}

\chapter{Decomposition of $\Es$-modules as $Vir \otimes \Ff$-modules }
\label{ch3}

This chapter provides the branching decomposition for the irreducible level 1 $\fgt$-modules as $\fat$-modules.  First, some notation will be set up to aid this decomposition and then, building on the work from Chapter 2, the decomposition will be given for these three $\fgt$-modules. In \cite{BTM} Bernard and Thierry-Mieg state the branching decomposition of these three modules.  We give a rigorous proof of these results.

\section{Setting Up the Decomposition and Notation}
\label{s3.1}

Consider the Lie algebra $\fgt$ of type $\Es$ and its $\tilde{\tau}$-fixed subalgebra $\fat$ of type $\Ff$.  We wish to decompose the three level 1 $\fgt$-modules, $V^{\Lam_0}$, $V^{\Lam_1}$, $V^{\Lam_6}$, in terms of the level 1 $\fat$-modules, $W^{\Omg_0}, W^{\Omg_4}$.  For any $n \in \Z$, $j \in \{ 0,4\}$, the modules $W^{\Omg_j}$ and $W^{\Omg_j -n\delta}$ are isomorphic as $\fah$-modules, where $\fat = \fah \oplus \C d$, but not as $\fat$-modules.  The highest weight vectors in these two modules have $d$ eigenvalues that differ by n.  Inside each $\fgt$-module, $V^{\Lam_i}$, for $n \in \Z$,  define the subspace of $\fat$ HWV's of weight $\Omg_j - n\delta$ to be 
$$V^{\Lam_i}_{\fat}(\Omg_j)_n = \{ v \in V^{\Lam_i} \mid \fat^+ \cdot v = 0, \forall h \in \fbt, h \cdot v = (\Omg_j - n\delta)(h)v\}$$
where $\fat = \fat^- \oplus \fbt \oplus \fat^+$ is the triangular decomposition.
For $i \in \{0,1,6 \}$ let 
$$V_{\fat}^{\Lam_i}(\Omg_j) = \bigoplus_{0 \leq n \in \Z} V^{\Lam_i}_{\fat}(\Omg_j)_n. $$
As $\fat$-modules we have
$$ V^{\Lam_i} = V_{\fat}^{\Lam_i}(\Omg_0) \otimes_{\fah} W^{\Omg_0} \oplus V_{\fat}^{\Lam_i}(\Omg_4) \otimes_{\fah} W^{\Omg_4}.$$
As $\Z$-graded $\fbt \oplus \fat^+$-modules, letting $q = e^{-\delta}$, 
$$gr\left(V^{\Lam_i}_{\fat}(\Omg_j)\right) = e^{-Proj(\Lam_i)} \sum_{0 \leq n \in \Z} dim\left( V^{\Lam_i}_{\fat}(\Omg_j)_n\right) e^{\Omg_j}q^n$$
and for $i \in \{ 0,1,6 \}$ we define the coefficients $$c^i(n) = dim\left(  V^{\Lam_i}_{\fat}(\Omg_0)_n\right) \quad \mbox{ and } \quad d^i(n) = dim\left(  V^{\Lam_i}_{\fat}(\Omg_4)_n\right)$$
and define the power series $\ds c^i(q) = \sum_{n \geq 0} c^i(n) q^n$ and $\ds d^i(q) = \sum_{n \geq 0} d^i(n) q^n$.
Then we have the following graded dimension formula as $\fat$-modules,
\begin{eqnarray}
gr\left( V^{\Lam_i}\right) = e^{\Omg_0 - Proj(\Lam_i)}c^i(q) gr\left( W^{\Omg_0}\right) + e^{\Omg_4 - Proj(\Lam_i)}d^i(q) gr\left( W^{\Omg_4}\right).  
\end{eqnarray}

From Theorem \ref{thm: coset}, we have that for each $V^{\Lam_i}$ there exists $Vir_{\fg-\fa}(V^{\Lam_i})$ which commutes with $\fat$ and whose central charge is $c = \frac{4}{5}$.  It follows that for $i \in \{0,1,6 \}$, $j \in \{0,4\}$, $V_{\fat}^{\Lam_i}(\Omg_j)$ is a $Vir_{\fg-\fa}(V^{\Lam_i})$-module which we will show is a direct sum of irreducible $Vir$-modules, $Vir(\frac{4}{5},h)$, with $h$ chosen from the possible values listed in Section 2.2.    When dealing with the principally graded characters of these spaces, one needs to know that $$-\delta = -\theta(\fg) - \alp_0 = -\theta(\fa) - \alp_0 = -(\alp_1+2\alp_2+2\alp_3+3\alp_4+2\alp_5+\alp_6) - \alp_0,$$  hence $q$ is replaced in this principal specialization by $v^{12}$.  We also know that 
\begin{eqnarray*}
\Omg_4 - Proj(\Lam_0) &=& \Omg_4 - \Omg_0 \\
&=& \omg_4 \\
&=& \beta_1 + 2\beta_2 + 3\beta_3 + 2\beta_4,
\end{eqnarray*}
so the principal specialization of (3.1) when $i = 0$ is
\begin{eqnarray}
gr_{princ}\left( V^{\Lam_0}\right) &=& c^0(v^{12}) gr_{princ}\left( W^{\Omg_0}\right) + v^{-8}d^0(v^{12}) gr_{princ}\left( W^{\Omg_4}\right).
\end{eqnarray}
Since $d^0(0) = 0$ and $d^0(1) = 1$, temporarily define $k^0(m) = d^0(m+1)$ for $m \geq 0$, then 
\begin{eqnarray*}
v^{-8}\sum_{n \geq 0}d^0(n)(v^{12})^n &=& 0 + v^{-8}\sum_{n \geq 1}d^0(n)(v^{12})^n \\
&=& v^{12-8}\sum_{n \geq 1}d^0(n)(v^{12})^{n-1} \\
&=& v^{4}\sum_{m \geq 0}k^0(m)(v^{12})^m.
\end{eqnarray*}
For convenience we will keep the name $d^0(q)$ for the new series $k^0(q)$ so that in the new notation $d^0(0) = 1$.  Using the results from Sections 2.1.1 and 2.1.2 this principal specialization can be rewritten in the following ways (using $t = v^4$ in (\ref{eq 3.6})): 
\begin{eqnarray}
gr_{princ}\left( V^{\Lam_0}\right) &=& c^0(v^{12}) gr_{princ}\left( W^{\Omg_0}\right) + v^4d^0(v^{12}) gr_{princ}\left( W^{\Omg_4}\right) \\
\frac{\vph(v^2)\vph(v^3)\vph(v^{12})}{\vph(v)\vph(v^4)\vph(v^6)} &=& c^0(v^{12})\frac{\vph(v^2)\vph(v^3)}{\vph(v)\vph(v^6)}a(v^4)  + v^4d^0(v^{12}) \frac{\vph(v^2)\vph(v^3)}{\vph(v)\vph(v^6)}b(v^4) \quad  \\ 
\frac{\vph(v^{12})}{\vph(v^4)} &=& c^0(v^{12})a(v^4)  + v^4d^0(v^{12})b(v^4) \\   
\frac{\vph(t^3)}{\vph(t)} &=& c^0(t^3)a(t)  + td^0(t^3)b(t).  \mylabel{eq 3.6}
\end{eqnarray}


We will see in the next chapter that $\tilde{\tau}$ determines an automorphism of $V^{\Lam_0} \oplus V^{\Lam_1} \oplus V^{\Lam_6}$ which fixes $V^{\Lam_0}$ and switches $V^{\Lam_1}$ and $V^{\Lam_6}$ in such a way that their principal graded dimensions are equal.  Since $c^1(0) \neq 0 \neq d^1(0)$, we do not shift either of these power series. Also note 
\begin{eqnarray*}
\Omg_0 - Proj(\Lam_1) &=& \Omg_0 - \Omg_4 \\
&=& -\omg_4 \\
&=& -\beta_1 - 2\beta_2 - 3\beta_3 - 2\beta_4,
\end{eqnarray*} 
and therefore the principal specialization of (3.1) when $i = 1$ can be written in the following ways: 
\begin{eqnarray}
gr_{princ}\left( V^{\Lam_1}\right) &=& v^8c^1(v^{12}) gr_{princ}\left( W^{\Omg_0}\right) + d^1(v^{12}) gr_{princ}\left( W^{\Omg_4}\right) \\
\frac{\vph(v^2)\vph(v^3)\vph(v^{12})}{\vph(v)\vph(v^4)\vph(v^6)} &=& v^8c^1(v^{12})\frac{\vph(v^2)\vph(v^3)}{\vph(v)\vph(v^6)}a(v^4)  + d^1(v^{12}) \frac{\vph(v^2)\vph(v^3)}{\vph(v)\vph(v^6)}b(v^4)  \quad \\
\frac{\vph(v^{12})}{\vph(v^4)} &=& v^8c^1(v^{12})a(v^4)  + d^1(v^{12})b(v^4) \\
\frac{\vph(t^3)}{\vph(t)} &=& t^2c^1(t^3)a(t)  + d^1(t^3)b(t). \mylabel{eq 3.10}   
\end{eqnarray}

Similarly, when $i = 6$ we have
\begin{eqnarray}
gr_{princ}\left( V^{\Lam_6}\right) &=& v^8c^6(v^{12}) gr_{princ}\left( W^{\Omg_0}\right) + d^6(v^{12}) gr_{princ}\left( W^{\Omg_4}\right) \\
\frac{\vph(t^3)}{\vph(t)} &=& t^2c^6(t^3)a(t)  + d^6(t^3)b(t) . \mylabel{eq 3.12}
\end{eqnarray}
 
We first show that there is a unique solution to the equations (\ref{eq 3.6}) and (\ref{eq 3.10}).  Let $\zeta \neq 1$ be a cube root of 1, and consider the equations
\begin{eqnarray*}
\frac{\vph(t^3)}{\vph(t)} &=& c^0(t^3)a(t) + td^0(t^3)b(t) \\
\frac{\vph(t^3)}{\vph(\zeta t)} &=& c^0(t^3)a(\zeta t) + \zeta td^0(t^3)b(\zeta t).
\end{eqnarray*} 
Setting $e(t) = \frac{\vph(t^3)}{\vph(t)}$ and $e(t\zeta) = \frac{\vph(t^3)}{\vph(\zeta t)}$ we have the following matrix equation
\begin{eqnarray*}
\left[\begin{array}{r r}
a(t) & b(t) \\
a(t\zeta) & \zeta b(t\zeta)
\end{array} \right] 
\left[
\begin{array}{r}
c^0(t^3) \\
td^0(t^3) 
\end{array} \right] =
\left[
\begin{array}{r}
e(t) \\
e(t\zeta) 
\end{array} \right].
\end{eqnarray*}

The constant term of $\zeta a(t)b(t\zeta) - a(t\zeta)b(t)$ is $\zeta - 1$ and  therefore $$det\left[\begin{array}{r r}
a(t) & b(t) \\
a(tz) & zb(tz)
\end{array} \right] $$ is an invertible power series.  This gives the uniqueness of a solution to the system and also to (\ref{eq 3.6}).  

Now consider the equations
\begin{eqnarray*}
\frac{\vph(t^3)}{\vph(t)} &=& t^2c^1(t^3)a(t) + d^1(t^3)b(t) \\
\frac{\vph(t^3)}{\vph(\zeta t)} &=& \zeta^2t^2c^1(t^3)a(\zeta t) + d^1(t^3)b(\zeta t)
\end{eqnarray*} 
and the matrix equation 
\begin{eqnarray*}
\left[\begin{array}{r r}
a(t) & b(t) \\
\zeta^2 a(t\zeta) & b(t\zeta)
\end{array} \right] 
\left[
\begin{array}{r}
t^2c^1(t^3) \\
d^1(t^3) 
\end{array} \right] =
\left[
\begin{array}{r}
e(t) \\
e(t\zeta) 
\end{array} \right].
\end{eqnarray*}

The constant term of $a(t)b(t\zeta) - \zeta^2 a(t\zeta)b(t)$ is $1 - \zeta^2$ and  therefore $$det\left[\begin{array}{r r}
a(t) & b(t) \\
\zeta^2 a(t\zeta) & b(t\zeta)
\end{array} \right]$$ is an invertible power series.  This gives the uniqueness of a solution to the system and also to (\ref{eq 3.10}) and (\ref{eq 3.12}).

\section{The Branching Rule Coefficients for $V^{\Lam_0}$}
\label{s3.2}
  


\begin{thm} \label{thm br1} The branching rule coefficients for $V^{\Lam_0}$ satisfying (\ref{eq 3.6}) can be expressed as the Virasoro characters:
$$c^0(t) = t^{1/30}\left(\chi_{5,6}^{1,1}(t) + \chi_{5,6}^{1,5}(t)  \right) \quad \mbox{and} \quad d^0(t)= t^{-11/30}\left(\chi_{5,6}^{2,1}(t) + \chi_{5,6}^{2,5}(t)  \right),$$
so that we get
\begin{eqnarray*}
gr_{princ}\left( V^{\Lam_0}\right) &=& t^{1/10}\left(\chi_{5,6}^{1,1}(t^3) + \chi_{5,6}^{1,5}(t^3)  \right) gr_{princ}\left( W^{\Omg_0}\right) \\
&&+ t^{-1/10}\left(\chi_{5,6}^{2,1}(t^3) + \chi_{5,6}^{2,5}(t^3)  \right) gr_{princ}\left( W^{\Omg_4}\right).
\end{eqnarray*}
\end{thm}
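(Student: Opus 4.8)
The plan is to exploit the uniqueness statement already in hand: the matrix computation immediately preceding this section shows that (\ref{eq 3.6}) has a unique solution $(c^0(t),d^0(t))$ among pairs of power series, so it suffices to check that the two series claimed in the theorem satisfy (\ref{eq 3.6}). Once that is done the final displayed formula is immediate, since setting $t=v^4$ (so that $t^3=v^{12}$) in the first equation of the chain ending in (\ref{eq 3.6}) gives $gr_{princ}(V^{\Lam_0}) = c^0(t^3)\,gr_{princ}(W^{\Omg_0}) + t\,d^0(t^3)\,gr_{princ}(W^{\Omg_4})$, and then $c^0(t^3) = t^{1/10}\big(\chi^{1,1}_{5,6}(t^3)+\chi^{1,5}_{5,6}(t^3)\big)$ and $t\,d^0(t^3) = t^{-1/10}\big(\chi^{2,1}_{5,6}(t^3)+\chi^{2,5}_{5,6}(t^3)\big)$.

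To carry out the verification I would first pass from the character expressions to products. Substituting $q=t^3$ into the two Jacobi-triple-product identities established in Chapter 2 for $\vph(q)q^{1/30}\big(\chi^{1,1}_{5,6}(q)+\chi^{1,5}_{5,6}(q)\big)$ and $\vph(q)q^{-11/30}\big(\chi^{2,1}_{5,6}(q)+\chi^{2,5}_{5,6}(q)\big)$ yields
\begin{align*}
\vph(t^3)\,c^0(t^3) &= \prod_{n\geq 1}(1-t^{45n})(1-t^{45n-21})(1-t^{45n-24}) \;-\; t^3\prod_{n\geq 1}(1-t^{45n})(1-t^{45n-39})(1-t^{45n-6}),\\
\vph(t^3)\,d^0(t^3) &= \prod_{n\geq 1}(1-t^{45n})(1-t^{45n-12})(1-t^{45n-33}) \;+\; t^3\prod_{n\geq 1}(1-t^{45n})(1-t^{45n-42})(1-t^{45n-3}).
\end{align*}
Multiplying (\ref{eq 3.6}) through by $\vph(t)\vph(t^3)$ and invoking (\ref{aq}), (\ref{bq}) for $\vph(t)a(t)$ and $\vph(t)b(t)$, the assertion reduces to the single infinite-product (equivalently, theta-function) identity
$$\vph(t^3)^2 \;=\; \big[\vph(t^3)c^0(t^3)\big]\,\vph(t)a(t) \;+\; t\,\big[\vph(t^3)d^0(t^3)\big]\,\vph(t)b(t),$$
in which every bracketed factor is now an explicit product in $t$ via the formulas above and (\ref{aq}), (\ref{bq}).

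The crux is that this last identity is, after a rescaling of the variable, exactly one of Ramanujan's forty identities for the Rogers--Ramanujan functions, namely Entry 3.6 of \cite{BCC}. So the remaining step is to bring the displayed identity into Ramanujan's normalization --- reconciling the moduli $3$, $5$, $15$, $45$ that appear on the two sides (this will involve a $5$-dissection of the theta series produced on the left against the products coming from the Chapter 2 computations, together with careful sign bookkeeping) --- and then quote that entry. I expect this reconciliation of exponents and sign conventions to be the only real obstacle; everything else is the uniqueness lemma plus routine substitution. As a sanity check one can also match the first several coefficients of both sides of (\ref{eq 3.6}) directly, which confirms that the relevant Ramanujan identity is Entry 3.6 and not a neighbouring one.
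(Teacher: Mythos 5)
Your reduction is sound and matches the paper's up to the key identity: the uniqueness argument, the substitution $q=t^3$ into the Chapter 2 product forms, and the clearing of denominators all correctly bring the theorem down to the single identity $\vph(t^3)^2 = G(t^3)\,\vph(t)a(t) + t\,H(t^3)\,\vph(t)b(t)$, where $G(t^3)=\vph(t^3)c^0(t^3)$ and $H(t^3)=\vph(t^3)d^0(t^3)$ are the alternating sums of modulus-$45$ products you wrote down. The gap is in the final step. This identity is \emph{not} one of Ramanujan's forty identities after a rescaling of the variable. Entry 3.6 of \cite{BCC} is the identity $\vph(t^3)^2 = t^2\vph(t)\vph(t^9)a(t)a(t^9) + \vph(t)\vph(t^9)b(t)b(t^9)$, whose right-hand side involves the Rogers--Ramanujan products at the two arguments $t$ and $t^9$ and has a visibly different shape from your target. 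Passing from Entry 3.6 to the target is the actual mathematical content of the proof, and it is not a matter of "normalization" or "sign bookkeeping."

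Concretely, what is needed is a $3$-dissection (not a $5$-dissection) of the theta series $\vph(t)a(t)=\sum_{k\in\Z}(-1)^k t^{k(5k+3)/2}$ and $\vph(t)b(t)=\sum_{k\in\Z}(-1)^k t^{k(5k+1)/2}$ according to the residue of the exponent mod $3$, together with four separate Jacobi-triple-product identifications of the resulting pieces: the exponents $k(5k+3)/2$ are never $\equiv 2$ and $k(5k+1)/2$ never $\equiv 1 \pmod 3$; the part of $\vph(t)a(t)$ with exponents $\equiv 0$ equals $\vph(t^9)b(t^9)$ and the part $\equiv 1$ equals $-tH(t^3)$; the part of $\vph(t)b(t)$ with exponents $\equiv 0$ equals $G(t^3)$ and the part $\equiv 2$ equals $-t^2\vph(t^9)a(t^9)$. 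Substituting these into Entry 3.6 collapses it to $\vph(t^3)^2 = G(t^3)[\vph(t)a(t)]_0 + tH(t^3)[\vph(t)b(t)]_2$, and the target then follows because the remaining cross terms cancel, $G(t^3)\bigl(-tH(t^3)\bigr) + tH(t^3)G(t^3)=0$. None of this is supplied by your proposal; "matching the first several coefficients" can identify which of the forty entries is relevant but does not prove the identity. To close the argument you must prove these dissection identities (the paper's Lemmas 3.2--3.6) or an equivalent.
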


\begin{proof}

Let the $G(t)$ and $H(t)$ be given by
$$G(t) = \vph(t)t^{1/30}\left(\chi_{5,6}^{1,1}(t) + \chi_{5,6}^{1,5}(t)  \right) \quad\mbox{ and } \quad H(t) = \vph(t)t^{-11/30}\left(\chi_{5,6}^{2,1}(t) + \chi_{5,6}^{2,5}(t)  \right),$$
so that
\begin{eqnarray}
G(t^3) &=& \vph(t^3)(t^3)^{1/30}\left(\chi_{5,6}^{1,1}(t^3) + \chi_{5,6}^{1,5}(t^3)  \right), \nn 
\end{eqnarray}
and
\begin{eqnarray}
H(t^3) &=& \vph(t^3)(t^3)^{-11/30}\left(\chi_{5,6}^{2,1}(t^3) + \chi_{5,6}^{2,5}(t^3) \right). \nn
\end{eqnarray}
According to (2.15) and (2.16) these equations can also be given by
\begin{eqnarray} 
G(t^3) &=& \prod_{n \geq 1} (1- (t^3)^{15n})(1- (t^3)^{15n -7})(1- (t^3)^{15n - 8}) \\
&&- t^3 \prod_{n \geq 1} (1- (t^3)^{15n})(1- (t^3)^{15n - 13})(1- (t^3)^{15n - 2}) ,\nonumber
\end{eqnarray}
and
\begin{eqnarray}
H(t^3) &=& \prod_{n \geq 1} (1- (t^3)^{15n})(1- (t^3)^{15n - 4})(1- (t^3)^{15n - 11}) \\
&&+ t^3 \prod_{n \geq 1} (1- (t^3)^{15n})(1- (t^3)^{15n - 14})(1- (t^3)^{15n - 1}) .\nonumber
\end{eqnarray}

Replacing $c^0(t^3)$ and $d^0(t^3)$ with $G(t^3)/\vph(t^3)$ and $H(t^3)/\vph(t^3)$, respectively, Theorem \ref{thm br1} is equivalent to 
$$\frac{\vph(t^3)}{\vph(t)} = G(t^3)a(t)/\vph(t^3) + tH(t^3)b(t)/\vph(t^3),$$
which is equivalent to
$$\vph(t^3)^2 = G(t^3)\vph(t)a(t) + tH(t^3)\vph(t)b(t).$$

The following equation, which is proven in Berndt et al \cite{BCC}, is one of the Ramanujan identities
\begin{eqnarray} \label{REq1}
\vph(t^3)^2 = t^2\vph(t)\vph(t^9)a(t)a(t^9) + \vph(t)\vph(t^9)b(t)b(t^9).
\end{eqnarray}

We write
\begin{eqnarray}
a(t)\vph(t) = \sum_{m\geq 0} x_mt^m &=& \sum_{m \geq 0}x_{3m}t^{3m}+\sum_{m \geq 0}x_{3m+1}t^{3m+1}+\sum_{m \geq 0}x_{3m+2}t^{3m+2} \nn \\
&=& [a(t)\vph(t)]_0 + [a(t)\vph(t)]_1 + [a(t)\vph(t)]_2,
\end{eqnarray}
and
\begin{eqnarray}
b(t)\vph(t) = \sum_{m\geq 0} y_mt^m &=& \sum_{m \geq 0}y_{3m}t^{3m}+\sum_{m \geq 0}y_{3m+1}t^{3m+1}+\sum_{m \geq 0}y_{3m+2}t^{3m+2} \nn \\
&=& [b(t)\vph(t)]_0 + [b(t)\vph(t)]_1 + [b(t)\vph(t)]_2
\end{eqnarray}
where $[a(t)\vph(t)]_i$ and $[b(t)\vph(t)]_i$ are the terms of the given series whose powers of $t$ are congruent to $i \bmod 3$ for $i \in \{ 0,1,2\}$.  Using this notation (\ref{REq1}) can be written as 
\begin{eqnarray} \label{REq2}
\vph(t^3)^2 &=& t^2 \left( [a(t)\vph(t)]_0 + [a(t)\vph(t)]_1 + [a(t)\vph(t)]_2 \right)\vph(t^9)a(t^9)  \\
&&+ \left( [b(t)\vph(t)]_0 + [b(t)\vph(t)]_1 + [b(t)\vph(t)]_2 \right)\vph(t^9)b(t^9). \nn  
\end{eqnarray}

\ni The next lemmas will provide simplifications to (\ref{REq2}).

\begin{lem} We show \label{lem3.2}$[a(t)\vph(t)]_2 = 0 = [b(t)\vph(t)]_1$. 
\end{lem}
\begin{proof} Equation (\ref{aq}) gives $a(t)\vph(t) = \sum_{k\in \Z} (-1)^kt^{k(5k+3)/2}$, then examining the powers of $t$:\\

\ni Case 1: If $k = 3m$, for $m \in \Z$, then $3m(15m+3)/2 \equiv 0 \bmod 3$ \\

\ni Case 2: If $k = 3m + 1$, for $m \in \Z$\\
And if $m$ even, then $(6n+1)(30n+8)/2 \equiv (6n+1)(15n+4)\equiv (1)(1)\equiv 1 \bmod 3$ \\
And if $m$ odd, then $(6n+4)(30n+23)/2 \equiv (3n+2)(30n+23)\equiv (2)(2)\equiv 1 \bmod 3$\\

\ni Case 3: If $k = 3m + 2$, for $m \in \Z$\\
And if $m$ even, then $(6n+2)(30n+13)/2 \equiv (3n+1)(30n+13)\equiv (1)(1)\equiv 1 \bmod 3$ \\
And if $m$ odd, then $(6n+5)(30n+28)/2 \equiv (6n+5)(15n+14)\equiv (2)(2)\equiv 1 \bmod 3$\\

\ni Hence $[a(t)\vph(t)]_2 = 0$. \\
Since (\ref{bq}) gives $b(t)\vph(t) = \sum_{k \in \Z} (-1)^k t^{k(5k+1)/2} $, then we also have the following argument for the resulting powers of $t$:\\

\ni Case 1: If $k = 3m$, for $m \in \Z$, then $3m(15m+1)/2 \equiv 0 \bmod 3$ \\

\ni Case 2: If $k = 3m + 1$, for $m \in \Z$\\
And if $m$ even, then $(6n+1)(30n+6)/2 \equiv (6n+1)(15n+3)\equiv (1)(0)\equiv 0 \bmod 3$ \\
And if $m$ odd, then $(6n+4)(30n+21)/2 \equiv (3n+2)(30n+21)\equiv (2)(0)\equiv 0 \bmod 3$ \\

\ni Case 3: If $k = 3m + 2$, for $m \in \Z$\\
And if $m$ even, then $(6n+2)(30n+11)/2 \equiv (3n+1)(30n+11)\equiv (1)(2)\equiv 2 \bmod 3$ \\
And if $m$ odd, then $(6n+5)(30n+26)/2 \equiv (6n+5)(15n+13)\equiv (2)(1)\equiv 2 \bmod 3$ \\

\ni Therefore $[b(t)\vph(t)]_1 = 0$. \\
\end{proof}

\begin{lem} \label{lem3.3}$[a(t)\vph(t)]_0 = \sum_{m \geq 0} x_{3m}t^{3m} = \sum_{k \in \Z} (-1)^kt^{3k(15k+3)/2} = b(t^9)\vph(t^9)$ 
\end{lem}
\begin{proof} Use the definition of $b(t)\vph(t)$ and substitute $t^9$ for $t$, along with the observation of the powers of $t$ from proof of Lemma \ref{lem3.2}. 
\end{proof}

\begin{lem} \label{lem3.4}$[a(t)\vph(t)]_1 =  \sum_{m\geq 0} x_{3m+1} t^{3m+1}  = -tH(t^3) $
\end{lem}
\begin{proof} Replacing $m$ by $-m-1$ to get from line 2 to line 3, 
\begin{eqnarray*}
\sum_{m\geq 0} x_{3m+1} t^{3m+1}  &=&  \sum_{k \in \Z, k \equiv 1,2 \bmod 3} (-1)^k t^{k(5k+3)/2} \\
 &=& -\sum_{m \in \Z} (-1)^m t^{(3m+1)(15m+8)/2} + \sum_{m \in \Z} (-1)^m t^{(3m+2)(15m+13)/2} \\
  &=& - \sum_{m \in \Z} (-1)^m t^{(45m^2+39m+8)/2} - \sum_{m \in \Z} (-1)^m t^{(45m^2 +21m+2)/2} \\
 &=& - t^4\sum_{m \in \Z} (-1)^m t^{(45m^2+39m)/2} - t \sum_{m \in \Z} (-1)^m t^{(45m^2 + 21m)/2} \\
 &=& -t^4 \sum_{m \in \Z} (-1)^m (t^3)^{(15m^2 + 13m)/2} - t \sum_{m \in \Z} (-1)^m (t^3)^{(15m^2+7m)/2} \\
 &=& -t^4 \sum_{m \in \Z} (-1)^m  (t^3)^{14m(m + 1)/2}(t^3)^{m(m-1)/2} \\ &&- t \sum_{m \in \Z} (-1)^m (t^3)^{11m(m +1)/2}(t^3)^{4m(m-1)/2} \\
 &=& -t^4 \prod_{n\geq 1} (1-(t^3)^{15n})(1-(t^3)^{15n - 1})(1-(t^3)^{15n - 14}) \\  &&+ -t\prod_{n\geq 1} (1-(t^3)^{15n})(1-(t^3)^{15n - 4})(1-(t^3)^{15n - 11})  \\
 &=& -tH(t^3)
\end{eqnarray*}
\end{proof}

\begin{lem} \label{lem3.5}$[b(t)\vph(t)]_0 =  \sum_{m\geq 0} y_{3m} t^{3m}  = G(t^3) $
\end{lem}
\begin{proof} 
\begin{eqnarray*}
 \sum_{m\geq 0} y_{3m} t^{3m}  &=&  \sum_{k \in \Z, k \equiv 0,1 \bmod 3} (-1)^k t^{k(5k+1)/2} \\
 &=& \sum_{m \in \Z} (-1)^m t^{3m(15m+1)/2} - \sum_{m \in \Z} (-1)^m t^{(3m+1)(15m+6)/2} \\
 &=& \sum_{m \in \Z} (-1)^m (t^3)^{m(15m+1)/2} - \sum_{m \in \Z} (-1)^m t^{(45m^2+33m+6)/2} \\
 &=& \sum_{m \in \Z} (-1)^m (t^3)^{(15m^2+m)/2} - t^3\sum_{m \in \Z} (-1)^m (t^3)^{(15m^2+11m)/2} \\
  &=& \sum_{m \in \Z} (-1)^m (t^3)^{(15m^2+m)/2} - t^3\sum_{m \in \Z} (-1)^m (t^3)^{(15m^2+11m)/2} \\
  &=& \sum_{m \in \Z} (-1)^m  (t^3)^{8m(m+1)/2}(t^3)^{7m(m-1)/2} \\ &&- t^3\sum_{m \in \Z} (-1)^m (t^3)^{13m(m+1)/2}(t^3)^{2m(m-1)/2} \\
  &=& \prod_{n\geq 1} (1-(t^3)^{15n})(1-(t^3)^{15n - 7})(1-(t^3)^{15n - 8}) \\
  &&- t^3\prod_{n\geq 1} (1-(t^3)^{15n})(1-(t^3)^{15n - 2})(1-(t^3)^{15n - 13}) \\
 &=& G(t^3)
\end{eqnarray*}

\end{proof}

\begin{lem} \label{lem3.6}$[b(t)\vph(t)]_2 =  \sum_{m\geq 0} y_{3m+2} t^{3m+2}  = -t^2a(t^9)b(t^9) $
\end{lem}
\begin{proof}
Replacing $m$ by $-m-1$ to get from line 6 to line 7,
\begin{eqnarray*}
 \sum_{m\geq 0} y_{3m+2} t^{3m+2}  &=& \sum_{k \in \Z, k \equiv 2 \bmod 3} (-1)^k t^{k(5k+1)/2} \\
 &=&  \sum_{m \in \Z} (-1)^m t^{(3m+2)(15m+11)/2} \\
 &=& \sum_{m \in \Z} (-1)^m t^{(45m^2 + 63m + 22)/2} \\
&=& \sum_{m \in \Z} (-1)^m t^{(45m^2 + 63m + 18 +  4)/2} \\
&=& t^2 \sum_{m \in \Z} (-1)^m t^{(45m^2 + 63m + 18)/2} \\
\end{eqnarray*}
\begin{eqnarray*}
&=& t^2 \sum_{m \in \Z} (-1)^m (t^9)^{(5m^2 + 7m + 2)/2} \\
&=& t^2 \sum_{m \in \Z} (-1)^m (t^9)^{(m+1)(5m+2)/2} \\
&=& t^2 \sum_{m \in \Z} (-1)^{m+1} (t^9)^{(m)(5m+3)/2} \\
&=& -t^2 \sum_{m \in \Z} (-1)^{m} (t^9)^{(m)(5m+3)/2} \\
&=& -t^2 a(t^9)\vph(t^9)
\end{eqnarray*}
\end{proof}

Using Lemmas 3.2-3.6, a reduction of (\ref{REq2}) can be made as follows 
\begin{eqnarray} \label{REq3}
\vph(t^3)^2 &=& t^2 \left( [a(t)\vph(t)]_0 + [a(t)\vph(t)]_1 + [a(t)\vph(t)]_2 \right)\vph(t^9)a(t^9) \nonumber \\
&&+ \left( [b(t)\vph(t)]_0 + [b(t)\vph(t)]_1 + [b(t)\vph(t)]_2 \right)\vph(t^9)b(t^9) \\
&=& t^2 \left( [a(t)\vph(t)]_0 + [a(t)\vph(t)]_1 \right)\vph(t^9)a(t^9) \nonumber \\
&&+ \left( [b(t)\vph(t)]_0 + [b(t)\vph(t)]_2 \right)\vph(t^9)b(t^9) \\
&=& t^2 \left( \vph(t^9)b(t^9) -tH(t^3) \right)\vph(t^9)a(t^9) \nonumber \\
&&+ \left( G(t^3) - t^2 a(t^9)\vph(t^9)  \right)\vph(t^9)b(t^9) \\
&=&  tH(t^3)(-t^2)\vph(t^9)a(t^9) + G(t^3)\vph(t^9)b(t^9) \\
&=&  tH(t^3)[b(t)\vph(t)]_2 + G(t^3)[a(t)\vph(t)]_0 
\end{eqnarray}

Therefore, Theorem \ref{thm br1} holds if and only if $$G(t^3)[a(t)\vph(t)]_1 + tH(t^3) [b(t)\vph(t)]_0 = 0$$
This obviously holds by using Lemmas 3.4 and 3.5, so Theorem 3.1 has been proven.
\end{proof}

\section{The Branching Rule Coefficients for $V^{\Lam_1}$ and $V^{\Lam_6}$}
\label{s3.3}

\begin{thm} \label{thm br2} For $i \in \{1,6\}$, the branching rules of $V^{\Lam_i}$ satisfying (\ref{eq 3.10}) and (\ref{eq 3.12}) can be expressed as the Virasoro characters:
$$c^1(t) = c^6(t) = t^{-19/30}\chi_{5,6}^{1,3}(t) \quad \mbox{and} \quad  d^1(t) = d^6(t) =  t^{-1/30}\chi_{5,6}^{2,3}(t),$$
so we have
\begin{eqnarray*}
gr_{princ}\left( V^{\Lam_i}\right) &=& t^{-19/10}\chi_{5,6}^{1,3}(t^3) gr_{princ}\left( W^{\Omg_0}\right) + t^{-1/10}\chi_{5,6}^{2,3}(t^3)gr_{princ}\left( W^{\Omg_4}\right).
\end{eqnarray*}
\end{thm}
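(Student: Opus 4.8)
The plan is to verify that the proposed power series $c^1, c^6, d^1, d^6$ satisfy the defining systems (\ref{eq 3.10}) and (\ref{eq 3.12}), and then to invoke the uniqueness of solutions already proved in Section \ref{s3.1}. A pleasant feature is that, unlike in the proof of Theorem \ref{thm br1}, no splitting of series modulo $3$ is needed here: the relevant Ramanujan identity falls out directly after a single substitution.

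First I would use the two identities established at the end of Section \ref{s2.3},
\begin{equation*}
\vph(t)\,t^{-19/30}\chi_{5,6}^{1,3}(t) = \vph(t^3)\,a(t^3), \qquad \vph(t)\,t^{-1/30}\chi_{5,6}^{2,3}(t) = \vph(t^3)\,b(t^3),
\end{equation*}
to rewrite the proposed coefficients as $c^1(t) = \vph(t^3)a(t^3)/\vph(t)$ and $d^1(t) = \vph(t^3)b(t^3)/\vph(t)$. Since $\chi_{5,6}^{1,3}$ and $\chi_{5,6}^{2,3}$ have leading $q$-exponents $19/30$ and $1/30$ respectively, these are honest formal power series with constant term $1$, consistent with the observation $c^1(0)\neq 0 \neq d^1(0)$ made earlier, so the uniqueness statement of Section \ref{s3.1} applies to them. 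Replacing $t$ by $t^3$ gives $c^1(t^3) = \vph(t^9)a(t^9)/\vph(t^3)$ and $d^1(t^3) = \vph(t^9)b(t^9)/\vph(t^3)$.

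Next I would substitute these into the right-hand side of (\ref{eq 3.10}) and clear the denominators $\vph(t^3)$ and $\vph(t)$; the equation (\ref{eq 3.10}) then reads
\begin{equation*}
\vph(t^3)^2 = t^2\,\vph(t)\vph(t^9)\,a(t)a(t^9) + \vph(t)\vph(t^9)\,b(t)b(t^9),
\end{equation*}
which is precisely the Ramanujan identity (\ref{REq1}) from \cite{BCC} already invoked in the proof of Theorem \ref{thm br1}. Hence the proposed $c^1, d^1$ satisfy (\ref{eq 3.10}), and by the uniqueness established in Section \ref{s3.1} they are the branching coefficients for $V^{\Lam_1}$. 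Because (\ref{eq 3.12}) is literally the same equation as (\ref{eq 3.10}) — equivalently, because $\tilde\tau$ interchanges $V^{\Lam_1}$ and $V^{\Lam_6}$ with equal principal graded dimensions — the same uniqueness forces $c^6 = c^1$ and $d^6 = d^1$. The displayed formula for $gr_{princ}(V^{\Lam_i})$ then follows by substituting these coefficient formulas into the principal specialization of the graded dimension identity (3.1) for $i \in \{1,6\}$ and using the product forms of $gr_{princ}(W^{\Omg_0})$ and $gr_{princ}(W^{\Omg_4})$ from Section \ref{ss2.1.2}, collecting the powers of $t$.

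The substantive ingredients are all in place: the JTP-based rewriting of the $c=\frac{4}{5}$ characters $\chi_{5,6}^{m,n}$ carried out in Chapter \ref{ch2}, the Ramanujan identity (\ref{REq1}), and the determinantal uniqueness argument of Section \ref{s3.1}. So there is no real obstacle; the one place to be careful is the bookkeeping of the substitution $t \mapsto t^3$ and the tracking of the fractional powers of $t$ in the final display, which is where an exponent slip is most likely and which I would therefore double-check.
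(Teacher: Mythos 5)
Your proposal is correct and follows essentially the same route as the paper's proof: rewrite $t^{-19/30}\chi_{5,6}^{1,3}(t)$ and $t^{-1/30}\chi_{5,6}^{2,3}(t)$ as $\vph(t^3)a(t^3)/\vph(t)$ and $\vph(t^3)b(t^3)/\vph(t)$ via the JTP computations of Section 2.3, substitute $t\mapsto t^3$, clear denominators, and observe that the resulting equation is exactly the Ramanujan identity (3.20), with the uniqueness argument of Section 3.1 closing the loop. The only difference is that you make the appeal to uniqueness and the equality $c^6=c^1$, $d^6=d^1$ explicit, which the paper leaves implicit.
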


\begin{proof}
Equations (2.19) and (2.20), give $$t^{-19/30}\chi_{5,6}^{1,3}(t) = \vph(t^3)a(t^3)/\vph(t)$$ and $$t^{-1/30}\chi_{5,6}^{2,3}(t) = \vph(t^3)b(t^3)/\vph(t).$$ 

Evaluating $c^1(t)$ and $d^1(t)$ at $t = t^3$, Theorem \ref{thm br2} is equivalent to 
\begin{eqnarray}
\frac{\vph(t^3)}{\vph(t)} &=& t^2 a(t)\vph(t^9)a(t^9)/\vph(t^3) + b(t)\vph(t^9)b(t^9)/\vph(t^3)
\end{eqnarray}
or
\begin{eqnarray}
\vph(t^3)^2 &=& t^2 a(t)\vph(t)a(t^9)\vph(t^9)+ b(t)\vph(t)b(t^9)\vph(t^9)
\end{eqnarray}
This equation is just (\ref{REq1}), which is which is the same Ramanujan identity used in the last section.  
\end{proof}

\chapter{The Vertex Operator Algebra Associated to $V^{\Lam_0}$ and Associated Structures}
\label{ch4}

\section{The Vertex Operator Algebra Associated to $V^{\Lam_0}$}
\label{s4.1}

This section contains the well known lattice (bosonic) construction for the vertex operator algebra (VOA) associated to $V^{\Lam_0}$, which can be found in \cite{FLM}.  This construction works for any even lattice, but will be presented here for the root lattice of type $E_6$.  We continue to using some of the notation defined in Section 1.2.4.

\begin{dfn}
\mylabel{Def: 4.1}
A vertex operator algebra is a $\Z$-graded vector space
$$V = \coprod_{n \in \Z} V_{(n)}; \mbox{ for } v \in V_{(n)}, n = wt(v)$$ 
and such that
$$dim (V_{(n)}) < \infty \mbox{ for } n \in \Z,$$
$$V_{(n)} = 0 \mbox{ for } n \mbox{ sufficiently small}.$$
$V$ is equipped with a linear map
$$\yvoz{\cdot} : V \to (End V)[[z,z^{-1}]]$$
$$v \mapsto \yvoz{v} = \sum_{n \in \Z} \{v\}_n z^{-n-1} \mbox{ (where } \{v\}_n \in End V)$$
and with two distinguished homogeneous vectors $1, \omg \in V$, and satisfying the following conditions for $u,v \in V$:
$$\{u\}_nv = 0 \mbox{ for } n \mbox{ sufficiently large; } $$
$$\yvoz{1} = I_V, \mbox{ the right hand side is the identity operator};$$
$$\yvoz{v}1 \in V[[z]] \mbox{ and } lim_{z \to 0} \yvoz{v}1 = v;$$
and the "Jacobi Identity" 
\begin{eqnarray*}
&&z_0^{-1}\delta\left( \frac{z_1-z_2}{z_0}\right)Y(u,z_1)Y(v,z_2)-z_0^{-1}\delta\left( \frac{z_2-z_1}{z_0}\right)Y(v,z_2)Y(u,z_1) \\
&&\hspace{10pt}= z_2^{-1}\delta\left( \frac{z_1-z_0}{z_2}\right)Y(Y(u,z_0)\cdot v,z_2).
\end{eqnarray*}
The distinguished conformal vector $\omg$ of weight 2, with 
$$\yvoz{\omg} = \sum_{n \in \Z}L(n)z^{-n-2} = \sum_{n \in \Z}\{ \omg\}_nz^{-n-1} $$ 
so that $L(n) = L_n = \{ \omg\}_{n+1}$, and the operators $L(n)$ satisfy the following 
$$\left[ L(m), L(n)\right] = (m-n)L(m+n) + \frac{m^3-m}{12}\delta_{m,-n}(rank V)I_V$$
where $rankV \in \C$ is a scalar uniquely determined by V.  The following also hold,
$$L(0)\cdot v = nv = wt(v)v \mbox{ for } n \in \Z \mbox{ and } v \in V_{(n)},$$
$$\yvoz{L(-1)\cdot v} = \frac{d}{dz}\yvoz{v}, \quad (\mbox{ the } L(-1) \mbox{ derivative property}),$$
$$L(n)\cdot 1 = 0, \mbox{ for } n \geq -1,$$
$$L(-2) \cdot 1 = \omg,$$
$$L(0) \cdot \omg = 2 \omg.$$
\end{dfn}
An automorphism of a a vertex operator algebra $V$ is a linear automorphism $\rho$ such that 
$$\rho \yvoz{v} \rho^{-1} = \yvoz{\rho v} \mbox{ for } v \in V,$$
$$\rho \omg = \omg.$$

It is known that the vector space $V_L$ constructed in Section 1.2.4 can be given a vertex operator algebra structure.  The weight of $v = \alp_1(-n_1) \cdots \alp_k(-n_k) \otea{\alp} \in V_L$ is given by $wt(v) = n_1 + n_2 + \dots + n_k + \frac{\herf{\alp}{\alp}}{2}$, providing the required $\Z$-grading of $V_L$ and we denote the $n^{th}$ graded piece by $(V_L)_{(n)}$.  In order to define $\yvoz{v}$ for all $v \in V_L$ it is sufficient to define it for $v \in (V_L)_{(n)}$ in the form written above.  We do this in such a way as to extend the previous definitions from Chapter 1.  For $\alp \in L$ define $\yvoz{\otea{\alp}} = \yvoz{\alp}$ as given in Definition 1.25 and for $k \geq 0$ define $$\yvoz{\alp(-k-1)\otz} = \alp^{(k)}(z) = \frac{1}{k!}\left( \frac{d}{dz} \right)^k\sum_{n \in \Z} \alp(n)z^{-n-1}  .$$  For $v = \alp_1(-n_1-1) \cdots \alp_k(-n_k-1) \otea{\alp}$ we now define
\begin{eqnarray*}
\yvoz{v} = \quad :  \yvoz{\alp_1(-n_1-1)\otz}  \cdots \yvoz{\alp_k(-n_k-1)\otz} \yvoz{\otea{\alp}}:.
\end{eqnarray*}

For $v = \otea{\alp}$, the new component operators, $\{ v\}_n$,  given in Definition \ref{Def: 4.1} are related to the old operators $\yvomn{v}{n}$, by matching the coefficients of corresponding powers of $z$.  This relationship is given by $\{ v \}_n = \yvomn{v}{n - \frac{1}{2}\herf{\alp}{\alp} + 1}$, hence when $wt(\alp) = \frac{1}{2}\herf{\alp}{\alp} = 1$, then $\{ v \}_n = \yvomn{v}{n}$.  Depending on the context, either of these notations will be used.  For $\alp \in L$ and $v = \alp(-1)\otimes e^0$, we have $\alp(n) = \{ v\}_n = \yvomn{v}{n}$.  The distinguished vectors $1$ is given by the vector $\otz \in V_L$ and $\omg$ is given by the vector $\omg_{E_6}$, which will be given in Section 5.1.

In order to calculate with these operators, the notation of residues will be used heavily.  The $Res$ operator will mean to take the coefficient of $z^{-1}$, and this variable will be $z$ unless otherwise stated.  So for $v = \alp_1(-n_1) \cdots \alp_k(-n_k) \otea{\alp}$, 
$$\yvomn{v}{n} = Res\left( z^{n+wt(\alp)-1} \yvoz{v}\right) \quad \mbox{  and  } \quad \{v\}_n = Res \left( z^n \yvoz{v}\right),$$
and in particular the Heisenberg operators can be written
$$\alp(n) = Res \left( z^n \yvoz{\alp(-1)\otimes e^0}\right).$$

Define the degree of the operator $\{v\}_n$ as
$$deg(\{v\}_n) = n - wt(v) + 1 = n - n_1 -n_2 - \cdots - n_k - wt(\alp) + 1.$$
For homogeneous elements $v,w \in V_L$ and $n \in \Z$, 
\begin{eqnarray} \mylabel{wtformula}
wt(\{v \}_n \cdot w) = wt(w) - deg(\{v\}_n) =wt(v) + wt(w) - n - 1.
\end{eqnarray}

Using the "Jacobi Identity" for VOA's the following can be proven as in FLM Corollary 8.6.5. If $\herf{\alp}{L} \subset \Z$, $\herf{\beta}{L} \subset \Z$, $u', v' \in S(\fhhz^-)$ and we set $$u=u'\otimes e^{\alp} \quad \mbox{ and } \quad v=v'\otimes e^{\beta}$$
then for $m,n \in \Z$,
\begin{eqnarray} \mylabel{VOABracket}
[\{ u\}_m, \{v\}_n] = \sum_{k \geq 0} \binom{m}{k}\{ \{ u\}_k \cdot v\}_{m+n-k} 
\end{eqnarray}
is a finite sum since $\{ u\}_k \cdot v = 0$ for $n$ sufficiently large.

The $L(-1)$ derivative property will be used later and therefore, some expansion is useful.  First, for a product of two Heisenberg operators, $\alp_1(z)\alp_2(z)$, the derivative is given by the product rule,
$$\frac{d}{dz}\left(: \alp_1(z)\alp_2(z) :\right) = \quad :\alp_1^{(1)}(z)\alp_2(z) + \alp_1(z)\alp_2^{(1)}(z):.$$

Note that the derivative of $\yvoz{\otea{\alp}}$ has the form of the derivative of an
exponential function.  For any $\alp \in L$,  
$$\frac{d}{dz}\yvoz{\otea{\alp}} = \quad  :\alp(z)\yvoz{\otea{\alp}}:.$$

\section{The Vertex Operator Algebra Modules for $V^{\Lam_0}$}
\label{s4.2}

\begin{dfn}
\mylabel{Def: 4.2}
A module for a vertex operator algebra $V$ is a $\Q$-graded vector space
$$W = \coprod_{m \in \Q} W_{(m)}; \mbox{ for } w \in W_{(m)}, m = wt(w)$$ 
and such that
$$dim (W_{(m)}) < \infty \mbox{ for } m \in \Q,$$
$$W_{(m)} = 0 \mbox{ for } m \mbox{ sufficiently small}.$$
$V$ is equipped with a linear map
$$\yvoz{\cdot} : V \to (End W)[[z,z^{-1}]]$$
$$v \mapsto \ywvoz{v} = \sum_{n \in \Z} \{v\}_n z^{-n-1} \mbox{ (where } \{v\}_n \in End W)$$
satisfying the following conditions for $u,v \in V$ and $w \in W$:
$$\{u\}_nw = 0 \mbox{ for } n \mbox{ sufficiently large }, $$
$$\ywvoz{1} = I_W, \mbox{ the right hand side is the identity operator on } W;$$
and the "Jacobi Identity" 
\begin{eqnarray*}
&&z_0^{-1}\delta\left( \frac{z_1-z_2}{z_0}\right)Y_W(u,z_1)Y_W(v,z_2)-z_0^{-1}\delta\left( \frac{z_2-z_1}{z_0}\right)Y_W(v,z_2)Y_W(u,z_1) \\
&&\hspace{10pt}= z_2^{-1}\delta\left( \frac{z_1-z_0}{z_2}\right)Y_W(Y(u,z_0)\cdot v,z_2).
\end{eqnarray*} 
$W$ is a $Vir$-module with operators given by  
$$\ywvoz{\omg} = \sum_{n \in \Z}L(n)z^{-n-2} = \sum_{n \in \Z}\{ \omg\}_nz^{-n-1}. $$ 

Furthermore, the following hold,
$$L(0)\cdot w = nw = wt(w)w \mbox{ for } m \in \Q \mbox{ and } w \in W_{(m)},$$
$$\ywvoz{L(-1)\cdot v} = \frac{d}{dz}\ywvoz{v}, \quad (\mbox{ the } L(-1) \mbox{ derivative property}).$$
Note that a VOA $V$ is a module for $V$.
\end{dfn}

In this work we will use three modules for $V^{\Lam_0}$.  These three modules turn out to be the three irreducible level one modules for $\Es$ which can be constructed using the weight lattice of type $E_6$, $P = P_{\Phi} = \sum_{i=1}^6 \Z \lam_i$ where $\Phi$ is the root system of type $E_6$.  As we saw in Chapter 1, the root lattice of type $E_6$, $Q = \sum_{i=1}^6 \Z \alp_i \sset P$, and in fact $Q$ is a normal subgroup of $P$ with index 3 and the quotient $P/Q$ consists of the three cosets, $P_0 = Q$, $P_1 = Q+\lam_1$, and $P_2 = Q+\lam_6$.  The quotient group $\{P_0,P_1,P_2 \} \cong \Z/3\Z$ with addition given by $P_i + P_j = P_{k}$, where $k \equiv (i+j)(\bmod 3)$.

In order to modify the lattice construction in Section 1.2.4 so that $P$ replaces $Q$, we need to extend the 2-cocyle to a $\tau$-invariant function $\veps: P\times P \to \{ \pm1\}$ which will be needed for the construction of intertwining operators.  Note that $\fh = \C \otimes Q = \C \otimes P$ so that $S(\hat{\fh}^-_{\Z})$ is not modified, but $\C[Q]$ is replaced by $\C[P]$.  Then $V_P = S(\hat{\fh}^-_{\Z}) \otimes \C[P] = V_Q \bigoplus V_{Q+\lam_1} \bigoplus V_{Q+\lam_6}$ where the subspaces $V_{P_0}$, $V_{P_1}$, and $V_{P_2}$ are spanned by  
$S(\hat{\fh}^-_{\Z}) \otimes e^{\lam}$ for $\lam \in P_0$, $P_1$, and $P_2$, respectively.  For $\alp \in \Phi$, $\{\otea{\alp} \}_n$ and $\alp(n)$ represent $\Es$ on $V_P$ so that $V_{P_0} = V^{\Lam_0}$, $V_{P_1} = V^{\Lam_1}$, and $V_{P_2} = V^{\Lam_6}$ are the irreducible level 1 $\fgt$-modules ($\Es$-modules).  In fact, the VOA structure on $V_Q = V^{\Lam_0}$ extends to a VOA module structure on the other subspaces.  For $i,j \in \{0,1,2\}$, $\lam \in P_i$, we have intertwining operators $\{ 1 \otimes e^\lam\}_n$ such that $\{ 1 \otimes e^\lam\}_n \cdot V_{P_j} \sset V_{P_{i+j}}$ where $i + j$ is taken modulo 3.  Following \cite{DL}, for any $v \in V_{P_i}$ and $\mu_i \in P_i$ we define the intertwining operators on $V_P$ by $$\iyvoz{v}{i}{\mu} = \yvoz{v}e^{\bi\pi \mu_{i}(0)}c( \cdot, \mu_i),$$ with the operators $e^{\bi\pi \mu_{i}(0)}$ and $c( \cdot, \mu_i)$ on $V_P$ given by 
\begin{eqnarray*}
&& e^{\bi\pi \mu_{i}(0)} \cdot u \otimes e^{\lam} = e^{\bi\pi 
\herf{\mu_{i}}{\lam}}u \otimes e^{\lam}, \\
&& c( \cdot, \mu_i) \cdot u \otimes e^{\lam} = c( \lam, \mu_i) \cdot u \otimes e^{\lam}.
\end{eqnarray*}
This also gives a 
One can check using the $\eps$ defined on $P$ below that for $\alp \in P_0$, $c(\alp, \mu_i) = (-1)^{\herf{\alp}{\mu_i}}$, so if $v \in V_{P_0}$ and $\mu_i \in P_0$, then $\iyvoz{v}{i}{\mu} = \yvoz{v}$.  One can also use this property to show that the definition of the new operator $\iyvoz{v}{i}{\mu}$ is well defined.  We also have a special case of the "Jacobi Identity" which will be needed in Chapter 6.  If $u \in V_{P_0}$ and $v \in V_{P_i}$, then 
\begin{eqnarray*}
&&z_0^{-1}\delta\left( \frac{z_1-z_2}{z_0}\right)Y(u,z_1)\mathcal{Y}_{\mu_i}(v,z_2)-z_0^{-1}\delta\left( \frac{z_2-z_1}{z_0}\right)\mathcal{Y}_{\mu_i}(v,z_2)Y(u,z_1) \\
&&\hspace{10pt}= z_2^{-1}\delta\left( \frac{z_1-z_0}{z_2}\right)\mathcal{Y}_{\mu_i}(Y(u,z_0)\cdot v,z_2).
\end{eqnarray*}
This gives a similar formula to (4.2) but with $n \in \Q$ rather than $n \in \Z$.
We also extend the definition of $\hat{\tau}$ on $V_Q$ to $V_P$, by defining $\hat{\tau}(\otea{\lam}) = \otea{\tau \lam}$ for any $\lam \in P$.  In \cite{DL} it is shown that a lattice automorphism, which is extended in the same manner as we have extended $\tau$ to $\hat{\tau}$, is an VOA automorphism of $V_P$, in the sense that $\hat{\tau} \yvoz{v} \hat{\tau} = \yvoz{\hat{\tau}(v)} $.

We now give the $\tau$ invariant bilinear 2-cocycle on $\{ \lam_i \mid 1\leq i \leq 6\}$ the basis for $P$ as
$$
[\eps(\lam_i,\lam_j)] = \left[ \begin{array}{r  r  r  r  r  r } 
 1 & 1 & 1 & 1 & 1 & 1\\ 
 -1 & 1 & 1 & 1 & 1 & -1\\ 
 -1 & 1 & 1 & 1 & 1 & 1\\ 
 1 & -1 & 1 & 1 & 1 & 1\\ 
 1 & 1 & 1 & 1 & 1 & -1\\ 
 1 & 1 & 1 & 1 & 1 & 1 \\
\end{array} \right].
$$

\chapter{The Conformal Vectors $\omg_{E_6}$, $\omg_{F_4}$, and $\omg = \omg_{E_6}-\omg_{F_4}$}
\label{ch5}
This chapter gives the explicit conformal vectors for the three Virasoro representations used in this work.  The Virasoro vector $\omg_{E_6}$ is well known, but the other two vectors $\omg_{F_4}$ and $\omg_{E_6} - \omg_{F_4}$ have not been explicitly computed.  The Virasoro brackets for the operators generated by these two vectors will be given implicitly for $\omg_{F_4}$, but explicitly for $\omg_{E_6} - \omg_{F_4}$.

\section{The Conformal Vector $\omg_{E_6}$}
\label{s5.1}

From (1.3), we construct the conformal vector $\omg_{E_6} := L(-2) \cdot \left( \otz \right)$ for the vertex operator representation of $\fgt$ of type $\Es$. It is well known that when constructing level 1 $\fgt$-modules where $\fg$ is of type $A,D,E$, the Sugawara construction reduces to the construction on the  Heisenberg subalgebra $\fhhz$ that is
 $$ \omg_{E_6} = \frac{1}{2}\sum_{i = 1}^{6}\alp_i(-1) \lam_i(-1).$$ 
Since $\omg_{E_6}$ generates the operators representing $Vir$ on the VOA $V^{\Lam_0}$ are
 $$L(m) := \yvomn{\omg_{E_6}}{m} = \{ \omg_{E_6}\}_{m+1}$$
 and $c_{Vir}$ is represented by the scalar 6.

\section{The Conformal Vector $\omg_{F_4}$}
\label{s5.2}

Since the algebra $\fat$ of type $\Ff$ is a subalgebra of $\fgt$ of type $\Es$, it is possible to implement the Sugawara construction for $\fat$. 
We may describe all the roots $\beta \in \Phi_{F_4}$ in terms of $\Phi_{E_6}$.  For any root $\alp \in \Phi_{E_6}$ fixed by $\tau$, $\beta = \alp = \tau\alp \in \Phi_{F_4}$ and the corresponding root vector $x_{\beta} = x_{\alp} \in \fa_{\beta}$.  If $\alp \in \Phi_{E_6}$ not fixed by $\tau$ we have $\beta = \frac{\alp +\tau\alp}{2} \in \Phi_{F_4}$ and the corresponding root vector $x_{\beta} = x_{\alp} + x_{\tau\alp} \in \fa_{\beta}$.  This provides a complete description of $\Phi_{F_4}$.  A basis for $\fa$ is then $B = \{x_\beta, h_{\beta_i} \mid \beta \in \Phi_{F_4}, 1 \leq i \leq 4 \}$.  The fundamental weights of $\fa$ are given in terms of the fundamental weights of $\fg$ as follows:
$$\omg_1 = \lam_2, \quad \omg_2 = \lam_4, \quad \omg_3 = \frac{\lam_3+\lam_5}{2}, \quad \omg_4 = \frac{\lam_1+\lam_6}{2}.$$   A dual basis of the Cartan subalgebra $\fb$ of $\fa$ is $\{ h_{\omg_i} \mid 1\leq i\leq 4\}$. 
The vectors in the basis $B^*$ dual to $B$ can be described as follows: if $\beta = \alp = \tau\alp$, then the dual vector to $x_{\beta}$ is $\eps(\beta, -\beta)x_{-\beta}$ and if $\alp \neq \tau\alp$, then the dual vector to $x_{\alp} + x_{\tau\alp}$ is $\frac{1}{2}\eps(\alp,\tau\alp)(x_{-\alp} + x_{-\tau\alp})$.  
To carry out the Sugawara construction for $\fat$ on a $\fgt$-module, $V^{\Lam_j}$, $j =0,1,6$, note that $V^{\Lam_j}$ decomposes into a direct sum of irreducible level 1 $\fat$-modules of the form $W^{\Omg_0-n\delta}$ and $W^{\Omg_4-n\delta}$ for $0 \leq n \in \Z$.  We need to use that $\Lam_j(c) = \Omg_i(c) = 1$ and the dual Coxeter number $k^{\vee}(F_4) = 9$.  If we relabel these basis vectors $B = \{u_i \mid 1\leq i \leq dim(\fa)  \}$ and $B^*  = \{u^i \mid 1\leq i \leq dim(\fa)  \}$, then we have
$$\omg_{F_4} = \frac{1}{20} \sum_{n\in Z} :u_i(-n)u^i(-2+n):\cdot (\otz).$$

\begin{thm}The vector $\omg_{F_4}$ is given by 
\begin{eqnarray}
&&\frac{1}{5}  \left[4\lam_1(-1)^2 - 4\lam_1(-1)\lam_3(-1) -\lam_1(-1)\lam_5(-1) + 2\lam_1(-1)\lam_6(-1) \right.\nonumber \\
&&+ 5\lam_2(-1)^2 - 5\lam_2(-1)\lam_4(-1) +4\lam_3(-1)^2 - 5\lam_3(-1)\lam_4(-1)  \nonumber \\
&&+ 2\lam_3(-1)\lam_5(-1) - \lam_3(-1)\lam_6(-1) + 5\lam_4(-1)^2 - 5\lam_4(-1)\lam_5(-1)   \nonumber\\
&&\left. + 4\lam_5(-1)^2 - 4\lam_5(-1)\lam_6(-1) +4\lam_6(-1)^2 \right]\otimes e^0 \nn \\
&&+\frac{1}{5}\left(\otea{\alp_1-\alp_6}+\otea{\alp_3-\alp_5} - \otea{\alp_1+\alp_3-\alp_5-\alp_6}\right) \nonumber \\
&&+ \frac{1}{5}\left(\otea{-\alp_1+\alp_6}+\otea{-\alp_3+\alp_5} - \otea{-\alp_1-\alp_3+\alp_5+\alp_6}\right).\nonumber 
\end{eqnarray}
\end{thm}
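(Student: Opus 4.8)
The plan is to compute $\omg_{F_4}$ directly from the Sugawara formula
$$\omg_{F_4} = \frac{1}{20} \sum_{n\in \Z} :u_i(-n)u^i(-2+n): \cdot (\otz),$$
using the explicit basis $B = \{x_\beta, h_{\beta_i}\}$ and dual basis $B^*$ described just before the statement. First I would organize the double sum: because we are applying the operator to the vacuum $\otz$, only terms that do not annihilate $\otz$ survive, and normal ordering forces all creation operators (negative mode) to the left. A mode $u_i(-n)u^i(-2+n)$ acting on $\otz$ produces something nonzero only when it is of the form $u_i(-1)u^i(-1)$, $u_i(-2)u^i(0)$ (with $u^i(0)\cdot(\otz)=0$ unless $u^i$ is a Cartan element, which here it is not since the Cartan part contributes only through $h(-1)h(-1)$ terms), or when one of the operators is a root-vector mode $x_\beta(0)$ hitting a vector $x_{-\beta}(-2)\cdot(\otz)$ — but $x_\beta(0)\cdot(\otz)=0$ as well. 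So the genuinely contributing terms are exactly the $n=1$ terms: $\frac{1}{20}\sum_i u_i(-1)u^i(-1)\cdot(\otz)$, together with a subtlety when $u_i$ is a root vector: there $u_i(-1)u^i(1)$ and $u_i(1)u^i(-1)$ both act on $\otz$, and one must be careful because $x_\beta(1)x_{-\beta}(-1)\cdot(\otz)$ is generally nonzero (it produces a multiple of $\otz$ plus possibly a $\otea{0}$-type term), while the commutator $[x_\beta(1),x_{-\beta}(-1)]$ contributes a $h_\beta(0)\cdot(\otz)$ term which vanishes, plus a central term. I would track these carefully, but the upshot is that the operator $\omg_{F_4}$ reduces, on the vacuum, to a sum over $B$ of $u_i(-1)u^i(-1)$-type contributions plus the root-vector pieces $x_\beta(-1)\cdot(\text{stuff})$.

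Next I would split $B$ into the Cartan part and the root-vector part. For the Cartan part, the dual basis to $\{h_{\beta_i}\}$ under the ($E_6$-inherited) form restricted to $\fb$ is $\{h_{\omg_i}\}$, and $h_{\beta_i}(-1)h_{\omg_i}(-1)\cdot(\otz)$ gives, after re-expressing $h_{\beta_i}$ and $h_{\omg_i}$ in terms of the $h_{\lam_k}$ (equivalently the $\lam_k(-1)$ under the identification $\fh\cong\fh^*$), a quadratic expression in the $\lam_k(-1)$. This is where the messy coefficient matrix $\frac{1}{5}[4,-4,-1,2,\dots]$ comes from: I would assemble the Gram-type matrix of $\{\beta_i\}$ against $\{\omg_i\}$ in the $\lam_k$-coordinates and collect terms; the factor $\frac{1}{20}$ from Sugawara combined with $k^\vee(F_4)+1 = 10$ already being absorbed, and the relabeling, should produce the stated $\frac{1}{5}$ normalization. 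For the root-vector part, each dual pair contributes $x_\beta(-1)x_{\beta}^\vee(-1)\cdot(\otz)$; for a $\tau$-fixed root $\beta=\alp$ this is $\eps(\beta,-\beta)\,x_\beta(-1)x_{-\beta}(-1)\cdot(\otz)$, and using the vertex-operator expansion $x_{-\beta}(-1)\cdot(\otz)$ together with $x_\beta(-1)$ produces either a $\otea{\beta-\beta}=\otz$-contribution (folded into the Cartan quadratic part via the bracket $[x_\beta, x_{-\beta}]=\eps(\beta,-\beta)h_\beta$) or nothing new; for a non-fixed root, $x_\beta = x_\alp + x_{\tau\alp}$ and its dual is $\frac12\eps(\alp,\tau\alp)(x_{-\alp}+x_{-\tau\alp})$, and expanding $x_\beta(-1)\cdot(x^\vee_\beta(-1)\otz)$ yields cross terms $x_\alp(-1)x_{-\tau\alp}(-1)\cdot(\otz)$ whose "lattice part" is $\otea{\alp-\tau\alp}$ — these are precisely the $\otea{\alp_1-\alp_6}$, $\otea{\alp_3-\alp_5}$, $\otea{\alp_1+\alp_3-\alp_5-\alp_6}$ terms (and their negatives) in the claimed formula. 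The signs $\pm$ on these three terms come from evaluating the cocycle $\veps$ on the relevant root differences using the chosen matrix $[\veps(\alp_i,\alp_j)]$, and the factor $\frac{1}{5}$ again from $\frac{1}{20}\cdot 4$ or similar bookkeeping with the $\frac12$'s in the dual basis.

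The main obstacle I anticipate is the bookkeeping of the root-vector contributions: one has to be careful that (i) terms like $x_\alp(-1)x_{-\alp}(-1)\cdot(\otz)$ do not secretly contribute to the $\otz$-coefficient in a way that alters the Cartan quadratic form — in fact they do contribute a Heisenberg piece via $[x_\alp(n), x_{-\alp}(-n)] \ni \alp(0) + n\veps(\alp,-\alp)c$ and one must verify these either cancel or are already accounted for — and (ii) the cross terms for non-fixed roots are computed with the correct cocycle signs and the correct combinatorial multiplicity (each unordered pair $\{x_\alp, x_{\tau\alp}\}$ appears once in $B$, but the product $x_\beta(-1)x^\vee_\beta(-1)$ expands into four summands, two "diagonal" and two "cross"). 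A secondary check is that $\omg_{F_4}$ has weight $2$ and lies in $V_P$ (indeed in $V_Q = V^{\Lam_0}$), which the stated answer manifestly satisfies since every term is either $\lam_k(-1)\lam_l(-1)\otimes e^0$ or $\otea{\gamma}$ with $\gamma$ a sum/difference of roots of norm giving weight $2$. I would close by verifying the formula is $\hat\tau$-invariant (it must be, since $\fat$ is the fixed subalgebra and the Sugawara vector is canonically attached to it), which gives an independent sanity check on the six $\otea{\pm(\cdots)}$ terms appearing in the $\tau$-symmetric combination written in the statement.
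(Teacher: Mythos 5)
Your plan is essentially the paper's own proof: both start from the Sugawara formula $\frac{1}{20}\sum_{n}:u_i(-n)u^i(-2+n):\cdot(\otz)$, reduce to the $u_i(-1)u^i(-1)$ terms, and split the basis into the Cartan part (yielding the quadratic expression in the $\lam_k(-1)$ after rewriting), the $\tau$-fixed root vectors (each pair $\pm\alp$ contributing $\frac{1}{20}\alp(-1)^2\otimes e^0$ after the $\alp(-2)$ pieces cancel), and the non-$\tau$-fixed root vectors, whose cross terms $x_\alp(-1)x_{-\tau\alp}(-1)\cdot(\otz)$ produce the six $\otea{\pm(\cdots)}$ terms with signs determined by $\veps(\alp,\alp-\tau\alp)$. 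The bookkeeping concerns you flag (the four-summand expansion per non-fixed pair, the cocycle signs, the fourfold multiplicity over each difference in $\{\alp_1-\alp_6,\ \alp_3-\alp_5,\ \alp_1+\alp_3-\alp_5-\alp_6\}$) are exactly the points the paper works through explicitly, so the proposal is correct and matches the paper's route.
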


There are essentially three parts to explicitly writing this vector.  First, for the vectors in the CSA, which are represented by Heisenberg operators, the contribution is given by:
\newpage
\begin{eqnarray}
&&\frac{1}{20} \left( \yvomn{\alp_2(-1)\lam_2(-1)}{-2} + \yvomn{\alp_4(-1)\lam_4(-1)}{-2}\right) \cdot \otz \nonumber \\
&&\hspace{10pt}+\frac{1}{20}\yvomn{\frac{1}{2}(\alp_3+\alp_5)(-1)(\lam_3 + \lam_5)(-1)}{-2} \cdot \otz \nn \\
&&\hspace{10pt}+\frac{1}{20}\yvomn{\frac{1}{2}(\alp_1+\alp_6)(-1)(\lam_1 + \lam_6)(-1)}{-2} \cdot \otz \nonumber \\ 
&=&\frac{1}{20}\left( \sum_{r\in \Z} :\alp_2(r)\lam_2(-2-r): + \sum_{r\in \Z} :\alp_4(r)\lam_4(-2-r): \right)\cdot \otz \nonumber \\
&&\hspace{10pt}+\frac{1}{20}\sum_{r\in \Z} :\frac{1}{2}(\alp_3+\alp_5)(r)(\lam_3 + \lam_5)(-2-r):\cdot \otz \nonumber \\ 
&&\hspace{10pt} + \frac{1}{20}\sum_{r\in \Z} :\frac{1}{2}(\alp_1+\alp_6)(r)(\lam_1 + \lam_6)(-2-r): \cdot \otz \nonumber \\
&=&\frac{1}{20} \left( \alp_2(-1)\lam_2(-1) + \alp_4(-1)\lam_4(-1)\right) \cdot \otz \\ 
&&\hspace{10pt} +\frac{1}{40} \left( (\alp_3+\alp_5)(-1)(\lam_3 + \lam_5)(-1) +(\alp_1+\alp_6)(-1)(\lam_1 + \lam_6)(-1)\right) \cdot \otz. \nonumber
\end{eqnarray}

Next, examine the root vectors with $x_{\beta} = x_{\alp}$. Combining the contributions of both $x_{\beta} = x_{\alp}$ and $x_{-\beta} = x_{-\alp}$ we have the following contribution for each such pair
\begin{eqnarray*}
&&\frac{1}{20} \left( \eps(\alp,\alp):\yvomn{\otea{\alp}}{-1}\yvomn{\otea{-\alp}}{-1}:\right) \cdot \otz \\
&&+\frac{1}{20} \left(  \eps(-\alp,-\alp):\yvomn{\otea{-\alp}}{-1}\yvomn{\otea{\alp}}{-1}:\right) \cdot \otz \\
&&=\frac{1}{20} \left( \eps(\alp,\alp)\yvomn{\otea{\alp}}{-1}\cdot \otea{-\alp} + \eps(-\alp,-\alp)\yvomn{\otea{-\alp}}{-1}\cdot\otea{\alp}\right)  \\
&&=\frac{1}{20} \left( \eps(\alp,0)\frac{1}{2}(\alp(-2) + \alp(-1)^2)\otimes e^0 + \eps(-\alp,0)\frac{1}{2}(-\alp(-2) + (-\alp)(-1)^2)\otimes e^0\right)  \\
&&=\frac{1}{20} \alp(-1)^2\otimes e^0. 
\end{eqnarray*}
Summing over all root vectors of this type we obtain
\begin{eqnarray} 
&&\frac{1}{20}\left[ \alp_2(-1)^2 + \alp_4(-1)^2 + (\alp_2 +\alp_4)(-1)^2 + (\alp_3 +\alp_4 + \alp_5)(-1)^2 \right.  \nonumber \\
&&\hspace{10pt} + (\alp_2 + \alp_3 +\alp_4 + \alp_5)(-1)^2 + (\alp_1 + \alp_3 +\alp_4 + \alp_5 + \alp_6)(-1)^2  \\
&& \hspace{10pt}+ (\alp_2 + \alp_3 +2\alp_4 + \alp_5)(-1)^2 + (\alp_1 + \alp_2 + \alp_3 + \alp_4 + \alp_5 + \alp_6)(-1)^2 \nonumber\\
&& \hspace{10pt} + (\alp_1 + \alp_2 + \alp_3 + 2\alp_4 + \alp_5 + \alp_6)(-1)^2 + (\alp_1 + \alp_2 + 2\alp_3 + 2\alp_4 + 2\alp_5 + \alp_6)(-1)^2\nonumber\\
&& \hspace{10pt} + (\alp_1 + \alp_2 + 2\alp_3 + 3\alp_4 + 2\alp_5 + \alp_6)(-1)^2 \nonumber \\
&&\hspace{10pt} \left.+ (\alp_1 + 2\alp_2 + 2\alp_3 + 2\alp_4 + 2\alp_5 + \alp_6)(-1)^2\right]\otimes e^0.\nonumber
\end{eqnarray}
The last contribution involves the vectors in the basis where the roots are not fixed by $\tau$, and their contribution is given by: 
\begin{eqnarray}
&&\frac{1}{40} \eps(\alp,\alp)\!:\!(\yvomn{\otea{\alp}}{-1}+\yvomn{\otea{\tau\alp}}{-1})(\yvomn{\otea{-\alp}}{-1}+\yvomn{\otea{-\tau\alp}}{-1})\!:\!\cdot \otz \nonumber\\
&&=\frac{1}{40} \eps(\alp,\alp) \left(:\yvomn{\otea{\alp}}{-1}\yvomn{\otea{-\alp}}{-1}: + :\yvomn{\otea{\tau\alp}}{-1}\yvomn{\otea{-\tau\alp}}{-1}: \right.\nonumber\\
&&\hspace{10pt}\left.+:\yvomn{\otea{\alp}}{-1}\yvomn{\otea{-\tau\alp}}{-1}: + :\yvomn{\otea{\tau\alp}}{-1}\yvomn{\otea{-\alp}}{-1}:\right) \cdot \otz \nonumber\\
&&=\frac{1}{40}  \left(\frac{1}{2}(\alp(-2)+\alp(-1)^2)\otimes e^0 + \frac{1}{2}(\tau\alp(-2)+\tau\alp(-1)^2)\otimes e^0 \right.\\
&&\hspace{10pt}\left.+\eps(\alp,\alp)\yvomn{\otea{\alp}}{-1}\cdot\otea{-\tau\alp} + \eps(\alp,\alp)\yvomn{\otea{\tau\alp}}{-1}\cdot \otea{-\alp} \right). \nonumber
\end{eqnarray}
Examining the second line of (5.3) and noting $\herf{\alp}{-\tau\alp} = 0$, we give the first calculation in the second line (the second calculation on this line being similar)
\begin{eqnarray*}
&&\eps(\alp,\alp)\yvomn{\otea{\alp}}{-1}\cdot\otea{-\tau\alp}\\
&=&\eps(\alp,\alp)Res\left( \eps(\alp,-\tau\alp)z^{-1} \exp\left( \sum_{k \geq 1}\frac{ \alp(-k)}{k}z^k\right) \cdot\otea{\alp-\tau\alp} \right) \\
&=&\eps(\alp,\alp-\tau\alp)\otimes e^{\alp-\tau\alp}. 
\end{eqnarray*}
Hence (5.3) becomes,
\begin{eqnarray}
&&\frac{1}{40}  \left(\frac{1}{2}(\alp(-2)+\alp(-1)^2)\otimes e^0 + \frac{1}{2}(\tau\alp(-2)+\tau\alp(-1)^2)\otimes e^0 \right. \nonumber\\
&&\hspace{10pt}\left.+\eps(\alp,\alp-\tau\alp)\otimes e^{\alp-\tau\alp} + \eps(\tau\alp,\tau\alp-\alp)\otimes e^{\tau\alp-\alp} \right) \nonumber \\
&=&\frac{1}{40}  \left(\frac{1}{2}(\alp(-2)+\alp(-1)^2)\otimes e^0 + \frac{1}{2}(\tau\alp(-2)+\tau\alp(-1)^2)\otimes e^0 \right. \nonumber\\
&&\hspace{10pt}\left.+\eps(\alp,\alp-\tau\alp)(\otea{\alp-\tau\alp} + \otea{\tau\alp-\alp}) \right). \nonumber
\end{eqnarray}
Since this calculation holds for $\alp, -\alp \in \Phi_{E_6}$ not $\tau$ fixed, for each quadruple of roots, $\pm\alp, \pm\tau\alp$, the total contribution is
\begin{eqnarray*}
&&\frac{1}{40}  \left(\frac{1}{2}(\alp(-2)+\alp(-1)^2)\otimes e^0 + \frac{1}{2}(\tau\alp(-2)+\tau\alp(-1)^2)\otimes e^0 \right. \\
&&\hspace{10pt}\left.+\eps(\alp,\alp-\tau\alp)(\otea{\alp-\tau\alp} + \otea{\tau\alp-\alp}) \right.  \\
&&\hspace{10pt} +\left.\frac{1}{2}(-\alp(-2)+\alp(-1)^2)\otimes e^0 + \frac{1}{2}(-\tau\alp(-2)+\tau\alp(-1)^2)\otimes e^0 \right. \\
&&\hspace{10pt}\left.+\eps(-\alp,-\alp+\tau\alp)(\otea{-\alp+\tau\alp} + \otea{-\tau\alp+\alp}) \right) \\
&=&\frac{1}{40}  \left(\alp(-1)^2\otimes e^0 + \tau\alp(-1)^2\otimes e^0 +2\eps(\alp,\alp-\tau\alp)(\otea{\alp-\tau\alp} + \otea{\tau\alp-\alp}) \right).  
\end{eqnarray*}
There are two parts to consider. First for each pair $\alp$ and $\tau\alp$, their difference will be one of these roots of $D= \{ \alp_1-\alp_6, \alp_3-\alp_5, \alp_1+\alp_3-\alp_5-\alp_6\}$. The twelve possible pairs, $\{\alp,\tau\alp \}$, divide into four of each of the roots in $D$.  The 2-cocyle $\veps(\alp,\alp-\tau\alp)$ will be 1 if $\alp-\tau\alp = \pm(\alp_1-\alp_6)$ or $\pm(\alp_3-\alp_5)$, and will be -1 if $\alp-\tau\alp =\pm(\alp_1+\alp_3-\alp_5-\alp_6)$.  Adding all possible vectors of this type gives 
\begin{eqnarray}
&&\frac{1}{5}\left(\otea{\alp_1-\alp_6}+\otea{\alp_3-\alp_5} - \otea{\alp_1+\alp_3-\alp_5-\alp_6}\right) \nonumber \\
&+& \frac{1}{5}\left(\otea{-\alp_1+\alp_6}+\otea{-\alp_3+\alp_5} - \otea{-\alp_1-\alp_3+\alp_5+\alp_6}\right).
\end{eqnarray}
Secondly, evaluating $\frac{1}{40}  \left(\alp(-1)^2\otimes e^0 + \tau\alp(-1)^2\otimes e^0 \right)$ for all possible pairs of roots $\{\alp,\tau\alp \}$ will give
\begin{eqnarray}
&&\frac{1}{40}  \left[\alp_1(-1)^2 + \alp_3(-1)^2 +\alp_5(-1)^2 + \alp_6(-1)^2 +(\alp_1+\alp_3)(-1)^2  \right. \nonumber\\
&&+ (\alp_5+\alp_6)(-1)^2 +(\alp_3+\alp_4)(-1)^2 + (\alp_4+\alp_5)(-1)^2 + (\alp_1 +\alp_3+\alp_4)(-1)^2 \nonumber\\
&&+ (\alp_4+\alp_5+\alp_6)(-1)^2 +(\alp_2+\alp_3+\alp_4)(-1)^2 + (\alp_2+\alp_4+\alp_5)(-1)^2 \nonumber\\
&&+(\alp_1+\alp_2+\alp_3+\alp_4)(-1)^2 + (\alp_2+\alp_4+\alp_5+\alp_6)(-1)^2 \nonumber\\
&&+(\alp_1+\alp_3+\alp_4+\alp_5)(-1)^2 + (\alp_3+\alp_4+\alp_5+\alp_6)(-1)^2 \\
&&+(\alp_1+\alp_2+\alp_3+\alp_4+\alp_5)(-1)^2 + (\alp_2+\alp_3+\alp_4+\alp_5+\alp_6)(-1)^2 \nonumber\\
&&+(\alp_1+\alp_2+\alp_3+2\alp_4+\alp_5)(-1)^2 + (\alp_2+\alp_3+2\alp_4+\alp_5+\alp_6)(-1)^2 \nonumber\\
&&+(\alp_1+\alp_2+2\alp_3+2\alp_4+\alp_5)(-1)^2 + (\alp_2+\alp_3+2\alp_4+2\alp_5+\alp_6)(-1)^2\nonumber\\
&&+(\alp_1+\alp_2+2\alp_3+2\alp_4+\alp_5+\alp_6)(-1)^2 \nonumber\\
&&\left.+ (\alp_1+\alp_2+\alp_3+2\alp_4+2\alp_5+\alp_6)(-1)^2\right]\otimes e^0. \nonumber
\end{eqnarray}
The sum of (5.1), (5.2), (5.4), and (5.5), gives the vector $\omg_{F_4}$ according to the Sugawara construction. This presentation is not the best for calculations in later chapters, and therefore it will be rewritten in a more useful form.  To obtain this form, rewrite the $\alp_i$'s in terms of the $\lam_i$'s, hence giving,
\begin{eqnarray}
&&\omg_{F_4} \nonumber\\
&=&\frac{1}{5}  \left[4\lam_1(-1)^2 - 4\lam_1(-1)\lam_3(-1) -\lam_1(-1)\lam_5(-1) + 2\lam_1(-1)\lam_6(-1) \right.\nonumber \\
&&\hspace{20pt}+ 5\lam_2(-1)^2 - 5\lam_2(-1)\lam_4(-1) +4\lam_3(-1)^2 - 5\lam_3(-1)\lam_4(-1)  \nonumber \\
&&\hspace{20pt}+ 2\lam_3(-1)\lam_5(-1) - \lam_3(-1)\lam_6(-1) + 5\lam_4(-1)^2 - 5\lam_4(-1)\lam_5(-1)   \nonumber\\
&&\hspace{20pt}\left. + 4\lam_5(-1)^2 - 4\lam_5(-1)\lam_6(-1) +4\lam_6(-1)^2 \right]\otimes e^0 \\
&&+\frac{1}{5}\left(\otea{\alp_1-\alp_6}+\otea{\alp_3-\alp_5} - \otea{\alp_1+\alp_3-\alp_5-\alp_6}\right) \nonumber \\
&&+ \frac{1}{5}\left(\otea{-\alp_1+\alp_6}+\otea{-\alp_3+\alp_5} - \otea{-\alp_1-\alp_3+\alp_5+\alp_6}\right).\nonumber 
\end{eqnarray}

Thus $\yvomn{\omg_{F_4}}{m}$ will give Virasoro operators and the central charge is $\frac{26}{5}$. Since the most important calculation for the Virasoro operators is for the coset Virasoro brackets coming in the next section, the verification that these operators represent $Vir$ will not be given.  After the next section, this verification of the brackets for this $c = \frac{26}{5}$ Virasoro representation will then be inferred from the verification of the coset Virasoro brackets.

\section{The Coset Conformal Vector $\omg$}
\label{s5.3}
\begin{thm}
The coset Virasoro generator $\omg = \omg_{E_6} - \omg_{F_4}$ is given by the vector 
\begin{eqnarray}
\omg&=&\frac{1}{10}  \left[(-\lam_1+\lam_6)(-1)^2 + (\lam_3-\lam_5)(-1)^2\right]\otimes e^0 \nonumber \\
 &&+ \frac{1}{10}(\lam_1-\lam_3+\lam_5-\lam_6)(-1)^2 \otimes e^0 \\
&&+\frac{1}{5}\left(-\otea{\alp_1-\alp_6}-\otea{\alp_3-\alp_5} +\otea{\alp_1+\alp_3-\alp_5-\alp_6}\right) \nonumber \\
&&+ \frac{1}{5}\left(-\otea{-\alp_1+\alp_6}-\otea{-\alp_3+\alp_5} +\otea{-\alp_1-\alp_3+\alp_5+\alp_6}\right).\nonumber 
\end{eqnarray}
\end{thm}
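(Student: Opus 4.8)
The proof is a direct computation: subtract the explicit formula for $\omg_{F_4}$ from Theorem 5.2 from the well-known formula $\omg_{E_6} = \frac{1}{2}\sum_{i=1}^{6}\alp_i(-1)\lam_i(-1)$, and verify the result equals the stated vector. The plan is to organize this in three pieces, matching the structure of $V_L = S(\fhhz^-)\otimes\C[P]$: the purely Heisenberg part $(\cdot)\otimes e^0$, and the two ``exponential'' parts supported on $e^{\pm(\alp_1-\alp_6)}$, $e^{\pm(\alp_3-\alp_5)}$, $e^{\pm(\alp_1+\alp_3-\alp_5-\alp_6)}$.

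First I would handle the easy part: the exponential terms of $\omg_{F_4}$ are
$$\tfrac{1}{5}\left(\otea{\alp_1-\alp_6}+\otea{\alp_3-\alp_5} - \otea{\alp_1+\alp_3-\alp_5-\alp_6}\right) + \tfrac{1}{5}\left(\otea{-\alp_1+\alp_6}+\otea{-\alp_3+\alp_5} - \otea{-\alp_1-\alp_3+\alp_5+\alp_6}\right),$$
while $\omg_{E_6}$ has no exponential terms at all (it lies in $S(\fhhz^-)\otimes e^0$). Hence the exponential part of $\omg_{E_6}-\omg_{F_4}$ is exactly the negative of the above, which matches the last two lines of the claimed formula for $\omg$. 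This requires nothing beyond reading off Theorem 5.2.

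The main work is the Heisenberg part. I would write $\omg_{E_6} = \frac{1}{2}\sum_{i=1}^{6}\alp_i(-1)\lam_i(-1)$ entirely in terms of the $\lam_i(-1)$'s by substituting $\alp_i = \sum_j a_{ij}\lam_j$ using the $E_6$ Cartan matrix (equivalently, expand each $\alp_i(-1)$ in the fundamental-weight basis). This turns $\omg_{E_6}$ into a quadratic form $\frac{1}{2}\sum_{i,j} C_{ij}\lam_i(-1)\lam_j(-1)$ with $C = (a_{ij})$ the Cartan matrix. Then I subtract the explicit quadratic form given for $\omg_{F_4}$ (the bracketed $\frac{1}{5}[\cdots]\otimes e^0$ expression in Theorem 5.2, already written in the $\lam_i$ basis), collect the coefficient of each monomial $\lam_i(-1)\lam_j(-1)$, and check it agrees with the expansion of
$$\tfrac{1}{10}\big[(-\lam_1+\lam_6)(-1)^2 + (\lam_3-\lam_5)(-1)^2 + (\lam_1-\lam_3+\lam_5-\lam_6)(-1)^2\big].$$
Expanding this target: the $\lam_1^2$, $\lam_6^2$, $\lam_3^2$, $\lam_5^2$ coefficients are each $\frac{1}{10}+\frac{1}{10}=\frac{2}{10}=\frac{1}{5}$; the $\lam_1\lam_6$ coefficient is $2(-\frac{1}{10})+2(-\frac{1}{10})=-\frac{2}{5}$; similarly one reads off the remaining cross terms, and the whole thing should match $\frac{1}{2}C - (\text{the }\omg_{F_4}\text{ matrix})$ entry by entry. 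This is a finite, purely arithmetic check over the $\binom{6}{2}+6 = 21$ monomials.

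The only real obstacle is bookkeeping: one must be careful that the $\omg_{F_4}$ formula from Theorem 5.2 uses the convention $\lam_i(-1)\lam_j(-1) = \lam_j(-1)\lam_i(-1)$ (Heisenberg operators at degree $-1$ commute, so the symmetric-algebra monomials are genuinely symmetric) and that cross terms like $\lam_1(-1)\lam_3(-1)$ in $\omg_{F_4}$ correspond to the \emph{sum} of the $(1,3)$ and $(3,1)$ entries when comparing against $\frac{1}{2}C_{ij}$. With the normalization fixed, every coefficient comparison is routine. I would present the Heisenberg comparison as a short table or an aligned display listing $\omg_{E_6}$ in the $\lam$-basis, then the subtraction, then the regrouping into the three perfect squares; the exponential part is then immediate from the discussion above, completing the proof.
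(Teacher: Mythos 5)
Your proposal is correct and is essentially the route the paper takes: Section 5.2 deliberately rewrites $\omg_{F_4}$ in the $\lam_i(-1)$ basis precisely so that the subtraction $\omg_{E_6}-\omg_{F_4}$ reduces to the coefficient-by-coefficient arithmetic you describe (the exponential terms of $\omg_{F_4}$ simply change sign since $\omg_{E_6}$ has none, and the Heisenberg quadratic form $\tfrac12\sum_{i,j}a_{ij}\lam_i(-1)\lam_j(-1)$ minus the $\tfrac15[\cdots]\otimes e^0$ expression regroups into the three squares, with the $\lam_2$ and $\lam_4$ terms cancelling entirely). Be aware, though, that the work the paper actually writes out in Section 5.3 after the theorem statement is not this subtraction (which it leaves implicit) but a direct verification that the resulting operators $\{\omg\}_{n+1}$ satisfy the Virasoro relations with central charge $\tfrac45$; your argument establishes the stated identity itself, while that bracket verification is a separate claim the paper treats as the main content of the section.
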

We will show that the operators $L(n) = \{\omg\}_{n+1}$ satisfy the Virasoro bracket relations.  According to the theorem of Goddard-Kent-Olive, the central charge of the coset Virasoro representation will be the scalar $\frac{4}{5}$.  Making use of the Jacobi identity for vertex operator algebras it is enough to verify: 
\begin{eqnarray*}
[L(m), L(n)] &=&[\{\omg\}_{m+1}, \{\omg\}_{n+1}]  \nonumber \\
 &=& \sum_{0\leq k} \binom{m+1}{k}\{\{ \omg\}_k\cdot \omg\}_{m+n+2-k} \nonumber \\
 &=& (m-n)\{ \omg\}_{m+n+1}+\frac{m^3-m}{12}\delta_{m,-n}\frac{4}{5}I_{V_P} \nonumber\\
  &=& (m-n)L(m+n)+\frac{m^3-m}{12}\delta_{m,-n}\frac{4}{5}I_{V_P}. \nonumber
\end{eqnarray*}
Using (\ref{wtformula}) and (\ref{VOABracket}), since $wt(\{ \omg\}_k \cdot \omg) = 3-k$, we only need to consider $0 \leq k \leq 3$.  

First, we consider $\{\omg\}_0\cdot\omg$ and  since each of the products of Heisenberg operators will act similarly, we only show the following computation:
\begin{eqnarray}
&&\frac{1}{10}\yvomn{(-\lam_1+\lam_6)(-1)^2}{-1} \cdot \omg \nn\\
&&=\frac{1}{10}\sum_{r\in\Z}:(-\lam_1+\lam_6)(r)(-\lam_1+\lam_6)(-1-r):\cdot \omg \nn\\
&&=\frac{1}{5}\left( (-\lam_1+\lam_6)(-1)(-\lam_1+\lam_6)(0)+(-\lam_1+\lam_6)(-2)(-\lam_1+\lam_6)(1)\right) \cdot \omg \nn 
\end{eqnarray}
\begin{eqnarray}
&&=\frac{1}{5}(-\lam_1+\lam_6)(-1)(-\lam_1+\lam_6)(0)\cdot \frac{1}{5}\left(-\otea{\alp_1-\alp_6}-\otea{-\alp_1+\alp_6}\right) \nn \\
&&\hspace{10pt}+\frac{1}{5}(-\lam_1+\lam_6)(-1)(-\lam_1+\lam_6)(0)\cdot \frac{1}{5}\left(-\otea{\alp_3-\alp_5}-\otea{-\alp_3+\alp_5}\right) \nn \\
&&\hspace{10pt}+\frac{1}{5}(-\lam_1+\lam_6)(-1)(-\lam_1+\lam_6)(0)\cdot \frac{1}{5}\left(\otea{\alp_1+\alp_3-\alp_5-\alp_6}+\otea{-\alp_1-\alp_3+\alp_5+\alp_6}\right) \nn \\
&&\hspace{10pt}+\frac{1}{5}(-\lam_1+\lam_6)(-2)(-\lam_1+\lam_6)(1)\cdot \frac{1}{10}(-\lam_1+\lam_6)(-1)^2 \otimes e^0\nn \\
&&\hspace{10pt}+\frac{1}{5}(-\lam_1+\lam_6)(-2)(-\lam_1+\lam_6)(1)\cdot \frac{1}{10}(\lam_3-\lam_5)(-1)^2 \otimes e^0\nn \\
&&\hspace{10pt}+\frac{1}{5}(-\lam_1+\lam_6)(-2)(-\lam_1+\lam_6)(1)\cdot \frac{1}{10}(\lam_1-\lam_3+\lam_5-\lam_6)(-1)^2 \otimes e^0 \nn \\
&&=\frac{2}{25}(-\lam_1+\lam_6)(-1)[\otea{\alp_1-\alp_6}-\otea{-\alp_1+\alp_6} ]\nn \\
&&\hspace{10pt}+\frac{2}{25}(-\lam_1+\lam_6)(-1)[-\otea{\alp_1+\alp_3-\alp_5-\alp_6}+\otea{-\alp_1-\alp_3+\alp_5+\alp_6} ]\nn \\
&&\hspace{10pt}+\frac{4}{75}(-\lam_1+\lam_6)(-2)(-\lam_1+\lam_6)(-1) \otimes e^0 \nn \\
&&\hspace{10pt} -\frac{2}{75}(-\lam_1+\lam_6)(-2)(\lam_3-\lam_5)(-1) \otimes e^0\nn \\
&&\hspace{10pt} -\frac{2}{75}(-\lam_1+\lam_6)(-2)(\lam_1-\lam_3+\lam_5-\lam_6)(-1) \otimes e^0 \nn \\
&&=\frac{2}{25}(-\lam_1+\lam_6)(-1)[\otea{\alp_1-\alp_6}-\otea{-\alp_1+\alp_6} ] \\
&&\hspace{10pt}+\frac{2}{25}(-\lam_1+\lam_6)(-1)[-\otea{\alp_1+\alp_3-\alp_5-\alp_6}+\otea{-\alp_1-\alp_3+\alp_5+\alp_6} ]\nn \\
&&\hspace{10pt}+\frac{2}{25}(-\lam_1+\lam_6)(-2)(-\lam_1+\lam_6)(-1) \otimes e^0. \nn
\end{eqnarray}
The actions of the other two operators are,
\begin{eqnarray}
&&\frac{1}{10}\yvomn{(\lam_3-\lam_5)(-1)^2}{-1} \cdot \omg \nn\\
&&=\frac{2}{25}(\lam_3-\lam_5)(-1)[-\otea{\alp_3-\alp_5}+\otea{-\alp_3+\alp_5} ] \\
&&\hspace{10pt}+\frac{2}{25}(\lam_3-\lam_5)(-1)[\otea{\alp_1+\alp_3-\alp_5-\alp_6}-\otea{-\alp_1-\alp_3+\alp_5+\alp_6} ]\nn \\
&&\hspace{10pt}+\frac{2}{25}(\lam_3-\lam_5)(-2)(\lam_3-\lam_5)(-1) \otimes e^0 \nn
\end{eqnarray}
and
\begin{eqnarray}
&&\frac{1}{10}\yvomn{(\lam_1-\lam_3+\lam_5-\lam_6)(-1)^2}{-1} \cdot \omg \nn\\
&&=\frac{2}{25}(\lam_1-\lam_3+\lam_5-\lam_6)(-1)[-\otea{\alp_1-\alp_6}+\otea{-\alp_1+\alp_6} ] \\
&&\hspace{10pt}+\frac{2}{25}(\lam_1-\lam_3+\lam_5-\lam_6)(-1)[\otea{\alp_3-\alp_5}-\otea{-\alp_3+\alp_5} ]\nn \\
&&\hspace{10pt}+\frac{2}{25}(\lam_1-\lam_3+\lam_5-\lam_6)(-2)(\lam_1-\lam_3+\lam_5-\lam_6)(-1) \otimes e^0. \nn
\end{eqnarray}
Next, $\yvomn{\otea{\alp_1-\alp_6}}{-1} \cdot \omg$ will be computed, and the other five operators follow in a similar manner.  If we consider $\yvomn{\otea{\gamma_i}}{-1} \cdot \otea{\gamma_j}$, then the result will be 0 if $\herf{\gamma_i}{\gamma_j} \geq 0$.  Therefore only the non-zero actions will be given, and writing
\begin{eqnarray*}
B &=& \frac{1}{5}\left(-\otea{\alp_1-\alp_6}-\otea{\alp_3-\alp_5} +\otea{\alp_1+\alp_3-\alp_5-\alp_6}\right) \\
&&+ \frac{1}{5}\left(-\otea{-\alp_1+\alp_6}-\otea{-\alp_3+\alp_5} +\otea{-\alp_1-\alp_3+\alp_5+\alp_6}\right),
\end{eqnarray*}
we have
\begin{eqnarray}
&&\frac{1}{5}\yvomn{\otea{\alp_1-\alp_6}}{-1} \cdot B \nn\\
&&=\frac{1}{25}Res\left[z^0 \exp\left( \sum_{k \geq 1}\frac{ (\alp_1-\alp_6)(-k)}{k}z^k\right) \exp\left( \sum_{k \geq 1}\frac{ (\alp_1-\alp_6)(k)}{-k}z^{-k}\right) \right. \nn \\
&&\hspace{80pt} \left.e_{\alp_1-\alp_6}z^{(\alp_1-\alp_6)(0)}  \cdot \otea{-\alp_1+\alp_6}\right] \nn \\
&& \hspace{10pt}+\frac{1}{25}Res\left[z^0 \exp\left( \sum_{k \geq 1}\frac{(\alp_1-\alp_6)(-k)}{k}z^k\right) \exp\left( \sum_{k \geq 1}\frac{ (\alp_1-\alp_6)(k)}{-k}z^{-k}\right) \right. \nn \\
&&\hspace{80pt} \left.e_{\alp_1-\alp_6}z^{(\alp_1-\alp_6)(0)}  \cdot \otea{\alp_3-\alp_5}\right] \nn \\
&& \hspace{10pt}+\frac{1}{25}Res\left[z^0 \exp\left( \sum_{k \geq 1}\frac{(\alp_1-\alp_6)(-k)}{k}z^k\right) \exp\left( \sum_{k \geq 1}\frac{ (\alp_1-\alp_6)(k)}{-k}z^{-k}\right) \right. \nn \\
&&\hspace{80pt} \left.e_{\alp_1-\alp_6}z^{(\alp_1-\alp_6)(0)} \cdot \otea{-\alp_1-\alp_3+\alp_5+\alp_6}\right] \nn \\
&&=\frac{1}{25}Res\left[z^{-4} \exp\left( \sum_{k \geq 1}\frac{(\alp_1-\alp_6)(-k)}{k}z^k\right) \cdot \otz \right] \nn \\
&& \hspace{10pt}+\frac{1}{25}Res\left[z^{-2} \exp\left( \sum_{k \geq 1}\frac{ (\alp_1-\alp_6)(-k)}{k}z^k\right) \cdot \otea{\alp_1+\alp_3-\alp_5-\alp_6}\right] \nn \\
&& \hspace{10pt}-\frac{1}{25}Res\left[z^{-2} \exp\left( \sum_{k \geq 1}\frac{(\alp_1-\alp_6)(-k)}{k}z^k\right) \cdot \otea{-\alp_3+\alp_5}\right] \nn \\
&&=\frac{1}{25}\left( \frac{(\alp_1-\alp_6)(-3)}{3}+\frac{(\alp_1-\alp_6)(-2)(\alp_1-\alp_6)(-1)}{2} + \frac{(\alp_1-\alp_6)(-1)^3}{6}\right) \otimes e^0 \nn \\
&& \hspace{10pt}+\frac{1}{25}(\alp_1-\alp_6)(-1)\otimes e^{\alp_1+\alp_3-\alp_5-\alp_6} -\frac{1}{25}(\alp_1-\alp_6)(-1) \otimes e^{-\alp_3+\alp_5}. \nn 
\end{eqnarray}
The results of the other five calculations are given as follows:
\begin{eqnarray}
&&\frac{1}{5}\yvomn{\otea{-\alp_1+\alp_6}}{-1} \cdot B \nn\\
&&=\frac{1}{25}\left( \frac{-(\alp_1-\alp_6)(-3)}{3}+\frac{(\alp_1-\alp_6)(-2)(\alp_1-\alp_6)(-1)}{2}\right. \nn \\
&&\hspace{40pt}\left.+ \frac{-(\alp_1-\alp_6)(-1)^3}{6}\right) \otimes e^0\nn \\
&& \hspace{10pt}+\frac{1}{25}(-\alp_1+\alp_6)(-1)\otimes e^{-\alp_1-\alp_3+\alp_5+\alp_6} -\frac{1}{25}(-\alp_1+\alp_6)(-1) \otimes e^{\alp_3-\alp_5}, \nn 
\end{eqnarray}
\begin{eqnarray}
&&\frac{1}{5}\yvomn{\otea{\alp_3-\alp_5}}{-1} \cdot B \nn\\
&&=\frac{1}{25}\left( \frac{(\alp_3-\alp_5)(-3)}{3}+\frac{(\alp_3-\alp_5)(-2)(\alp_3-\alp_5)(-1)}{2} \right. \nn \\
&&\hspace{40pt}\left.+ \frac{(\alp_3-\alp_5)(-1)^3}{6}\right) \otimes e^0\nn \\
&& \hspace{10pt}+\frac{1}{25}(\alp_3-\alp_5)(-1)\otimes e^{\alp_1+\alp_3-\alp_5-\alp_6} -\frac{1}{25}(\alp_3-\alp_5)(-1) \otimes e^{-\alp_1+\alp_6}, \nn 
\end{eqnarray}
\begin{eqnarray}
&&\frac{1}{5}\yvomn{\otea{-\alp_3+\alp_5}}{-1} \cdot B \nn\\
&&=\frac{1}{25}\left( \frac{-(\alp_3-\alp_5)(-3)}{3}+\frac{(\alp_3-\alp_5)(-2)(\alp_3-\alp_5)(-1)}{2} \right. \nn \\
&&\hspace{40pt}\left.+ \frac{-(\alp_3-\alp_5)(-1)^3}{6}\right) \otimes e^0\nn \\
&& \hspace{10pt}+\frac{1}{25}(-\alp_3+\alp_5)(-1)\otimes e^{-\alp_1-\alp_3+\alp_5+\alp_6} -\frac{1}{25}(-\alp_3+\alp_5)(-1) \otimes e^{\alp_1-\alp_6}, \nn 
\end{eqnarray}
\begin{eqnarray}
&&\frac{1}{5}\yvomn{\otea{\alp_1+\alp_3-\alp_5-\alp_6}}{-1} \cdot B \nn\\
&&=\frac{1}{25}\left( \frac{(\alp_1+\alp_3-\alp_5-\alp_6)(-3)}{3} + \frac{(\alp_1+\alp_3-\alp_5-\alp_6)(-1)^3}{6}\right.\nn\\
&&\hspace{40pt}\left. +\frac{(\alp_1+\alp_3-\alp_5-\alp_6)(-2)(\alp_1+\alp_3-\alp_5-\alp_6)(-1)}{2} \right) \otimes e^0\nn \\
&& \hspace{10pt}-\frac{1}{25}(\alp_1+\alp_3-\alp_5-\alp_6)(-1)\otimes e^{\alp_3-\alp_5} -\frac{1}{25}(\alp_1+\alp_3-\alp_5-\alp_6)(-1) \otimes e^{\alp_1-\alp_6} ,\nn 
\end{eqnarray}
and
\begin{eqnarray}
&&\frac{1}{5}\yvomn{\otea{-\alp_1-\alp_3+\alp_5+\alp_6}}{-1} \cdot B \nn\\
&&=\frac{1}{25}\left( \frac{-(\alp_1+\alp_3-\alp_5-\alp_6)(-3)}{3} + \frac{-(\alp_1+\alp_3-\alp_5-\alp_6)(-1)^3}{6}  \right.\nn\\
&&\hspace{40pt}\left. +\frac{(\alp_1+\alp_3-\alp_5-\alp_6)(-2)(\alp_1+\alp_3-\alp_5-\alp_6)(-1)}{2}\right) \otimes e^0\nn \\
&& \hspace{10pt}-\frac{1}{25}(-\alp_1-\alp_3+\alp_5+\alp_6)(-1)\otimes e^{-\alp_3+\alp_5} \nn \\
&& \hspace{10pt} -\frac{1}{25}(-\alp_1-\alp_3+\alp_5+\alp_6)(-1) \otimes e^{-\alp_1+\alp_6}.\nn 
\end{eqnarray}
Adding these six results together, the total is 
\newline
\begin{eqnarray}
&&\frac{1}{25}\left[(\alp_1-\alp_6)(-2)(\alp_1-\alp_6)(-1)+ (\alp_3-\alp_5)(-2)(\alp_3-\alp_5)(-1) \right. \nn\\
&&\hspace{40pt}\left.+(\alp_1-\alp_3-\alp_5-\alp_6)(-2)(\alp_1-\alp_3-\alp_5-\alp_6)(-1)\right]\otimes e^0 \nn\\
&&+\frac{1}{25}\left[(\alp_1-\alp_6)(-1)\otimes e^{\alp_1-\alp_6} + (-\alp_1+\alp_6)(-1)\otimes e^{-\alp_1+\alp_6} \right] \\
&&+\frac{1}{25}\left[(\alp_3-\alp_5)(-1)\otimes e^{\alp_3-\alp_5} + (-\alp_3+\alp_5)(-1)\otimes e^{-\alp_3+\alp_5} \right] \nn\\
&&+\frac{1}{25}\left[(\alp_1+\alp_3-\alp_5-\alp_6)(-1)\otimes e^{\alp_1+\alp_3-\alp_5-\alp_6} \right] \nn\\
&&+\frac{1}{25}\left[(-\alp_1-\alp_3+\alp_5+\alp_6)(-1)\otimes e^{-\alp_1-\alp_3+\alp_5+\alp_6} \right].\nn
\end{eqnarray}
Only the calculation of $\yvomn{\otea{\alp_1-\alp_6}}{-1}$ on the products of Heisenberg operators in $\omg$ will be shown since the other five calculations are similar.  Writing
\begin{eqnarray*}
A&=&\frac{1}{10}  \left[(-\lam_1+\lam_6)(-1)^2 + (\lam_3-\lam_5)(-1)^2\right]\otimes e^0 \nonumber \\
 &&+ \frac{1}{10}(\lam_1-\lam_3+\lam_5-\lam_6)(-1)^2 \otimes e^0,
\end{eqnarray*}
we have for one particular part of $A$,
\begin{eqnarray*}
&&-\frac{1}{5}\yvomn{\otea{\alp_1-\alp_6}}{-1} \cdot (-\lam_1+\lam_6)(-1)^2 \otimes e^0 \\
&&=-\frac{1}{50}Res\left[ z^0 \exp\left( \sum_{k \geq 1}\frac{ (\alp_1-\alp_6)(-k)}{k}z^k\right) \exp\left( \sum_{k \geq 1}\frac{ (\alp_1-\alp_6)(k)}{-k}z^{-k}\right) \right. \nn \\
&&\hspace{80pt} \left.e_{\alp_1-\alp_6}z^{(\alp_1-\alp_6)(0)}  \cdot (-\lam_1+\lam_6)(-1)^2 \otimes e^0\right] \nn \\
&&= -\frac{1}{50}Res\left[ z^0 \exp\left( \sum_{k \geq 1}\frac{ (\alp_1-\alp_6)(-k)}{k}z^k\right) \cdot (-\lam_1+\lam_6)(-1)^2 \otimes e^{\alp_1-\alp_6}\right] \nn \\
&&= -\frac{1}{50}(2)(-2)(-\lam_1+\lam_6)(-1) \otimes e^{\alp_1-\alp_6} \nn \\
&&= \frac{2}{25}(-\lam_1+\lam_6)(-1) \otimes e^{\alp_1-\alp_6}. \nn
\end{eqnarray*}
So, determining the action on all three parts of $A$ in a similar manner, we have
\begin{eqnarray}
&&-\frac{1}{5}\yvomn{\otea{\alp_1-\alp_6}}{-1} \cdot A \nn\\
&&= -\frac{2}{25}\left[-(-\lam_1+\lam_6)(-1) \otimes e^{\alp_1-\alp_6} +(\lam_1-\lam_3+\lam_5-\lam_6)(-1)\otimes e^{\alp_1-\alp_6} \right]\nn \\
&&= -\frac{2}{25}(\alp_1-\alp_6)(-1)\otimes e^{\alp_1-\alp_6}.
\end{eqnarray}
Hence, we obtain the following similar actions on $A$,
\begin{eqnarray}
&&-\frac{1}{5}\yvomn{\otea{-\alp_1+\alp_6}}{-1} \cdot A \nn\\
&&= -\frac{2}{25}\left[(-\lam_1+\lam_6)(-1) \otimes e^{-\alp_1+\alp_6} -(\lam_1-\lam_3+\lam_5-\lam_6)(-1)\otimes e^{-\alp_1+\alp_6} \right]\nn \\
&&= -\frac{2}{25}(-\alp_1+\alp_6)(-1)\otimes e^{-\alp_1+\alp_6},
\end{eqnarray}
\begin{eqnarray}
&&-\frac{1}{5}\yvomn{\otea{\alp_3+\alp_5}}{-1} \cdot A \nn\\
&&= -\frac{2}{25}\left[-(\lam_3-\lam_5)(-1) \otimes e^{\alp_3-\alp_5} +(\lam_1-\lam_3+\lam_5-\lam_6)(-1)\otimes e^{\alp_3-\alp_5} \right]\nn \\
&&= -\frac{2}{25}(\alp_3-\alp_5)(-1)\otimes e^{\alp_3-\alp_5},
\end{eqnarray}
\begin{eqnarray}
&&-\frac{1}{5}\yvomn{\otea{-\alp_3+\alp_5}}{-1} \cdot A \nn\\
&&= -\frac{2}{25}\left[(\lam_3-\lam_5)(-1) \otimes e^{-\alp_3+\alp_5} -(\lam_1-\lam_3+\lam_5-\lam_6)(-1)\otimes e^{-\alp_3+\alp_5} \right]\nn \\
&&= -\frac{2}{25}(-\alp_3+\alp_5)(-1)\otimes e^{-\alp_3+\alp_5},
\end{eqnarray}
\begin{eqnarray}
&&\frac{1}{5}\yvomn{\otea{\alp_1+\alp_3-\alp_5-\alp_6}}{-1} \cdot A \nn\\
&&= \frac{2}{25}\left[-(-\lam_1+\lam_6)(-1) \otimes e^{\alp_1+\alp_3-\alp_5-\alp_6} + (\lam_3-\lam_5)(-1)\otimes e^{\alp_1+\alp_3-\alp_5-\alp_6} \right]\nn \\
&&= \frac{2}{25}(\alp_1+\alp_3-\alp_5-\alp_6)(-1)\otimes e^{\alp_1+\alp_3-\alp_5-\alp_6},
\end{eqnarray}
and
\begin{eqnarray}
&&\frac{1}{5}\yvomn{\otea{-\alp_1-\alp_3+\alp_5+\alp_6}}{-1} \cdot A \nn\\
&&= \frac{2}{25}\left[(-\lam_1+\lam_6)(-1) \otimes e^{-\alp_1-\alp_3+\alp_5+\alp_6} -(\lam_3-\lam_5)(-1)\otimes e^{-\alp_1-\alp_3+\alp_5+\alp_6} \right]\nn \\
&&= \frac{2}{25}(-\alp_1-\alp_3+\alp_5+\alp_6)(-1)\otimes e^{-\alp_1-\alp_3+\alp_5+\alp_6}.
\end{eqnarray}
Summing over the right hand sides of (5.8) - (5.17), we have \newpage
\begin{eqnarray*}
&&\yvomn{\omg}{-1} \cdot \omg \\
&&= -\frac{1}{5}(\alp_1-\alp_6)(-1)\otimes e^{\alp_1-\alp_6}-\frac{1}{5}(-\alp_1+\alp_6)(-1)\otimes e^{-\alp_1+\alp_6} \\
&&\hspace{10pt} -\frac{1}{5}(\alp_3-\alp_5)(-1)\otimes e^{\alp_3-\alp_5}-\frac{1}{5}(-\alp_3+\alp_5)(-1)\otimes e^{-\alp_3+\alp_5} \\
&&\hspace{10pt}+ \frac{1}{5}(\alp_1+\alp_3-\alp_5-\alp_6)(-1)\otimes e^{\alp_1+\alp_3-\alp_5-\alp_6}\\
&&\hspace{10pt}+\frac{1}{5}(-\alp_1-\alp_3+\alp_5+\alp_6)(-1)\otimes e^{-\alp_1-\alp_3+\alp_5+\alp_6} \\
&&\hspace{10pt}+\frac{1}{10}\bigg( 2(-\lam_1+\lam_6)(-2)(-\lam_1+\lam_6)(-1) + 2(\lam_3-\lam_5)(-2)(\lam_3-\lam_5)(-1) \\
&&\hspace{40pt}+ 2(\lam_1-\lam_3+\lam_5-\lam_6)(-2)(\lam_1-\lam_3+\lam_5-\lam_6)(-1)\bigg)\otimes e^0.
\end{eqnarray*}
Using the derivatives of vertex operators at the end of Section 4.1,
\begin{eqnarray*}
&&\yvoz{\yvomn{\omg}{-1}\cdot \omg} \\
&&= :\frac{1}{10}(-\lam_1+\lam_6)^{(1)}(z)(-\lam_1+\lam_6)(z)+\frac{1}{10}(-\lam_1+\lam_6)^{(1)}(z)(-\lam_1+\lam_6)(z):\\
&&\hspace{10pt}+:\frac{1}{10}(\lam_3-\lam_5)^{(1)}(z)(\lam_3-\lam_5)(z)+\frac{1}{10}(\lam_3-\lam_5)^{(1)}(z)(\lam_3-\lam_5)(z):\\
&&\hspace{10pt}+:\frac{1}{10}(\lam_1-\lam_3+\lam_5-\lam_6)^{(1)}(z)(\lam_1-\lam_3+\lam_5-\lam_6)(z):\\
&&\hspace{10pt}+:\frac{1}{10}(\lam_1-\lam_3+\lam_5-\lam_6)^{(1)}(z)(\lam_1-\lam_3+\lam_5-\lam_6)(z): \\
&&\hspace{10pt}-\frac{1}{5}\yvoz{(\alp_1-\alp_6)(-1)\otimes e^{\alp_1-\alp_6}} -\frac{1}{5}\yvoz{(-\alp_1+\alp_6)(-1)\otimes e^{-\alp_1+\alp_6}} \\
&&\hspace{10pt}-\frac{1}{5}\yvoz{(\alp_3-\alp_5)(-1)\otimes e^{\alp_3-\alp_5}} -\frac{1}{5}\yvoz{(-\alp_3+\alp_5)(-1)\otimes e^{-\alp_3+\alp_5}} \\
&&\hspace{10pt}+\frac{1}{5}\yvoz{(\alp_1+\alp_3-\alp_5-\alp_6)(-1)\otimes e^{\alp_1+\alp_3-\alp_5-\alp_6}} \\
&&\hspace{10pt}+\frac{1}{5}\yvoz{(-\alp_1-\alp_3+\alp_5+\alp_6)(-1)\otimes e^{-\alp_1-\alp_3+\alp_5+\alp_6}} \\
&&=\frac{d}{dz}\yvoz{\omg}.
\end{eqnarray*}
Since 
\begin{eqnarray}
\yvoz{\omg} = \sum_{p\in \Z} \{ \omg\}_pz^{-p-1}
\end{eqnarray}
we have
\begin{eqnarray} \mylabel{dery}
\frac{d}{dz}\yvoz{\omg} = \sum_{p\in \Z} (-p-1)\{ \omg\}_pz^{-p-2}.
\end{eqnarray}
To complete the calculation, the operator $\{ \{ \omg\}_0\cdot \omg\}_{m+n+2}$ is given by the coefficient of $z^{-m-n-3}$ in (\ref{dery}).  Hence $p = m+n+1$ and 
\begin{eqnarray}
\{ \{ \omg\}_0\cdot \omg\}_{m+n+2}  = -(m+n+2)\{ \omg \}_{m+n+1}.
\end{eqnarray}

The next contribution for the calculation of the bracket comes from the calculation of $\{ \omg\}_1\cdot \omg$.  Noting that the rest of the actions of Heisenberg operators follow similarly, we compute
\begin{eqnarray}
&&\frac{1}{10}\yvomn{(-\lam_1+\lam_6)(-1)^2}{0} \cdot \omg \nn\\
&&=\frac{1}{10}\sum_{r\in\Z}:(-\lam_1+\lam_6)(r)(-\lam_1+\lam_6)(-r):\cdot \omg \nn\\
&&=\frac{1}{10}\left( (-\lam_1+\lam_6)(0)(-\lam_1+\lam_6)(0)+2(-\lam_1+\lam_6)(-1)(-\lam_1+\lam_6)(1)\right) \cdot \omg \nn \\
&&=\frac{1}{10}(-\lam_1+\lam_6)(0)(-\lam_1+\lam_6)(0)\cdot \frac{1}{5}\left(-\otea{\alp_1-\alp_6}-\otea{-\alp_1+\alp_6}\right) \nn \\
&&\hspace{10pt}+\frac{1}{10}(-\lam_1+\lam_6)(0)(-\lam_1+\lam_6)(0)\cdot \frac{1}{5}\left(-\otea{\alp_3-\alp_5}-\otea{-\alp_3+\alp_5}\right) \nn \\
&&\hspace{10pt}+\frac{1}{10}(-\lam_1+\lam_6)(0)(-\lam_1+\lam_6)(0)\cdot \frac{1}{5}\left(\otea{\alp_1+\alp_3-\alp_5-\alp_6}+\otea{-\alp_1-\alp_3+\alp_5+\alp_6}\right) \nn \\
&&\hspace{10pt}+\frac{1}{5}(-\lam_1+\lam_6)(-1)(-\lam_1+\lam_6)(1)\cdot \frac{1}{10}(-\lam_1+\lam_6)(-1)^2 \otimes e^0\nn \\
&&\hspace{10pt}+\frac{1}{5}(-\lam_1+\lam_6)(-1)(-\lam_1+\lam_6)(1)\cdot \frac{1}{10}(\lam_3-\lam_5)(-1)^2 \otimes e^0\nn \\
&&\hspace{10pt}+\frac{1}{5}(-\lam_1+\lam_6)(-1)(-\lam_1+\lam_6)(1)\cdot \frac{1}{10}(\lam_1-\lam_3+\lam_5-\lam_6)(-1)^2 \otimes e^0 \nn \\
&&=-\frac{2}{25}[\otea{\alp_1-\alp_6}+\otea{-\alp_1+\alp_6} ]+\frac{2}{25}[\otea{\alp_1+\alp_3-\alp_5-\alp_6}+\otea{-\alp_1-\alp_3+\alp_5+\alp_6} ]\nn \\
&&\hspace{10pt}+\frac{4}{75}(-\lam_1+\lam_6)(-1)^2\otimes e^0 -\frac{2}{75}(-\lam_1+\lam_6)(-1)(\lam_3-\lam_5)(-1) \otimes e^0\nn \\
&&\hspace{10pt} -\frac{2}{75}(-\lam_1+\lam_6)(-1)(\lam_1-\lam_3+\lam_5-\lam_6)(-1) \otimes e^0 \nn \\
&&=-\frac{2}{25}[\otea{\alp_1-\alp_6}+\otea{-\alp_1+\alp_6} ]+\frac{2}{25}[\otea{\alp_1+\alp_3-\alp_5-\alp_6}+\otea{-\alp_1-\alp_3+\alp_5+\alp_6} ]\nn \\
&&\hspace{10pt}+\frac{2}{25}(-\lam_1+\lam_6)(-1)^2\otimes e^0. \mylabel{5.21}
\end{eqnarray}
The actions of the other two operators are given by
\begin{eqnarray}
&&\frac{1}{10}\yvomn{(\lam_3-\lam_5)(-1)^2}{0} \cdot \omg \nn\\
&&=-\frac{2}{25}[\otea{\alp_3-\alp_5}+\otea{-\alp_3+\alp_5} ] +\frac{2}{25}[\otea{\alp_1+\alp_3-\alp_5-\alp_6}+\otea{-\alp_1-\alp_3+\alp_5+\alp_6} ]\nn \\
&&\hspace{10pt}+\frac{2}{25}(\lam_3-\lam_5)(-1)^2 \otimes e^0, 
\end{eqnarray}
and
\begin{eqnarray}
&&\frac{1}{10}\yvomn{(\lam_1-\lam_3+\lam_5-\lam_6)(-1)^2}{0} \cdot \omg \nn\\
&&=-\frac{2}{25}[\otea{\alp_1-\alp_6}+\otea{-\alp_1+\alp_6} ] -\frac{2}{25}[\otea{\alp_3-\alp_5}+\otea{-\alp_3+\alp_5} ]\nn \\
&&\hspace{10pt}+\frac{2}{25}(\lam_1-\lam_3+\lam_5-\lam_6)(-1)^2 \otimes e^0. 
\end{eqnarray}

Only the calculation $\yvomn{\otea{\alp_1-\alp_6}}{0} \cdot \omg$ will be shown since the other five calculations are similar.  If we consider $\yvomn{\otea{\gamma_i}}{0}\cdot \otea{\gamma_j}$, then the result will be 0 if $(\gamma_i,\gamma_j) \geq -1$.  We only give the non-zero actions and therefore we have
\begin{eqnarray}
&&\frac{1}{5}\yvomn{\otea{\alp_1-\alp_6}}{0} \cdot B \nn\\
&&=\frac{1}{25}Res\left[z^1 \exp\left( \sum_{k \geq 1}\frac{ (\alp_1-\alp_6)(-k)}{k}z^k\right) \exp\left( \sum_{k \geq 1}\frac{ (\alp_1-\alp_6)(k)}{-k}z^{-k}\right) \right. \nn \\
&&\hspace{80pt} \left.e_{\alp_1-\alp_6}z^{(\alp_1-\alp_6)(0)} \cdot \otea{-\alp_1+\alp_6}\right] \nn \\
&& \hspace{10pt}+\frac{1}{25}Res\left[z^1 \exp\left( \sum_{k \geq 1}\frac{ (\alp_1-\alp_6)(-k)}{k}z^k\right) \exp\left( \sum_{k \geq 1}\frac{ (\alp_1-\alp_6)(k)}{-k}z^{-k}\right) \right. \nn \\
&&\hspace{80pt} \left.e_{\alp_1-\alp_6}z^{(\alp_1-\alp_6)(0)} \cdot \otea{\alp_3-\alp_5}\right] \nn \\
&& \hspace{10pt}+\frac{1}{25}Res\left[z^1 \exp\left( \sum_{k \geq 1}\frac{ (\alp_1-\alp_6)(-k)}{k}z^k\right) \exp\left( \sum_{k \geq 1}\frac{ (\alp_1-\alp_6)(k)}{-k}z^{-k}\right) \right. \nn \\
&&\hspace{80pt} \left.e_{\alp_1-\alp_6}z^{(\alp_1-\alp_6)(0)} \cdot \otea{-\alp_1-\alp_3+\alp_5+\alp_6}\right] \nn \\
&&=\frac{1}{25}Res\left[z^{-3} \exp\left( \sum_{k \geq 1}\frac{ (\alp_1-\alp_6)(-k)}{k}z^k\right) \cdot \otz \right] \nn \\
&& \hspace{10pt}+\frac{1}{25}Res\left[z^{-1} \exp\left( \sum_{k \geq 1}\frac{(\alp_1-\alp_6)(-k)}{k}z^k\right) \cdot \otea{\alp_1+\alp_3-\alp_5-\alp_6}\right] \nn \\
&& \hspace{10pt}-\frac{1}{25}Res\left[z^{-1} \exp\left( \sum_{k \geq 1}\frac{ (\alp_1-\alp_6)(-k)}{k}z^k\right) \cdot \otea{-\alp_3+\alp_5}\right] \nn \\
&&=\frac{1}{25}\left( \frac{(\alp_1-\alp_6)(-2)}{2}+ \frac{(\alp_1-\alp_6)(-1)^2}{2}\right) \otimes e^0 \nn \\
&& \hspace{10pt}+\frac{1}{25}\otimes e^{\alp_1+\alp_3-\alp_5-\alp_6} -\frac{1}{25}\otimes e^{-\alp_3+\alp_5}. \nn 
\end{eqnarray}
The results of the other five calculations are given by:
\begin{eqnarray}
&&\frac{1}{5}\yvomn{\otea{-\alp_1+\alp_6}}{0} \cdot B \nn\\
&&=\frac{1}{25}\left( -\frac{(\alp_1-\alp_6)(-2)}{2}+ \frac{(\alp_1-\alp_6)(-1)^2}{2}\right) \otimes e^0 +\frac{1}{25}\otimes e^{-\alp_1-\alp_3+\alp_5+\alp_6}  \nn \\
&& \hspace{10pt}-\frac{1}{25}\otimes e^{\alp_3-\alp_5}, \nn 
\end{eqnarray}
\begin{eqnarray}
&&\frac{1}{5}\yvomn{\otea{\alp_3-\alp_5}}{0} \cdot B \nn\\
&&=\frac{1}{25}\left( \frac{(\alp_3-\alp_5)(-2)}{2} + \frac{(\alp_3-\alp_5)(-1)^2}{2}\right) \otimes e^0 +\frac{1}{25}\otimes e^{\alp_1+\alp_3-\alp_5-\alp_6} \nn \\
&& \hspace{10pt}-\frac{1}{25} \otimes e^{-\alp_1+\alp_6}, \nn 
\end{eqnarray}
\begin{eqnarray}
&&\frac{1}{5}\yvomn{\otea{-\alp_3+\alp_5}}{0} \cdot B \nn\\
&&=\frac{1}{25}\left( -\frac{(\alp_3-\alp_5)(-2)}{2} + \frac{(\alp_3-\alp_5)(-1)^2}{2}\right) \otimes e^0 +\frac{1}{25}\otimes e^{-\alp_1-\alp_3+\alp_5+\alp_6} \nn \\
&& \hspace{10pt}-\frac{1}{25} \otimes e^{\alp_1-\alp_6}, \nn 
\end{eqnarray}
\begin{eqnarray}
&&\frac{1}{5}\yvomn{\otea{\alp_1+\alp_3-\alp_5-\alp_6}}{0} \cdot B \nn\\
&&=\frac{1}{25}\left( \frac{(\alp_1+\alp_3-\alp_5-\alp_6)(-2)}{2} + \frac{(\alp_1+\alp_3-\alp_5-\alp_6)(-1)^2}{2}\right) \otimes e^0\nn \\
&& \hspace{10pt}-\frac{1}{25}\otimes e^{\alp_3-\alp_5} -\frac{1}{25}\otimes e^{\alp_1-\alp_6}, \nn 
\end{eqnarray}
and
\begin{eqnarray}
&&\frac{1}{5}\yvomn{\otea{-\alp_1-\alp_3+\alp_5+\alp_6}}{0} \cdot B \nn\\
&&=\frac{1}{25}\left( -\frac{(\alp_1+\alp_3-\alp_5-\alp_6)(-2)}{2} + \frac{(\alp_1+\alp_3-\alp_5-\alp_6)(-1)^2}{2}\right) \otimes e^0\nn \\
&& \hspace{10pt}-\frac{1}{25}\otimes e^{-\alp_3+\alp_5} -\frac{1}{25} \otimes e^{-\alp_1+\alp_6}. \nn 
\end{eqnarray}
Adding these six calculations together, the resulting vector is
\begin{eqnarray}
&&\frac{1}{25}\left[(\alp_1-\alp_6)(-1)^2 + (\alp_3-\alp_5)(-1)^2 +(\alp_1-\alp_3-\alp_5-\alp_6)(-1)^2\right] \otimes e^0\nn\\
&&-\frac{1}{25}\left[\otea{\alp_1-\alp_6} + \otea{-\alp_1+\alp_6} + \otea{\alp_3-\alp_5} + \otea{-\alp_3+\alp_5}\right] \mylabel{5.24}\\
&&+\frac{1}{25}\left[\otea{\alp_1+\alp_3-\alp_5-\alp_6} + \otea{-\alp_1-\alp_3+\alp_5+\alp_6} \right]. \nn
\end{eqnarray}

Only the calculation $\yvomn{\otea{\alp_1-\alp_6}}{0}$ on the products of Heisenberg operators in $\omg$ will be shown since the other five calculations are similar.  We have
\begin{eqnarray*}
&&-\frac{1}{5}\yvomn{\otea{\alp_1-\alp_6}}{0} \cdot (-\lam_1+\lam_6)(-1)^2 \otimes e^0 \\
&&=-\frac{1}{50}Res\left[ z^1 \exp\left( \sum_{k \geq 1}\frac{ (\alp_1-\alp_6)(-k)}{k}z^k\right) \exp\left( \sum_{k \geq 1}\frac{ (\alp_1-\alp_6)(k)}{-k}z^{-k}\right) \right. \nn \\
&&\hspace{80pt} \left.e_{\alp_1-\alp_6}z^{(\alp_1-\alp_6)(0)}  \cdot (-\lam_1+\lam_6)(-1)^2 \otimes e^0\right] \nn 
\end{eqnarray*}
\begin{eqnarray*}
&&= -\frac{1}{50}Res\left[ z^1 \exp\left( \sum_{k \geq 1}\frac{ (\alp_1-\alp_6)(-k)}{k}z^k\right) \exp\left( \sum_{k \geq 1}\frac{ (\alp_1-\alp_6)(k)}{-k}z^{-k}\right)\right. \\
&&\hspace{80pt}\left. \cdot (-\lam_1+\lam_6)(-1)^2 \otimes e^{\alp_1-\alp_6}\right] \nn \\
&&= -\frac{1}{50}Res\left[ z^{1-2} \frac{(\alp_1-\alp_6)(1)^2}{2}\cdot (-\lam_1+\lam_6)(-1)^2 \otimes e^{\alp_1-\alp_6}\right] \nn \\
&&= -\frac{1}{50}(\alp_1-\alp_6, -\lam_1+\lam_6)^2 \otimes e^{\alp_1-\alp_6} \nn \\
&&= -\frac{2}{25}\otea{\alp_1-\alp_6}. \nn
\end{eqnarray*}
Determining the action on all three parts of $A$ in a similar manner, we have 
\begin{eqnarray*}
-\frac{1}{5}\yvomn{\otea{-\alp_1+\alp_6}}{-1} \cdot A &=& -\frac{4}{25}\otea{\alp_1-\alp_6}. 
\end{eqnarray*}
Hence, the following are the results of the action on the Heisenberg operators:
\begin{eqnarray*}
-\frac{1}{5}\yvomn{\otea{-\alp_1-\alp_6}}{0} \cdot A &=& -\frac{4}{25}\otea{-\alp_1+\alp_6}, \\
-\frac{1}{5}\yvomn{\otea{\alp_3+\alp_5}}{0} \cdot A &=& -\frac{4}{25}\otea{\alp_3-\alp_5}, \\
-\frac{1}{5}\yvomn{\otea{-\alp_3-\alp_5}}{0} \cdot A &=& -\frac{4}{25}\otea{-\alp_3+\alp_5}, \\
\frac{1}{5}\yvomn{\otea{\alp_1+\alp_3-\alp_5-\alp_6}}{0} \cdot A &=& \frac{4}{25}\otea{\alp_1+\alp_3-\alp_5-\alp_6}, \\
\frac{1}{5}\yvomn{\otea{-\alp_1-\alp_3+\alp_5+\alp_6}}{0} \cdot A &=& \frac{4}{25}\otea{-\alp_1-\alp_3+\alp_5+\alp_6}.
\end{eqnarray*}
Summing all these calculations gives
\begin{eqnarray*}
\yvomn{\omg}{0} \cdot \omg &=&\frac{2}{10}  \left[(-\lam_1+\lam_6)(-1)^2 + (\lam_3-\lam_5)(-1)^2\right]\otimes e^0 \nonumber \\
 &&+ \frac{2}{10}(\lam_1-\lam_3+\lam_5-\lam_6)(-1)^2 \otimes e^0 \\
&&+\frac{2}{5}\left(-\otea{\alp_1-\alp_6}-\otea{\alp_3-\alp_5} +\otea{\alp_1+\alp_3-\alp_5-\alp_6}\right) \nn \\
&&+ \frac{2}{5}\left(-\otea{-\alp_1+\alp_6}-\otea{-\alp_3+\alp_5} +\otea{-\alp_1-\alp_3+\alp_5+\alp_6}\right) \nn \\
&=& 2\omg,
\end{eqnarray*}
so that  
\begin{eqnarray}
\yvoz{\yvomn{\omg}{0}\cdot\omg } &=& 2\yvoz{\omg } \nn \\
&=& 2\sum_{p\in \Z} \{ \omg\}_p z^{-p-1}. \mylabel{5.25}
\end{eqnarray}
Hence the operator $\{\{ \omg\}_1\cdot\omg \}_{m+n+1}$ is given by the coefficient of $z^{-m-n-2}$ in (\ref{5.25}).  Therefore, $p = m+n+1$ and  
\begin{eqnarray}
\{\{ \omg\}_1\cdot \omg\}_{m+n+1}  = 2\{ \omg\}_{m+n+1} .\end{eqnarray}

The next contribution for the calculation of the bracket comes from the calculation of $\{ \omg\}_2\cdot \omg$.  We first compute
\begin{eqnarray}
&&\frac{1}{10}\yvomn{(-\lam_1+\lam_6)(-1)^2}{1} \cdot \omg \nn \\
&&=\frac{1}{10}\sum_{r\in\Z}:(-\lam_1+\lam_6)(r)(-\lam_1+\lam_6)(1-r):\cdot \omg \nn \\
&&=\frac{1}{10}\left(\dots+(-\lam_1+\lam_6)(-1)(-\lam_1+\lam_6)(2)+ 2(-\lam_1+\lam_6)(0)(-\lam_1+\lam_6)(1) \right.\nn \\
&&\hspace{60pt}\left.+ (-\lam_1+\lam_6)(-1)(-\lam_1+\lam_6)(2) +\dots\right) \cdot \omg \nn \\
&&= 0
\end{eqnarray}
since every term in this sum kills $\omg$.
The argument for the other two operators acting on $\omg$ are exactly the same, hence
\begin{eqnarray}
\frac{1}{10}\yvomn{(\lam_3-\lam_5)(-1)^2}{1} \cdot \omg &=& 0, \nn \\
\frac{1}{10}\yvomn{(\lam_1-\lam_3+\lam_5-\lam_6)(-1)^2}{1} \cdot \omg &=& 0. \nn 
\end{eqnarray}

The action of $\yvomn{\otea{\alp_1-\alp_6}}{1}$ will be computed and the other five operators follow in a similar manner. We have that 
\begin{eqnarray}
&&-\frac{1}{5}\yvomn{\otea{\alp_1-\alp_6}}{1} \cdot \omg \nn\\
&&=-\frac{1}{5}Res\left[z^2 \exp\left( \sum_{k \geq 1}\frac{ (\alp_1-\alp_6)(-k)}{k}z^k\right) \exp\left( \sum_{k \geq 1}\frac{ (\alp_1-\alp_6)(k)}{-k}z^{-k}\right) \right. \nn \\
&&\hspace{80pt} \left.e_{\alp_1-\alp_6}z^{(\alp_1-\alp_6)(0)} \cdot \omg \right]. \nn \\
&&=\frac{1}{25}\eps(\alp_1-\alp_6,-\alp_1+\alp_6)(\alp_1-\alp_6)(-1)\otimes e^0 \nn
\end{eqnarray}
since the only non-zero contribution is from the term $\otea{-\alp_1+\alp_6}$ in $\omg$ because \\ $\herf{\alp_1-\alp_6}{\gam} \geq -2$ for all other terms $\otea{\gam}$ and $\herf{\alp_1-\alp_6}{-\alp_1+\alp_6} = -4$.

Similarly, the other five calculations are 
\begin{eqnarray}
-\frac{1}{5}\yvomn{\otea{-\alp_1+\alp_6}}{1} \cdot \omg &=& \frac{1}{25}\eps(-\alp_1+\alp_6,\alp_1-\alp_6)(-\alp_1+\alp_6)(-1)\otimes e^0, \nn \\
-\frac{1}{5}\yvomn{\otea{\alp_3-\alp_5}}{1} \cdot \omg &=& \frac{1}{25}\eps(\alp_3-\alp_5,-\alp_3+\alp_5)(\alp_3-\alp_5)(-1)\otimes e^0, \nn \\
-\frac{1}{5}\yvomn{\otea{-\alp_3+\alp_5}}{1} \cdot \omg &=& \frac{1}{25}\eps(-\alp_3+\alp_5,\alp_3-\alp_5)(-\alp_3+\alp_5)(-1)\otimes e^0, \nn 
\end{eqnarray}
\begin{eqnarray}
&&-\frac{1}{5}\yvomn{\otea{\alp_1+\alp_3-\alp_5-\alp_6}}{1} \cdot \omg \nn \\
&&\hspace{10pt}= \frac{1}{25}\eps(\alp_1+\alp_3-\alp_5-\alp_6,-\alp_1-\alp_3+\alp_5+\alp_6)(\alp_1+\alp_3-\alp_5-\alp_6)(-1)\otimes e^0, \nn \\
&&-\frac{1}{5}\yvomn{\otea{-\alp_1-\alp_3+\alp_5+\alp_6}}{1} \cdot \omg \nn\\ 
&&\hspace{10pt}= \frac{1}{25}\eps(-\alp_1-\alp_3+\alp_5+\alp_6,\alp_1+\alp_3-\alp_5-\alp_6)(-\alp_1-\alp_3+\alp_5+\alp_6)(-1)\otimes e^0. \nn
\end{eqnarray}
Adding these six calculations together, the resulting action gives the 0 vector.  The sum gives
$$\yvomn{\omg}{1} \cdot \omg = 0. $$
So $\{ \{\omg\}_2\cdot \omg\}_{m+n} = 0 I_{V_P}$.


The last contribution for the calculation of the bracket comes from $\{ \omg\}_3\cdot \omg$.  In the following computation only the r=1 term contributes,
\begin{eqnarray}
&&\frac{1}{10}\yvomn{(-\lam_1+\lam_6)(-1)^2}{2} \cdot \omg \nn \\
&&=\frac{1}{10}\sum_{r\in\Z}:(-\lam_1+\lam_6)(r)(-\lam_1+\lam_6)(2-r):\cdot \omg \nn \\
&&=\frac{1}{100}(-\lam_1+\lam_6)(1)^2\cdot \left[(-\lam_1+\lam_6)(-1)^2 \otimes e^0 +(\lam_3-\lam_5)(-1)^2 \otimes e^0\right. \nn \\
&&\hspace{140pt}+\left.(\lam_1-\lam_3+\lam_5-\lam_6)(-1)^2 \otimes e^0 \right] \nn \\
&&=\frac{1}{100}\left[2\left(\frac{4}{3}\right)^2 + 2\left(-\frac{2}{3}\right)^2 + 2\left(-\frac{2}{3}\right)^2  \right]\otimes e^0 \nn \\
&&= \frac{4}{75}\otimes e^0.\nn
\end{eqnarray}
We have similarly
\begin{eqnarray}
\frac{1}{10}\yvomn{(\lam_3-\lam_5)(-1)^2}{2} \cdot \omg &=& \frac{4}{75}\otimes e^0 \nn \\
\frac{1}{10}\yvomn{(\lam_1-\lam_3+\lam_5-\lam_6)(-1)^2}{2} \cdot \omg &=& \frac{4}{75}\otimes e^0. \nn 
\end{eqnarray}

Only the calculation $\yvomn{\otea{\alp_1-\alp_6}}{2}$ on $\omg$ will be shown since the five other calculations are similar.  We have
\begin{eqnarray}
&&-\frac{1}{5}\yvomn{\otea{\alp_1-\alp_6}}{2} \cdot \omg \nn\\
&&=-\frac{1}{5}Res\left[z^3 \exp\left( \sum_{k \geq 1}\frac{ (\alp_1-\alp_6)(-k)}{k}z^k\right) \exp\left( \sum_{k \geq 1} \frac{(\alp_1-\alp_6)(k)}{-k}z^{-k}\right) \right. \nn \\
&&\hspace{80pt} \left.e_{\alp_1-\alp_6}z^{(\alp_1-\alp_6)(0)} \cdot \omg \right]. \nn 
\end{eqnarray}
\begin{eqnarray}
&&=\frac{1}{25}\eps(\alp_1-\alp_6,-\alp_1+\alp_6)\otz \nn\\
&&=\frac{1}{25}\otimes e^0, \nn
\end{eqnarray}
since the only non-zero contribution is from the term $\otea{-\alp_1+\alp_6}$ in $\omg$ because $\herf{\alp_1-\alp_6}{\gam} \geq -2$ for all other terms $\otea{\gam}$ and $\herf{\alp_1-\alp_6}{-\alp_1+\alp_6} = -4$.

Similarly, the other five calculations are now given by
\begin{eqnarray}
\frac{1}{5}\yvomn{\otea{-\alp_1+\alp_6}}{2} \cdot \omg &=& \frac{1}{25}\otimes e^0,\nn \\
\frac{1}{5}\yvomn{\otea{\alp_3-\alp_5}}{2} \cdot \omg &=& \frac{1}{25}\otimes e^0, \nn\\
\frac{1}{5}\yvomn{\otea{-\alp_3+\alp_5}}{2} \cdot \omg &=& \frac{1}{25}\otimes e^0, \nn \\ 
\frac{1}{5}\yvomn{\otea{\alp_1+\alp_3-\alp_5-\alp_6}}{2} \cdot \omg &=& \frac{1}{25}\otimes e^0, \nn \\
\frac{1}{5}\yvomn{\otea{-\alp_1-\alp_3+\alp_5+\alp_6}}{2} \cdot \omg &=& \frac{1}{25}\otimes e^0. \nn 
\end{eqnarray}
Therefore $\yvomn{\omg}{2}\cdot \omg = \frac{2}{5}\otimes e^0$ and 
\begin{eqnarray}
\yvoz{\yvomn{\omg}{2}\cdot \omg} &=& Y\left(\frac{2}{5}\otimes e^0,z\right) \nn \\
&=&\sum_{p\in\Z} \left\{\frac{2}{5}\otimes e^0\right\}_p z^{-p-1} .
\end{eqnarray}
Since the operator $\{\{\omg\}_2 \cdot \omg \}_{m+n-1}$ is given by the coefficient of $z^{-m-n}$, $p = m+n-1$.  The operators $\{\frac{2}{5}\otimes e^0\}_p = 0$, unless $p = -1$, and therefore $-1=m+n-1$ implies $m = -n$.  We have the following
\begin{eqnarray}
\{\{\omg \}_3 \cdot \omg\}_{m+n-1} &=&  \left\{\frac{2}{5}\otimes e^0\right\}_{m+n-1} \nn \\
&=& \delta_{m,-n} \frac{2}{5}I_{V_P}
\end{eqnarray}

Now putting together all the previous calculations, we have 
\begin{eqnarray*}
[L(m), L(n)]  &=&[\{\omg\}_{m+1}, \{\omg\}_{n+1}]   \nonumber \\
&=& \sum_{0\leq k} \binom{m+1}{k}\{\{ \omg\}_k\cdot \omg\}_{m+n+2-k} \nonumber \\
&=&\binom{m+1}{0}(-m-n-2) \{\omg\}_{m+n+1}  +  \binom{m+1}{1} 2\{\omg\}_{m+n+1}   \\
&& +  \binom{m+1}{2} 0I_{V_P} +  \binom{m+1}{3} \delta_{m,-n} \frac{2}{5} I_{V_P}\\
&=& \left[(-m-n-2)+   2(m+1)\right]\{\omg\}_{m+n+1} +  \frac{m^3-m}{6} \delta_{m,-n} \frac{2}{5} I_{V_P} 
\end{eqnarray*}
\begin{eqnarray*}
&=& (m-n)\{ \omg\}_{m+n+1}+\frac{m^3-m}{12}\delta_{m,-n}\frac{4}{5}I_{V_P}\nonumber\\
&=&  (m-n)L(m+n)+\frac{m^3-m}{12}\delta_{m,-n}\frac{4}{5}I_{V_P}. \nonumber
\end{eqnarray*}

Therefore the vector $\omg$ generates a representation of the Virasoro algebra with central charge $\frac{4}{5}$.  Note that one can now recover the Virasoro brackets for the representation generated by $\omg_{F_4}$, by using the brackets the Virasoro representation generated by $\omg_{E_6}- \omg$.

\chapter{Verification of $Vir \otimes \Ff$-module Highest Weight Vectors }
\label{ch6}
This chapter gives the calculations which determine the summands in the decomposition for each $V^{\Lam_i}$ given in Chapter 3.  Each of these summands $Vir\left(\frac{4}{5},h\right) \otimes W^{\Omg_j}$ is determined by a HWV with respect to both $Vir$ and $\fat$.  In such a summand, the conditions for $v \in V_P$ to be a HWV for these two algebras are:
\begin{enumerate}
\item $\yvomn{\otea{-\theta}}{1} \cdot v = 0$.
\item $\yvomn{\beta_i}{0} \cdot v = 0, \, \, \, \forall \beta_i \in \Delta_{F_4}$.
\item $\yvomn{\omg}{1} \cdot v = 0$ and $\yvomn{\omg}{2} \cdot v = 0$.
\item $\yvomn{\omg}{0} \cdot v = hv$.
\item $\yvomn{\beta_i(-1)\otz}{0} \cdot v = \beta_i(0)\cdot v = \herf{\omg_j}{\beta_i} v, \, \, \, \forall \beta_i \in \Delta_{F_4}$.
\end{enumerate} 
When $\ds v \in \oplus_{k = 1}^{l} S(\fhhz) \otimes e^{\nu_k}$, condition 5 is equivalent to $Proj(\nu_k) = \omg_j$, for $1 \leq k \leq l$.  Condition 5 is simple to check in each case and is not discussed below.  Also we define
$$
\yvomn{\beta_i}{n} = \{\beta_i\}_n = \left\{ \begin{array}{ccl} 
\yvomn{\otea{\beta_i}}{n} & \mbox{for} & i = 1,2 \\ 
\yvomn{\otea{\alp_3}}{n} + \yvomn{\otea{\alp_5}}{n} & \mbox{for} & i = 3  \\ 
\yvomn{\otea{\alp_1}}{n} + \yvomn{\otea{\alp_6}}{n}& \mbox{for} & i = 4.  
\end{array}\right.
$$

\section{The Decomposition of $V^{\Lam_0}$}
\label{s6.1}
We now find four highest weight vectors for the summands in the decomposition of $V^{\Lam_0}$.

\begin{lem} $\otz$ is the HWV in $Vir\left( \frac{4}{5}, 0 \right) \otimes W^{\Omg_0}$.
\end{lem}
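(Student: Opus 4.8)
The plan is to verify directly that $v = \otz$ satisfies conditions 1--5 listed at the start of the chapter, with $h = 0$ and $\Omg_j = \Omg_0$. The entire argument rests on one fact: $\otz = 1 \otimes e^0$ is the vacuum vector $\mathbf{1}$ of the VOA $V_P$ (equivalently of $V^{\Lam_0}$), so that for every $u \in V_P$ the creation/truncation axiom $\yvoz{u}\cdot \otz \in V_P[[z]]$ gives $\{u\}_n \cdot \otz = 0$ for all $n \geq 0$, together with the Virasoro axiom $L(n)\cdot \otz = 0$ for $n \geq -1$ from Definition \ref{Def: 4.1}. I would also recall the index translation $\{u\}_n = \yvomn{u}{\,n - \frac{1}{2}\herf{\alp}{\alp}+1}$ for $u = u'\otimes e^{\alp}$, which reduces to $\{u\}_n = \yvomn{u}{n}$ whenever $\frac{1}{2}\herf{\alp}{\alp}=1$, i.e.\ whenever $\alp \in L_2 = \Phi_{E_6}$.

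\emph{Conditions 1 and 2.} Since $-\theta \in L_2$, we have $\yvomn{\otea{-\theta}}{1} = \{\otea{-\theta}\}_1$, and $1 \geq 0$ kills $\otz$. For each $\beta_i \in \Delta_{F_4}$ the operator $\yvomn{\beta_i}{0}$ equals $\{\otea{\beta_i}\}_0$ for $i=1,2$ and $\{\otea{\alp_3}\}_0 + \{\otea{\alp_5}\}_0$, resp.\ $\{\otea{\alp_1}\}_0 + \{\otea{\alp_6}\}_0$, for $i=3,4$; each summand annihilates $\otz$ since $\alp_j \in L_2$ and $0 \geq 0$.

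\emph{Conditions 3, 4 and 5.} The operators $\yvomn{\omg}{1} = L(1)$ and $\yvomn{\omg}{2} = L(2)$ annihilate $\otz$ by the axiom $L(n)\cdot\mathbf{1}=0$ for $n\ge -1$; and $\yvomn{\omg}{0}\cdot\otz = L(0)\cdot\otz = wt(\otz)\,\otz = 0 = 0\cdot\otz$, so the $L(0)$-eigenvalue is $h = 0$, matching the value $h_{5,6}^{1,1}=0$ occurring in the first summand of $V^{\Lam_0}$ in Theorem \ref{thm br1}. Finally $\otz = 1\otimes e^0$ has $\fh^*$-weight $0$, so $\yvomn{\beta_i(-1)\otz}{0}\cdot\otz = \beta_i(0)\cdot(1\otimes e^0) = \herf{0}{\beta_i}\,\otz = 0 = \herf{\Omg_0}{\beta_i}\,\otz$, and $Proj(0) = \omg_0$, giving condition 5.

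\emph{Anticipated main point.} There is no genuinely hard computation here; the only things to be careful about are the two indexing conventions ($\{v\}_n$ versus $\yvomn{v}{n}$) and the fact that the lemma asserts $\otz$ is \emph{the} highest weight vector of the summand $Vir\left(\frac{4}{5},0\right)\otimes W^{\Omg_0}$. The verification above shows $\otz$ is a $Vir\otimes\fat$ highest weight vector with the stated weight data; that this vector generates exactly that summand (and that it is the unique such up to scalar) follows by combining it with the principally specialized character identity of Theorem \ref{thm br1}, which pins down the multiplicity spaces, so within this lemma it suffices to exhibit $\otz$ with the correct $(c,h)$ and $\fat$-weight.
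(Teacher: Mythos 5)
Your proof is correct, but it takes a cleaner, more axiomatic route than the paper does. The paper verifies each condition by hand: for conditions 1 and 3 it invokes the weight formula $wt(\{v\}_n \cdot w) = wt(v)+wt(w)-n-1$ to conclude the result lands in negative weight and hence vanishes, and for conditions 2 and 4 it carries out explicit residue computations with the exponential form of the vertex operators (including expanding $\yvomn{(\lam_1+\lam_6)(-1)^2}{0}$ and $\yvomn{\otea{\gamma_i}}{0}$ term by term to get eigenvalue $0$). You instead observe that $\otz$ is the vacuum $\mathbf{1}$, so the creation axiom $\yvoz{u}\cdot\mathbf{1}\in V_P[[z]]$ kills $\{u\}_n\cdot\otz$ for every $u$ and every $n\geq 0$ at once; after the index translation $\{u\}_n = \yvomn{u}{n-\frac12\herf{\alp}{\alp}+1}$ this disposes of all five conditions uniformly. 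That buys brevity and makes clear that nothing about the specific shape of $\omg$ or the $\beta_i$ matters here; the paper's computations, while longer, serve as a warm-up for the genuinely nontrivial HWV verifications later in the chapter where the vacuum argument is unavailable. One small caution: in condition 3 the operators $\yvomn{\omg}{n}$ are the coset Virasoro operators $\{\omg_{E_6}-\omg_{F_4}\}_{n+1}$, not the $L(n)$ of Definition \ref{Def: 4.1} (which belong to $\omg_{E_6}$), so strictly you should cite the creation axiom $\{u\}_n\cdot\mathbf{1}=0$ for $n\geq 0$ rather than the axiom $L(n)\cdot\mathbf{1}=0$; your general statement already covers this, so the conclusion stands.
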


\begin{proof} 

1) $\yvomn{\otea{-\theta}}{1} \cdot \otz =0 $

Using (\ref{wtformula}), \, $wt(\{ \otea{-\theta}\}_1 \cdot \otz) = -1$ and therefore this action is 0. \\

2) $\yvomn{\beta}{0} \cdot \otz = 0, \forall \beta \in \Delta_{F_4}$ \\
We have for each $\alp \in \Delta_{E_6}$,
\begin{eqnarray*}
&&\yvomn{\otea{\alp}}{0} \cdot 1\ \otimes e^0 \\
&&= Res \left[ \exp\left( \sum_{k \geq 1}\frac{ \alp(-k)}{k}z^k\right) \exp\left( \sum_{k \geq 1}\frac{\alp(k)}{-k}z^{-k}\right) e_{\alp}z^{\alp(0)} \cdot \otz\right] \\
&&= Res \left[ \exp\left( \sum_{k \geq 1}\frac{ \alp(-k)}{k}z^k\right) \exp\left( \sum_{k \geq 1}\frac{ \alp(k)}{-k}z^{-k}\right) \cdot 1 \otimes e^{\alp}\right] \\
&&= Res \left[ \left( I + \frac{\alp(-1)}{1}z^1 \dots \right) \cdot 1 \otimes e^{\alp}\right] \\
&&= 0.
\end{eqnarray*}
Therefore $\yvomn{\beta}{0} \cdot \otz = 0$ for each $\beta \in \Delta_{F_4}$. \\

3) Remember the $\omg$, which generates the coset Virasoro, is 
\begin{eqnarray*} \omg &=&  \frac{1}{10}\left[ (-\lam_1 + \lam_6)(-1)^2 + (\lam_3 - \lam_5)(-1)^2 +  (\lam_1 - \lam_3 + \lam_5 - \lam_6)(-1)^2\right]\otimes e^0 \\
&&+ \frac{1}{5}\left[ -1 \otimes e^{\pm\gamma_1} - 1 \otimes e^{\pm\gamma_2} + 1 \otimes e^{\pm\gamma_3} \right],
\end{eqnarray*} 
where $\gamma_1 = \alp_1 - \alp_6$, $\gamma_2 = \alp_3 - \alp_5$, and $\gamma_3 = \gamma_1 + \gamma_2$.  The action of the Virasoro operators will be broken into a sum of two parts with $\omg = A + B$, where $A$ is the first line of $\omg$ above and $B$ is the second line.  

To give $\yvomn{\omg}{1} \cdot \otz$, use (\ref{wtformula}), $wt(\{ \omg\}_2 \cdot \otz) = -1$ and hence the action is 0.  Using (\ref{wtformula}) to check $\yvomn{\omg}{2} \cdot \otz$, we have $ wt(\{ \omg\}_3 \cdot \otz) = -2$ and therefore the action is 0. Hence all positive Virasoro operators will kill $\otz$.

4) $\otz$ has eigenvalue 0 for $\yvomn{\omg}{0}$.  The action of $\yvomn{(\lam_1+\lam_6)(-1)^2}{0}$ will be computed and the other three operators, from $A$, follow in a similar manner.  Only the $r = 0$ term could contribute in this calculation, so we have  
\begin{eqnarray*}
&&\yvomn{(\lam_1+\lam_6)(-1)^2}{0} \cdot \otz \\
&&= \sum_{r \in \Z} : (\lam_1+\lam_6)(r)(\lam_1+\lam_6)(-r): \cdot \otz \\
&&= 0(\otz).
\end{eqnarray*}
Hence the action of part A on $\otz$ will give eigenvalue 0 for $\otz$. \\
To check the action of part B, for any $\gamma_i$,
\begin{eqnarray*}
&&\yvomn{\otea{\gamma_i}}{0} \cdot  \otz \\
&&= Res \left[z \exp\left( \sum_{k \geq 1}\frac{ \gamma_i(-k)}{k}z^k\right) \exp\left( \sum_{k \geq 1}\frac{\gamma_i(k)}{-k}z^{-k}\right) e_{\gamma_i}z^{\gamma_i(0)} \cdot \otz\right] \\
&&= Res \left[ z\exp\left( \sum_{k \geq 1}\frac{\gamma_i(-k)}{k}z^k\right) \exp\left( \sum_{k \geq 1}\frac{ \gamma_i(k)}{-k}z^{-k}\right) \cdot 1 \otimes e^{\gamma_i}\right] \\
&&= Res \left[ z \left( I + \frac{\gamma_i(-1)}{1}z^1 \dots \right) \cdot 1 \otimes e^{\gamma_i}\right] \\
&&= 0.
\end{eqnarray*} 
Hence all of B will give eigenvalue 0 for $\otz$ and $\otz$ has eigenvalue 0 for $\yvomn{\omg}{0}$.

\end{proof}


We set $\mu = (1,1,2,2,1,1) , \tau\mu = (1,1,1,2,2,1) \in \Phi_{E_6}$, where the ordered sextuples are the coefficients in the basis of simple roots.  These are useful because they satisfy $Proj(\mu) = Proj(\tau\mu) = \omg_4$.  We have two facts to help complete the calculations below:
\begin{description}
\item{a)} $\alp_3 + \tau\mu = \alp_5 + \mu$ 
\item{b)} A table of dot products which make the calculations much quicker.

$
 \begin{array}{r | r | r | r | r | r | r | r | r | r | r | r | r | r | r | r | r}
  & \alp_1 & \alp_2 & \alp_3 & \alp_4 & \alp_5 & \alp_6 & -\theta & \gamma_1 & \gamma_2 & \gamma_3 & \lam_1 & \lam_2 & \lam_3 & \lam_4 & \lam_5 & \lam_6\\ \hline 
 \mu & 0 & 0 & 1 & 0 & -1 & 1 & -1 & -1 & 2 & 1 & 1 & 1 & 2 & 2 & 1 & 1\\ \hline 
 \tau\mu & 1 & 0 & -1 & 0 & 1 & 0 & -1 & 1 & -2 & -1 & 1 & 1 & 1 & 2 & 2 & 1\\  
\end{array}$
\end{description}

\begin{lem}$\otea{\mu} - \otea{\tau\mu}$ is a HWV in $Vir\left(\frac{4}{5}, \frac{2}{5}\right) \otimes W^{\Omg_4}$.
\end{lem}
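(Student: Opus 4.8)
The plan is to verify the five highest-weight conditions listed at the start of Chapter 6 for the vector $v = \otea{\mu} - \otea{\tau\mu}$, using the table of dot products in fact (b) to make all the residue computations routine. Condition 5 is immediate: $Proj(\mu) = Proj(\tau\mu) = \omg_4$ by construction, so $\beta_i(0)$ acts on both summands by the scalar $\herf{\omg_4}{\beta_i}$, and since $\herf{\omg_4}{\beta_i} = \delta_{i,4}$ this is consistent with $W^{\Omg_4}$. For condition 2, I would compute $\yvomn{\beta_i}{0}\cdot \otea{\mu}$ and $\yvomn{\beta_i}{0}\cdot\otea{\tau\mu}$ for each simple root $\beta_i$ of $F_4$ (recalling $\beta_3 = \alp_3+\alp_5$, $\beta_4 = \alp_1+\alp_6$ so the operator is the corresponding sum of vertex operators). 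Using the formula $\yvomn{\otea{\alp}}{0}\cdot\otea{\nu} = Res\left(z^{\herf{\alp}{\nu}}\exp(\textstyle\sum_{k\geq1}\frac{\alp(-k)}{k}z^k)\right)\veps(\alp,\nu)\otea{\alp+\nu}$, the action vanishes whenever $\herf{\alp}{\nu} \geq 0$ and otherwise picks out a Heisenberg coefficient. From the table, $\herf{\alp_j}{\mu} \geq 0$ and $\herf{\alp_j}{\tau\mu}\geq 0$ for $j=1,2,4$, killing $\beta_1,\beta_2,\beta_4$ on each summand; for $\beta_3$, fact (a) says $\alp_3+\tau\mu = \alp_5+\mu$, so the two surviving terms $\yvomn{\otea{\alp_3}}{0}\cdot\otea{\tau\mu}$ and $\yvomn{\otea{\alp_5}}{0}\cdot\otea{\mu}$ land in the same graded piece and cancel after checking the $\veps$-signs — this sign check is where I'd be most careful. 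Condition 1 is similar: $\yvomn{\otea{-\theta}}{1}\cdot\otea{\nu}$ vanishes by the weight formula (\ref{wtformula}) unless $\herf{-\theta}{\nu}$ is sufficiently negative; from the table $\herf{-\theta}{\mu} = \herf{-\theta}{\tau\mu} = -1$, giving $wt = -1 + 1 + \herf{\nu}{\nu}/2 - 1 - 1 + 1$; one computes $\herf{\mu}{\mu}=\herf{\tau\mu}{\tau\mu}=2$, so the weight is negative and the action is zero.

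For conditions 3 and 4 — the Virasoro conditions — I would use the explicit $\omg = A + B$ with $A = \frac{1}{10}[(-\lam_1+\lam_6)(-1)^2 + (\lam_3-\lam_5)(-1)^2 + (\lam_1-\lam_3+\lam_5-\lam_6)(-1)^2]\otimes e^0$ and $B$ the sum over $\pm\gamma_1,\pm\gamma_2,\pm\gamma_3$. For $\yvomn{\omg}{2}\cdot v$ and $\yvomn{\omg}{1}\cdot v$: the $A$-part operators $\yvomn{(\cdots)(-1)^2}{n}$ for $n=1,2$ act on $\otea{\nu}$ through terms like $(\lambda(1)\lambda(0) + \lambda(2)\lambda(-1) + \cdots)$ on $S(\fhhz^-)\otimes e^\nu$; since $v$ has no Heisenberg part, most terms vanish, and the ones that survive for $\yvomn{\omg}{2}$ contribute scalars $\propto \herf{\lambda}{\nu}^2$ — I'd check these cancel between $\mu$ and $\tau\mu$ or vanish outright. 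The $B$-part operators $\yvomn{\otea{\pm\gamma_i}}{n}\cdot\otea{\nu}$ for $n=1,2$ vanish unless $\herf{\pm\gamma_i}{\nu} \leq -2$ resp. $\leq -3$; from the table $\herf{\gamma_1}{\mu}=-1$, $\herf{\gamma_2}{\mu}=2$, $\herf{\gamma_3}{\mu}=1$, and the mirror values for $\tau\mu$, so essentially only $\herf{\gamma_2}{\tau\mu}=-2$ (and $\herf{\gamma_3}{?}$, $\herf{\gamma_1}{\tau\mu}$ — none reach $-2$ except $\gamma_2$ on $\tau\mu$ which gives $-2$, not $\leq -3$), meaning $\yvomn{\omg}{2}\cdot v = 0$ trivially from $B$ and $\yvomn{\omg}{1}\cdot v$ gets at most a few Heisenberg-degree-1 contributions that I expect to cancel between the two summands; confirming these cancellations is the crux of condition 3. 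For condition 4, I compute $\yvomn{\omg}{0}\cdot v$: the $A$-part gives $\frac{1}{10}\sum_\lambda \herf{\lambda}{\nu}^2 \otea{\nu}$ for each summand (the diagonal $r=0,-1$ terms), which using the table entries $(-\lam_1+\lam_6)\cdot\mu = 0$, $(\lam_3-\lam_5)\cdot\mu = 1$, $(\lam_1-\lam_3+\lam_5-\lam_6)\cdot\mu = ?$ — I'd read these off — should total $\frac{2}{5}$ on each; the $B$-part $\yvomn{\otea{\pm\gamma_i}}{0}\cdot\otea{\nu}$ lands in a different weight space (shifting $\nu$ by $\pm\gamma_i$) unless it vanishes, and the surviving cross-terms must organize so that $(\otea{\mu}-\otea{\tau\mu})$ is genuinely an eigenvector, i.e. the off-diagonal pieces either cancel or reproduce $v$ with the same coefficient.

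The main obstacle I anticipate is condition 4 together with the eigenvector nature of the combination $\otea{\mu}-\otea{\tau\mu}$ under $\yvomn{\omg}{0}$: the $B$-part of $\omg$ can carry $\otea{\mu}$ to $\otea{\mu+\gamma_i}$-type vectors, and for $v$ to be a true $L(0)$-eigenvector these must recombine back into the two-dimensional span of $\{\otea{\mu},\otea{\tau\mu}\}$. This is precisely why the antisymmetric combination (rather than $\otea{\mu}$ alone) is needed, and it is where $\hat{\tau}$-invariance of $\omg$ enters: since $\hat{\tau}$ fixes $\omg$ and swaps $\otea{\mu}\leftrightarrow\otea{\tau\mu}$ (noting $\tau\gamma_i = -\gamma_i$ for our $\gamma$'s or permutes them), $\yvomn{\omg}{0}$ commutes with $\hat{\tau}$ and the $\hat{\tau}$-eigenspaces are preserved. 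I would use this symmetry to reduce the bookkeeping: it suffices to compute $\yvomn{\omg}{0}\cdot\otea{\mu}$, identify the component along $\otea{\mu}$ (giving $\frac{2}{5}$ from $A$) and along $\otea{\tau\mu}$ (coming from $B$, via $\gamma_i$ with $\mu + \gamma_i = \tau\mu$, which happens for $\gamma_i = \tau\mu - \mu = \alp_1 - \alp_3 + \alp_5 - \alp_6 = \gamma_3$... let me instead say: via the appropriate $\pm\gamma_i$), and check that all other components vanish. The $\veps$-cocycle signs from the matrix $[\veps(\alp_i,\alp_j)]$ fixed in Section 1.2.4 will need to be tracked through every nonzero term, and getting those signs right across conditions 2, 3, and 4 is the delicate routine work; the structural content is just "$\frac{2}{5}$ from the Heisenberg part, consistency from $\tau$-symmetry for the rest."
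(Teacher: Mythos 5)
Your overall strategy is the same as the paper's: verify the five highest-weight conditions by direct residue computations using the dot-product table, with the key cancellation for $\beta_3$ coming from $\alp_3+\tau\mu=\alp_5+\mu$ together with $\veps(\alp_3,\tau\mu)=\veps(\alp_5,\mu)$, and the eigenvalue $\tfrac{2}{5}$ assembled from the Heisenberg part $A$ and the lattice part $B$ of $\omg$ (with $B$ acting through $\pm\gamma_2$, since $\tau\mu=\mu-\gamma_2$). The extra remark about exploiting $\hat\tau$-symmetry to halve the bookkeeping is a reasonable addition the paper does not use here.

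However, two of your numerical claims are wrong as stated and would derail the verification if carried out literally. First, in condition 1 the weight of $\{\otea{-\theta}\}_1\cdot\otea{\mu}$ is $wt(\otea{-\theta})+wt(\otea{\mu})-1-1 = 1+1-2 = 0$, not negative, so formula (\ref{wtformula}) alone does not kill it; you would need either the direct residue computation (the power of $z$ is $1+\herf{-\theta}{\mu}=0$ and $E^-$ only adds nonnegative powers, so there is no $z^{-1}$ coefficient — this is what the paper does), or the supplementary observation that the weight-$0$ component of $S(\fhhz^-)\otimes e^{\mu-\theta}$ vanishes because $\herf{\mu-\theta}{\mu-\theta}=2\neq 0$. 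Second, in condition 4 the $A$-part contributes $\tfrac{1}{10}\bigl(\herf{-\lam_1+\lam_6}{\mu}^2+\herf{\lam_3-\lam_5}{\mu}^2+\herf{\lam_1-\lam_3+\lam_5-\lam_6}{\mu}^2\bigr)=\tfrac{1}{10}(0+1+1)=\tfrac{1}{5}$, not $\tfrac{2}{5}$; the other $\tfrac{1}{5}$ comes from the $B$-part via $\yvomn{\otea{\pm\gamma_2}}{0}$ exchanging $\otea{\mu}$ and $\otea{\tau\mu}$. If $A$ alone gave $\tfrac{2}{5}$, the nonzero $B$-contribution would force an eigenvalue larger than $\tfrac{2}{5}$, contradicting the statement. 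With these two corrections (and with the sharper vanishing thresholds $\herf{\gamma_i}{\nu}\leq -3$ for $\yvomn{\otea{\gamma_i}}{1}$, which show the $B$-part of condition 3 vanishes outright rather than "cancelling"), your plan reproduces the paper's proof.
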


\begin{proof}
1)$\yvomn{\otea{-\theta}}{1} \cdot (\otea{\mu} - \otea{\tau\mu}) =0 $ \\
Since the action on $\otea{\tau\mu}$ is similar, we only give the action on $\otea{\mu}$.  We have 
\begin{eqnarray*}
&&\yvomn{\otea{-\theta}}{1} \cdot \otea{\mu} \\
&&= Res \left[ z^1 \exp\left( \sum_{k \geq 1}\frac{ -\theta(-k)}{k}z^k\right) \exp\left( \sum_{k \geq 1}\frac{ -\theta(k)}{-k}z^{-k}\right) e_{-\theta}z^{-\theta(0)}\cdot \otea{\mu}\right] \\
&&= Res \left[ \eps(-\theta, \mu)\exp\left( \sum_{k \geq 1}\frac{ -\theta(-k)}{k}z^k\right) \exp\left( \sum_{k \geq 1}\frac{ -\theta(k)}{-k}z^{-k}\right) \cdot 1 \otimes e^{\mu-\theta}\right] \\
&&= Res \left[ \eps(-\theta, \mu)\left( I + \frac{-\theta(-1)}{1}z^1 \dots \right) \cdot 1 \otimes e^{\mu-\theta}\right] \\
&&= 0. \\
\end{eqnarray*}
Therefore using both calculations, $\yvomn{\otea{-\theta}}{1} \cdot  (\otea{\mu} - \otea{\tau\mu}) = 0$.

2) $\yvomn{\beta}{0} \cdot (\otea{\mu} - \otea{\tau\mu}) = 0$, $\forall \beta \in \Delta_{F_4}$.

For any $\alp \in \Phi_{E_6}$, we have
\begin{eqnarray*}
&&\yvomn{\otea{\alp}}{0} \cdot (\otea{\mu} - \otea{\tau\mu}) \\
&&= Res \left[ \exp\left( \sum_{k \geq 1}\frac{ \alp(-k)}{k}z^k\right) \exp\left( \sum_{k \geq 1}\frac{\alp(k)}{-k}z^{-k}\right) e_{\alp}z^{\alp(0)} \cdot (\otea{\mu} - \otea{\tau\mu})\right] \\
&&= Res \bigg[ \eps(\alp,\mu)z^{\herf{\alp}{\mu}}\exp\left( \sum_{k \geq 1}\frac{ \alp(-k)}{k}z^k\right) \exp\left( \sum_{k \geq 1}\frac{ \alp(k)}{-k}z^{-k}\right) \cdot \otea{\alp +\mu} \\
&&\hspace{.45in}-  \eps(\alp,\tau\mu)z^{\herf{\alp}{\tau\mu}}\exp\left( \sum_{k \geq 1}\frac{ \alp(-k)}{k}z^k\right) \exp\left( \sum_{k \geq 1}\frac{ \alp(k)}{-k}z^{-k}\right) \cdot \otea{\alp+\tau\mu} \bigg] \\
&&= Res \bigg[ \eps(\alp,\mu)z^{\herf{\alp}{\mu}}\exp\left( \sum_{k \geq 1}\frac{ \alp(-k)}{k}z^k\right) \cdot \otea{\alp +\mu} \\
&&\hspace{.45in}-  \eps(\alp,\tau\mu)z^{\herf{\alp}{\tau\mu}}\exp\left( \sum_{k \geq 1}\frac{\alp(-k)}{k}z^k\right) \cdot \otea{\alp+\tau\mu} \bigg]. \\
\end{eqnarray*}

Notice the table above does not contain any dot products for $\beta$'s, but we can get this information by taking appropriate linear combinations of $\alp$'s.  The action computed above could only be non-zero if $\herf{\alp}{\mu} < 0$ or $\herf{\alp}{\tau\mu} < 0$. It happens when $\alp = \alp_3$ because $\herf{\alp_3}{\tau\mu} = -1$ and when $\alp = \alp_5$ because $\herf{\alp_5}{\mu} = -1$.  The only $\beta \in \Delta_{F_4}$ where the action could be non-zero is when $\beta = \beta_3 = \frac{\alp_3 + \alp_5}{2}$, and hence $\yvomn{\beta_3}{0} = \yvomn{\otea{\alp_3}}{0} + \yvomn{\otea{\alp_5}}{0}$.  We have 
\begin{eqnarray*}
&&\yvomn{\otea{\alp_3}}{0} \cdot (\otea{\mu} - \otea{\tau\mu}) \\
&&= -Res \left[ \eps(\alp_3,\tau\mu)z^{-1}\exp\left( \sum_{k \geq 1}\frac{ \alp_3(-k)}{k}z^k\right) \cdot \otea{\alp_3+\tau\mu} \right] \\
&&= -Res \left[ \eps(\alp_3,\tau\mu)z^{-1} \left(I + \dots \right) \cdot \otea{\alp_3+\tau\mu}\right] \\
&&= -Res \left[ \eps(\alp_3,\tau\mu)z^{-1} \otea{\alp_3+\tau\mu}\right] \\
&&= -\eps(\alp_3,\tau\mu) \otea{\alp_3+\tau\mu},
\end{eqnarray*}
and
\begin{eqnarray*}
&&\yvomn{\otea{\alp_5}}{0} \cdot (\otea{\mu} - \otea{\tau\mu}) \\
&&= Res \left[ \eps(\alp_5,\mu)z^{-1}\exp\left( \sum_{k \geq 1}\frac{\alp_5(-k)}{k}z^k\right) \cdot \otea{\alp_5+\mu} \right] 
\end{eqnarray*}
\begin{eqnarray*}
&&= Res \left[ \eps(\alp_5,\mu)z^{-1} \left(I + \dots \right) \cdot \otea{\alp_5+\mu}\right] \\
&&= Res \left[ \eps(\alp_5,\mu)z^{-1} \otea{\alp_5+\mu}\right] \\
&&= \eps(\alp_5,\mu) \otea{\alp_5+\mu}.
\end{eqnarray*}

Even though both of these are non-zero, since $\eps(\alp_3,\tau\mu) = \eps(\alp_5, \mu)$ and $\alp_3 + \tau\mu = \alp_5 + \mu$, the sum of these two operators acting on $\otea{\mu} - \otea{\tau\mu}$, gives 0. \\

3) The calculations for the positive Virasoro operators acting on this vector. 

First check $\yvomn{A}{1} \cdot (\otea{\mu} - \otea{\tau\mu})$.  Since A is a sum of Heisenberg operators then it will suffice to check this for each product of Heisenberg operators. None of the terms will give a non-zero contribution in this calculation, so we have
\begin{eqnarray*}
&&\yvomn{(\lam_1+\lam_6)(-1)^2}{1} \cdot (\otea{\mu} - \otea{\tau\mu}) \\
&&= \sum_{r \in \Z} : (\lam_1+\lam_6)(r)(\lam_1+\lam_6)(1-r): \cdot (\otea{\mu} - \otea{\tau\mu}) \\
&&= \left( \cdots+ 2(\lam_1+\lam_6)(0)(\lam_1+\lam_6)(1) +\cdots \right) \cdot (\otea{\mu} - \otea{\tau\mu}) \\
&&= 0.
\end{eqnarray*}
Checking in a similar manner the other two parts of A, then $\yvomn{A}{1} (\otea{\mu} - \otea{\tau\mu})$. \\
To check the part B, consider the following
\begin{eqnarray*}
&&\yvomn{\otea{\gamma_i}}{1} \cdot (\otea{\mu} - \otea{\tau\mu}) \\&=& Res \left[z^2 \exp\left( \sum_{k \geq 1}\frac{\gamma_i(-k)}{k}z^k\right) \exp\left( \sum_{k \geq 1}\frac{\gamma_i(k)}{-k}z^{-k}\right) e_{\gamma_i}z^{\gamma_i(0)}  \cdot (\otea{\mu} - \otea{\tau\mu})\right] \\
&=& Res \left[ \eps(\gamma_i,\mu)z^{2+\herf{\gamma_i}{\mu}}\exp\left( \sum_{k \geq 1}\frac{ \gamma_i(-k)}{k}z^k\right) \exp\left( \sum_{k \geq 1}\frac{ \gamma_i(k)}{-k}z^{-k}\right) \cdot \otea{\gamma_i + \mu}\right] \\
&&-Res \left[ \eps(\gamma_i,\tau\mu)z^{2+\herf{\gamma_i}{\tau\mu}}\exp\left( \sum_{k \geq 1}\frac{ \gamma_i(-k)}{k}z^k\right) \exp\left( \sum_{k \geq 1}\frac{ \gamma_i(k)}{-k}z^{-k}\right)  \otea{\gamma_i+\tau\mu}\right] \\
&=& Res \left[ \eps(\gamma_i,\mu)z^{2+\herf{\gamma_i}{\mu}}\exp\left( \sum_{k \geq 1}\frac{ \gamma_i(-k)}{k}z^k\right)  \cdot \otea{\gamma_i + \mu}\right] \\
&&-Res \left[ \eps(\gamma_i,\tau\mu)z^{2+\herf{\gamma_i}{\tau\mu}}\exp\left( \sum_{k \geq 1}\frac{ \gamma_i(-k)}{k}z^k\right) \cdot \otea{\gamma_i+\tau\mu}\right] \\
&=& 0
\end{eqnarray*} 
because the powers of $z$ in both terms are greater or equal to zero for any $\gamma_i$, and the same is true for $-\gam_i$.  Therefore both $\yvomn{A}{1}$ and $\yvomn{B}{1}$ will kill $\otea{\mu} - \otea{\tau\mu}$, and hence $\yvomn{\omg}{1}$ will also kill. 

Because $wt(\{ \omg \}_3 \cdot (\otea{\mu} - \otea{\tau\mu})) = -1$, $\yvomn{\omg}{2}$ kills the vector.   Therefore all positive Virasoro operators kill $\otea{\mu} - \otea{\tau\mu}$. \\

4) To check that $\yvomn{\omg}{0}$ gives eigenvalue of $\frac{2}{5}$ for $ \otea{\mu} - \otea{\tau\mu}$, first note that $\yvomn{A}{0}$ will only be non-zero when both mode numbers are 0.  Using the table of dot products above, we have that
\begin{eqnarray*}
&&\yvomn{A}{0}\cdot (\otea{\mu} - \otea{\tau\mu}) \\
&=& \frac{1}{10}\left((-\lam_1+\lam_6)(0)^2 + (\lam_3-\lam_5)(0)^2 + (\lam_1-\lam_3+\lam_5-\lam_6)(0)^2 \right) \\
&& \hspace{100pt} \cdot (\otea{\mu} - \otea{\tau\mu}) \\
&=& \frac{1}{10} \left(\herf{-\lam_1+\lam_6}{\mu}^2 + \herf{\lam_3-\lam_5}{\mu}^2 + \herf{\lam_1-\lam_3+\lam_5-\lam_6}{\mu}^2 \right)\otea{\mu}\\
&&- \frac{1}{10} \left(\herf{-\lam_1+\lam_6}{\tau\mu}^2 + \herf{\lam_3-\lam_5}{\tau\mu}^2 + \herf{\lam_1-\lam_3+\lam_5-\lam_6}{\tau\mu}^2 \right)\otea{\tau\mu}\\
&=& \frac{1}{10} \left(2\otimes e^{\mu} - 2 \otimes e^{\tau\mu}\right) \\
&=& \frac{1}{5}(\otea{\mu} - \otea{\tau\mu}).
\end{eqnarray*} 

And for the action of part $B$, look at the action for an arbitrary $\gamma_i$ given by

\begin{eqnarray*}
&&\yvomn{\otea{\gamma_i}}{0}\cdot (\otea{\mu} - \otea{\tau\mu}) \\
&=& Res \left[z^1 \exp\left( \sum_{k \geq 1}\frac{\gamma_i(-k)}{k}z^k\right) \exp\left( \sum_{k \geq 1}\frac{\gamma_i(k)}{-k}z^{-k}\right) e_{\gamma_i}z^{\gamma_i(0)}  \cdot (\otea{\mu} - \otea{\tau\mu})\right] \\
&=& Res \left[ \eps(\gamma_i,\mu)z^{1+\herf{\gamma_i}{\mu}}\exp\left( \sum_{k \geq 1}\frac{ \gamma_i(-k)}{k}z^k\right)  \cdot \otea{\gamma_i + \mu}\right] \\
&&-Res \left[ \eps(\gamma_i,\tau\mu)z^{1+\herf{\gamma_i}{\tau\mu}}\exp\left( \sum_{k \geq 1}\frac{ \gamma_i(-k)}{k}z^k\right) \cdot \otea{\gamma_i+\tau\mu}\right]. \\
\end{eqnarray*}  

If $\gamma_i$ is replaced by $-\gamma_i$, we have
\begin{eqnarray*}
&&\yvomn{\otea{-\gamma_i}}{0} \cdot (\otea{\mu} - \otea{\tau\mu}) \\
&=& Res \left[ \eps(-\gamma_i,\mu)z^{1+\herf{-\gamma_i}{\mu}}\exp\left( \sum_{k \geq 1}\frac{ -\gamma_i(-k)}{k}z^k\right)  \cdot \otea{-\gamma_i + \mu}\right] \\
&&-Res \left[ \eps(-\gamma_i,\tau\mu)z^{1+\herf{-\gamma_i}{\tau\mu}}\exp\left( \sum_{k \geq 1}\frac{-\gamma_i(-k)}{k}z^k\right) \cdot \otea{-\gamma_i+\tau\mu}\right] .\\
\end{eqnarray*} 

Using the table of dot products, the residues will only be non-zero in the cases where $\herf{\gamma_2}{\tau\mu} = -2$ and $\herf{-\gamma_2}{\mu} = -2$.  All other $\gamma$'s will give residues equal to 0.  Also, $\eps(\gamma_2,\mu) = \eps(\gamma_2,\tau\mu) =1$ and $\tau\gamma_2 = -\gamma_2$, hence these reductions give

\begin{eqnarray*}
&&\yvomn{B}{0} \cdot (\otea{\mu} - \otea{\tau\mu}) \\
&=& -\frac{1}{5}Res \left[ z^{-1}\exp\left( \frac{ -\gamma_2(-k)}{k}z^k\right)  \cdot \otea{-\gamma_2 + \mu}\right] \\
&&+\frac{1}{5}Res \left[ z^{-1}\exp\left( \sum_{k \geq 1}\frac{ \gamma_2(-k)}{k}z^k\right) \cdot \otea{\gamma_2+\tau\mu}\right] \\
&=& -\frac{1}{5}\otea{-\gamma_2 + \mu} +\frac{1}{5} \otea{\gamma_2+\tau\mu}\\
&=& - \frac{1}{5}\otea{\tau\mu} + \frac{1}{5}\otea{\mu}. 
\end{eqnarray*} 

Adding these two parts together we have
\begin{eqnarray*}
&&\yvomn{\omg}{0} \cdot (\otea{\mu} - \otea{\tau\mu}) \\
&=&  \frac{1}{5}\otea{\mu} - \frac{1}{5}\otea{\tau\mu} - \frac{1}{5}\otea{\tau\mu} + \frac{1}{5}\otea{\mu} \\
&=&  \frac{2}{5}\left( \otea{\mu} - \otea{\tau\mu}\right).
\end{eqnarray*} 

Hence, $ \otea{\mu} - \otea{\tau\mu}$ has eigenvalue $\frac{2}{5}$ for $\yvomn{\omg}{0}$.

\end{proof}

\begin{lem} The vector $P$ is a HWV for Vir$\left(\frac{4}{5}, \frac{7}{5}\right) \otimes W^{\Omg_4}$ where
\begin{eqnarray*}
P &=& (-2\alp_1(-1) - \alp_3(-1) + \alp_5(-1) + 2\alp_6(-1))\otimes e^{\mu} \\
 &&+ (2\alp_1(-1) + \alp_3(-1) - \alp_5(-1) - 2\alp_6(-1))\otimes e^{\tau\mu} \\
 &&+ 3 \otimes e^{\mu + \alp_1 - \alp_6} + 3 \otimes e^{\tau\mu - \alp_1 + \alp_6}.
\end{eqnarray*}
\end{lem}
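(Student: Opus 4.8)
The plan is to verify the five highest-weight-vector conditions listed at the start of Chapter~6 for the vector
\[
P = (-2\alp_1(-1) - \alp_3(-1) + \alp_5(-1) + 2\alp_6(-1))\otimes e^{\mu} + (2\alp_1(-1) + \alp_3(-1) - \alp_5(-1) - 2\alp_6(-1))\otimes e^{\tau\mu} + 3 \otimes e^{\mu+\gam_1} + 3 \otimes e^{\tau\mu-\gam_1},
\]
where $\gam_1 = \alp_1-\alp_6$. Condition~5 reduces to checking $Proj(\mu) = Proj(\tau\mu) = Proj(\mu+\gam_1) = Proj(\tau\mu-\gam_1) = \omg_4$, which follows immediately from the identifications $\lam_1,\lam_6\mapsto\omg_4$, $\lam_3,\lam_5\mapsto\omg_3$ together with the dot-product table; I would dispatch this first. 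Since $P\in\bigoplus S(\fhhz^-)\otimes e^{\nu}$ with each $\nu$ of square length $2$, every summand of $P$ has weight $2$, so $wt(P)=2$; this immediately gives condition~3 for $\{\omg\}_2$ by the weight formula (\ref{wtformula}), since $wt(\{\omg\}_3\cdot P) = 3+2-3-1 = 1$ would force a weight-$1$ output in a space built from weight-$2$ generators, but one must instead argue $\{\omg\}_3\cdot P$ lands in weight $wt(\omg)+wt(P)-3-1 = 0$ — wait, recompute: $wt(\{v\}_n w) = wt(v)+wt(w)-n-1$, so $wt(\{\omg\}_3 P) = 2+2-3-1 = 0$, hence it lies in $\bigoplus_{\nu}\C\cdot(1\otimes e^\nu)$ with $\tfrac12\herf{\nu}{\nu}=0$, i.e.\ $\nu=0$; one then checks the coefficient of $1\otimes e^0$ vanishes by a short residue computation. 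The same weight bookkeeping shows $wt(\{\otea{-\theta}\}_1\cdot P) = 2+2-1-1 = 2$, so condition~1 is a genuine computation, not automatic.

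For conditions 1 and 2, I would compute the vertex-operator actions termwise using the standard residue formula
\[
\yvomn{\otea{\alp}}{n}\cdot(u\otimes e^{\nu}) = Res\Big(z^{n+\herf{\alp}{\nu}}\,\eps(\alp,\nu)\,E^-(-\alp,z)E^+(-\alp,z)\,(u\otimes e^{\alp+\nu})\Big),
\]
exactly as in the proofs of the preceding two lemmas. For condition~1, $\yvomn{\otea{-\theta}}{1}$ applied to each of the four summands of $P$ produces terms with powers of $z$ shifted by $\herf{-\theta}{\mu}=-1$, $\herf{-\theta}{\tau\mu}=-1$, and $\herf{-\theta}{\mu+\gam_1}$, $\herf{-\theta}{\tau\mu-\gam_1}$ (read off from the table, noting $\herf{-\theta}{\gam_1}=0$); the surviving contributions are linear in Heisenberg modes times $1\otimes e^{\nu-\theta}$, and I expect the coefficients from the $e^\mu$ and $e^{\mu+\gam_1}$ terms to cancel against each other (and likewise the $\tau$-images), because $P$ was built precisely as a $\hat\tau$-eigenvector with matched cocycle signs. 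For condition~2, only $\beta_3 = \tfrac{\alp_3+\alp_5}{2}$ (equivalently $\yvomn{\beta_3}{0} = \yvomn{\otea{\alp_3}}{0}+\yvomn{\otea{\alp_5}}{0}$) can act nontrivially, since the table shows $\herf{\alp_i}{\nu}\ge 0$ for the other simple roots and all four exponents $\nu$ — this is where the identities $\alp_3+\tau\mu = \alp_5+\mu$ and $\eps(\alp_3,\tau\mu)=\eps(\alp_5,\mu)$ (fact (a) and the cocycle matrix) do the work, matching the output of $\yvomn{\otea{\alp_3}}{0}$ on one summand against $\yvomn{\otea{\alp_5}}{0}$ on another. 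I would also need the analogous matching of the $E^-$-correction terms (the linear-in-$\alp(-1)$ pieces), which is where the explicit Heisenberg coefficients $-2\alp_1-\alp_3+\alp_5+2\alp_6$ in $P$ are calibrated; checking that these coefficients are exactly right is the first place real arithmetic enters.

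Condition~3 for $\{\omg\}_1$ and condition~4 ($\{\omg\}_0\cdot P = \tfrac75 P$) are the heart of the matter. Writing $\omg = A+B$ as in the earlier lemmas ($A$ the Heisenberg part, $B$ the six exponential terms $\mp\otea{\pm\gam_1}\mp\otea{\pm\gam_2}\pm\otea{\pm\gam_3}$), I would compute $\{A\}_0$ and $\{B\}_0$ separately on each summand of $P$. The $\{A\}_0$ action contributes $\tfrac1{10}\sum\herf{\cdot}{\nu}^2$ on the pure-exponential parts and also moves the Heisenberg modes $\alp_i(-1)$ around via the $r=\pm1$ terms of $\sum_r\,{:}\cdot(r)\cdot({-}r){:}$; the $\{B\}_0$ action uses the residue formula with the relevant $\herf{\gam_j}{\nu}$ from the table — crucially $\herf{\gam_2}{\tau\mu}=-2$, $\herf{-\gam_2}{\mu}=-2$, and $\herf{\gam_1}{\mu+\gam_1}$, etc., will be the negative values that produce nonzero residues. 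I expect the eigenvalue $\tfrac75$ to emerge as a sum of a rational contribution from $\{A\}_0$ (something like $\tfrac1{10}(\text{sum of squared pairings})$, which on the weight-$2$ vector should give a piece larger than the $\tfrac25$ of the previous lemma) plus mixing contributions from $\{B\}_0$ that convert $\otea{\mu}\leftrightarrow\otea{\mu+\gam_1}$ and rescale. The main obstacle will be organizing this $\{B\}_0$ computation: there are six operators times four summands, each producing both a pure $1\otimes e^{\gam_j+\nu}$ term and a Heisenberg-dressed $\gam_j(-1)\otimes e^{\gam_j+\nu}$ term, and one must track the cocycle signs $\eps(\gam_j,\nu)$ carefully (some equal $-1$ per fact (b) and the $[\veps(\alp_i,\alp_j)]$ matrix) to see the dressed terms reassemble into multiples of the Heisenberg parts of $P$ and the pure terms into multiples of $3\otimes e^{\mu+\gam_1}$ etc. Once $\{\omg\}_0 P = \tfrac75 P$ is established, $\{\omg\}_1 P = 0$ should fall out of an entirely parallel but strictly easier computation (all powers of $z$ shifted up by one, killing every would-be residue except possibly on the pure-exponential terms, where fact (b)'s sign structure forces cancellation), so I would do $\{\omg\}_0$ first and then reuse the bookkeeping.
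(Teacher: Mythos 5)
Your overall strategy (verify the five conditions termwise by residues, splitting $\omg = A+B$, using the dot-product table and the identities $\alp_3+\tau\mu=\alp_5+\mu$, $\eps(\alp_3,\tau\mu)=\eps(\alp_5,\mu)$) is the same as the paper's, and your weight-counting disposal of $\{\omg\}_3\cdot P$ is fine. But there is a concrete error in your treatment of condition~2. You assert that only $\beta_3$ can act nontrivially because ``the table shows $\herf{\alp_i}{\nu}\ge 0$ for the other simple roots and all four exponents $\nu$.'' That table only records pairings with $\mu$ and $\tau\mu$; for the shifted exponents one has $\herf{\alp_6}{\mu+\alp_1-\alp_6}=-1$ and $\herf{\alp_1}{\tau\mu-\alp_1+\alp_6}=-1$, so $\yvomn{\otea{\alp_1}}{0}$ and $\yvomn{\otea{\alp_6}}{0}$ do \emph{not} kill the pure exponential summands of $P$. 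Moreover, on the Heisenberg-dressed summands the criterion $\herf{\alp}{\nu}\ge 0$ is not sufficient anyway: with a single mode $\alp_k(-1)$ present, the $E^+$-contraction lowers the power of $z$ by one, so e.g.\ $\yvomn{\otea{\alp_1}}{0}$ acting on $\sum_k a_k\alp_k(-1)\otimes e^{\mu}$ (where $\herf{\alp_1}{\mu}=0$) produces $-\sum_k a_k\herf{\alp_1}{\alp_k}\otimes e^{\mu+\alp_1}=3\otimes e^{\mu+\alp_1}\neq 0$. The identity $\yvomn{\beta_4}{0}\cdot P=0$ therefore rests on a genuine four-way cancellation ($3\otimes e^{\mu+\alp_1}$ from the Heisenberg term against $-3\otimes e^{\mu+\alp_1}$ from $\yvomn{\otea{\alp_6}}{0}\cdot 3\otimes e^{\mu+\alp_1-\alp_6}$, and the $\tau$-image pair), which is where the paper spends most of its condition-2 effort and which your plan, as written, would skip entirely. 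This is exactly the calibration of the coefficients $(-2,-1,1,2)$ that you correctly flag as ``the first place real arithmetic enters,'' but you attach it to the wrong operator.

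A secondary, less serious point: for condition~1 you predict a cancellation between the $e^{\mu}$ and $e^{\mu+\gam_1}$ contributions. In fact each of the four summands of $P$ is annihilated individually by $\yvomn{\otea{-\theta}}{1}$ --- the Heisenberg pieces because $\herf{\theta}{\alp_k}=0$ for $k\in\{1,3,5,6\}$ kills the contraction coefficient, and the exponential pieces because $\herf{-\theta}{\mu\pm\gam_1}=-1$ leaves a nonnegative power of $z$. No cross-term cancellation occurs. This does not invalidate the plan, but together with the $\beta_4$ issue it indicates that the mechanisms by which the conditions hold have not actually been checked; the proposal defers essentially all of the arithmetic that constitutes the proof.
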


\begin{proof}
1)$ \yvomn{\otea{-\theta}}{1}\cdot P =0 $ \\
This calculation will be completed in two parts, first for the action on \\ $\ds \sum_{k = 1,3,5,6} a_k \alp_k(-1) \otimes e^{\mu} + \sum_{k = 1,3,5,6} -a_k \alp_k(-1) \otimes e^{\tau\mu}$. \\

Because $(\theta, \alp_k) = 0$ for $k \in \{ 1,3,5,6\}$ we have
\begin{eqnarray*}
&&\yvomn{\otea{-\theta}}{1} \cdot \sum_{k = 1,3,5,6} a_k \alp_k(-1) \otimes e^{\mu} 
\end{eqnarray*}
\begin{eqnarray*}
&=& Res \left[ z^1 \exp\left( \sum_{k \geq 1}\frac{-\theta(-k)}{k}z^k\right) \exp\left( \sum_{k \geq 1}\frac{ -\theta(k)}{-k}z^{-k}\right) e_{-\theta}z^{-\theta(0)} \right. \\
&& \hspace{100pt} \left. \cdot \sum_{k = 1,3,5,6} a_k \alp_k(-1) \otimes e^{\mu} \right] \\
&=& Res \left[ \eps(-\theta, \mu)\exp\left( \sum_{k \geq 1}\frac{ -\theta(-k)}{k}z^k\right) \exp\left( \sum_{k \geq 1}\frac{ -\theta(k)}{-k}z^{-k}\right) \right. \\
&& \hspace{100pt} \left. \cdot \sum_{k = 1,3,5,6} a_k \alp_k(-1) \otimes e^{\mu - \theta}\right] \\
&=& Res \left[ \eps(-\theta, \mu)\exp\left( \sum_{k \geq 1}\frac{-\theta(-k)}{k}z^k\right) \left( I + \frac{-\theta(1)}{-1}z^{-1} \cdots\right) \right. \\
&& \hspace{100pt} \left. \cdot \sum_{k = 1,3,5,6} a_k \alp_k(-1) \otimes e^{\mu - \theta}\right] \\
&=& Res \left[ \eps(-\theta, \mu)\left( I + \frac{-\theta(-1)}{1}z^1 \dots \right) \cdot \left(\sum_{k = 1,3,5,6} a_k (\theta,\alp_k) \otimes e^{\mu - \theta}\right)z^{-1}\right] \\
&=& Res \left[ \eps(-\theta, \mu) \left(\sum_{k = 1,3,5,6} a_k (\theta,\alp_k) \otimes e^{\mu - \theta}\right)z^{-1}\right] \\
&=& \eps(-\theta, \mu) \left(\sum_{k = 1,3,5,6} a_k (\theta,\alp_k) \otimes e^{\mu - \theta}\right)\\
&=& 0
\end{eqnarray*}


The second half of this calculation is similar and given by
\begin{eqnarray*}
&&\yvomn{\otea{-\theta}}{1} \cdot \sum_{k = 1,3,5,6} -a_k \alp_k(-1) \otimes e^{\tau\mu} \\
&=& Res \left[ \eps(-\theta, \tau\mu) \left(\sum_{k = 1,3,5,6} -a_k (\theta,\alp_k) \otimes e^{\tau\mu - \theta}\right)z^{-1}\right] \\
&=& \eps(-\theta, \tau\mu) \left(\sum_{k = 1,3,5,6} -a_k (\theta,\alp_k) \otimes e^{\tau\mu - \theta}\right)\\
&=& 0.
\end{eqnarray*}

Notice that $\herf{-\theta}{\mu + \alp_1 - \alp_6} = -1$ and $\herf{-\theta}{\tau\mu -\alp_1 + \alp_6} = -1$, hence
\begin{eqnarray*}
&&\yvomn{\otea{-\theta}}{1}\cdot (3 \otimes e^{\mu + \alp_1 - \alp_6} + 3 \otimes e^{\tau\mu - \alp_1 + \alp_6})\\
&=& Res \left[ z^1 \exp\left( \sum_{k \geq 1}\frac{ -\theta(-k)}{k}z^k\right) \exp\left( \sum_{k \geq 1}\frac{-\theta(k)}{-k}z^{-k}\right) e_{-\theta}z^{-\theta(0)}\cdot (3 \otimes e^{\mu + \alp_1 - \alp_6}) \right] \\
&&+ Res \left[ z^1 \exp\left( \sum_{k \geq 1}\frac{ -\theta(-k)}{k}z^k\right) \exp\left( \sum_{k \geq 1}\frac{-\theta(k)}{-k}z^{-k}\right) e_{-\theta}z^{-\theta(0)} \right. \\
&& \hspace{100pt} \left. \cdot (3 \otimes e^{\tau\mu - \alp_1 + \alp_6}) \right] 
\end{eqnarray*}
\begin{eqnarray*}
&=& Res \left[ \eps(-\theta, \tau\mu) \exp\left( \sum_{k \geq 1}\frac{ -\theta(-k)}{k}z^k\right) \cdot (3 \otimes e^{-\theta + \mu + \alp_1 - \alp_6} + 3 \otimes e^{-\theta + \tau\mu - \alp_1 + \alp_6}) \right] \\
&=& 0
\end{eqnarray*}

Therefore, $\yvomn{-\theta}{1}\cdot P = 0$.\\

2) $\yvomn{\beta}{0} \cdot P = 0$, $\forall \beta \in \Delta_{F_4}$.

First, for $\alp_i$, $i \in \{2,4\}$ we calculate $\yvomn{\otea{\alp_i}}{0} \cdot P$. 
\begin{eqnarray*}
&&\yvomn{\otea{\alp_i}}{0} \cdot \sum_{k = 1,3,5,6} a_k \alp_k(-1) \otimes e^{\mu} \\
&=& Res \left[ \exp\left( \sum_{k \geq 1}\frac{ -\alp_i(-k)}{k}z^k\right) \exp\left( \sum_{k \geq 1}\frac{\alp_i(k)}{-k}z^{-k}\right) e_{\alp_i}z^{\alp_i(0)} \right. \\
&& \hspace{100pt} \left. \cdot \sum_{k = 1,3,5,6} a_k \alp_k(-1) \otimes e^{\mu} \right] \\
&=& Res \left[ \eps(\alp_i, \mu)\exp\left( \sum_{k \geq 1}\frac{ \alp_i(-k)}{k}z^k\right) \exp\left( \sum_{k \geq 1}\frac{ \alp_i(k)}{-k}z^{-k}\right) \right. \\
&& \hspace{100pt} \left. \cdot \sum_{k = 1,3,5,6} a_k \alp_k(-1) \otimes e^{\mu + \alp_i}\right] \\
&=& Res \left[ \eps(\alp_i, \mu)\exp\left( \sum_{k \geq 1}\frac{ \alp_i(-k)}{k}z^k\right) \left( I + \frac{\alp_i(1)}{-1}z^{-1} \cdots\right) \right. \\
&& \hspace{100pt} \left. \cdot \sum_{k = 1,3,5,6} a_k \alp_k(-1) \otimes e^{\mu + \alp_i}\right] \\
&=& Res \left[ \eps(\alp_i, \mu)z^{-1}\left( I + \frac{\alp_i(-1)}{1}z^1 \dots \right) \cdot \left(\sum_{k = 1,3,5,6} -a_k (\alp_i,\alp_k) \otimes e^{\mu + \alp_i}\right)\right] \\
&=& Res \left[ \eps(\alp_i, \mu) z^{-1}\left(\sum_{k = 1,3,5,6} -a_k (\alp_i,\alp_k) \otimes e^{\mu + \alp_i}\right)\right] \\
&=& \eps(\alp_i, \mu) \left(\sum_{k = 1,3,5,6} -a_k (\alp_i,\alp_k) \otimes e^{\mu + \alp_i}\right)\\
&=& 0.
\end{eqnarray*}
Similarly we have 
\begin{eqnarray*}
&&\yvomn{\otea{\alp_i}}{0} \cdot \sum_{k = 1,3,5,6} -a_k \alp_k(-1) \otimes e^{\tau\mu} \\
&=& \eps(\alp_i, \tau\mu) \left(\sum_{k = 1,3,5,6} a_k (\alp_i,\alp_k) \otimes e^{\tau\mu + \alp_i}\right)\\
&=& 0.
\end{eqnarray*}
We also have the following
\begin{eqnarray*}
&&\yvomn{\otea{\alp_i}}{0} \cdot 3 \otimes e^{\mu+\alp_1-\alp_6} \\
&=& Res \left[ \exp\left( \sum_{k \geq 1}\frac{ -\alp_i(-k)}{k}z^k\right) \exp\left( \sum_{k \geq 1}\frac{ \alp_i(k)}{-k}z^{-k}\right) e_{\alp_i}z^{\alp_i(0)} \cdot 3 \otimes e^{\mu+\alp_1-\alp_6} \right] \\
&=& Res \left[ \eps(\alp_i, \mu+\alp_1-\alp_6)\exp\left( \sum_{k \geq 1}\frac{\alp_i(-k)}{k}z^k\right) \exp\left( \sum_{k \geq 1}\frac{ \alp_i(k)}{-k}z^{-k}\right) \right. \\
&& \hspace{100pt} \left. \cdot 3 \otimes e^{\mu+\alp_1-\alp_6-\alp_i}\right] \\
&=& Res \left[ \eps(\alp_i, \mu+\alp_1-\alp_6)\exp\left( \sum_{k \geq 1}\frac{\alp_i(-k)}{k}z^k\right) \left( I + \frac{\alp_i(1)}{-1}z^{-1} \cdots\right) \right. \\
&& \hspace{100pt} \left. \cdot 3 \otimes e^{\mu+\alp_1-\alp_6-\alp_i}\right] \\
&=& 0,
\end{eqnarray*}
and by a similar calculation  
$$\yvomn{\otea{\alp_i}}{0} \cdot 3 \otimes e^{\tau\mu-\alp_1+\alp_6} = 0.$$
For $\beta = \beta_4$, we have $\yvomn{\beta_4}{0} = \yvomn{\otea{\alp_1}}{0} + \yvomn{\otea{\alp_6}}{0}$ and we consider each of these calculations separately.  We first have
\begin{eqnarray*}
&&\yvomn{\otea{\alp_1}}{0} \cdot \sum_{k = 1,3,5,6} a_k \alp_k(-1) \otimes e^{\mu} \\
&=& Res \left[ \exp\left( \sum_{k \geq 1}\frac{-\alp_1(-k)}{k}z^k\right) \exp\left( \sum_{k \geq 1}\frac{ \alp_1(k)}{-k}z^{-k}\right) e_{\alp_1}z^{\alp_1(0)} \right. \\
&& \hspace{100pt} \left. \cdot \sum_{k = 1,3,5,6} a_k \alp_k(-1) \otimes e^{\mu} \right] \\
&=& Res \left[ \exp\left( \sum_{k \geq 1}\frac{\alp_1(-k)}{k}z^k\right) \exp\left( \sum_{k \geq 1}\frac{ \alp_1(k)}{-k}z^{-k}\right) \cdot \sum_{k = 1,3,5,6} a_k \alp_k(-1) \otimes e^{\mu + \alp_1}\right] \\
&=& Res \left[ \exp\left( \sum_{k \geq 1}\frac{ \alp_1(-k)}{k}z^k\right) \left( I + \frac{\alp_1(1)}{-1}z^{-1} \cdots\right) \cdot \sum_{k = 1,3,5,6} a_k \alp_k(-1) \otimes e^{\mu + \alp_1}\right] \\
&=& Res \left[ z^{-1}\left( I + \frac{\alp_1(-1)}{1}z^1 \dots \right) \cdot \left(\sum_{k = 1,3,5,6} -a_k (\alp_1,\alp_k) \otimes e^{\mu + \alp_1}\right)\right] \\
&=& Res \left[z^{-1} \left(\sum_{k = 1,3,5,6} -a_k (\alp_1,\alp_k) \otimes e^{\mu + \alp_1}\right)\right] \\
&=& \left(\sum_{k = 1,3,5,6} -a_k (\alp_1,\alp_k) \otimes e^{\mu + \alp_1}\right)\\
&=& 3 \otimes e^{\mu+\alp_1},
\end{eqnarray*}

and also we have 
\begin{eqnarray*}
&&\yvomn{\otea{\alp_1}}{0} \cdot \sum_{k = 1,3,5,6} -a_k \alp_k(-1) \otimes e^{\tau\mu} \\
&=& Res \left[ \exp\left( \sum_{k \geq 1}\frac{ -\alp_1(-k)}{k}z^k\right) \exp\left( \sum_{k \geq 1}\frac{ \alp_1(k)}{-k}z^{-k}\right) e_{\alp_1}z^{\alp_1(0)}\right. \\
&& \hspace{100pt} \left. \cdot \sum_{k = 1,3,5,6} -a_k \alp_k(-1) \otimes e^{\tau\mu} \right] \\
&=& Res \left[ z^1\exp\left( \sum_{k \geq 1}\frac{ \alp_1(-k)}{k}z^k\right) \exp\left( \sum_{k \geq 1}\frac{ \alp_1(k)}{-k}z^{-k}\right) \right. \\
&& \hspace{100pt} \left. \cdot \sum_{k = 1,3,5,6} -a_k \alp_k(-1) \otimes e^{\tau\mu + \alp_1}\right] \\
&=& Res \left[z^1 \exp\left( \sum_{k \geq 1}\frac{ \alp_1(-k)}{k}z^k\right) \left( I + \frac{\alp_1(1)}{-1}z^{-1} \cdots\right) \right. \\
&& \hspace{100pt} \left. \cdot \sum_{k = 1,3,5,6} -a_k \alp_k(-1) \otimes e^{\tau\mu + \alp_1}\right] \\
&=& 0.
\end{eqnarray*}
Similarly for $\alp_6$, we get 
$$\yvomn{\otea{\alp_6}}{0} \cdot \sum_{k = 1,3,5,6} a_k \alp_k(-1) \otimes e^{\mu} = 0 $$
and 
$$\yvomn{\otea{\alp_6}}{0} \cdot \sum_{k = 1,3,5,6} -a_k \alp_k(-1) \otimes e^{\tau\mu} = -3\otimes e^{\tau\mu+\alp_6} .$$
We also have 
\begin{eqnarray*}
&&\yvomn{\otea{\alp_1}}{0} \cdot 3 \otimes e^{\mu+\alp_1-\alp_6} \\
&=& Res \left[ \exp\left( \sum_{k \geq 1}\frac{ -\alp_1(-k)}{k}z^k\right) \exp\left( \sum_{k \geq 1}\frac{ \alp_1(k)}{-k}z^{-k}\right) e_{\alp_1}z^{\alp_1(0)} \cdot 3 \otimes e^{\mu+\alp_1-\alp_6}  \right] \\
&=& Res \left[ z^2\exp\left( \sum_{k \geq 1}\frac{\alp_1(-k)}{k}z^k\right) \exp\left( \sum_{k \geq 1}\frac{\alp_1(k)}{-k}z^{-k}\right) \cdot 3 \otimes e^{\mu+2\alp_1-\alp_6} \right] \\
&=& 0.
\end{eqnarray*}
and 
\begin{eqnarray*}
&&\yvomn{\otea{\alp_1}}{0} \cdot 3 \otimes e^{\tau\mu-\alp_1+\alp_6} \\
&=& Res \left[ \exp\left( \sum_{k \geq 1}\frac{-\alp_1(-k)}{k}z^k\right) \exp\left( \sum_{k \geq 1}\frac{ \alp_1(k)}{-k}z^{-k}\right) e_{\alp_1}z^{\alp_1(0)} \cdot 3 \otimes e^{\tau\mu-\alp_1+\alp_6}   \right] \\
\end{eqnarray*}
\begin{eqnarray*}
&=& Res \left[ z^{-1}\exp\left( \sum_{k \geq 1}\frac{\alp_1(-k)}{k}z^k\right) \exp\left( \sum_{k \geq 1}\frac{\alp_1(k)}{-k}z^{-k}\right) \cdot 3 \otimes e^{\tau\mu+\alp_6}  \right] \\
&=& Res \left[ z^{-1}\exp\left( \sum_{k \geq 1}\frac{ \alp_1(-k)}{k}z^k\right) \cdot 3 \otimes e^{\tau\mu+\alp_6}  \right] \\
&=& 3 \otimes e^{\tau\mu+\alp_6}
\end{eqnarray*}
Similarly for $\alp_6$, we have 
$$ \yvomn{\otea{\alp_6}}{0} \cdot 3 \otimes e^{\mu+\alp_1-\alp_6}= -3 \otimes e^{\mu+\alp_1}$$
and
$$\yvomn{\otea{\alp_6}}{0} \cdot 3 \otimes e^{\tau\mu-\alp_1+\alp_6} = 0.$$
By adding all these actions together for $\alp_1$ and $\alp_6$, we have 
$$\yvomn{\beta_4}{0} \cdot P = 0.$$

For $\beta = \beta_3$, we have $\yvomn{\beta_3}{0} = \yvomn{\otea{\alp_3}}{0} + \yvomn{\otea{\alp_5}}{0}$ we consider each of these calculations separately. We have 

\begin{eqnarray*}
&&\yvomn{\otea{\alp_3}}{0} \cdot \sum_{k = 1,3,5,6} a_k \alp_k(-1) \otimes e^{\mu} \\
&=& Res \left[ \exp\left( \sum_{k \geq 1}\frac{ -\alp_3(-k)}{k}z^k\right) \exp\left( \sum_{k \geq 1}\frac{ \alp_3(k)}{-k}z^{-k}\right) e_{\alp_3}z^{\alp_3(0)} \right. \\
&& \hspace{100pt} \left. \cdot \sum_{k = 1,3,5,6} a_k \alp_k(-1) \otimes e^{\mu} \right] \\
&=& Res \left[ z^1\exp\left( \sum_{k \geq 1}\frac{ \alp_3(-k)}{k}z^k\right) \exp\left( \sum_{k \geq 1}\frac{ \alp_3(k)}{-k}z^{-k}\right) \right. \\
&& \hspace{100pt} \left. \cdot \sum_{k = 1,3,5,6} a_k \alp_k(-1) \otimes e^{\mu + \alp_3}\right] \\
&=& 0,
\end{eqnarray*}
and also
\begin{eqnarray*}
&&\yvomn{\otea{\alp_3}}{0} \cdot \sum_{k = 1,3,5,6} -a_k \alp_k(-1) \otimes e^{\tau\mu} \\
&=& Res \left[ \exp\left( \sum_{k \geq 1}\frac{ -\alp_3(-k)}{k}z^k\right) \exp\left( \sum_{k \geq 1}\frac{\alp_3(k)}{-k}z^{-k}\right) e_{\alp_3}z^{\alp_3(0)} \right. \\
&& \hspace{100pt} \left. \cdot \sum_{k = 1,3,5,6} -a_k \alp_k(-1) \otimes e^{\tau\mu} \right] 
\end{eqnarray*}
\begin{eqnarray*}
&=& Res \left[ z^{-1}\exp\left( \sum_{k \geq 1}\frac{\alp_3(-k)}{k}z^k\right) \exp\left( \sum_{k \geq 1}\frac{\alp_3(k)}{-k}z^{-k}\right) \right. \\
&& \hspace{100pt} \left. \cdot \sum_{k = 1,3,5,6} -a_k \alp_k(-1) \otimes e^{\tau\mu + \alp_3}\right] \\
&=& \sum_{k = 1,3,5,6} -a_k \alp_k(-1) \otimes e^{\tau\mu + \alp_3}.
\end{eqnarray*}
Similarly for $\alp_5$, we get 
$$\yvomn{\otea{\alp_5}}{0} \cdot \sum_{k = 1,3,5,6} a_k \alp_k(-1) \otimes e^{\mu} = \sum_{k = 1,3,5,6} a_k \alp_k(-1) \otimes e^{\mu + \alp_5} $$
and 
$$\yvomn{\otea{\alp_5}}{0} \cdot \sum_{k = 1,3,5,6} -a_k \alp_k(-1) \otimes e^{\tau\mu} = 0 .$$
We have 
\begin{eqnarray*}
&&\yvomn{\otea{\alp_3}}{0} \cdot 3 \otimes e^{\mu+\alp_1-\alp_6} \\
&=& Res \left[ \exp\left( \sum_{k \geq 1}\frac{-\alp_3(-k)}{k}z^k\right) \exp\left( \sum_{k \geq 1}\frac{\alp_3(k)}{-k}z^{-k}\right) e_{\alp_3}z^{\alp_3(0)} \cdot 3 \otimes e^{\mu+\alp_1-\alp_6}  \right] \\
&=& Res \left[ \exp\left( \sum_{k \geq 1}\frac{ \alp_3(-k)}{k}z^k\right) \exp\left( \sum_{k \geq 1}\frac{ \alp_3(k)}{-k}z^{-k}\right) \cdot 3 \otimes e^{\mu+\alp_1+\alp_3-\alp_6} \right] \\
&=& 0,
\end{eqnarray*}
and 
\begin{eqnarray*}
&&\yvomn{\otea{\alp_3}}{0} \cdot 3 \otimes e^{\tau\mu-\alp_1+\alp_6} \\
&=& Res \left[ \exp\left( \sum_{k \geq 1}\frac{ -\alp_3(-k)}{k}z^k\right) \exp\left( \sum_{k \geq 1}\frac{ \alp_3(k)}{-k}z^{-k}\right) e_{\alp_3}z^{\alp_3(0)} \cdot 3 \otimes e^{\tau\mu-\alp_3+\alp_6}   \right] \\
&=& Res \left[\exp\left( \sum_{k \geq 1}\frac{ \alp_3(-k)}{k}z^k\right) \exp\left( \sum_{k \geq 1}\frac{ \alp_3(k)}{-k}z^{-k}\right) \cdot 3 \otimes e^{\tau\mu-\alp_1+\alp_3\alp_6}  \right] \\
&=& 0.
\end{eqnarray*}
For $\alp_5$, we get 
$$ \yvomn{\otea{\alp_5}}{0} \cdot 3 \otimes e^{\mu+\alp_1-\alp_6}= 0$$
and
$$\yvomn{\otea{\alp_5}}{0} \cdot 3 \otimes e^{\tau\mu-\alp_1+\alp_6} = 0.$$
By adding all these actions together for $\alp_1$ and $\alp_6$, we have 
$$\yvomn{\beta_3}{0} \cdot P = 0.$$
Hence for $i \in \{1,2,3,4\}$, $\yvomn{\beta_i}{0} \cdot P = 0$.  \\

3)$ \yvomn{\omg}{1} \cdot P = 0$.  This calculation will be completed in four parts. 

First we will calculate the action of $\yvomn{A}{1} $ on $\ds \sum_{k = 1,3,5,6} a_k \alp_k(-1) \otimes e^{\mu} +$ \\$ \sum_{k = 1,3,5,6} -a_k \alp_k(-1) \otimes e^{\tau\mu}$

\begin{eqnarray*}
&&\yvomn{A}{1} \cdot \left( \sum_{k = 1,3,5,6} a_k \alp_k(-1) \otimes e^{\mu} \right) \\
&=& \frac{1}{10}: (-\lam_1 + \lam_6)(r)(-\lam_1 + \lam_6)(1-r): \cdot \left( \sum_{k = 1,3,5,6} a_k \alp_k(-1) \otimes e^{\mu}\right)\\
&&+ \frac{1}{10}: (\lam_3 - \lam_5)(r)(-\lam_1 + \lam_6)(1-r): \cdot \left( \sum_{k = 1,3,5,6} a_k \alp_k(-1) \otimes e^{\mu} \right)\\
&&+ \frac{1}{10}: (\lam_1 -\lam_3 + \lam_5 - \lam_6)(r)(\lam_1 -\lam_3 + \lam_5 - \lam_6)(1-r):  \\
&& \hspace{100pt}  \cdot \left( \sum_{k = 1,3,5,6} a_k \alp_k(-1) \otimes e^{\mu}\right)\\
&=& \frac{1}{5}(-\lam_1 + \lam_6)(0)(-\lam_1 + \lam_6)(1) \cdot \left( \sum_{k = 1,3,5,6} a_k \alp_k(-1) \otimes e^{\mu}\right)\\
&&+ \frac{1}{5}(\lam_3 - \lam_5)(0)(-\lam_1 + \lam_6)(1) \cdot \left( \sum_{k = 1,3,5,6} a_k \alp_k(-1) \otimes e^{\mu} \right)\\
&&+ \frac{1}{5} (\lam_1 -\lam_3 + \lam_5 - \lam_6)(0)(\lam_1 -\lam_3 + \lam_5 - \lam_6)(1) \cdot \left( \sum_{k = 1,3,5,6} a_k \alp_k(-1) \otimes e^{\mu}\right)\\
&=& \frac{1}{5}\herf{-\lam_1 + \lam_6}{\mu}\herf{-\lam_1 + \lam_6}{ \sum_{k = 1,3,5,6} a_k \alp_k} \otimes e^{\mu}\\
&&+ \frac{1}{5}\herf{\lam_3 - \lam_5}{\mu}\herf{-\lam_1 + \lam_6}{ \sum_{k = 1,3,5,6} a_k \alp_k} \otimes e^{\mu}\\
&&+ \frac{1}{5} \herf{\lam_1 -\lam_3 + \lam_5 - \lam_6}{\mu}\herf{\lam_1 -\lam_3 + \lam_5 - \lam_6}{ \sum_{k = 1,3,5,6} a_k \alp_k} \otimes e^{\mu}\\
&=& 0.
\end{eqnarray*}
This is zero because in the expression above the three dot products containing $\mu$ are each zero, hence the sum will be also.  Similarly the dot products with $\mu$ replaced by $\tau\mu$ give:

\begin{eqnarray*}
&&\yvomn{A}{1} \cdot \left(\sum_{k = 1,3,5,6} -a_k \alp_k(-1) \otimes e^{\tau\mu}\right) \\
&=& \frac{1}{5}\herf{-\lam_1 + \lam_6}{\tau\mu}\herf{-\lam_1 + \lam_6}{ \sum_{k = 1,3,5,6} -a_k \alp_k} \otimes e^{\tau\mu}\\
&&+ \frac{1}{5}\herf{\lam_3 - \lam_5}{\tau\mu}\herf{-\lam_1 + \lam_6}{ \sum_{k = 1,3,5,6} -a_k \alp_k} \otimes e^{\tau\mu}\\
&&+ \frac{1}{5} \herf{\lam_1 -\lam_3 + \lam_5 - \lam_6}{\tau\mu}\herf{\lam_1 -\lam_3 + \lam_5 - \lam_6}{ \sum_{k = 1,3,5,6} -a_k \alp_k} \otimes e^{\tau\mu}\\
&=& 0.
\end{eqnarray*}

Hence the operator $\yvomn{A}{1}$ kills this first part of the vector $P$.

It is simple to see that $\yvomn{A}{1}$ kills $3 \otimes e^{\mu + \alp_1 - \alp_6} + 3 \otimes e^{\tau\mu - \alp_1 + \alp_6}$. This is true since at least one of the mode numbers will be positive, hence killing each term.

The third part will be when we look at $\yvomn{B}{1}$ acting on $$\sum_{k = 1,3,5,6} a_k \alp_k(-1) \otimes e^{\mu} + \sum_{k = 1,3,5,6} -a_k \alp_k(-1) \otimes e^{\tau\mu}.$$  Note that $(\gamma_2, \tau\mu) = -2 = (-\gamma_2, \mu)$, are the only relevant dot products and hence simplify the calculations.  We have
\begin{eqnarray*}
&&\yvomn{\otea{\gamma_2}}{1} \cdot \left( \sum_{k = 1,3,5,6} -a_k \alp_k(-1) \otimes e^{\tau\mu} \right) \\
&&= Res \left[z^2 \exp\left( \sum_{k \geq 1}\frac{ \gamma_2(-k)}{k}z^k\right) \exp\left( \sum_{k \geq 1}\frac{ \gamma_2(k)}{-k}z^{-k}\right) e_{\gamma_2}z^{\gamma_2(0)} \right. \\
&&\hspace{1in}\cdot \left. \left( \sum_{k = 1,3,5,6} -a_k \alp_k(-1) \otimes e^{\tau\mu} \right)\right] \\
&&= Res \left[\eps(\gamma_2,\tau\mu)z^{2-2} \exp\left( \sum_{k \geq 1}\frac{ \gamma_2(-k)}{k}z^k\right) \left( I + \frac{\gamma_2(1)}{-1}z^{-1}\right) \right. \\
&&\hspace{1in} \left.\cdot \left( \sum_{k = 1,3,5,6} -a_k \alp_k(-1) \otimes e^{\gamma_2 + \tau\mu} \right)\right] \\
&&= Res \left[ \eps(\gamma_2,\tau\mu) \left(I +  \frac{\gamma_2(-1)}{1}z^{1}\right) \cdot \left(z^{-1}\sum_{k = 1,3,5,6} a_k \herf{\gamma_2}{\alp_k} \otimes e^{\gamma_2 + \tau\mu} \right) \right] \\
&&= Res \left[ \eps(\gamma_2,\tau\mu) z^{-1}\sum_{k = 1,3,5,6} a_k \herf{\gamma_2}{\alp_k} \otimes e^{\gamma_2 + \tau\mu}  \right] \\
&&= \eps(\gamma_2,\tau\mu) \sum_{k = 1,3,5,6} a_k \herf{\gamma_2}{\alp_k} \otimes e^{\gamma_2 + \tau\mu}\\
&&= \eps(\gamma_2,\tau\mu)(-2 + 2 + 2 - 2) \otimes e^{\gamma_2 + \tau\mu}  = 0.
\end{eqnarray*}

Similarly we have
\begin{eqnarray*}
&&\yvomn{\otea{-\gamma_2}}{1} \cdot \left( \sum_{k = 1,3,5,6} a_k \alp_k(-1) \otimes e^{\mu} \right) \\
&&= Res \left[z^2 \exp\left( \sum_{k \geq 1}\frac{ -\gamma_2(-k)}{k}z^k\right) \exp\left( \sum_{k \geq 1}\frac{ -\gamma_2(k)}{-k}z^{-k}\right) e_{-\gamma_2}z^{-\gamma_2(0)} \right. \\
&&\hspace{1in}\cdot \left. \left( \sum_{k = 1,3,5,6} a_k \alp_k(-1) \otimes e^{\mu} \right)\right] \\
&&= Res \left[\eps(-\gamma_2,\mu)z^{2-2} \exp\left( \sum_{k \geq 1}\frac{ -\gamma_2(-k)}{k}z^k\right) \left( I + \frac{-\gamma_2(1)}{-1}z^{-1}\right) \right. \\
&&\hspace{1in} \left.\cdot \left( \sum_{k = 1,3,5,6} a_k \alp_k(-1) \otimes e^{-\gamma_2 + \mu} \right)\right] \\
&&= Res \left[ \eps(-\gamma_2,\mu) \left(I +  \frac{-\gamma_2(-1)}{1}z^{1}\right) \cdot \left(z^{-1}\sum_{k = 1,3,5,6} a_k \herf{-\gamma_2}{\alp_k}  \otimes e^{-\gamma_2 + \mu} \right) \right] \\
&&= Res \left[ \eps(\gamma_2,\tau\mu) z^{-1}\sum_{k = 1,3,5,6} a_k \herf{-\gamma_2}{\alp_k}  \otimes e^{\gamma_2 + \tau\mu}  \right] \\
&&= \eps(-\gamma_2,\mu) \sum_{k = 1,3,5,6} a_k \herf{-\gamma_2}{\alp_k} \otimes e^{-\gamma_2 + \mu}\\
&&= \eps(-\gamma_2,\mu)(2 - 2 - 2 + 2) \otimes e^{-\gamma_2 + \mu} \\
&&= 0.
\end{eqnarray*}

Hence, $\yvomn{\otea{\gamma_i} + \otea{-\gamma_i}}{1} \cdot \sum_{k = 1,3,5,6}\left( a_k \alp_k(-1) \otimes e^{\mu} + -a_k \alp_k(-1) \otimes e^{\tau\mu}\right) - 0$.\\

The fourth part will calculate the action of $\yvomn{B}{1}$ on $(3 \otimes e^{\mu + \alp_1 - \alp_6} + 3 \otimes e^{\tau\mu - \alp_1 + \alp_6})$.  To decrease the number of calculations notice that $(\gamma_1, \tau\mu-\alp_1+\alp_6) = (-\gamma_1, \mu+\alp_1-\alp_6) = -3$, and $(\gamma_3, \tau\mu-\alp_1+\alp_6) = (-\gamma_3, \mu +\alp_1-\alp_6) = -3$ will be the only relevant dot products. 

For $i = \{1,3 \}$, we have:
\begin{eqnarray*}
&&\yvomn{\otea{\gamma_i}}{1} \cdot \left( 3 \otimes e^{\tau\mu - \alp_1 + \alp_6} \right) \\
&&= Res \left[z^2 \exp\left( \sum_{k \geq 1}\frac{ \gamma_i(-k)}{k}z^k\right) \exp\left( \sum_{k \geq 1}\frac{ \gamma_i(k)}{-k}z^{-k}\right) e_{\gamma_i}z^{\gamma_i(0)}\right. \\
&& \hspace{100pt} \left. \cdot \left( 3 \otimes e^{\tau\mu - \alp_1 + \alp_6} \right)\right] \\
&&= Res \left[ z^{-1} \eps(\gamma_i,\tau\mu-\alp_1+\alp_6) \left(I +  \frac{\gamma_i(-1)}{1}z^{1}\right) \cdot \left( 3 \otimes e^{\gamma_i + \tau\mu - \alp_1 + \alp_6} \right) \right] \\
&&= Res \left[ z^{-1} \eps(\gamma_i,\tau\mu-\alp_1+\alp_6) \left( 3 \otimes e^{\gamma_i + \tau\mu - \alp_1 + \alp_6} \right) \right] \\
&&= \eps(\gamma_i,\tau\mu-\alp_1+\alp_6) \left( 3 \otimes e^{\gamma_i + \tau\mu - \alp_1 + \alp_6} \right)
\end{eqnarray*}
and 
\begin{eqnarray*}
&&\yvomn{\otea{-\gamma_i}}{1} \cdot \left( 3 \otimes e^{\mu + \alp_1 - \alp_6} \right) \\
&&= Res \left[z^2 \exp\left( \sum_{k \geq 1}\frac{ \gamma_i(-k)}{k}z^k\right) \exp\left( \sum_{k \geq 1}\frac{ -\gamma_i(k)}{-k}z^{-k}\right) e_{-\gamma_i}z^{-\gamma_i(0)} \right. \\
&& \hspace{100pt} \left. \cdot \left( 3 \otimes e^{\mu + \alp_1 - \alp_6} \right)\right] \\
&&= Res \left[ z^{-1} \eps(-\gamma_i,\mu+ \alp_1 - \alp_6) \left(I +  \frac{-\gamma_i(-1)}{1}z^{1}\right) \cdot \left( 3 \otimes e^{-\gamma_i + \mu + \alp_1 - \alp_6} \right) \right] \\
&&= Res \left[ z^{-1} \eps(-\gamma_i,\mu+ \alp_1 - \alp_6) \left( 3 \otimes e^{-\gamma_i + \mu + \alp_1 - \alp_6} \right) \right] \\
&&= \eps(-\gamma_i,\mu+ \alp_1 - \alp_6) \left( 3 \otimes e^{-\gamma_i + \mu + \alp_1 - \alp_6} \right). 
\end{eqnarray*}

Therefore, 
\begin{eqnarray*}
&&-\frac{1}{5}\yvomn{\otea{\gamma_1} + \otea{-\gamma_1}}{1} \cdot \left( 3 \otimes e^{\mu + \alp_1 - \alp_6} + 3 \otimes e^{\tau\mu - \alp_1 + \alp_6} \right) \\
&&= -\frac{1}{5}\eps(\gamma_1,\tau\mu-\alp_1+\alp_6)( 3 \otimes e^{\tau\mu}) -\frac{1}{5}\eps(-\gamma_1,\mu+\alp_1-\alp_6)  (3 \otimes e^{\mu}) 
\end{eqnarray*}
and
\begin{eqnarray*}
&&\frac{1}{5}\yvomn{\otea{\gamma_3} + \otea{-\gamma_3}}{1} \cdot \left( 3 \otimes e^{\mu + \alp_1 - \alp_6} + 3 \otimes e^{\tau\mu - \alp_1 + \alp_6} \right) \\
&&= \frac{1}{5}\eps(\gamma_3,\tau\mu-\alp_1+\alp_6)( 3 \otimes e^{\mu}) + \frac{1}{5}\eps(-\gamma_3,\mu+\alp_1-\alp_6)  (3 \otimes e^{\tau\mu}). 
\end{eqnarray*}

Adding both of these together, 
\begin{eqnarray*}
&&\yvomn{B}{1}\cdot \left( 3 \otimes e^{\mu + \alp_1 - \alp_6} + 3 \otimes e^{\tau\mu - \alp_1 + \alp_6} \right) \\
&&=\frac{1}{5}(\eps(\gamma_3,\tau\mu-\alp_1+\alp_6) - \eps(-\gamma_1,\mu+\alp_1-\alp_6))  (3 \otimes e^{\mu}) \\
&&\hspace{.5in}+\frac{1}{5}(\eps(-\gamma_3,\mu+\alp_1-\alp_6) - \eps(\gamma_1,\tau\mu-\alp_1+\alp_6))  (3 \otimes e^{\mu}).
\end{eqnarray*}
A calculation of these four 2-cocyles yields that $\eps(\gamma_3,\tau\mu-\alp_1+\alp_6) - \eps(-\gamma_1,\mu+\alp_1-\alp_6) = 0$, and $\eps(-\gamma_3,\mu+\alp_1-\alp_6) - \eps(\gamma_1,\tau\mu-\alp_1+\alp_6)=0$
Therefore giving  $\yvomn{B}{1} \cdot P = 0$.
Putting all four of these parts together gives $\yvomn{\omg}{1} \cdot P = 0$.

We break the calculation $\yvomn{\omg}{2} \cdot P = 0$ into two parts.  First for the part A, notice that $\yvomn{A}{2}$ will always have at least one mode number positive, and therefore it is easily seen to kill the vectors $3 \otimes e^{\mu+\alp_1-\alp_6}$ and  $3 \otimes e^{\tau\mu\alp_1+\alp_6}$.  It is also easy to see that they will kill the other vectors of the form $$\sum_{k = 1,3,5,6} a_k \alp_k(-1) \otimes e^{\mu}$$ and  $$\sum_{k = 1,3,5,6} -a_k \alp_k(-1) \otimes e^{\tau\mu}. $$ The operators with at least one mode number greater than one are easily seen to kill the vectors, and the only other option is for both mode numbers to be 1.  In this case, the first operator will not kill, but the second one will kill the remaining vector.


We look at $\yvomn{B}{2} \cdot \left( \sum_{k = 1,3,5,6} a_k \alp_k(-1) \otimes e^{\mu} + \sum_{k = 1,3,5,6} -a_k \alp_k(-1) \otimes e^{\tau\mu}\right) $.  Note \\ that $\herf{\gamma_2}{ \tau\mu} = -2 = \herf{-\gamma_2}{ \mu}$, since they are the smallest, are the only relevant dot products and hence simplify the calculations to
\begin{eqnarray*}
&&\yvomn{\otea{\gamma_2}}{2} \cdot \left( \sum_{k = 1,3,5,6} -a_k \alp_k(-1) \otimes e^{\tau\mu} \right) \\
&&= Res \left[z^3 \exp\left( \sum_{k \geq 1}\frac{ \gamma_2(-k)}{k}z^k\right) \exp\left( \sum_{k \geq 1}\frac{ \gamma_2(k)}{-k}z^{-k}\right) e_{\gamma_2}z^{\gamma_2(0)} \right. \\
&&\hspace{1in}\cdot \left. \left( \sum_{k = 1,3,5,6} -a_k \alp_k(-1) \otimes e^{\tau\mu} \right)\right] \\
&&= Res \left[\eps(\gamma_2,\tau\mu)z^1 \exp\left( \sum_{k \geq 1}\frac{ \gamma_2(-k)}{k}z^k\right) \left( I + \frac{\gamma_2(1)}{-1}z^{-1}\right) \right. \\
&&\hspace{1in} \left.\cdot \left( \sum_{k = 1,3,5,6} -a_k \alp_k(-1) \otimes e^{\gamma_2 + \tau\mu} \right)\right] \\
&&= 0.
\end{eqnarray*} 

Similarly we have

\begin{eqnarray*}
&&\yvomn{\otea{-\gamma_2}}{2} \cdot \left( \sum_{k = 1,3,5,6} a_k \alp_k(-1) \otimes e^{\mu} \right) \\
&&= Res \left[z^3 \exp\left( \sum_{k \geq 1}\frac{ -\gamma_2(-k)}{k}z^k\right) \exp\left( \sum_{k \geq 1}\frac{ -\gamma_2(k)}{-k}z^{-k}\right) e^{-\gamma_2}z^{-\gamma_2(0)} \right. \\
&&\hspace{1in}\cdot \left. \left( \sum_{k = 1,3,5,6} a_k \alp_k(-1) \otimes e^{\mu} \right)\right] \\
&&= Res \left[\eps(-\gamma_2,\mu)z^1 \exp\left( \sum_{k \geq 1}\frac{ -\gamma_2(-k)}{k}z^k\right) \left( I + \frac{-\gamma_2(1)}{-1}z^{-1}\right) \right. \\
&&\hspace{1in} \left.\cdot \left( \sum_{k = 1,3,5,6} a_k \alp_k(-1) \otimes e^{-\gamma_2 + \mu} \right)\right] \\
&&= 0.
\end{eqnarray*}

Hence, $$\yvomn{\otea{\gamma_i} + \otea{-\gamma_i}}{2} \cdot \left( \sum_{k = 1,3,5,6} a_k \alp_k(-1) \otimes e^{\mu} + \sum_{k = 1,3,5,6} -a_k \alp_k(-1) \otimes e^{\tau\mu}\right) =0. $$

Now the action of $\yvomn{B}{2}$ on $(3 \otimes e^{\mu + \alp_1 - \alp_6} + 3 \otimes e^{\tau\mu - \alp_1 + \alp_6})$ is calculated.  To decrease the number of calculations notice that $\herf{\gamma_1}{ \tau\mu-\alp_1+\alp_6} = \herf{-\gamma_1}{ \mu+\alp_1-\alp_6} = -3$, and $\herf{\gamma_3}{ \tau\mu-\alp_1+\alp_6} = \herf{-\gamma_3}{ \mu +\alp_1-\alp_6} = -3$ will be the only relevant dot products. \\
For $i = \{1,3 \}$, we have:
\begin{eqnarray*}
&&\yvomn{\otea{\gamma_i}}{2} \cdot \left( 3 \otimes e^{\tau\mu - \alp_1 + \alp_6} \right) \\
&&= Res \left[z^3 \exp\left( \sum_{k \geq 1}\frac{ \gamma_i(-k)}{k}z^k\right) \exp\left( \sum_{k \geq 1}\frac{ \gamma_i(k)}{-k}z^{-k}\right) e_{\gamma_i}z^{\gamma_i(0)} \right. \\
&& \hspace{100pt} \left. \cdot \left( 3 \otimes e^{\tau\mu - \alp_1 + \alp_6} \right)\right] \\
&&= Res \left[ \eps(\gamma_i,\tau\mu-\alp_1+\alp_6) \left(I +  \frac{\gamma_i(-1)}{1}z^{1}\right) \cdot \left( 3 \otimes e^{\gamma_i + \tau\mu - \alp_1 + \alp_6} \right) \right] \\
&&= 0.
\end{eqnarray*}
And,
\begin{eqnarray*}
&&\yvomn{\otea{-\gamma_i}}{2} \cdot \left( 3 \otimes e^{\mu + \alp_1 - \alp_6} \right) \\
&&= Res \left[z^3 \exp\left( \sum_{k \geq 1}\frac{ \gamma_i(-k)}{k}z^k\right) \exp\left( \sum_{k \geq 1}\frac{ -\gamma_i(k)}{-k}z^{-k}\right) e_{-\gamma_i}z^{-\gamma_i(0)} \right. \\
&& \hspace{100pt} \left. \cdot \left( 3 \otimes e^{\mu + \alp_1 - \alp_6} \right)\right] \\
&&= Res \left[  \eps(-\gamma_i,\mu+ \alp_1 - \alp_6) \left(I +  \frac{-\gamma_i(-1)}{1}z^{1}\right) \cdot \left( 3 \otimes e^{-\gamma_i + \mu + \alp_1 - \alp_6} \right) \right] \\
&&= 0.
\end{eqnarray*}

Therefore, $\yvomn{B}{2} \cdot P = 0$. \\


4) We show $P$ has $\yvomn{\omg}{0}$ eigenvalue $\frac{7}{5}$.
This calculation will also be shown in four parts the first of which is
\begin{eqnarray*}
&&\yvomn{A}{0} \cdot a_k \alp_k(-1) \otimes e^{\mu} \\
&=& \frac{1}{10}\left( 2(-\lam_1+\lam_6)(-1)(-\lam_1+\lam_6)(1) + (-\lam_1+\lam_6)^2(0)\right) \cdot a_k \alp_k(-1) \otimes e^{\mu} \\
&&\hspace{.1pt}+ \frac{1}{10}\left( 2(\lam_3-\lam_5)(-1)(\lam_3-\lam_5)(1) + (\lam_3-\lam_5)(0)^2 \right) \cdot a_k \alp_k(-1) \otimes e^{\mu} \\
&&\hspace{.1pt}+ \frac{1}{10}\left(  2(\lam_1-\lam_3+\lam_5-\lam_6)(-1)(\lam_1-\lam_3+\lam_5-\lam_6)(1) \right. \\
&& \hspace{60pt}\left.+ (\lam_1-\lam_3+\lam_5-\lam_6)^2(0) \right) \cdot a_k \alp_k(-1) \otimes e^{\mu} 
\end{eqnarray*}
\begin{eqnarray*}
&=& \left( \frac{1}{5}a_k\herf{-\lam_1+\lam_6}{\alp_k}(-\lam_1+\lam_6)(-1) +  \frac{1}{10}\herf{-\lam_1+\lam_6}{\mu}^2 a_k \alp_k(-1) \right)\otimes e^{\mu}\\
&&\hspace{.1pt}+  \left(\frac{1}{5}a_k\herf{\lam_3-\lam_5}{\alp_k}(\lam_3-\lam_5)(-1) + \frac{1}{10}\herf{\lam_3-\lam_5}{\mu}^2 a_k \alp_k(-1)\right) \otimes e^{\mu} \\
&&\hspace{.1pt}+  \left(\frac{1}{5}a_k\herf{\lam_1-\lam_3+\lam_5-\lam_6}{\alp_k}(\lam_1-\lam_3+\lam_5-\lam_6)(-1) \right.\\
&& \hspace{60pt}+\left.\frac{1}{10} \herf{\lam_1-\lam_3+\lam_5-\lam_6}{\mu}^2 a_k \alp_k(-1)\right) \otimes e^{\mu} \\
&=& \left( \frac{1}{5}a_k\herf{-\lam_1+\lam_6}{\alp_k}(-\lam_1+\lam_6)(-1) +  \frac{1}{5}a_k\herf{\lam_3-\lam_5}{\alp_k}(\lam_3-\lam_5)(-1) \right. \\
&&\hspace{.1pt}\left.+  \frac{1}{5}a_k\herf{\lam_1-\lam_3+\lam_5-\lam_6}{\alp_k}(\lam_1-\lam_3+\lam_5-\lam_6)(-1)+  \frac{1}{5}a_k \alp_k(-1) \right)\otimes e^{\mu}.
\end{eqnarray*} 

Summing over $k\in \{ 1,3,5,6\}$ and evaluating the dot products, the sum becomes:
$$ \left( \frac{6}{5}(-\lam_1+\lam_6)(-1) +  \frac{1}{5}\sum_{k = 1,3,5,6}a_k \alp_k(-1) \right)\otimes e^{\mu}.$$
But, $-2\alp_1 - \alp_3 + \alp_5 + 2\alp_6 = -3\lam_1+3\lam_6$, hence,
$$\yvomn{A}{0} \cdot \sum_{k = 1,3,5,6}a_k \alp_k(-1) \otimes e^{\mu} = \frac{3}{5}\left(\sum_{k = 1,3,5,6}a_k \alp_k(-1) \otimes e^{\mu}\right).$$

The calculation for the action on $-a_k \alp_k(-1) \otimes e^{\tau\mu}$, is similar and is shortened below:
\begin{eqnarray*}
&&\yvomn{A}{0} \cdot -a_k \alp_k(-1) \otimes e^{\tau\mu} \\
&=& \left( -\frac{1}{5}a_k\herf{-\lam_1+\lam_6}{\alp_k}(-\lam_1+\lam_6)(-1) -  \frac{1}{10}\herf{-\lam_1+\lam_6}{\tau\mu}^2a_k \alp_k(-1) \right)\otimes e^{\tau\mu}\\
&&\hspace{.1pt}+  \left(-\frac{1}{5}a_k\herf{\lam_3-\lam_5}{\alp_k}(\lam_3-\lam_5)(-1) - \frac{1}{10}\herf{\lam_3-\lam_5}{\tau\mu}^2 a_k \alp_k(-1)\right) \otimes e^{\tau\mu} \\
&&\hspace{.1pt}+  \left(-\frac{1}{5}a_k\herf{\lam_1-\lam_3+\lam_5-\lam_6}{\alp_k}(\lam_1-\lam_3+\lam_5-\lam_6)(-1) \right.\\
&& \hspace{60pt}-\left.\frac{1}{10} \herf{\lam_1-\lam_3+\lam_5-\lam_6}{\tau\mu}^2 a_k \alp_k(-1)\right) \otimes e^{\tau\mu} \\
&=& \left( -\frac{1}{5}a_k\herf{-\lam_1+\lam_6}{\alp_k}(-\lam_1+\lam_6)(-1) - \frac{1}{5}a_k\herf{\lam_3-\lam_5}{\alp_k}(\lam_3-\lam_5)(-1) \right. \\
&&\hspace{.1pt}\left.-  \frac{1}{5}a_k\herf{\lam_1-\lam_3+\lam_5-\lam_6}{\alp_k}(\lam_1-\lam_3+\lam_5-\lam_6)(-1) -  \frac{1}{5}a_k \alp_k(-1) \right)\otimes e^{\tau\mu}.
\end{eqnarray*}

Once again, summing over $k\in \{ 1,3,5,6\}$ and evaluating the dot products, the sum becomes:
$$ \left( \frac{6}{5}(\lam_1-\lam_6)(-1) + \frac{1}{5}\sum_{k = 1,3,5,6}-a_k \alp_k(-1) \right)\otimes e^{\tau\mu}.$$
From above, $2\alp_1 + \alp_3  - \alp_5 - 2\alp_6 = 3\lam_1-3\lam_6$, hence,\\
$$\yvomn{A}{0} \cdot \sum_{k = 1,3,5,6}-a_k \alp_k(-1) \otimes e^{\tau\mu} = \frac{3}{5}\left(\sum_{k = 1,3,5,6}-a_k \alp_k(-1) \otimes e^{\tau\mu}\right).$$ 

We also have
\begin{eqnarray*}
&&\yvomn{A}{0} \cdot \left(3 \otimes e^{\mu + \alp_1 - \alp_6} + 3 \otimes e^{\tau\mu - \alp_1 + \alp_6} \right)\\
&=& \frac{1}{10}\left( \cdots + (-\lam_1+\lam_6)^2(0) + \cdots\right) \cdot \left(3 \otimes e^{\mu + \alp_1 - \alp_6} + 3 \otimes e^{\tau\mu - \alp_1 + \alp_6} \right) \\
&&+ \frac{1}{10}\left( \cdots + (\lam_3-\lam_5)^2(0) + \cdots\right) \cdot \left(3 \otimes e^{\mu + \alp_1 - \alp_6} + 3 \otimes e^{\tau\mu - \alp_1 + \alp_6} \right) \\
&&+ \frac{1}{10}\left( \cdots + (\lam_1-\lam_3+\lam_5-\lam_6)^2(0) + \cdots\right) \cdot \left(3 \otimes e^{\mu + \alp_1 - \alp_6} + 3 \otimes e^{\tau\mu - \alp_1 + \alp_6} \right) \\
&=& \frac{3}{10} \left(\herf{-\lam_1+\lam_6}{ \mu + \alp_1 - \alp_6}^2+\herf{\lam_3-\lam_5}{ \mu + \alp_1 - \alp_6}^2 \right.\\
&&\hspace{40pt} \left.+\ \herf{\lam_1-\lam_3+\lam_5-\lam_6}{\mu + \alp_1 - \alp_6}^2\right)\otimes e^{\mu + \alp_1 - \alp_6} \\
&&+ \frac{3}{10} \left(\herf{-\lam_1+\lam_6}{ \tau\mu - \alp_1 + \alp_6}^2+\herf{\lam_3-\lam_5}{ \tau\mu - \alp_1 + \alp_6}^2 \right.\\
&&\hspace{40pt} \left.+\ \herf{\lam_1-\lam_3+\lam_5-\lam_6}{\tau\mu - \alp_1 + \alp_6}^2\right)\otimes e^{\tau\mu - \alp_1 + \alp_6} \\
&=& \frac{3}{10}(4+1+1)\otimes e^{\mu + \alp_1 - \alp_6} + \frac{3}{10}(4+1+1)\otimes e^{\tau\mu - \alp_1 + \alp_6}\\
&=&\frac{9}{5}\otimes e^{\mu + \alp_1 - \alp_6} + \frac{9}{5}\otimes e^{\tau\mu - \alp_1 + \alp_6} \\
&=&\frac{3}{5}\left(3 \otimes e^{\mu + \alp_1 - \alp_6} + 3 \otimes e^{\tau\mu - \alp_1 + \alp_6} \right).
\end{eqnarray*} 

The next two calculations will be for $\yvomn{B}{0}$ acting on the two parts of $P$.  The first is given by
\begin{eqnarray*}
&&\yvomn{\otea{\gamma_i}}{0} \cdot a_k \alp_k(-1) \otimes e^{\mu} \\
&&= Res \!\left[ z^1 \exp\left( \sum_{k \geq 1}\frac{ \gamma_i(-k)}{k}z^k\right) \exp\left( \sum_{k \geq 1}\frac{ \gamma_i(k)}{-k}z^{-k}\right) e_{\gamma_i}z^{\gamma_i(0)} \right. \\
&& \hspace{100pt} \left. \cdot a_k \alp_k(-1) \otimes e^{\mu} \right] \\
&&= Res \!\left[ \eps(\gamma_i, \mu) z^{1+ \herf{\gamma_i}{\mu}} \exp\left( \sum_{k \geq 1}\frac{ \gamma_i(-k)}{k}z^k\right) \left( I + \frac{\gamma_i(1)}{-1}z^{-1}\right) \right. \\
&& \hspace{100pt} \left. \cdot a_k \alp_k(-1) \otimes e^{\gamma_i + \mu} \right] \\
&&= Res\! \left[ \eps(\gamma_i, \mu) z^{1+ \herf{\gamma_i}{\mu}} \left( I + \frac{\gamma_i(-1)}{1}z^1\right) \right. \\
&& \hspace{100pt} \left. \cdot \left( a_k \alp_k(-1) \otimes e^{\gamma_i + \mu} - z^{-1}a_k(\gamma_i,\alp_k) \otimes e^{\gamma_i + \mu}\right)\right].
\end{eqnarray*} 

It is easier to break into cases, first if $\herf{\gamma_i}{ \mu} = -1$, then we have: \newpage
\begin{eqnarray*}
&&Res \left[ \eps(\gamma_i, \mu) z^{1+ \herf{\gamma_i}{\mu}} \left( I + \frac{\gamma_i(-1)}{1}z^1\right) \right. \\
&& \hspace{100pt} \left. \cdot \left( a_k \alp_k(-1) \otimes e^{\gamma_i + \mu} - z^{-1}a_k(\gamma_i,\alp_k) \otimes e^{\gamma_i + \mu}\right)\right] \\
&&=Res \left[ \eps(\gamma_i, \mu) \left( I + \frac{\gamma_i(-1)}{1}z^1\right) \right. \\
&& \hspace{100pt} \left. \cdot \left( a_k \alp_k(-1) \otimes e^{\gamma_i + \mu} - z^{-1}a_k(\gamma_i,\alp_k) \otimes e^{\gamma_i + \mu}\right)\right] \\
&&=Res \left[ -\eps(\gamma_i, \mu)z^{-1}a_k(\gamma_i,\alp_k) \otimes e^{\gamma_i + \mu}\right] \\
&&=-\eps(\gamma_i, \mu)a_k(\gamma_i,\alp_k) \otimes e^{\gamma_i + \mu}.
\end{eqnarray*}

If $\herf{\gamma_i}{ \mu} = -2$, then we have: 	
\begin{eqnarray*}
&&Res \left[ \eps(\gamma_i, \mu) z^{1+ \herf{\gamma_i}{\mu}} \left( I + \frac{\gamma_i(-1)}{1}z^1\right) \right. \\
&& \hspace{100pt} \left. \cdot \left( a_k \alp_k(-1) \otimes e^{\gamma_i + \mu} - z^{-1}a_k(\gamma_i,\alp_k) \otimes e^{\gamma_i + \mu}\right)\right] \\
&&=Res \left[ \eps(\gamma_i, \mu) z^{-1} \left( I + \frac{\gamma_i(-1)}{1}z^1\right) \right. \\
&& \hspace{100pt} \left. \cdot \left( a_k \alp_k(-1) \otimes e^{\gamma_i + \mu} - z^{-1}a_k(\gamma_i,\alp_k) \otimes e^{\gamma_i + \mu}\right)\right] \\
&&=Res \left[ \eps(\gamma_i, \mu) z^{-1} \left( a_k \alp_k(-1) \otimes e^{\gamma_i + \mu} - a_k(\gamma_i,\alp_k)\gamma_i(-1) \otimes e^{\gamma_i + \mu}\right)\right] \\
&&= \eps(\gamma_i, \mu) \left( a_k \alp_k(-1) \otimes e^{\gamma_i + \mu} - a_k(\gamma_i,\alp_k)\gamma_i(-1) \otimes e^{\gamma_i + \mu}\right).
\end{eqnarray*}

Analogously, if $\herf{\gamma_i}{ \tau\mu} = -1$, then
\begin{eqnarray*}
&&\yvomn{\otea{\gamma_i}}{0} \cdot -a_k \alp_k(-1) \otimes e^{\tau\mu}\\
&&=\eps(\gamma_i, \tau\mu)a_k(\gamma_i,\alp_k) \otimes e^{\gamma_i + \tau\mu}
\end{eqnarray*}
and if $\herf{\gamma_i}{ \tau\mu} = -2$, then 
\begin{eqnarray*}
&&\yvomn{\otea{\gamma_i}}{0} \cdot -a_k \alp_k(-1) \otimes e^{\tau\mu}\\
&&=\eps(\gamma_i, \tau\mu) \left( -a_k \alp_k(-1) \otimes e^{\gamma_i + \tau\mu} + a_k(\gamma_i,\alp_k)\gamma_i(-1) \otimes e^{\gamma_i + \tau\mu}\right).
\end{eqnarray*}

Let $\phi$ be $(\mu+\alp_1-\alp_6)$ or $(\tau\mu-\alp_1+\alp_6)$, then
\begin{eqnarray*}
&&\yvomn{\otea{\gamma_i}}{0} \cdot 3 \otimes e^{\phi} \\
&=& Res \left[ z^1 \exp\left( \sum_{k \geq 1}\frac{ \gamma_i(-k)}{k}z^k\right) e_{\gamma_i}z^{\gamma_i(0)}\cdot 3 \otimes e^{\phi} \right] \\
&=& Res \left[ \eps(\gamma_i, \phi) z^{1+ \herf{\gamma_i}{\phi}} \left( I + \frac{\gamma_i(-1)}{1}z^1\right)\cdot 3 \otimes e^{\gamma_i + \phi} \right]. 
\end{eqnarray*} 

This calculation simplifies if you see that the only useful dot products here are: \\
$\herf{\gamma_1}{ \tau\mu - \alp_1 + \alp_6} = \herf{-\gamma_1}{ \mu + \alp_1 - \alp_6} = -3$ and \\ $\herf{\gamma_3}{ \tau\mu - \alp_1 + \alp_6} = \herf{-\gamma_3}{ \mu + \alp_1 - \alp_6} = -3.$ 

If $\herf{\gamma_i}{ \phi} = -3$, then  
\begin{eqnarray*}
&&Res \left[ \eps(\gamma_i, \phi) z^{1+ (\gamma_i,\phi)} \left( I + \frac{\gamma_i(-1)}{1}z^1\right)\cdot 3 \otimes e^{\gamma_i + \phi} \right]\\
&&= Res \left[ \eps(\gamma_i, \mu) z^{-2} \left( I + \frac{\gamma_i(-1)}{1}z^1\right)\cdot 3 \otimes e^{\gamma_i + \phi} \right] \\
&&= 3\eps(\gamma_i, \mu) \gamma_i(-1) \otimes e^{\gamma_i + \phi}. 
\end{eqnarray*} 

Breaking the action into parts and using the calculations from above we have
\begin{eqnarray*}
&& -\frac{1}{5}\yvomn{\otea{\gamma_1} + \otea{-\gamma_1}}{0} \cdot \left( \sum_{k \in 1,3,5,6}a_k \alp_k(-1) \otimes e^{\mu}  + \sum_{k \in 1,3,5,6}-a_k \alp_k(-1) \otimes e^{\tau\mu}\right)\\
&&=\frac{2}{5}\left( 3 \otimes e^{\mu + \alp_1 - \alp_6}  + 3 \otimes e^{\tau\mu - \alp_1 + \alp_6}\right),
\end{eqnarray*}
and 
\begin{eqnarray*}
&&-\frac{1}{5}\yvomn{\otea{\gamma_2} + \otea{-\gamma_2}}{0} \cdot \left( \sum_{k \in 1,3,5,6}a_k \alp_k(-1) \otimes e^{\mu}  + \sum_{k \in 1,3,5,6}-a_k \alp_k(-1) \otimes e^{\tau\mu}\right)\\
&&=\frac{1}{5} \left[  \left( -2 \alp_1 -\alp_3 + \alp_5 +2 \alp_6\right)(-1) \otimes e^{\mu} + \left( 2 \alp_1 +\alp_3 - \alp_5 -2 \alp_6\right)(-1) \otimes e^{\tau\mu},\right]\end{eqnarray*}
also,
\begin{eqnarray*}
&&\frac{1}{5}\yvomn{\otea{\gamma_3} + \otea{-\gamma_3}}{0} \cdot \left( \sum_{k \in 1,3,5,6}a_k \alp_k(-1) \otimes e^{\mu}  + \sum_{k \in 1,3,5,6}-a_k \alp_k(-1) \otimes e^{\tau\mu}\right)\\
&&=\frac{2}{5}\left( 3 \otimes e^{\mu + \alp_1 - \alp_6}  + 3 \otimes e^{\tau\mu - \alp_1 + \alp_6}\right).
\end{eqnarray*}
Now we give the results for $\yvomn{B}{0}$ acting on $\left( 3 \otimes e^{\mu + \alp_1 - \alp_6}  + 3 \otimes e^{\tau\mu - \alp_1 + \alp_6}\right)$, first we have
\begin{eqnarray*}
&& -\frac{1}{5}\yvomn{\otea{\gamma_1} + \otea{-\gamma_1}}{0} \cdot \left( 3 \otimes e^{\mu + \alp_1 - \alp_6}  + 3 \otimes e^{\tau\mu - \alp_1 + \alp_6}\right) \\
&&=\frac{3}{5}\left( (-\alp_1(-1) + \alp_6(-1)) \otimes e^{\mu}  + (\alp_1(-1) - \alp_6(-1)) \otimes e^{\tau\mu}\right),
\end{eqnarray*}
and 
\begin{eqnarray*}
&&-\frac{1}{5}\yvomn{\otea{\gamma_2} + \otea{-\gamma_2}}{0} \cdot \left( 3 \otimes e^{\mu + \alp_1 - \alp_6}  + 3 \otimes e^{\tau\mu - \alp_1 + \alp_6}\right)\\
&&=0.
\end{eqnarray*}
Lastly,
\begin{eqnarray*}
&&\frac{1}{5}\yvomn{\otea{\gamma_3} + \otea{-\gamma_3}}{0} \cdot \left( 3 \otimes e^{\mu + \alp_1 - \alp_6}  + 3 \otimes e^{\tau\mu - \alp_1 + \alp_6}\right)\\
&&=\frac{3}{5}(-\alp_1(-1) - \alp_3(-1) + \alp_5(-1) + \alp_6(-1)) \otimes e^{\mu}  \\
&&\hspace{10pt}+\frac{3}{5}(\alp_1(-1) + \alp_3(-1) - \alp_5(-1) - \alp_6(-1)) \otimes e^{\tau\mu}.
\end{eqnarray*}

Adding all these actions together gives  
$$\yvomn{\omg}{0} \cdot P = \frac{7}{5}P.$$ 
Hence $P$ is a HWV for Vir$\left(\frac{4}{5}, \frac{7}{5}\right) \otimes W^{\Omg_4}$. 
\end{proof}

To determine the HWV for the module $Vir\left(\frac{4}{5}, 3\right) \otimes W^{\Omg_0}$, the use of the intertwining operators on $V_P$ and two of the HWVs from $V^{\Lam_1}$ and $V^{\Lam_6}$ play a key role. We start with two vectors, to be found in Section 6.2, each of which is a HWV in a summand $Vir\left(\frac{4}{5}, \frac23\right) \otimes W^{\Omg_0}$ that occurs in $V^{\Lam_1}$ and $V^{\Lam_6}$.  From $V^{\Lam_1}$ we have
$R = \otea{-\lam_1 + \lam_6} + \otea{\lam_3 - \lam_5} - \otea{\lam_1 - \lam_3 + \lam_5 - \lam_6}$, and from $V^{\Lam_6}$, $\tau R = \otea{\lam_1 - \lam_6} + $\\$\otea{-\lam_3 + \lam_5} - \otea{-\lam_1 + \lam_3 - \lam_5 + \lam_6}$.

\begin{lem}
$U = \yvomn{R}{-\frac{7}{3}}\cdot \tau R -\frac{5}{2} \yvomn{\omg}{-3}\cdot \otea{0} = \left\{ R \right\}_{-\frac{8}{3}}\cdot \tau R -\frac{5}{2} \left\{ \omg \right\}_{-2} \cdot \otea{0}$ 

is the HWV for Vir$\left(\frac{4}{5}, 3\right) \otimes W^{\Omg_0}$.
\end{lem}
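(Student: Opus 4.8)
The plan is to verify the five highest weight vector conditions from the start of Chapter 6 for the explicit vector
\[
U = \left\{ R \right\}_{-\frac{8}{3}}\cdot \tau R -\frac{5}{2} \left\{ \omg \right\}_{-2} \cdot \otea{0},
\]
where $R \in V_{P_1} = V^{\Lam_1}$ and $\tau R \in V_{P_2} = V^{\Lam_6}$, so that the product $\{R\}_{-8/3}\cdot\tau R$ is computed using the intertwining operator $\mathcal{Y}_{\mu_1}(R,z)$ with $\mu_1 = -\lam_1+\lam_6 \in P_1$ (or an equivalent representative), landing in $V_{P_0} = V^{\Lam_0}$ since $1+2 \equiv 0 \pmod 3$. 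Condition 5 (the $\fbt$-weight condition) is immediate: every monomial appearing in $U$ has its lattice exponent projecting to $\omg_0$, which I will check directly from the list of roots $\mu_1+\nu$ with $\mu_1,\nu \in \{-\lam_1+\lam_6, \lam_3-\lam_5, \lam_1-\lam_3+\lam_5-\lam_6\}$, together with the $\otea{0}$ part.

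First I would exploit the fact that $R$ is already a HWV in $Vir(\tfrac45,\tfrac23)\otimes W^{\Omg_0}\subset V^{\Lam_1}$ and $\tau R$ is a HWV in the corresponding summand of $V^{\Lam_6}$ (both to be established in Section 6.2). The key structural tool is the commutator formula (4.2)/(\ref{VOABracket}) in its $\mathbb{Q}$-graded intertwiner form, stated right after Definition \ref{Def: 4.2}: for $u \in V_{P_0}$ and $v \in V_{P_i}$,
\[
[\{u\}_m, \mathcal{Y}_{\mu_i}(v,z)] = \sum_{k\ge 0}\binom{m}{k}\mathcal{Y}_{\mu_i}(\{u\}_k\cdot v, z)\,z^{m-k},
\]
so that $[\{u\}_m, \{v\}_n] = \sum_{k\ge 0}\binom{m}{k}\{\{u\}_k\cdot v\}_{m+n-k}$. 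Taking $u \in \{\otea{-\theta},\ \beta_i,\ \omg\}$ — all of which lie in $V_{P_0}$ and annihilate (or act diagonally on) $R$ in the required low modes — lets me push the raising/Virasoro operators through $\{R\}_{-8/3}$ onto $\tau R$, reducing conditions 1–4 on $\{R\}_{-8/3}\cdot\tau R$ to: (i) the same conditions on $\tau R$ (known), plus (ii) finitely many correction terms $\{\{u\}_k\cdot R\}_{-8/3-\text{something}}\cdot\tau R$ where $k$ ranges over the small values for which $\{u\}_k\cdot R \ne 0$. For $u = \beta_i$ (condition 2) the relevant modes are $k=0$ where $\{\beta_i\}_0\cdot R$ — since $R$ is a $\fat$-HWV — vanishes, so the corrections are zero; for $u = \otea{-\theta}$ (condition 1) and $u = \omg$ (condition 3, modes $k=1,2$) one gets a bounded list of correction terms to evaluate explicitly. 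The $-\tfrac52\{\omg\}_{-2}\cdot\otea{0}$ summand contributes via $\{u\}_m\{\omg\}_{-2}\otea{0}$, again handled by the same commutator identity together with the brackets of $\{\omg\}_n$ computed in Chapter 5 and the fact that $\otea{0} = 1$ is the VOA vacuum (so $\{u\}_k\cdot 1$ is $L(-1)$-controlled via the formulas at the end of Section 4.1).

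For condition 4 (the $L(0) = \{\omg\}_1$ eigenvalue), I would use $[\{\omg\}_1, \{R\}_n] = \sum_{k\ge 0}\binom{1}{k}\{\{\omg\}_k\cdot R\}_{1+n-k} = \{\{\omg\}_0\cdot R\}_{n+1} + \{\{\omg\}_1\cdot R\}_n$. Since $R$ has coset-Virasoro weight $h=\tfrac23$ in $V^{\Lam_1}$, $\{\omg\}_0\cdot R = \tfrac23 R$ and $\{\omg\}_1\cdot R = 0$, so $[\{\omg\}_1,\{R\}_n] = \tfrac23\{R\}_{n+1}$; applying this with $n = -\tfrac83$ and using $\{\omg\}_1\cdot\tau R = \tfrac23\tau R$ gives $\{\omg\}_1\cdot\{R\}_{-8/3}\tau R = (\tfrac23 + \tfrac23 + \tfrac53)\{R\}_{-8/3}\tau R$? — here I must be careful: $\{R\}_{-8/3}$ shifts $L(0)$-weight by $-(-8/3) - wt(R) + 1$; with $wt(R) = 1$ (each $\otea{\lam}$ has $\tfrac12\herf{\lam}{\lam} = 1$ for $\lam$ a minuscule weight of norm$^2$ 2... actually $\herf{-\lam_1+\lam_6}{-\lam_1+\lam_6}$ needs checking), the weight added is $8/3$. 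So the eigenvalue on $\{R\}_{-8/3}\tau R$ should be $2/3 + 2/3 + 8/3\cdot(\text{contribution of }\{\omg\}_0\text{ shift})$; this bookkeeping is exactly where the coefficients $-8/3$ and $-5/2$ were chosen to make both summands land on eigenvalue $3$, and I would verify the arithmetic lines up, also checking that $\{\omg\}_1\cdot(\{\omg\}_{-2}\otea{0}) = 3\{\omg\}_{-2}\otea{0}$ directly from the Chapter-5 Virasoro brackets (indeed $\{\omg\}_{-2}\otea{0} = \{\omg\}_{-2}\cdot 1$ has $L(0)$-weight $3$ since $L(-2)\cdot 1$ has weight $2$ and the mode index differs — more precisely $\{\omg\}_{-2} = L(-3)$, and $L(-3)\cdot 1$ has weight $3$).

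The main obstacle I anticipate is the nonvanishing correction terms in conditions 1 and 3: I will need to compute $\{\otea{-\theta}\}_k\cdot R$ and $\{\omg\}_k\cdot R$ for the finitely many $k\ge 0$ where these are nonzero (likely $k=1$ for $\otea{-\theta}$ and $k=2,3$ for $\omg$, since $R$ is a sum of weight-1 vectors $\otea{\lam}$), and then check that the resulting contributions from the $\{R\}_{-8/3}\cdot\tau R$ piece cancel exactly against those from the $-\tfrac52\{\omg\}_{-2}\cdot\otea{0}$ piece. This cancellation is precisely why the coefficient $-\tfrac52$ and the mode $-3$ (equivalently $-\tfrac73$ with the $wt$-shift) appear, and getting the $\veps$-cocycle signs and the dot-product bookkeeping right — using the explicit $\veps$ on $P$ given at the end of Chapter 4 and the projection $Proj$ — will be the delicate part. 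The computations are entirely mechanical residue manipulations of the type already carried out in Chapter 5 and the preceding lemmas of Chapter 6, just applied to the intertwining operator $\mathcal{Y}_{\mu_1}$ rather than $Y$; I would organize them by first reducing everything via the commutator identity to actions on $\tau R$ and on the vacuum, then collecting like terms.
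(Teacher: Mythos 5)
Your overall strategy is the same as the paper's: use the commutator formula $[\{u\}_m,\{v\}_n]=\sum_{k\geq 0}\binom{m}{k}\{\{u\}_k\cdot v\}_{m+n-k}$ to push $\{\otea{-\theta}\}_1$, $\{\beta_i\}_0$ and the Virasoro modes through $\left\{R\right\}_{-8/3}$ onto $\tau R$, exploit the fact that $R$ and $\tau R$ are already HWVs of $Vir\left(\frac{4}{5},\frac{2}{3}\right)\otimes W^{\Omg_0}$, and evaluate the finitely many correction terms. But two concrete problems would derail the execution as written. First, you have the mode convention backwards: since $L(n)=\{\omg\}_{n+1}$, the eigenvalue statement is $\{\omg\}_1\cdot R=\frac{2}{3}R$ (not $\{\omg\}_0\cdot R$), while $\{\omg\}_0\cdot R=L(-1)\cdot R$ is governed by the derivative property $\{\{\omg\}_0\cdot R\}_n=-n\{R\}_{n-1}$ and is not a multiple of $R$. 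Consequently the nonzero corrections come from $k=0$ and $k=1$ (not $k=2,3$ as you predict, since $\{\omg\}_k\cdot R=0$ for $k\geq 2$ and $\{\otea{-\theta}\}_k\cdot R=0$ for all $k\geq 0$), and with your stated inputs the eigenvalue bookkeeping, which must yield $\frac{5}{3}+\frac{2}{3}+\frac{2}{3}=3$, does not close. (Relatedly, $wt(R)=\frac{1}{2}\herf{-\lam_1+\lam_6}{-\lam_1+\lam_6}=\frac{2}{3}$, not $1$; you correctly flagged this as unchecked.)

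Second, the proposal never identifies the single computational fact on which the cancellation hinges: $\left\{R\right\}_{-5/3}\cdot\tau R=5\omg$, together with $\left\{R\right\}_{-2/3}\cdot\tau R=0$. The condition $\{\omg\}_2\cdot U=L(1)\cdot U=0$ reduces to $2\left\{R\right\}_{-5/3}\cdot\tau R-10\{\omg\}_{-1}\cdot\otz=2(5\omg)-10\omg=0$, and this is exactly where the coefficient $-\frac{5}{2}$ is forced. You correctly anticipate that the two pieces of $U$ must cancel, but without explicitly computing $\left\{R\right\}_{-5/3}\cdot\tau R$ (a residue calculation with the intertwining operators, which the paper handles via small lemmas for the cases $\herf{\alp}{\beta}=\frac{2}{3}$ and $\herf{\alp}{\alp}=\frac{4}{3}$) there is no way to verify the cancellation or to pin down the coefficient. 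Finally, the lemma implicitly asserts $U\neq 0$; the paper settles this by computing $\left\{R\right\}_{-8/3}\cdot\tau R$ and $\{\omg\}_{-2}\cdot\otz$ explicitly and observing they are linearly independent, a step absent from your outline.
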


\begin{proof}
1)$\yvomn{\otea{-\theta}}{1}\cdot U = \{ \otea{-\theta} \}_1 \cdot U=  0$

Before calculating, note that $R$ is the HWV in a 1 dimensional $F_4$-module, hence in particular $\{-\theta\}_0\cdot R =\{ \otea{-\theta} \}_0 \cdot R = 0$. 
We now have
\begin{eqnarray*}
&&\{ \otea{-\theta} \}_1 \cdot \left( \left\{ R \right\}_{-\frac{8}{3}} \cdot \tau R \right) \\
&=& \left[ \{ \otea{-\theta} \}_1 , \left\{ R \right\}_{-\frac{8}{3}} \right] \cdot \tau R + \left\{ R \right\}_{-\frac{8}{3}} \left( \{ \otea{-\theta} \}_1 \cdot \tau R \right) \\
&=& \left[ \{ \otea{-\theta} \}_1 , \left\{ R \right\}_{-\frac{8}{3}} \right] \cdot \tau R + 0 \\
&=& \sum_{k \geq 0} \binom{1}{k} \left\{ \left\{ \otea{-\theta} \right\}_k \cdot R\right\}_{1-\frac{8}{3}-k} \cdot \tau R\\
&=& \left\{ \left\{ \otea{-\theta} \right\}_0 \cdot R \right\}_{-\frac{8}{3}} \cdot \tau R + \left\{ \{ \otea{-\theta} \}_1 \cdot R \right\}_{-\frac{5}{3}} \cdot \tau R  \\
&=& \left\{ 0 \right\}_{-\frac{8}{3}} \cdot \tau R + \left\{ 0 \right\}_{-\frac{5}{3}} \cdot \tau R \\
&=& 0.
\end{eqnarray*}

Since $\omg$ is the generator for coset Virasoro, then the operators generated by this vector commute with the $F_4^{(1)}$ operators.  This gives:
 $$\{ \otea{-\theta} \}_1 \cdot (\left\{ \omg \right\}_{-2} \cdot \otea{0}) =  \left\{ \omg \right\}_{-2} \cdot (\{ \otea{-\theta} \}_1 \cdot \otz) = 0.$$
Therefore $U$ is killed by $\yvomn{\otea{-\theta}}{1}$ \\

2)$\yvomn{\beta}{0} \cdot U  = \{\beta\}_0 \cdot U = 0$

\begin{eqnarray*}
\{\beta\}_0 \cdot \left( \left\{ R \right\}_{-\frac{8}{3}} \cdot \tau R \right) &=& \left[ \{\beta\}_0 , \left\{ R \right\}_{-\frac{8}{3}} \right] \cdot \tau R + \left\{ R \right\}_{-\frac{8}{3}} \left( \{\beta\}_0 \cdot \tau R \right) \\
&=& \left[ \{\beta\}_0 , \left\{ R \right\}_{-\frac{8}{3}} \right] \cdot \tau R + 0 \\
&=& \sum_{k \geq 0} \binom{0}{k} \left\{ \left\{ \beta \right\}_k \cdot R\right\}_{0-\frac{8}{3}-k} \cdot \tau R\\
&=& \left\{ \{\beta\}_0 \cdot R \right\}_{-\frac{8}{3}} \cdot \tau R \\
&=& \left\{ 0 \right\}_{-\frac{8}{3}} \cdot \tau R \\
&=& 0.
\end{eqnarray*}

Once again we use $\{ \omg\}_n$ operators commute with the $F_4^{(1)}$ operators, which gives:
 $$\{\otea{\beta}\}_0 \cdot (\left\{ \omg \right\}_{-2} \cdot \otea{0}) =  \left\{ \omg \right\}_{-2} \cdot (\{\otea{\beta}\}_0 \cdot \otea{0}) = 0.$$

Instead of separating the action of the three Virasoro operators, they are calculated together because they all need some of the same techniques to be completed.  \\

3) $\yvomn{\omg}{n}\cdot U = \{\omg\}_{n+1} \cdot U = 0$, for $n\in \{ 2,3\}$, and $\yvomn{\omg}{0}\cdot U = \{\omg\}_{1} \cdot U = 3U$.

Three preliminary calculations for all three of the actions.
\begin{eqnarray*}
\left\{ \omg \right\}_3 \cdot \left( \left\{ R \right\}_{-\frac{8}{3}} \cdot \tau R \right) &=& \left[ \left\{ \omg \right\}_3 , \left\{ R \right\}_{-\frac{8}{3}} \right] \cdot \tau R + \left\{ R \right\}_{-\frac{8}{3}} \left( \left\{ \omg \right\}_3 \cdot \tau R \right) \\
&=& \left[ \left\{ \omg \right\}_3 , \left\{ R \right\}_{-\frac{8}{3}} \right] \cdot \tau R + 0 \\
&=& \sum_{k \geq 0} \binom{3}{k} \left\{ \left\{ \omg \right\}_k \cdot R\right\}_{3-\frac{8}{3}-k} \cdot \tau R \\
&=& \left\{ \left\{ \omg \right\}_0 \cdot R \right\}_{\frac{1}{3}} \cdot \tau R + 3 \left\{ \left\{ \omg \right\}_1 \cdot R \right\}_{-\frac{2}{3}} \cdot \tau R \\
&&+ 3\left\{ \left\{ \omg \right\}_2 \cdot R \right\}_{-\frac{5}{3}} \cdot \tau R + \left\{ \left\{ \omg \right\}_3 \cdot R \right\}_{-\frac{8}{3}} \cdot \tau R\\
&=& \left\{ \left\{ \omg \right\}_0 \cdot R \right\}_{\frac{1}{3}} \cdot \tau R + 3 \left\{ 2/3 R \right\}_{-\frac{2}{3}} \cdot \tau R \\
&&+ \left\{ 0 \right\}_{-\frac{5}{3}} \cdot \tau R + \left\{ 0\right\}_{-\frac{8}{3}} \cdot \tau R\\
&=& \left\{ \left\{ \omg \right\}_0 \cdot R \right\}_{\frac{1}{3}} \cdot \tau R + 2 \left\{ R \right\}_{-\frac{2}{3}} \cdot \tau R 
\end{eqnarray*}

Also we have 
\begin{eqnarray*}
\left\{ \omg \right\}_2 \cdot \left( \left\{ R \right\}_{-\frac{8}{3}} \cdot \tau R \right) &=& \left[ \left\{ \omg \right\}_2 , \left\{ R \right\}_{-\frac{8}{3}} \right] \cdot \tau R + \left\{ R \right\}_{-\frac{8}{3}} \left( \left\{ \omg \right\}_2 \cdot \tau R \right) \\
&=& \left[ \left\{ \omg \right\}_2 , \left\{ R \right\}_{-\frac{8}{3}} \right] \cdot \tau R + 0 \\
&=& \sum_{k \geq 0} \binom{2}{k} \left\{ \left\{ \omg \right\}_k \cdot R\right\}_{2-\frac{8}{3}-k} \\
&=& \left\{ \left\{ \omg \right\}_0 \cdot R \right\}_{-\frac{2}{3}} \cdot \tau R + 2 \left\{ \left\{ \omg \right\}_1 \cdot R \right\}_{-\frac{5}{3}} \cdot \tau R \\
&&+ \left\{ \left\{ \omg \right\}_2 \cdot R \right\}_{-\frac{8}{3}} \cdot \tau R\\
&=& \left\{ \left\{ \omg \right\}_0 \cdot R \right\}_{-\frac{2}{3}} \cdot \tau R + 2 \left\{ 2/3 R \right\}_{-\frac{5}{3}} \cdot \tau R \\
&&+ \left\{ 0 \right\}_{-\frac{8}{3}} \cdot \tau R \\
&=& \left\{ \left\{ \omg \right\}_0 \cdot R \right\}_{-\frac{2}{3}} \cdot \tau R + \frac{4}{3} \left\{ R \right\}_{-\frac{5}{3}} \cdot \tau R
\end{eqnarray*}

and 
\begin{eqnarray*}
\left\{ \omg \right\}_1 \cdot \left( \left\{ R \right\}_{-\frac{8}{3}} \cdot \tau R \right) &=& \left[ \left\{ \omg \right\}_1 , \left\{ R \right\}_{-\frac{8}{3}} \right] \cdot \tau R + \left\{ R \right\}_{-\frac{8}{3}} \left( \left\{ \omg \right\}_1 \cdot \tau R \right) \\
&=& \left[ \left\{ \omg \right\}_1 , \left\{ R \right\}_{-\frac{8}{3}} \right] \cdot \tau R +  \left\{ R \right\}_{-\frac{8}{3}} \cdot \left( 2/3 \tau R \right) \\
&=& \sum_{k \geq 0} \binom{1}{k} \left\{ \left\{ \omg \right\}_k \cdot R\right\}_{1-\frac{8}{3}-k} + \frac{2}{3} \left( \left\{ R \right\}_{-\frac{8}{3}} \cdot \tau R \right) \\
&=& \left\{ \left\{ \omg \right\}_0 \cdot R \right\}_{-\frac{5}{3}} \cdot \tau R + \left\{ \left\{ \omg \right\}_1 \cdot R \right\}_{-\frac{8}{3}} \cdot \tau R + \frac{2}{3} \left( \left\{ R \right\}_{-\frac{8}{3}} \cdot \tau R \right)  \\
&=& \left\{ \left\{ \omg \right\}_0 \cdot R \right\}_{-\frac{5}{3}} \cdot \tau R +  \left\{ 2/3 R \right\}_{-\frac{8}{3}} \cdot \tau R + \frac{2}{3} \left( \left\{ R \right\}_{-\frac{8}{3}} \cdot \tau R \right)  \\
&=& \left\{ \left\{ \omg \right\}_0 \cdot R \right\}_{-\frac{5}{3}} \cdot \tau R +  \frac{2}{3} \left( \left\{ R \right\}_{-\frac{8}{3}} \cdot \tau R \right)  + \frac{2}{3} \left( \left\{ R \right\}_{-\frac{8}{3}} \cdot \tau R \right)  \\
&=& \left\{ \left\{ \omg \right\}_0 \cdot R \right\}_{-\frac{5}{3}} \cdot \tau R +  \frac{4}{3}  \left\{ R \right\}_{-\frac{8}{3}} \cdot \tau R. 
\end{eqnarray*}

A necessary vector for these calculations is $\{\omg\}_0 \cdot R$.  Record the following observation about $\{\omg\}_0 \cdot R$, so it can be used as necessary.  We know $\{ \omg_{E_6} \}_1 \cdot R = \frac{2}{3}R$ and also $\{ \omg \}_1 \cdot R = \frac{2}{3}R$, therefore $\{ \omg_{F_4} \}_1 \cdot R = 0 \cdot R$ and then we must have $\{ \omg_{F_4} \}_0 \cdot R = 0$. We now have $ \{ \omg \}_0 \cdot R =\left(\{ \omg_{E_6} \}_0 - \{ \omg_{F_4} \}_0 \right) \cdot R =  \{ \omg_{E_6} \}_0 \cdot R.$

By the derivative property for the VOA of $\Es$,  $\{ \{ \omg_{E_6} \}_0 \cdot R\}_n = -n\{ R\}_{n-1}$ and therefore $\{ \{ \omg \}_0 \cdot R\}_n = -n\{ R\}_{n-1}$.  We need a few lemmas before we finish these calculations. 

\begin{lem}If $\herf{\alp}{\alp} = \frac{4}{3}$, then $\{ \otea{\alp}\}_{-2/3} \cdot (\otea{-\alp}) = \alp(-1)\otimes e^0$.
\end{lem}

\begin{proof} 
\begin{eqnarray*}
&&\{ \otea{\alp}\}_{-2/3} \cdot (\otea{-\alp}) \\ 
&=& Res\left( z^{-2/3} \exp\left( \sum_{k \geq 1}\frac{ \alp(-k)}{k}z^k\right) e_{\alp}z^{\alp(0)}  \cdot \otea{-\alp}  \right) \\
&=& Res\left( z^{-2}\eps(\alp,-\alp) \exp\left( \sum_{k \geq 1}\frac{ \alp(-k)}{k}z^k\right)\cdot \otea{0} \right)  \\
&=& Res\left( z^{-1} \alp(-1) \otimes e^{0} \right) \\
&=& \alp(-1) \otimes e^0
\end{eqnarray*}
\end{proof}

\begin{lem}
If $\herf{\alp}{\beta} = \frac23$ , then $\{ \alp(-1)\otimes e^{\alp}\}_{-2/3} \cdot (\otea{\beta}) = 0$.
\end{lem}

\begin{proof} 
\begin{eqnarray*}
&&\{ \otea{\alp}\}_{-2/3} \cdot \otea{\beta} \\ 
&=& Res\left( z^{-2/3} \exp\left( \sum_{k \geq 1}\frac{ \alp(-k)}{k}z^k\right) e_{\alp}z^{\alp(0)}  \cdot \otea{\beta}  \right) \\
&=& Res\left( z^{0}\eps(\alp,\beta) \exp\left( \sum_{k \geq 1}\frac{ \alp(-k)}{k}z^k\right)\cdot \otea{\alp+\beta} \right)  \\
&=& 0
\end{eqnarray*}
\end{proof}

Since the exponents of $e$ in $R$ and $\tau R$ satisfy one of these lemmas, then we have
\begin{eqnarray*}
&&\{ R\}_{-2/3} \cdot \tau R \\
&&=((-\lam_1+\lam_6)(-1) + (\lam_3 - \lam_5)(-1) + (\lam_1  - \lam_3 +\lam_5 - \lam_6)(-1))\otimes e^0 = 0.
\end{eqnarray*}

\begin{lem}
If $\herf{\alp}{\beta} = \frac23$, then $\{ \otea{\alp}\}_{-5/3} \cdot (\otea{\beta}) = \eps(\alp,\beta)\otea{\alp+\beta}.$
\end{lem}
\begin{proof}
\begin{eqnarray*}
&&\{ \otea{\alp}\}_{-5/3} \cdot (\otea{\beta}) \\ 
&=& Res\left( z^{-5/3} \exp\left( \sum_{k \geq 1}\frac{ \alp(-k)}{k}z^k\right) e_{\alp}z^{\alp(0)} \cdot \otea{\beta}  \right) \\
&=& Res\left( \eps(\alp,\beta) z^{-1} \exp\left( \sum_{k \geq 1}\frac{ \alp(-k)}{k}z^k\right)\cdot \otea{\alp+\beta} \right)  \\
&=& \eps(\alp,\beta) \otea{\alp+\beta}
\end{eqnarray*}
\end{proof}

\begin{lem}
If $\herf{\alp}{\alp} = \frac{4}{3}$, then $\{ \otea{\alp}\}_{-5/3} \cdot (\otea{-\alp}) = \frac{1}{2}(\alp^2(-1)+\alp(-2))\otimes e^0.$
\end{lem}
 
\begin{proof}
\begin{eqnarray*}
&&\{ \otea{\alp}\}_{-5/3} \cdot (\otea{-\alp}) \\ 
&=& Res\left( z^{-5/3} \exp\left( \sum_{k \geq 1}\frac{ \alp(-k)}{k}z^k\right) e_{\alp}z^{\alp(0)}  \cdot \otea{-\alp}  \right) \\
&=& Res\left( z^{-3} \eps(\alp,-\alp)\exp\left( \sum_{k \geq 1}\frac{ \alp(-k)}{k}z^k\right)\cdot \otea{0} \right)  \\
&=& Res\left( z^{-1} \eps(\alp,-\alp)\left(\frac{1}{2}\alp(-2) + \frac{1}{2}\alp(-1)^2 \right) \otimes e^{0} \right) \\
&=& \frac{1}{2}(\alp(-1)^2+\alp(-2))\otimes e^0
\end{eqnarray*}
\end{proof}

Using these two lemmas we have that  
\begin{eqnarray*}
&&\{ R \}_{-5/3} \cdot \tau R \\ 
&=& \otea{-\lam_1 - \lam_3 + \lam_5 + \lam_6} - \otea{-2\lam_1 + \lam_3 - \lam_5 + 2\lam_6} \\
&&\hspace{10pt}+ \frac{1}{2}((-\lam_1 + \lam_6)(-2) + (-\lam_1 + \lam_6)(-1)^2) \otimes e^0 \\
&&\hspace{10pt}+\otea{\lam_1 + \lam_3 - \lam_5 - \lam_6} - \otea{-\lam_1 + 2\lam_3 - 2\lam_5 + \lam_6} \\
&&\hspace{10pt}+ \frac{1}{2}( (\lam_3 -\lam_5)(-2) + (\lam_3 - \lam_5)(-1)^2) \otimes e^0\\
&&\hspace{10pt}- \otea{2\lam_1 - \lam_3 + \lam_5 - 2\lam_6}- \otea{\lam_1 - 2\lam_3 + 2\lam_5 - \lam_6} \\
&&\hspace{10pt}+ \frac{1}{2}((\lam_1 - \lam_3 +\lam_5 - \lam_6)(-2) + (\lam_1 - \lam_3 + \lam_5 - \lam_6)(-1)^2) \otimes e^0\\
&=& \otea{-\alp_1 - \alp_3 + \alp_5 + \alp_6} - \otea{-\alp_1 + \alp_6} + \frac{1}{2}(-\lam_1 + \lam_6)(-1)^2 \otimes e^0 \\
&&\hspace{10pt}+ \otea{\alp_1 + \alp_3 - \alp_5 - \alp_6} - \otea{-\alp_3 + \alp_5} + \frac{1}{2}(\lam_3 - \lam_5)(-1)^2\otimes e^0 \\
&&\hspace{10pt}- \otea{\alp_1 - \alp_6}  - \otea{\alp_3 - \alp_5} + \frac{1}{2}(\lam_1 - \lam_3 + \lam_5 - \lam_6)(-1)^2 \otimes e^0 \\
&=& \frac{1}{2}((-\lam_1 + \lam_6)(-1)^2 + (\lam_3 - \lam_5)(-1)^2 + (\lam_1 - \lam_3 + \lam_5 - \lam_6)(-1)^2) \otimes e^0 \\
&&-(\otea{\gamma_1}+ \otea{-\gamma_1}) - (\otea{\gamma_2}+ \otea{-\gamma_2}) + (\otea{\gamma_3}+ \otea{-\gamma_3}) \\
&=& 5\omg.
\end{eqnarray*}

We also have the following observation 
\begin{eqnarray*}
\left\{ \omg \right\}_n \cdot (\left\{ \omg \right\}_{-2} \cdot \otea{0}) &=& [ \{\omg\}_n, \{ \omg\}_{-2}] \cdot \otz + \{ \omg \}_{-2} \cdot ( \{ \omg \}_{n} \cdot \otz)\\
 &=& (n+2)\{ \omg\}_{n-3} \cdot \otz + \{ \omg \}_{-2} \cdot ( \{ \omg \}_{n} \cdot \otz).
\end{eqnarray*}
Since $ \{ \omg \}_{n} \cdot \otz = 0$ for $n \geq 0$, then for $n \in \{ 0,1,2,3\}$
\begin{eqnarray*}
\left\{ \omg \right\}_n \cdot (\left\{ \omg \right\}_{-2} \cdot \otea{0}) &=& (n+2)\{ \omg\}_{n-3} \cdot \otz. 
\end{eqnarray*}

We have
\begin{eqnarray*}
\left\{ \omg \right\}_3 \cdot U &=& \left\{ \left\{ \omg \right\}_0 \cdot R \right\}_{\frac{1}{3}} \cdot \tau R + 2 \left\{ R \right\}_{-\frac{2}{3}} \cdot \tau R -\frac{5}{2} (5)\{ \omg\}_{0} \cdot \otz \\
&=& -\frac{1}{3}\left\{ R \right\}_{-\frac{2}{3}} \cdot \tau R + 2 \left\{ R \right\}_{-\frac{2}{3}} \cdot \tau R -\frac{5}{2} (5)\{ \omg\}_{0} \cdot \otz \\
&=& \frac{5}{3}\left\{ R \right\}_{-\frac{2}{3}} \cdot \tau R - 0 \\ 
&=& 0,
\end{eqnarray*}

\begin{eqnarray*}
\left\{ \omg \right\}_2 \cdot U &=& \left\{ \left\{ \omg \right\}_0 \cdot R \right\}_{-\frac{2}{3}} \cdot \tau R + 4/3 \left\{ R \right\}_{-\frac{5}{3}} \cdot \tau R -\frac{5}{2} (4)\{ \omg\}_{-1} \cdot \otz \\
&=& \frac{2}{3}\left\{ R \right\}_{-\frac{5}{3}} \cdot \tau R + 4/3 \left\{ R \right\}_{-\frac{5}{3}} \cdot \tau R -10\{ \omg\}_{-1} \cdot \otz \\
&=& 2 \left\{ R \right\}_{-\frac{5}{3}} \cdot \tau R -  2 \left\{ R \right\}_{-\frac{5}{3}} \cdot \tau R \\ 
&=& 0,
\end{eqnarray*}

and lastly
\begin{eqnarray*}
\left\{ \omg \right\}_1 \cdot U &=& \left\{ \left\{ \omg \right\}_0 \cdot R \right\}_{-\frac{5}{3}} \cdot \tau R +  \frac{4}{3} \left\{ R \right\}_{-\frac{8}{3}} \cdot \tau R -\frac{5}{2} (3)\{ \omg\}_{-2} \cdot \otz \\
&=& \frac{5}{3}\left\{  R \right\}_{-\frac{8}{3}} \cdot \tau R +  \frac{4}{3} \left\{ R \right\}_{-\frac{8}{3}} \cdot \tau R -3\left(\frac{5}{2} \{ \omg\}_{-2} \cdot \otz \right) \\
&=& 3\left\{ R \right\}_{-\frac{8}{3}} \cdot \tau R -3\left(\frac{5}{2} \{ \omg\}_{-2} \cdot \otz \right)\\
&=&3U.
\end{eqnarray*}

There are still two items to clarify. First, to determine that $U \neq 0$  and then to find the exact description of this vector in $V^{\Lam_0}$. 

The next step is to determine both the vector $\{R\}_{-8/3} \cdot \tau R$ and $\{ \omg \}_{-2} \cdot \otz$, hence answering both questions simultaneously.  We present two lemmas needed to determine $\{R\}_{-8/3} \cdot \tau R$.

\begin{lem}
If $\herf{\alp}{\beta} = \frac23$, then $\{ \otea{\alp}\}_{-8/3} \cdot (\otea{\beta}) = \eps(\alp,\beta)\alp(-1)\otimes e^{\alp+\beta}.$
\end{lem}
 
\begin{proof} 
\begin{eqnarray*}
&&\{ \otea{\alp}\}_{-8/3} \cdot (\otea{\beta}) \\ 
&=& Res\left( z^{-8/3} \exp\left( \sum_{k \geq 1}\frac{ \alp(-k)}{k}z^k\right) e_{\alp}z^{\alp(0)} \cdot \otea{\beta}  \right) \\
&=& Res\left( \eps(\alp,\beta) z^{-2} \exp\left( \sum_{k \geq 1}\frac{ \alp(-k)}{k}z^k\right)\cdot \otea{\alp+\beta} \right)  \\
&=& \eps(\alp,\beta) \alp(-1)\otimes e^{\alp+\beta}.
\end{eqnarray*}
\end{proof}

\begin{lem}
If $\herf{\alp}{\alp} = \frac{4}{3}$, then
 $$\{ \otea{\alp}\}_{-8/3} \cdot (\otea{-\alp}) = \left(\frac{\alp(-3)}{3} + \frac{\alp(-1)\alp(-2)}{2} + \frac{\alp(-1)^3}{6}\right) \otimes e^0.$$
\end{lem} 
 
\begin{proof}
\begin{eqnarray*}
&&\{ \otea{\alp}\}_{-8/3} \cdot (\otea{-\alp}) \\ 
&=& Res\left( z^{-8/3} \exp\left( \sum_{k \geq 1}\frac{ \alp(-k)}{k}z^k\right) e_{\alp}z^{\alp(0)}  \cdot \otea{-\alp}  \right) \\
&=& Res\left( z^{-4} \eps(\alp,-\alp)\exp\left( \sum_{k \geq 1}\frac{ \alp(-k)}{k}z^k\right)\cdot \otz \right)  \\
&=& Res\left( z^{-1} \left(\frac{\alp(-3)}{3} + \frac{\alp(-1)\alp(-2)}{2} + \frac{\alp(-1)^3}{6}\right) \otimes e^0 \right) \\
&=& \left(\frac{\alp(-3)}{3} + \frac{\alp(-1)\alp(-2)}{2} + \frac{\alp(-1)^3}{6}\right) \otimes e^0.
\end{eqnarray*}
\end{proof}

Using these two lemmas with the vectors $R$ and $\tau R$, then 
\begin{eqnarray*}
&&\{ R\}_{-8/3} \cdot \tau R \\
&=& \left(\frac{(-\lam_1 + \lam_6)(-3)}{3} + \frac{(-\lam_1 + \lam_6)(-1)(-\lam_1 + \lam_6)(-2)}{2}\right)\otimes e^0 \\
&& +\left( \frac{(-\lam_1 + \lam_6)(-1)^3}{6}\right) \otimes e^0 + (-\lam_1 + \lam_6)(-1) \otimes e^{-\lam_1 -\lam_3 + \lam_5 + \lam_6} \\
&& - (-\lam_1 + \lam_6)(-1) \otimes e^{-2\lam_1 + \lam_3 - \lam_5 + 2\lam_6}\\
&&+ \left(\frac{(\lam_3 - \lam_5)(-3)}{3} + \frac{(\lam_3 - \lam_5)(-1)(\lam_3 - \lam_5)(-2)}{2} + \frac{(\lam_3 - \lam_5)(-1)^3}{6}\right) \otimes e^0 \\
&&+ (\lam_3 - \lam_5)(-1) \otimes e^{\lam_1 +\lam_3 - \lam_5 - \lam_6} - (\lam_3 - \lam_5)(-1) \otimes e^{-\lam_1 + 2\lam_3 - 2\lam_5 + \lam_6} \\
&&+ \left(\frac{(\lam_1 - \lam_3 + \lam_5 - \lam_6)(-3)}{3} + \frac{(\lam_1 - \lam_3 + \lam_5 - \lam_6)^3(-1)}{6}\right) \otimes e^0\\
&& + \left(\frac{(\lam_1 - \lam_3 + \lam_5 - \lam_6)(-1)(\lam_1 - \lam_3 + \lam_5 - \lam_6)(-2)}{2}\right) \otimes e^0 \\
&& - (\lam_1 - \lam_3 + \lam_5 - \lam_6)(-1) \otimes e^{2\lam_1 -\lam_3 + \lam_5 - 2\lam_6} \\
&&- (\lam_1 - \lam_3 + \lam_5 - \lam_6)(-1) \otimes e^{\lam_1 - 2\lam_3 + 2\lam_5 -\lam_6} \\
&=& \left(\frac{(-\lam_1 + \lam_6)(-1)(-\lam_1 + \lam_6)(-2)}{2} + \frac{(-\lam_1 + \lam_6)(-1)^3}{6}\right) \otimes e^0 \\
&&+ \left(\frac{(\lam_3 - \lam_5)(-1)(\lam_3 - \lam_5)(-2)}{2} + \frac{(\lam_3 - \lam_5)(-1)^3}{6}\right) \otimes e^0 \\
&&+ \left(\frac{(\lam_1 - \lam_3 + \lam_5 - \lam_6)(-1)(\lam_1 - \lam_3 + \lam_5 - \lam_6)(-2)}{2}\right)\otimes e^0 \\
&& + \left(\frac{(\lam_1 - \lam_3 + \lam_5 - \lam_6)(-1)^3}{6}\right) \otimes e^0 \\
&&+ (-\lam_1 + \lam_6)(-1) \otimes e^{-\lam_1 -\lam_3 + \lam_5 + \lam_6} - (-\lam_1 + \lam_6)(-1) \otimes e^{-2\lam_1 + \lam_3 - \lam_5 + 2\lam_6}\\
&&+ (\lam_3 - \lam_5)(-1) \otimes e^{\lam_1 +\lam_3 - \lam_5 - \lam_6} - (\lam_3 - \lam_5)(-1) \otimes e^{-\lam_1 + 2\lam_3 - 2\lam_5 + \lam_6} \\
&&- (\lam_1 - \lam_3 + \lam_5 - \lam_6)(-1) \otimes e^{2\lam_1 -\lam_3 + \lam_5 - 2\lam_6} \\
&&- (\lam_1 - \lam_3 + \lam_5 - \lam_6)(-1) \otimes e^{\lam_1 - 2\lam_3 + 2\lam_5 -\lam_6}.
\end{eqnarray*}
Two lemmas are also needed to compute $\{ \omg \}_{-2} \cdot \otz$.

\begin{lem}
If $\herf{\alp}{\alp} =4$, then $\{ \otea{\alp}\}_{-2} \cdot (\otz) = \alp(-1) \otimes e^{\alp}$.
\end{lem}
\begin{proof}
\begin{eqnarray*}
\{ \otea{\alp} \}_{-2} \cdot \otz 
&=& Res\left( z^{-2} \exp\left( \sum_{k \geq 1}\frac{ \alp(-k)}{k}z^k\right) e_{\alp}z^{\alp(0)}  \cdot (\otz)  \right) \\
&=& Res\left( z^{-2} \exp\left( \sum_{k \geq 1}\frac{ \alp(-k)}{k}z^k\right)\cdot (\otea{\alp}) \right)  \\
&=& \alp(-1) \otimes e^{\alp}
\end{eqnarray*}
\end{proof}

The next lemma helps calculate the action of the Heisenberg operators.
\begin{lem} We have $\{ \alp(-1)^2\otz\}_{-3} \cdot \otz = 2\alp(-1)\alp(-2) \otimes e^0.$
\end{lem}
\begin{proof}
\begin{eqnarray*}
\{ \alp(-1)^2\otz\}_{-3} \cdot \otz 
&=&  \left( \dots + \alp(-1)\alp(-2) + \alp(-2)\alp(-1) + \dots \right) \cdot \otz\\
&=&  (\alp(-1)\alp(-2) + \alp(-2)\alp(-1)) \cdot \otz\\
&=& 2\alp(-1)\alp(-2) \otimes e^0
\end{eqnarray*}
\end{proof}
Using these two lemmas gives 
\begin{eqnarray*}
&&\{ \omg \}_{-2} \cdot \otz \\
&=& \frac{1}{10}\left( 2(-\lam_1 + \lam_6)(-1)(-\lam_1 + \lam_6)(-2) + 2(\lam_3 - \lam_5)(-1)(\lam_3 - \lam_5)(-2)\right) \otimes e^0 \\
&&+ \frac{1}{10}\left(2(\lam_1 - \lam_3 + \lam_5 - \lam_6)(-1)(\lam_1 - \lam_3 + \lam_5 - \lam_6)(-2)\right) \otimes e^0 \\
&&-\frac{1}{5}((\alp_1 - \alp_6)(-1) \otimes e^{\alp_1 - \alp_6}+ (-\alp_1 + \alp_6)(-1)\otimes e^{-\alp_1 + \alp_6}) \\
&&-\frac{1}{5}((\alp_3 - \alp_5)(-1) \otimes e^{\alp_3 - \alp_5}+ (-\alp_3 + \alp_5)(-1)\otimes e^{-\alp_3 + \alp_5}) \\
&&+\frac{1}{5}(\alp_1 + \alp_3 - \alp_5 - \alp_6)(-1) \otimes e^{\alp_1 + \alp_3 - \alp_5 - \alp_6} \\
&&+ \frac{1}{5}(-\alp_1 - \alp_3 + \alp_5 + \alp_6)(-1)\otimes e^{-\alp_1 - \alp_3 + \alp_5 + \alp_6} \\
&=& \frac{1}{5}\left((-\lam_1 + \lam_6)(-1)(-\lam_1 + \lam_6)(-2) + (\lam_3 - \lam_5)(-1)(\lam_3 - \lam_5)(-2)\right) \otimes e^0 \\
&&+ \frac{1}{5}\left((\lam_1 - \lam_3 + \lam_5 - \lam_6)(-1)(\lam_1 - \lam_3 + \lam_5 - \lam_6)(-2)\right) \otimes e^0 \\
&&-\frac{1}{5}((\alp_1 - \alp_6)(-1) \otimes e^{\alp_1 - \alp_6}+ (-\alp_1 + \alp_6)(-1)\otimes e^{-\alp_1 + \alp_6}) \\
&&-\frac{1}{5}((\alp_3 - \alp_5)(-1) \otimes e^{\alp_3 - \alp_5}+ (-\alp_3 + \alp_5)(-1)\otimes e^{-\alp_3 + \alp_5}) \\
&&+\frac{1}{5}(\alp_1 + \alp_3 - \alp_5 - \alp_6)(-1) \otimes e^{\alp_1 + \alp_3 - \alp_5 - \alp_6} \\
&&+ \frac{1}{5}(-\alp_1 - \alp_3 + \alp_5 + \alp_6)(-1)\otimes e^{-\alp_1 - \alp_3 + \alp_5 + \alp_6}.
\end{eqnarray*}

It is now apparent that $\{ R\}_{-8/3} \cdot \tau R $ and $\{ \omg \}_{-2} \cdot \otz$ are linearly independent and therefore $U \neq 0$.  We may rewrite $\{ R\}_{-8/3} \cdot \tau R$ as follows by expressing the exponents of $e$ in terms of of simple roots.  We have
\begin{eqnarray*}
&&\{ R\}_{-8/3} \cdot \tau R \\
&&= \left(\frac{(-\lam_1 + \lam_6)(-1)(-\lam_1 + \lam_6)(-2)}{2} + \frac{(-\lam_1 + \lam_6)(-1)^3}{6}\right) \otimes e^0 \\
&&+ \left(\frac{(\lam_3 - \lam_5)(-1)(\lam_3 - \lam_5)(-2)}{2} + \frac{(\lam_3 - \lam_5)(-1)^3}{6}\right) \otimes e^0 \\
&&+ \left(\frac{(\lam_1 - \lam_3 + \lam_5 - \lam_6)(-1)(\lam_1 - \lam_3 + \lam_5 - \lam_6)(-2)}{2}\right)\otimes e^0 \\
&& + \left(\frac{(\lam_1 - \lam_3 + \lam_5 - \lam_6)(-1)^3}{6}\right) \otimes e^0 \\
&&+ (-\lam_1 + \lam_6)(-1) \otimes e^{-\alp_1 - \alp_3 + \alp_5 + \alp_6} - (-\lam_1 + \lam_6)(-1) \otimes e^{-\alp_1 + \alp_6}\\
&&+ (\lam_3 - \lam_5)(-1) \otimes e^{\alp_1 +\alp_3 - \alp_5 - \alp_6} - (\lam_3 - \lam_5)(-1) \otimes e^{\alp_3 - \alp_5} \\
&&- (\lam_1 - \lam_3 + \lam_5 - \lam_6)(-1) \otimes e^{\alp_1 - \alp_6} - (\lam_1 - \lam_3 + \lam_5 - \lam_6)(-1) \otimes e^{- \alp_3 + \alp_5 }.\\
\end{eqnarray*}
We may finally rewrite the HWV of $Vir\left(\frac{4}{5}, 3\right)\otimes W^{\Omg_0}$ as
\begin{eqnarray*}
U &=& \left\{ R \right\}_{-\frac{8}{3}}\cdot \tau R -\frac{5}{2} \left\{ \omg \right\}_{-2} \cdot \otea{0} \\
&=& \frac16 \left((-\lam_1 + \lam_6)(-1)^3 + (\lam_3 - \lam_5)(-1)^3\right) \otimes e^0 \\
&& + \frac16 (\lam_1 - \lam_3 + \lam_5 - \lam_6)(-1)^3  \otimes e^0 \\
&&+ \frac{1}{2}((\lam_3 - \lam_5)(-1) \otimes e^{\alp_1 - \alp_6} + (\lam_3 - \lam_5)(-1) \otimes e^{-\alp_1 + \alp_6}) \\
&&+ \frac{1}{2}((-\lam_1 + \lam_6)(-1) \otimes e^{\alp_3 - \alp_5} + (-\lam_1 + \lam_6)(-1) \otimes e^{-\alp_3 + \alp_5} ) \\
&&+ \frac{1}{2} (-\lam_1 + \lam_3 - \lam_5 + \lam_6)(-1) \otimes e^{\alp_1 +\alp_3 - \alp_5 - \alp_6}  \\
&&+ \frac{1}{2}(-\lam_1 + \lam_3 - \lam_5 + \lam_6)(-1) \otimes e^{-\alp_1 - \alp_3 + \alp_5 + \alp_6}.
\end{eqnarray*}


\end{proof}

\begin{thm} \label{ch6thm1}The decomposition of $V^{\Lam_0}$ as a direct sum of $Vir \otimes \Ff$-modules is as follows: 
$$V^{\Lam_0} = \left(Vir\left(\frac{4}{5},0\right) \oplus Vir\left(\frac{4}{5},3\right)\right) \otimes W^{\Omg_0} \space \space  \bigoplus \space \space \left(Vir\left(\frac{4}{5},\frac{2}{5}\right) \oplus Vir\left(\frac{4}{5},\frac{7}{5}\right)\right) \otimes W^{\Omg_4},$$
with the highest weight vectors of the summands given by
$$\begin{aligned}
&\otz, U (\mbox{as above}), \otea{\mu} - \otea{\tau\mu}, \mbox{ and } \\
&(-2\alp_1 - \alp_3 + \alp_5 + 2\alp_6)(-1)\otimes e^{\mu} 
 + (2\alp_1 + \alp_3 - \alp_5 - 2\alp_6)(-1)\otimes e^{\tau\mu} \\
 &\hspace{60pt}+ 3 \otimes (e^{\mu + \alp_1 - \alp_6} + e^{\tau\mu - \alp_1 + \alp_6}).
\end{aligned}$$

\end{thm}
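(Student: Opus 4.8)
The plan is to combine the two kinds of results already established in the dissertation: the principally specialized graded-dimension identities from Chapter~3 (Theorems~\ref{thm br1} and \ref{thm br2}) and the explicit highest weight vector (HWV) verifications from the lemmas of this section. The logical structure is a ``lower bound meets upper bound'' argument. First I would recall that Theorem~\ref{thm br1} gives
\begin{eqnarray*}
gr_{princ}\left( V^{\Lam_0}\right) &=& t^{1/10}\left(\chi_{5,6}^{1,1}(t^3) + \chi_{5,6}^{1,5}(t^3)  \right) gr_{princ}\left( W^{\Omg_0}\right) \\
&&+ t^{-1/10}\left(\chi_{5,6}^{2,1}(t^3) + \chi_{5,6}^{2,5}(t^3)  \right) gr_{princ}\left( W^{\Omg_4}\right),
\end{eqnarray*}
and that by the uniqueness argument in Section~\ref{s3.1} the power series $c^0(q)$ and $d^0(q)$ solving (\ref{eq 3.6}) are uniquely determined; hence the branching multiplicities $c^0(n),d^0(n)$ of $V_{\fat}^{\Lam_0}(\Omg_0),V_{\fat}^{\Lam_0}(\Omg_4)$ as $Vir_{\fg-\fa}$-modules are exactly the coefficients of $\chi_{5,6}^{1,1}+\chi_{5,6}^{1,5}$ and $\chi_{5,6}^{2,1}+\chi_{5,6}^{2,5}$ respectively.

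Next I would invoke Theorem~\ref{thm: coset}: the coset operators $L(n)=\{\omg\}_{n+1}$ give a $c=\tfrac45$ unitary Virasoro representation on each $V^{\Lam_0}_{\fat}(\Omg_j)$ commuting with $\fat$, so each such space is a (completely reducible) direct sum of irreducible unitary modules $Vir(\tfrac45,h)$ with $h$ drawn from the list in Section~\ref{s2.3}. Comparing principally specialized characters, a sum of $Vir(\tfrac45,h)$-characters equals $\chi_{5,6}^{1,1}+\chi_{5,6}^{1,5}$ only if the multiset of $h$'s is exactly $\{0,3\}$, and equals $\chi_{5,6}^{2,1}+\chi_{5,6}^{2,5}$ only if it is exactly $\{\tfrac25,\tfrac75\}$ --- here one uses that the leading exponents $q^{h-c/24}$ of the minimal-model characters are distinct and that these characters are linearly independent, so no cancellation or coincidence can occur. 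This shows the decomposition has \emph{at most} the claimed four summands $Vir(\tfrac45,0)\otimes W^{\Omg_0}$, $Vir(\tfrac45,3)\otimes W^{\Omg_0}$, $Vir(\tfrac45,\tfrac25)\otimes W^{\Omg_4}$, $Vir(\tfrac45,\tfrac75)\otimes W^{\Omg_4}$ (and no others can appear, since any extra summand would change the graded dimension).

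Finally I would supply the matching \emph{lower bound}: Lemmas~6.1--6.5 exhibit, inside $V^{\Lam_0}$, explicit nonzero vectors $\otz$, $\otea{\mu}-\otea{\tau\mu}$, $P$, and $U$ satisfying conditions (1)--(5) at the head of this chapter, i.e.\ each is a genuine HWV for $Vir\otimes\fat$ with the respective data $(h,\Omg_j)\in\{(0,\Omg_0),(\tfrac25,\Omg_4),(\tfrac75,\Omg_4),(3,\Omg_0)\}$. Each such HWV generates a submodule isomorphic to $Vir(\tfrac45,h)\otimes W^{\Omg_j}$, and these four submodules are linearly independent (their HWVs lie in distinct weight spaces for $\fbt\oplus\C d$, or are distinguished by the $L(0)$-eigenvalue $h$). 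Summing the principally specialized graded dimensions of these four submodules already equals $gr_{princ}(V^{\Lam_0})$ by Theorem~\ref{thm br1}, so the four submodules exhaust $V^{\Lam_0}$ and the sum is direct. Combining the upper and lower bounds yields the stated decomposition together with the explicit HWV list.

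The step I expect to be the main obstacle is not the character bookkeeping but confirming $U\neq 0$ and that it is genuinely a HWV of the $h=3$ (rather than some lower) module --- this is exactly the delicate computation carried out in Lemma~6.5, where $U=\{R\}_{-8/3}\cdot\tau R-\tfrac52\{\omg\}_{-2}\cdot\otz$ must be shown to have nonzero image (the two summands are verified linearly independent via the explicit Fock-space expansions) and to be annihilated by $\{\otea{-\theta}\}_1$, all $\{\beta_i\}_0$, $\{\omg\}_2$, $\{\omg\}_3$ while being an $L(0)$-eigenvector of eigenvalue $3$. Once Lemma~6.5 is in hand, assembling Theorem~\ref{ch6thm1} is essentially immediate. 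I would also remark, as the dissertation does, that $Vir(\tfrac45,0)\oplus Vir(\tfrac45,3)=\mathcal{W}_3$ is the commutant sub-VOA of $W^{\Omg_0}$ inside $V^{\Lam_0}$, which gives an independent structural reason the two $\Omg_0$-summands pair with exactly those two conformal weights.
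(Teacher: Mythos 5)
Your proposal is correct and follows essentially the same route as the dissertation: the theorem is assembled by combining the uniquely determined branching coefficients of Theorem \ref{thm br1} (which identify $c^0,d^0$ with the sums $\chi_{5,6}^{1,1}+\chi_{5,6}^{1,5}$ and $\chi_{5,6}^{2,1}+\chi_{5,6}^{2,5}$) with the explicit HWV verifications of the preceding lemmas, the graded dimensions of the four generated unitary $Vir\otimes\fat$-submodules then exhausting $gr_{princ}(V^{\Lam_0})$ so that no further summands can occur. You also correctly single out the nonvanishing and $L(0)$-eigenvalue of $U$ as the delicate step, which is exactly where the paper expends its effort.
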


%

\section{The Decomposition of $V^{\Lam_1}$ and $V^{\Lam_6}$}
\label{s6.2}
This section contains detailed calculations used to determine the HWVs in the decomposition of $V^{\Lam_1}$.  These calculations are followed by a brief explanation of the determination for the HWVs in the decomposition of $V^{\Lam_6}$.

\begin{lem} $\otea{\lam_1}$ is the HWV in $Vir\left(\frac{4}{5}, \frac{1}{15}\right) \otimes W^{\Omg_4}$ in $V^{\Lam_1}$. \end{lem}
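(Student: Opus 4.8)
The strategy is the same as in the lemmas of Section~6.1 that verify highest weight vectors: check conditions (1)--(5) from the start of Chapter~6 for the vector $v = \otea{\lam_1} = 1\otimes e^{\lam_1}$, working inside $V^{\Lam_1} = V_{Q+\lam_1}$. Condition (5) is immediate since $Proj(\lam_1) = \omg_4$. The key numerical inputs are the norm $\herf{\lam_1}{\lam_1}$ and the pairings $\herf{\lam_1}{\beta_i}$, $\herf{\lam_1}{-\theta}$, $\herf{\lam_1}{\gam_i}$ where $\gam_1 = \alp_1-\alp_6$, $\gam_2 = \alp_3-\alp_5$, $\gam_3 = \gam_1+\gam_2$; I would record these at the outset from the $E_6$ Cartan data.

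First I would dispatch the two ``positivity'' conditions that follow from the weight formula~(\ref{wtformula}): $\{\otea{-\theta}\}_1\cdot\otea{\lam_1}$ has weight $wt(\otea{-\theta}) + wt(\otea{\lam_1}) - 1 - 1 = 1 + \tfrac{1}{2}\herf{\lam_1}{\lam_1} - 2$, and since $\herf{\lam_1}{\lam_1}$ is small this is negative (or one computes the residue directly as in Lemma~6.2 using that $\herf{-\theta}{\lam_1}$ is not negative enough), so condition~(1) holds; similarly $\{\omg\}_2\cdot\otea{\lam_1}$ and $\{\omg\}_3\cdot\otea{\lam_1}$ vanish because $wt(\{\omg\}_3\cdot\otea{\lam_1})$ and $wt(\{\omg\}_4\cdot\otea{\lam_1})$ are too small, giving the positive-mode part of condition~(3). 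For condition~(2), $\{\beta_i\}_0\cdot\otea{\lam_1}$, I would run the same residue computation as in the proof of Lemma~6.1: for $\alp\in\Delta_{E_6}$, $\{\otea{\alp}\}_0\cdot\otea{\lam_1} = Res\left(\eps(\alp,\lam_1)z^{\herf{\alp}{\lam_1}}E^-(-\alp,z)\cdot\otea{\alp+\lam_1}\right)$, which is zero whenever $\herf{\alp}{\lam_1}\ge 0$; one checks $\herf{\alp_i}{\lam_1}\ge 0$ for all simple roots entering $\beta_1,\beta_2,\beta_3,\beta_4$, so all four actions vanish.

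The substantive part is condition~(3) for $\{\omg\}_1$ and condition~(4), the $L(0)$-eigenvalue. Writing $\omg = A + B$ with $A$ the Heisenberg part and $B$ the exponential part as in Section~6.1, the action of $\{A\}_1$ and $\{A\}_0$ on $\otea{\lam_1}$ reduces to the scalars $\herf{-\lam_1+\lam_6}{\lam_1}$, $\herf{\lam_3-\lam_5}{\lam_1}$, $\herf{\lam_1-\lam_3+\lam_5-\lam_6}{\lam_1}$ exactly as in the computation preceding the eigenvalue $\tfrac{2}{5}$ in Lemma~6.3; $\{A\}_1\cdot\otea{\lam_1}=0$ since it involves a positive Heisenberg mode, and $\{A\}_0\cdot\otea{\lam_1} = \tfrac{1}{10}\left(\herf{-\lam_1+\lam_6}{\lam_1}^2 + \herf{\lam_3-\lam_5}{\lam_1}^2 + \herf{\lam_1-\lam_3+\lam_5-\lam_6}{\lam_1}^2\right)\otea{\lam_1}$. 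For $\{B\}_1$ and $\{B\}_0$ I would compute $\{\otea{\pm\gam_i}\}_m\cdot\otea{\lam_1} = Res\left(\eps(\pm\gam_i,\lam_1)z^{m+1+\herf{\pm\gam_i}{\lam_1}}E^-(\mp\gam_i,z)\cdot\otea{\pm\gam_i+\lam_1}\right)$; these vanish for $m=1$ because the relevant exponents of $z$ are $\ge 0$ (all $\herf{\pm\gam_i}{\lam_1}$ are $\ge -1$), giving $\{\omg\}_1\cdot\otea{\lam_1}=0$ and completing condition~(3), and for $m=0$ the only surviving terms contribute vectors that must assemble back into a multiple of $\otea{\lam_1}$ — the potential difficulty is precisely that the $B$-contributions produce vectors of the form $\gam_i(-1)\otimes e^{\gam_i+\lam_1}$ or $1\otimes e^{\gam_i+\lam_1}$ which do \emph{not} obviously collapse. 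The resolution, as in Lemma~6.2's use of the identity $\alp_3+\tau\mu = \alp_5+\mu$, should be that for $\lam_1$ the pairings $\herf{\gam_i}{\lam_1}$ are never as small as $-2$, so every such off-diagonal term is killed by the residue and $\{B\}_0\cdot\otea{\lam_1}=0$; then condition~(4) gives $h = \tfrac{1}{10}\left(\herf{-\lam_1+\lam_6}{\lam_1}^2 + \herf{\lam_3-\lam_5}{\lam_1}^2 + \herf{\lam_1-\lam_3+\lam_5-\lam_6}{\lam_1}^2\right)$, which I expect to evaluate to $\tfrac{1}{15}$. The main obstacle is thus bookkeeping: correctly computing the handful of $E_6$ inner products $\herf{\lam_1}{\cdot}$ and verifying that none of them is negative enough to let an unwanted exponential term survive in the $\{\omg\}_0$ and $\{\beta_i\}_0$ actions; once those are in hand, every step is a direct residue calculation of the type already carried out in Lemmas~6.1--6.3.
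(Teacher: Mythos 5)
Your proposal is correct and follows essentially the same route as the paper's proof: verify conditions (1)--(5) by residue computations, split $\omg = A+B$ into its Heisenberg and exponential parts, observe that $\herf{\alp}{\lam_1}\ge 0$ for simple roots and $\herf{\pm\gam_i}{\lam_1}\ge -1$ so that all unwanted exponential terms die in the residues, and read off $h=\frac{1}{10}\bigl(\herf{-\lam_1+\lam_6}{\lam_1}^2+\herf{\lam_3-\lam_5}{\lam_1}^2+\herf{\lam_1-\lam_3+\lam_5-\lam_6}{\lam_1}^2\bigr)$ from the $A$ part alone. The only piece you defer is the final arithmetic, which indeed gives $\frac{1}{10}\left(\frac{4}{9}+\frac{1}{9}+\frac{1}{9}\right)=\frac{1}{15}$, confirming your expectation.
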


\begin{proof} 1)$\yvomn{\otea{-\theta}}{1} \cdot \otea{\lam_1} =0. $

Noting that $\herf{-\theta}{\lam_1} = -1$, then we have
\begin{eqnarray*}
&&\yvomn{\otea{-\theta}}{1} \cdot \otea{\lam_1}  \\
&=& Res \left[ z^1 \exp\left( \sum_{k \geq 1}\frac{ -\theta(-k)}{k}z^k\right) \exp\left( \sum_{k \geq 1}\frac{ -\theta(k)}{-k}z^{-k}\right) e_{-\theta}z^{-\theta(0)} \cdot \otea{\lam_1} \right] \\
&=& Res \left[ \eps(-\theta, \lam_1) z^0 \exp\left( \sum_{k \geq 1}\frac{ -\theta(-k)}{k}z^k\right) \cdot 1 \otimes e^{-\theta + \lam_1}\right] \\
&=& 0.
\end{eqnarray*}

2) $\yvomn{\beta}{0} \cdot  \otea{\lam_1}= 0$ for all $\beta \in \Delta_{F_4}$. \\
Notice for all $\alp$, simple roots of $E_6$, $\herf{\alp}{\lam_1} \geq 0$, hence we have 
\begin{eqnarray*}
&&\yvomn{\otea{\alp}}{0} \cdot \otea{\lam_1} \\
 &=& Res \left[ \exp\left( \sum_{k \geq 1}\frac{ \alp(-k)}{k}z^k\right) \exp\left( \sum_{k \geq 1}\frac{ \alp(k)}{-k}z^{-k}\right) e_{\alp}z^{\alp(0)} \cdot \otea{\lam_1}\right] \\
&=& Res \left[ \eps(\alp, \lam_1) z^{\herf{\alp}{\lam_1}}\exp\left( \sum_{k \geq 1}\frac{ \alp(-k)}{k}z^k\right) \cdot 1 \otimes e^{\alp + \lam_1}\right] \\
&=& 0.
\end{eqnarray*}

Since each $\yvomn{\otea{\alp}}{0}$ then each $\yvomn{\beta}{0}$ will also kill.\\

3) The positive Virasoro operators kill $\otea{\lam_1}$. \\
Recall that the conformal vector generating the coset Virasoro is 
\begin{eqnarray*} \omg &=&  \frac{1}{10}\left[ (-\lam_1 + \lam_6)^2(-1) + (\lam_3 - \lam_5)^2(-1) +  (\lam_1 - \lam_3 + \lam_5 - \lam_6)^2(-1)\right]\otimes e^0 \\
&&+ \frac{1}{5}\left[ -1 \otimes e^{\pm\gamma_1} - 1 \otimes e^{\pm\gamma_2} + 1 \otimes e^{\pm\gamma_3} \right], 
\end{eqnarray*}
where $\gamma_1 = \alp_1 - \alp_6$, $\gamma_2 = \alp_3 - \alp_5$, and $\gamma_3 = \gamma_1 + \gamma_2$.

This action will be considered in two parts, first we check $\yvomn{A}{1} \cdot\otea{\lam_1} $.  Since A is a sum of products of Heisenberg operators then it will suffice to check this for each term.  Since the rest follow similarly, we show the first calculation as follows
\begin{eqnarray*}
&&\yvomn{(\lam_1+\lam_6)(-1)^2}{1} \cdot \otea{\lam_1} \\
&&= \sum_{r \in \Z} : (\lam_1+\lam_6)(r)(\lam_1+\lam_6)(1-r): \cdot \otea{\lam_1} \\
&&= \left( \cdots+ 2(\lam_1+\lam_6)(0)(\lam_1+\lam_6)(1) +\cdots \right) \cdot \otea{\lam_1} \\
&&= 0.
\end{eqnarray*}
It follows from carrying out the other 2 calculations that $\yvomn{A}{1} \cdot \otea{\lam_1} = 0$. 

To check part B, consider the following:
\begin{eqnarray*}
&&\yvomn{\otea{\gamma_i}}{1} \cdot \otea{\lam_1} \\
&=& Res \left[z^2 \exp\left( \sum_{k \geq 1}\frac{ \gamma_i(-k)}{k}z^k\right) \exp\left( \sum_{k \geq 1}\frac{ \gamma_i(k)}{-k}z^{-k}\right) e_{\gamma_i}z^{\gamma_i(0)} \cdot \otea{\lam_1}\right] \\
&=& Res \left[ \eps(\gamma_i, \lam_i)z^{2 + \herf{\gamma_i}{ \lam_1}}\exp\left( \sum_{k \geq 1}\frac{ \gamma_i(-k)}{k}z^k\right) \cdot 1 \otimes e^{\gamma_i + \lam_1}\right] \\
&=& 0.
\end{eqnarray*} 

With the last step happening since, for all $i \in \{1,2,3 \}$ we have $\herf{\pm\gamma_i}{ \lam_1} > -1$, hence the residue will be 0.  Since the operators $\yvomn{A}{1}$ and $\yvomn{B}{1}$ kill $\otea{\lam_1}$, then so does $\yvomn{\omg}{1}$.  

The calculation that $\yvomn{A}{2} \cdot \otea{\lam_1}  = 0$ is the same as the one above for $\yvomn{A}{1}$.  The calculation that $\yvomn{B}{2} \cdot \otea{\lam_1}  = 0$ only differs from the above by having a $z^3$ in place of $z^2$, therefore $\yvomn{\omg}{2} \cdot \otea{\lam_1}  = 0$.\\

4)The vector $\otea{\lam_1}$ has eigenvalue $\frac{1}{15}$ for $\yvomn{\omg}{0}$.
The first part of calculating $\yvomn{A}{0}$ is given by the following
\begin{eqnarray*}
&&-\frac{1}{10}\yvomn{(-\lam_1 + \lam_6)(-1)^2}{0} \cdot \otea{\lam_1} \\
&=& -\frac{1}{10}\sum_{r \in \Z} : (-\lam_1 + \lam_6)(r)(-\lam_1 + \lam_6)(-r): \cdot \otea{\lam_1} \\
&=& -\frac{1}{10}\left( \cdots + (-\lam_1 + \lam_6)^2(0) + \cdots \right) \cdot \otea{\lam_1} \\
&=& -\frac{1}{10}\herf{-\lam_1 + \lam_6}{\lam_1}^2 \otimes e^{\lam_1} \\
&=& \frac{1}{15} \otimes e^{\lam_1}.
\end{eqnarray*}
Results for the other two parts are
$$-\frac{1}{10}\yvomn{(\lam_3 - \lam_5)^2(-1)}{0} \cdot \otea{\lam_1} = -\frac{1}{30} \otimes e^{\lam_1},$$
\noindent and
$$\frac{1}{10}\yvomn{(\lam_1- \lam_3 +\lam_5-\lam_6)^2(-1)}{0} \cdot \otea{\lam_1} = \frac{1}{30} \otimes e^{\lam_1}.$$

\noindent Therefore adding these three together, $\yvomn{A}{0} \cdot \otea{\lam_1} = \frac{1}{15}\otea{\lam_1}$.

When calculating the action of $\yvomn{B}{0}$, it is useful to observe that  $\forall i \in \{ 1,2,3\}$, \\$\herf{\pm\gamma_i}{\lam_1} > -1$, therefore
\begin{eqnarray*}
&&\yvomn{\otea{\gamma_i}}{0} \cdot  \otea{\lam_1} \\
&=& Res \left[z \exp\left( \sum_{k \geq 1}\frac{ \gamma_i(-k)}{k}z^k\right) \exp\left( \sum_{k \geq 1}\frac{ \gamma_i(k)}{-k}z^{-k}\right) e_{\gamma_i}z^{\gamma_i(0)} \cdot \otea{\lam_1}\right] \\
&=& Res \left[ z^{1 + \herf{\gamma_i}{\lam_1}} \eps(\gam_i,\lam_1)\exp\left( \sum_{k \geq 1}\frac{ \gamma_i(-k)}{k}z^k\right) \cdot \otea{\gamma_1 + \lam_1}\right] \\
&=& 0.
\end{eqnarray*}

So $\yvomn{\omg}{0} \cdot \otea{\lam_1} = \frac{1}{15}(\otea{\lam_1})$. 
\end{proof}
\begin{lem} $R = \otea{-\lam_1 + \lam_6} + \otea{\lam_3 - \lam_5} - \otea{\lam_1 - \lam_3 + \lam_5 - \lam_6}$ is the HWV for \\$Vir\left(\frac{4}{5}, \frac{2}{3}\right) \otimes W^{\Omg_0}$ in $V^{\Lam_1}$.
 \end{lem}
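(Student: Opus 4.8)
The plan is to verify the five HWV conditions listed at the top of this chapter directly for the vector $R = \otea{-\lam_1 + \lam_6} + \otea{\lam_3 - \lam_5} - \otea{\lam_1 - \lam_3 + \lam_5 - \lam_6}$ inside $V^{\Lam_1} = V_{Q+\lam_1}$, following the same template used in the lemmas for $V^{\Lam_0}$. First I would record the relevant inner products: each of the three exponents $\nu$ appearing in $R$ satisfies $\herf{\nu}{\nu} = \frac{2}{3} + \frac{2}{3} = \frac{4}{3}$ (so $\mathrm{wt}(R) = \frac{2}{3}$), and $Proj(\nu) = \omg_0$ in each case, which immediately takes care of condition 5. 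I would also tabulate $\herf{-\theta}{\nu}$, $\herf{\beta_i}{\nu}$ and $\herf{\gamma_j}{\nu}$ for the three $\nu$'s and the simple roots $\beta_i \in \Delta_{F_4}$ and the roots $\gamma_1 = \alp_1 - \alp_6$, $\gamma_2 = \alp_3 - \alp_5$, $\gamma_3 = \gamma_1 + \gamma_2$ that occur in $\omg = A + B$; these are the only data the residue computations consume.

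For condition 1, $\yvomn{\otea{-\theta}}{1} \cdot R = 0$: since $\mathrm{wt}(R) = \frac23$, the weight formula (\ref{wtformula}) gives $\mathrm{wt}(\{\otea{-\theta}\}_1 \cdot R) = \mathrm{wt}(\otea{-\theta}) + \mathrm{wt}(R) - 1 - 1 = 2 + \frac23 - 2$ — not automatically negative — so I would instead compute the residue term by term. The point is that $\herf{-\theta}{\nu} \ge -1$ for each exponent $\nu$ of $R$ (with equality forcing a nonzero $z^{-1}$ coefficient), and one checks the three nonzero contributions cancel, or that the relevant powers are nonnegative; this is the analog of the $\otea{\mu}-\otea{\tau\mu}$ computation in Lemma after the $\mu,\tau\mu$ table. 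For condition 2, $\yvomn{\beta_i}{0} \cdot R = 0$ for $\beta_i \in \Delta_{F_4}$: I would split $\yvomn{\beta_3}{0} = \yvomn{\otea{\alp_3}}{0} + \yvomn{\otea{\alp_5}}{0}$ and $\yvomn{\beta_4}{0} = \yvomn{\otea{\alp_1}}{0} + \yvomn{\otea{\alp_6}}{0}$, compute each $\yvomn{\otea{\alp}}{0} \cdot \otea{\nu} = \eps(\alp,\nu)\,\mathrm{Res}\big(z^{\herf{\alp}{\nu}} E^-(-\alp,z)\big) \cdot \otea{\alp+\nu}$, and see that only the cases $\herf{\alp}{\nu} = -1$ survive, then check that the three summands of $R$ conspire to cancel in pairs (using the chosen 2-cocycle on $P$ given by the displayed matrix $[\eps(\lam_i,\lam_j)]$, together with coincidences like $\alp_3 + \nu' = \alp_5 + \nu''$ among the exponents). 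For condition 3, $\yvomn{\omg}{1} \cdot R = \yvomn{\omg}{2} \cdot R = 0$: writing $\omg = A + B$, the operators $\yvomn{A}{1}, \yvomn{A}{2}$ kill $R$ because in every monomial of normal-ordered Heisenberg operators at least one mode number is positive or the contracted dot product with an exponent of $R$ vanishes, and $\yvomn{B}{1}, \yvomn{B}{2}$ kill $R$ because for each $\pm\gamma_j$ one has $\herf{\pm\gamma_j}{\nu} \ge -1 > -2$, making the residue of $z^{2+\herf{\gamma_j}{\nu}} E^-(\mp\gamma_j,z)\cdot\otea{\gamma_j+\nu}$ (resp. $z^{3+\cdots}$) vanish.

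The substance of the lemma is condition 4: $\yvomn{\omg}{0} \cdot R = \frac23 R$. For part $A$, only the $r=0$ term of each $\sum_r\! :\!h(r)h(-r)\!:\!$ contributes, giving $\yvomn{A}{0}\cdot\otea{\nu} = \frac{1}{10}\big(\herf{-\lam_1+\lam_6}{\nu}^2 + \herf{\lam_3-\lam_5}{\nu}^2 + \herf{\lam_1-\lam_3+\lam_5-\lam_6}{\nu}^2\big)\otea{\nu}$; I expect each of the three exponents of $R$ to give the same eigenvalue (by the $\tau$/Weyl symmetry permuting them), which I would pin down numerically. For part $B$, I would compute $\yvomn{\otea{\pm\gamma_j}}{0}\cdot\otea{\nu}$ for the finitely many $(\pm\gamma_j,\nu)$ with $\herf{\pm\gamma_j}{\nu} < 0$; the $\herf{\cdot}{\cdot} = -1$ cases produce a vector of the form $\eps\cdot\otea{\gamma_j+\nu}$ where $\gamma_j+\nu$ is again one of the three exponents of $R$ (this is what makes $R$ an eigenvector rather than a generic combination), while the $\herf{\cdot}{\cdot} = -2$ cases — if they occur — produce additional $\alp(-1)\otea{\cdots}$ terms that must cancel. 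Summing the $A$ and $B$ contributions and using the cocycle values should collapse everything to $\frac23 R$, exactly matching $h_{5,6}^{1,3} = \frac23$. \emph{The main obstacle} is bookkeeping: getting the signs $\eps(\gamma_j,\nu)$ consistent with the cocycle matrix on $P$ (not the one on $Q$), and confirming that the three maps $\nu \mapsto \gamma_j + \nu$ induced by $B$ permute the exponents of $R$ with the correct signs so that $B$ acts on $R$ as the scalar completing $\frac23$; once those coincidences are checked the rest is routine residue extraction.
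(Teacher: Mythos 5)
Your plan follows the paper's proof almost exactly: the same five-condition checklist, the same splitting $\omg = A+B$, and the same residue extractions. But the description of the decisive step — the part-$B$ contribution to $\yvomn{\omg}{0}\cdot R$ — has the contributing cases exactly backwards, and taken literally it would fail. Since $wt(\otea{\gamma_j}) = 2$, the operator is $\yvomn{\otea{\gamma_j}}{0} = Res\left(z^{1}Y(\otea{\gamma_j},z)\right)$, so acting on $\otea{\nu}$ one extracts the residue of $z^{1+\herf{\gamma_j}{\nu}}E^-(-\gamma_j,z)\cdot\otea{\gamma_j+\nu}$. Hence the pairs with $\herf{\gamma_j}{\nu} = -1$ give the power $z^{0}$ and contribute \emph{nothing}, while the pairs with $\herf{\gamma_j}{\nu} = -2$ give $z^{-1}$ and contribute $\eps(\gamma_j,\nu)\otea{\gamma_j+\nu}$. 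Such $-2$ pairs do occur — six of them, e.g. $\herf{\alp_1-\alp_6}{-\lam_1+\lam_6} = -2$ — and they are precisely what produces $\yvomn{B}{0}\cdot R = \frac{2}{5}R$; added to $\yvomn{A}{0}\cdot R = \frac{4}{15}R$ this gives $\frac{2}{3}R$. Under your assignment (eigenvector terms from the $-1$ cases, with $-2$ cases producing extra $\alp(-1)\otimes e^{\cdot}$ terms) you would find $\yvomn{B}{0}\cdot R = 0$ and land on the wrong eigenvalue $\frac{4}{15}$. There are in fact no $\alp(-1)\otimes e^{\cdot}$ remainders anywhere in this computation, since a pairing of $-3$ would be needed to produce them.

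Two smaller corrections to the same bookkeeping. First, the inequality $\herf{\pm\gamma_j}{\nu}\ge -1$ you invoke for condition 3 is false (the minimum is $-2$); the conclusion survives only because $2+(-2)=0\ge 0$ still kills the residue for $\yvomn{\omg}{1}$, and likewise $3+(-2)\ge 0$ for $\yvomn{\omg}{2}$. Second, for condition 1 no cancellation among the three terms is needed: one has $\herf{-\theta}{\nu}=0$ exactly for each exponent $\nu$ of $R$ (the $\nu$'s are $\tau$-antisymmetric combinations of fundamental weights while $\theta=\lam_2$ is $\tau$-fixed), so each summand is killed individually by $\yvomn{\otea{-\theta}}{1}$. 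With these sign-of-the-pairing issues repaired, the rest of your outline (the pairwise cancellations in condition 2 via $\alp_3+\nu'=\alp_5+\nu''$ and the cocycle, and the closure of the exponent set of $R$ under $\nu\mapsto\gamma_j+\nu$) matches the paper's argument.
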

\begin{proof}
For this section let $\nu_1 = -\lam_1 + \lam_6, \nu_2 = \lam_3 - \lam_5 , \nu_3 = \lam_1 - \lam_3 + \lam_5 - \lam_6$

1)$\yvomn{\otea{-\theta}}{1} \cdot R =0 $

For any $i \in \{1,2,3\}$, $\herf{-\theta}{\nu_i} = 0$, so 
\begin{eqnarray*}
&&\yvomn{\otea{-\theta}}{1} \cdot \otea{\nu_i}  \\
&=& Res \left[ z^1 \exp\left( \sum_{k \geq 1}\frac{ -\theta(-k)}{k}z^k\right) \exp\left( \sum_{k \geq 1}\frac{ -\theta(k)}{-k}z^{-k}\right) e_{-\theta}z^{-\theta(0)} \cdot \otea{\nu_i} \right] \\
&=& Res \left[ \eps(-\theta, \nu_i) z^1 \exp\left( \sum_{k \geq 1}\frac{ -\theta(-k)}{k}z^k\right) \cdot 1 \otimes e^{-\theta + \nu_i}\right] \\
&=& 0.
\end{eqnarray*}

2) $\yvomn{\beta}{0} \cdot R= 0$ \\
Notice for all simple roots $\alp$ of $E_6$, $\herf{\alp}{\nu_i} \geq -1$ for all $i \in \{1,2,3 \}$, and we have:
\begin{eqnarray*}
&&\yvomn{\otea{\alp}}{0} \cdot \otea{\nu_i} \\
&=& Res \left[ \exp\left( \sum_{k \geq 1}\frac{ \alp(-k)}{k}z^k\right) \exp\left( \sum_{k \geq 1}\frac{ \alp(k)}{-k}z^{-k}\right) e_{\alp}z^{\alp(0)} \cdot \otea{\nu_i}\right] \\
&=& Res \left[ \eps(\alp, \nu_i) z^{\herf{\alp}{\nu_i}}\exp\left( \sum_{k \geq 1}\frac{ \alp(-k)}{k}z^k\right) \cdot 1 \otimes e^{\alp + \nu_i}\right] \\
&=& \left\{ \begin{array}{rcl} 
0 & \mbox{if} & \herf{\alp}{\nu_i} \geq 0 \\ 
\eps(\alp,\nu_i) \otea{\alp+ \nu_i} & \mbox{if} & \herf{\alp}{\nu_i} = -1. 
\end{array}\right.
\end{eqnarray*}

Using this result, we see that
\begin{eqnarray*}
&&\yvomn{\beta_1}{0} \cdot \left( \otea{-\lam_1 + \lam_6} + \otea{\lam_3 - \lam_5} - \otea{\lam_1 - \lam_3 + \lam_5 - \lam_6} \right) \\
&&=  \yvomn{\otea{\alp_2}}{0} \cdot \left( \otea{-\lam_1 + \lam_6} + \otea{\lam_3 - \lam_5} - \otea{\lam_1 - \lam_3 + \lam_5 - \lam_6} \right) \\
&&=0
\end{eqnarray*}
and
\begin{eqnarray*}
&&\yvomn{\beta_2}{0} \cdot \left( \otea{-\lam_1 + \lam_6} + \otea{\lam_3 - \lam_5} - \otea{\lam_1 - \lam_3 + \lam_5 - \lam_6} \right) \\
&&=  \yvomn{\otea{\alp_4}}{0} \cdot \left( \otea{-\lam_1 + \lam_6} + \otea{\lam_3 - \lam_5} - \otea{\lam_1 - \lam_3 + \lam_5 - \lam_6} \right) \\
&&=0.
\end{eqnarray*}
For the last two we have
\begin{eqnarray*}
&&\yvomn{\beta_3}{0} \cdot \left( \otea{-\lam_1 + \lam_6} + \otea{\lam_3 - \lam_5} - \otea{\lam_1 - \lam_3 + \lam_5 - \lam_6} \right) \\
&&=  \left( \yvomn{\otea{\alp_3}}{0} + \yvomn{\otea{\alp_5}}{0}\right) \cdot \left( \otea{-\lam_1 + \lam_6} + \otea{\lam_3 - \lam_5} - \otea{\lam_1 - \lam_3 + \lam_5 - \lam_6} \right) \\
&&= -\eps(\alp_3, \lam_1-\lam_3+\lam_5-\lam_6) \otea{\alp_3 + \lam_1-\lam_3+\lam_5-\lam_6} + \eps(\alp_5, \lam_3-\lam_5) \otea{\alp_5 + \lam_3-\lam_5} \\
&&= -\eps(\alp_3, -\lam_3+\lam_5)\eps(\alp_3, \lam_1-\lam_6) \otea{\alp_5 +\lam_3-\lam_5} + \eps(\alp_5, \lam_3-\lam_5) \otea{\alp_5 + \lam_3-\lam_5}\\
&&= 0
\end{eqnarray*}
and
\begin{eqnarray*}
&&\yvomn{\beta_4}{0} \cdot \left( \otea{-\lam_1 + \lam_6} + \otea{\lam_3 - \lam_5} - \otea{\lam_1 - \lam_3 + \lam_5 - \lam_6} \right) \\
&&=  \left( \yvomn{\otea{\alp_1}}{0} + \yvomn{\otea{\alp_6}}{0}\right) \cdot \left( \otea{-\lam_1 + \lam_6} + \otea{\lam_3 - \lam_5} - \otea{\lam_1 - \lam_3 + \lam_5 - \lam_6} \right) \\
&&= \eps(\alp_1, -\lam_1+\lam_6) \otea{\alp_1 - \lam_1+\lam_6} - \eps(\alp_6, \lam_1- \lam_3+\lam_5-\lam_6) \otea{\alp_5 + \lam_3-\lam_5} \\
&&= \eps(\alp_1, -\lam_1+\lam_6) \otea{\alp_1 -\lam_1+\lam_6} + -\eps(\alp_6, -\lam_3+\lam_5)\eps(\alp_6, \lam_1-\lam_6) \otea{\alp_1 - \lam_1+\lam_6}\\
&&= 0.
\end{eqnarray*}
Hence each $\yvomn{\beta_i}{0}$ kills $R$.\\

3)$\yvomn{\omg}{1}\cdot R = 0$ 

For $i \in \{ 1,2,3\}$, we first check $\yvomn{A}{1}\cdot \otea{\nu_i}$.  As usual we show one calculation, the rest being similar:
\begin{eqnarray*}
&&\yvomn{(-\lam_1+\lam_6)(-1)^2}{1} \cdot  \otea{\nu_i} \\
&=& \sum_{r \in \Z} : (-\lam_1+\lam_6)(r)(-\lam_1+\lam_6)(1-r): \cdot \otea{\nu_i} \\
&=& \left( \cdots + 2(-\lam_1+\lam_6)(0)(-\lam_1+\lam_6)(1) +\cdots \right) \otea{\nu_i} \\
&=& 0.
\end{eqnarray*}
Hence each component of A will kill each $\otea{\nu_i}$, and therefore kill the sum of these vectors. 

To check part B, for all $i,j \in \{1,2,3 \}$, consider the following
\begin{eqnarray*}
&&\yvomn{\otea{\gamma_i}}{1} \cdot \otea{\nu_j} \\
&=& Res \left[z^2 \exp\left( \sum_{k \geq 1}\frac{ \gamma_i(-k)}{k}z^k\right) \exp\left( \sum_{k \geq 1}\frac{ \gamma_i(k)}{-k}z^{-k}\right) e_{\gamma_i}z^{\gamma_i(0)} \cdot \otea{\nu_j}\right] \\
&=& Res \left[ \eps(\gamma_i, \nu_j)z^{2 + \herf{\gamma_i}{ \nu_j}}\exp\left( \sum_{k \geq 1}\frac{ \gamma_i(-k)}{k}z^k\right) \cdot 1 \otimes e^{\gamma_i + \nu_j}\right] \\
&=& 0
\end{eqnarray*} 
because $\herf{\pm\gamma_i}{ \nu_j} \geq -2$.  So, all the operators from components of $B$ will kill $R$, and therefore $\yvomn{\omg}{1}\cdot R = 0$.

Since each term of $\yvomn{A}{2}$ contains at least one Heisenberg annihilation operator, \\$\yvomn{A}{2} \cdot R = 0$. Since the $z^2$ in the above calculation is replaced by $z^3$ for the operator $\yvomn{B}{2}$, the residue will also be zero.  Hence $\yvomn{\omg}{2} \cdot R =0$.\\ 

4)$\yvomn{\omg}{0} \cdot R = \frac23 R$ \\
For the action of part A on the vector, all calculations are similar to the following:
\begin{eqnarray*}
&&\yvomn{(-\lam_1 + \lam_6)(-1)^2}{0}\cdot \otea{\nu_i} \\
&=& \sum_{r \in \Z} : (-\lam_1+\lam_6)(r)(-\lam_1+\lam_6)(-r): \cdot \otea{\nu_i} \\
&=& \left( \cdots + (-\lam_1+\lam_6)(0)^2 +\cdots \right) \otea{\nu_i} \\
&=& \herf{-\lam_1+\lam_6}{\nu_i}^2 \otea{\nu_i}.
\end{eqnarray*}
Hence calculating the action on the vector is as follows:

$\yvomn{A}{0} \cdot \otea{-\lam_1 + \lam_6}= \left( \frac{4}{15}\right) \otea{-\lam_1 + \lam_6}$ 

$\yvomn{A}{0}\cdot \otea{\lam_3 - \lam_5}= \left( \frac{4}{15}\right) \otea{\lam_3 - \lam_5}$ 

$\yvomn{A}{0}\cdot (-\otea{\lam_1 - \lam_3 + \lam_5 - \lam_6})= \left( \frac{4}{15}\right)\left(-\otea{\lam_1 - \lam_3 + \lam_5 - \lam_6} \right). $
So $\yvomn{A}{0}\cdot R = \frac{4}{15}R$.

Before proceeding, a few useful identities are recorded below:\\
$\begin{aligned}
-\lam_1 + \lam_6 + \alp_1 - \alp_6 &= \lam_1 - \lam_3 + \lam_5 - \lam_6 \\
\lam_3 - \lam_5 - \alp_3 + \alp_5 &= \lam_1 - \lam_3 + \lam_5 - \lam_6 \\
-\lam_1 + \lam_6 + \alp_1 + \alp_3 - \alp_5 - \alp_6 &= \lam_3 - \lam_5 \\
\lam_1 - \lam_3 + \lam_5 - \lam_6 - \alp_1 + \alp_6 &= -\lam_1 + \lam_6 \\
\lam_3 - \lam_5 - \alp_1 - \alp_3 + \alp_5 + \alp_6 &= -\lam_1 + \lam_6.
\end{aligned}$

Now, we give the action of part B on $R$.  Consider the following calculation 
\begin{eqnarray*}
&&\yvomn{\otea{\gamma_i}}{0} \cdot  \otea{\nu_j} \\
&=& Res \left[z \exp\left( \sum_{k \geq 1}\frac{ \gamma_i(-k)}{k}z^k\right) \exp\left( \sum_{k \geq 1}\frac{ \gamma_i(k)}{-k}z^{-k}\right) e_{\gamma_i}z^{\gamma_i(0)} \cdot \otea{\nu_j}\right] \\
&=& Res \left[ \eps(\gamma_i,\nu_j)z^{1 + \herf{\gamma_i}{ \nu_j}}\exp\left( \sum_{k \geq 1}\frac{ \gamma_i(-k)}{k}z^k\right) \cdot \otea{\gamma_i + \nu_j}\right] \\
&=&\left\{ \begin{array}{rcl} 
0 & \mbox{if} & \herf{\alp}{\nu_i} \geq -1 \\ 
\eps(\gamma_i,\nu_j) \otea{\gamma_i + \nu_j} & \mbox{if} & \herf{\alp}{\nu_i} = -2. 
\end{array}\right.
\end{eqnarray*}

The non-zero contributions of $\yvomn{B}{0} \cdot R$, are given by:
for $\gamma_1$ or $-\gamma_1$
\begin{eqnarray*}
&&\herf{\alp_1 - \alp_6}{ -\lam_1 + \lam_6} = -2 \\
&&\herf{-\alp_1 + \alp_6}{ \lam_1 -\lam_3 + \lam_5 - \lam_6} = -2
\end{eqnarray*}
for $\gamma_2$ or $-\gamma_2$
\begin{eqnarray*}
&&\herf{\alp_3 - \alp_5}{ \lam_1 - \lam_3 + \lam_5 - \lam_6} = -2 \\
&&\herf{-\alp_3 + \alp_5}{ \lam_3 - \lam_5} = -2
\end{eqnarray*}
for $\gamma_3$ or $-\gamma_3$
\begin{eqnarray*}
&&\herf{\alp_1 + \alp_3 - \alp_5 - \alp_6}{ -\lam_1 + \lam_6} = -2 \\
&&\herf{-\alp_1 - \alp_3 + \alp_5 + \alp_6}{ \lam_3 - \lam_5} = -2.
\end{eqnarray*}

Hence we have the following results 
\begin{eqnarray*}
&&\yvomn{-\frac{1}{5}(\otea{\gamma_1} + \otea{-\gamma_1})}{0} \cdot R \\
&&=-\eps(\alp_1 - \alp_6, -\lam_1 + \lam_6) \frac{1}{5} \otimes e^{\alp_1 - \alp_6 - \lam_1 + \lam_6} \\
&&\,\,\,\, + \eps(-\alp_1 + \alp_6, \lam_1 - \lam_3 + \lam_5 - \lam_6) \frac{1}{5} \otimes e^{-\alp_1 + \alp_6 + \lam_1 - \lam_3 + \lam_5 - \lam_6} \\
&&=-\frac{1}{5}  \otimes e^{\lam_1 - \lam_3 + \lam_5 - \lam_6} + \frac{1}{5}  \otimes e^{-\lam_1 + \lam_6},\\
\end{eqnarray*}
and
\begin{eqnarray*}
&&\yvomn{-\frac{1}{5}(\otea{\gamma_2} + \otea{-\gamma_2})}{0} \cdot R \\
&&=\eps(\alp_3 - \alp_5, \lam_1 - \lam_3 + \lam_5 - \lam_6) \frac{1}{5} \otimes e^{\alp_3 - \alp_5 + \lam_1 - \lam_3 + \lam_5 - \lam_6} \\
&&\,\,\,\, - \eps(-\alp_3 + \alp_5, \lam_3 - \lam_5) \frac{1}{5}\otimes e^{-\alp_3 + \alp_5 + \lam_3 - \lam_5} \\
&&= \frac{1}{5}\otimes e^{\lam_3 - \lam_5} -\frac{1}{5} \otimes e^{\lam_1 - \lam_3 + \lam_5 - \lam_6}.
\end{eqnarray*}
Lastly we have
\begin{eqnarray*}
&&\yvomn{\frac{1}{5}(\otea{\gamma_3} + \otea{-\gamma_3})}{0} \cdot R\\
&&=\eps(\alp_1 + \alp_3 - \alp_5 - \alp_6, -\lam_1 + \lam_6) \frac{1}{5} \otimes e^{\alp_1 + \alp_3 - \alp_5 - \alp_6 - \lam_1 + \lam_6} \\
&&\,\,\,\, + \eps(-\alp_1 - \alp_3 + \alp_5 + \alp_6, \lam_3 - \lam_5) \frac{1}{5} \otimes e^{-\alp_1 - \alp_3 + \alp_5 + \alp_6 + \lam_3 - \lam_5} \\
&&=\frac{1}{5} \otimes e^{\lam_3 - \lam_5}+ \frac{1}{5} \otimes e^{-\lam_1 + \lam_6}.
\end{eqnarray*}

Now putting all of these calculations together, we have
$$\yvomn{B}{0} \cdot R = \frac{2}{5}R$$.

So $\yvomn{\omg}{0} \cdot R = \frac{4}{15}R + \frac25 R = \frac{2}{3}R$.
\end{proof}

To determine the HWVs in the decomposition of $V^{\Lam_6}$, the automorphism $\hat{\tau}$ is used.  First, since $\tau$ exchanges $\lam_1$ to $\lam_6$, $V^{\Lam_6}$ has the same decomposition as $V^{\Lam_1}$.  Therefore we have
$$ V^{\Lam_6} = \mbox{Vir}\left(\frac{4}{5}, \frac{2}{3}\right) \otimes W^{\Omg_0} \bigoplus \mbox{Vir}\left(\frac{4}{5}, \frac{1}{15}\right) \otimes W^{\Omg_4}. $$

We can obtain the two highest weight vectors for these two summands using the invariance of $\omg$ under $\hat{\tau}$, and make use of $\hat{\tau}$ as a VOA automorphism. First, observe that since $\otea{\lam_1}$ is a HWV for Vir$\left(\frac{4}{5}, \frac{1}{15}\right)\otimes W^{\Omg_4}$ in $V^{\Lam_1}$ and $\tau(\lam_1) = \lam_6$, we have $\hat{\tau}(\otea{\lam_1}) = \otea{\lam_6}$. 

 \begin{lem}Let $v \in V_P$ be a vector such that $\hat{\tau} v = v$.  If $\yvomn{v}{n} \cdot w = kw$, then $\yvomn{v}{n} \cdot \hat{\tau} w = k \hat{\tau} w$.
\end{lem}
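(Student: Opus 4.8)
The plan is to deduce this immediately from the fact, recorded at the end of Section 4.2 following \cite{DL}, that $\hat\tau$ is a vertex operator algebra automorphism of $V_P$, i.e. $\hat\tau\,\yvoz{u}\,\hat\tau = \yvoz{\hat\tau(u)}$ for all $u \in V_P$, together with the observation that $\hat\tau$ is an involution: since $\tau^2 = 1$ on the weight lattice $P$, the extension $\hat\tau$ defined on $V_P$ satisfies $\hat\tau^2 = I_{V_P}$, so $\hat\tau^{-1} = \hat\tau$. No computation with the cocycle $\eps$ or with $\omg$ is needed.

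First I would specialize the automorphism identity to the given $v$. Because $\hat\tau v = v$ by hypothesis, $\hat\tau\,\yvoz{v}\,\hat\tau = \yvoz{\hat\tau(v)} = \yvoz{v}$ as an equality in $(\mathrm{End}\,V_P)[[z,z^{-1}]]$. Comparing the coefficients of each power of $z$ on the two sides yields $\hat\tau\,\{v\}_m\,\hat\tau = \{v\}_m$ for every $m$, and hence $\hat\tau\,\yvomn{v}{m}\,\hat\tau = \yvomn{v}{m}$ for every $m$, since the two component normalizations differ only by the fixed shift $m \mapsto m - \tfrac12\herf{\alp}{\alp} + 1$ recorded in Chapter 4. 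Multiplying on the right by $\hat\tau$ and using $\hat\tau^2 = I_{V_P}$ gives $\yvomn{v}{n}\,\hat\tau = \hat\tau\,\yvomn{v}{n}$; that is, the operator $\yvomn{v}{n}$ commutes with $\hat\tau$.

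Then I would apply this to $w$: using the commutation just established, the hypothesis $\yvomn{v}{n}\cdot w = kw$, and the linearity of $\hat\tau$,
\[
\yvomn{v}{n}\cdot \hat\tau w = \hat\tau\bigl(\yvomn{v}{n}\cdot w\bigr) = \hat\tau(kw) = k\,\hat\tau w,
\]
which is the assertion. The only point requiring any care — the nearest thing to an obstacle — is confirming that the cited automorphism property is being invoked in the correct generality, namely on all of $V_P$ (not merely on the sub-VOA $V_Q$) and for an arbitrary homogeneous $v$, and that extracting coefficients of $z$ from both sides of $\hat\tau\,\yvoz{v}\,\hat\tau = \yvoz{v}$ is legitimate; both are immediate from the definition of $\yvoz{\cdot}$ on $V_P$ together with the $\hat\tau$-equivariance of the operators $\iyvoz{\cdot}{i}{\mu}$ discussed in Section 4.2.
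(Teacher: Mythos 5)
Your argument is correct and is essentially the paper's own proof: the paper likewise writes $\yvomn{v}{n}\cdot\hat\tau w = \yvomn{\hat\tau v}{n}\cdot\hat\tau w = \hat\tau(\yvomn{v}{n}\cdot w) = k\hat\tau w$, which is exactly your commutation of $\yvomn{v}{n}$ with $\hat\tau$ derived from the automorphism identity and $\hat\tau v = v$. Your extra remarks about $\hat\tau^2 = I$ and coefficient extraction only make explicit what the paper leaves implicit.
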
 
\begin{proof}We have $\yvomn{v}{n} \cdot\hat{\tau} w= \yvomn{\hat{\tau} v}{n}\cdot \hat{\tau}w = \hat{\tau}( \yvomn{ v}{n}\cdot w) = \hat{\tau}(kw) = k\hat{\tau} w. $
\end{proof}

\begin{lem}$\otea{\lam_6}$ is a HWV for Vir$\left(\frac{4}{5}, \frac{1}{15}\right)\otimes W^{\Omg_4}$ in $V^{\Lam_6}$.
\end{lem}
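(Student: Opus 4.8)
The plan is to exploit the vertex operator algebra automorphism $\hat{\tau}$ together with the two facts just established: that $\hat{\tau}(\otea{\lam_1}) = \otea{\lam_6}$ (since $\tau$ exchanges the fundamental weights $\lam_1$ and $\lam_6$, and $\hat{\tau}$ acts on $V_P$ by $\hat{\tau}(\otea{\lam}) = \otea{\tau\lam}$), and that $\otea{\lam_1}$ has already been verified to be the HWV in $Vir\left(\frac{4}{5},\frac{1}{15}\right)\otimes W^{\Omg_4}$ inside $V^{\Lam_1}$. The essential point is that $\hat{\tau}$ carries $V^{\Lam_1} = V_{P_1}$ isomorphically onto $V^{\Lam_6} = V_{P_2}$ as a vector space, and conjugation by $\hat{\tau}$ transports the relevant operators correctly because $\hat{\tau}$ fixes $\omg$ and fixes the $\fat$-generators.

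First I would record that $\omg$ is $\hat{\tau}$-invariant: inspecting the explicit form of $\omg$ in Section 5.3, the Heisenberg part $\frac{1}{10}\left[(-\lam_1+\lam_6)(-1)^2 + (\lam_3-\lam_5)(-1)^2 + (\lam_1-\lam_3+\lam_5-\lam_6)(-1)^2\right]\otimes e^0$ is manifestly fixed since each quadratic form is $\tau$-symmetric, and the exponential terms are permuted among themselves by $\tau$ (which sends $\alp_1-\alp_6 \mapsto -(\alp_1-\alp_6)$, $\alp_3-\alp_5\mapsto-(\alp_3-\alp_5)$, etc.), preserving the $\pm$ pairing and the signs. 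Likewise the generators $\yvomn{\otea{-\theta}}{1}$ and $\yvomn{\beta_i}{0}$ for $\beta_i\in\Delta_{F_4}$ are built from vectors fixed by $\hat{\tau}$ (the $F_4$ root vectors are $\tau$-symmetrized combinations, as noted in Section 5.2). So I would invoke the preceding lemma (the one immediately above stating that if $\hat{\tau}v = v$ and $\yvomn{v}{n}\cdot w = kw$ then $\yvomn{v}{n}\cdot\hat{\tau}w = k\hat{\tau}w$) in two modes: for eigenvalue conditions it applies directly, and for annihilation conditions ($kw = 0$) it gives $\yvomn{v}{n}\cdot\hat{\tau}w = 0$.

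The key steps, in order: (1) verify $\hat{\tau}\omg = \omg$ and $\hat{\tau}$-invariance of $\yvomn{\otea{-\theta}}{1}$ and each $\yvomn{\beta_i}{0}$; (2) apply the transport lemma to the five defining HWV conditions for $\otea{\lam_1}$ in $V^{\Lam_1}$, concluding that $\otea{\lam_6} = \hat{\tau}(\otea{\lam_1})$ satisfies $\yvomn{\otea{-\theta}}{1}\cdot\otea{\lam_6} = 0$, $\yvomn{\beta_i}{0}\cdot\otea{\lam_6} = 0$, $\yvomn{\omg}{1}\cdot\otea{\lam_6} = \yvomn{\omg}{2}\cdot\otea{\lam_6} = 0$, and $\yvomn{\omg}{0}\cdot\otea{\lam_6} = \frac{1}{15}\otea{\lam_6}$; (3) check condition 5, namely $\yvomn{\beta_i(-1)\otz}{0}\cdot\otea{\lam_6} = \herf{\omg_4}{\beta_i}\otea{\lam_6}$, which reduces to $Proj(\lam_6) = \omg_4$ — true by the projection table in Chapter 1; and (4) note $\otea{\lam_6}\in V_{P_2} = V^{\Lam_6}$. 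Since the earlier existence/uniqueness argument in Section 3.1 (invertibility of the relevant power-series determinant) together with the branching Theorem \ref{thm br2} already pins down that $V^{\Lam_6}$ decomposes as $Vir\left(\frac{4}{5},\frac{2}{3}\right)\otimes W^{\Omg_0}\oplus Vir\left(\frac{4}{5},\frac{1}{15}\right)\otimes W^{\Omg_4}$, finding a vector with the correct $\fat$-weight $\omg_4$ and coset-Virasoro weight $\frac{1}{15}$ that is annihilated by all raising operators identifies the summand uniquely, so $\otea{\lam_6}$ is the desired HWV.

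The main obstacle — really the only nontrivial point — is step (1): one must be genuinely sure that $\hat{\tau}$ fixes $\omg$ and the chosen $F_4$-generators, since the transport lemma is worthless otherwise. But this is routine given the explicit formulas already in hand, and the invariance of $\omg$ was in fact announced in the introduction ("$\tilde{\tau}$ determines an automorphism … which fixes $V^{\Lam_0}$") and is implicit in the GKO construction via the $\tau$-invariant 2-cocycle choice. No long computation is needed; the argument is a three-line application of the lemma once invariance is recorded, mirroring the treatment of $R$ and $\tau R$ elsewhere in this section. Thus the proof is essentially: apply the transport lemma to each of the five conditions, then cite the uniqueness of the decomposition.
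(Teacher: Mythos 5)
Your proposal is correct and is essentially the paper's own argument: the paper proves this lemma by citing the transport lemma (Lemma 6.15) together with the verification already done for $\otea{\lam_1}$ in $V^{\Lam_1}$, exactly as you do. Your write-up simply makes explicit the $\hat{\tau}$-invariance checks that the paper leaves implicit.
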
 
\begin{proof}The proof follows from Lemma 6.15 and the calculation in $V^{\Lam_1}.$
\end{proof}
\begin{lem}$\tau R = \otea{\lam_1 - \lam_6} + \otea{-\lam_3 + \lam_5} - \otea{-\lam_1 + \lam_3 - \lam_5 + \lam_6}$ is a HWV for \\Vir$\left(\frac{4}{5}, \frac{2}{3}\right) \otimes W^{\Omg_4}$ in $V^{\Lam_6}$.
\end{lem}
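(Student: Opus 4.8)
The plan is to deduce this lemma from the already-proven fact (Lemma 6.12) that $R = 1\otimes e^{-\lam_1+\lam_6} + 1\otimes e^{\lam_3-\lam_5} - 1\otimes e^{\lam_1-\lam_3+\lam_5-\lam_6}$ is a HWV for $Vir\left(\frac45,\frac23\right)\otimes W^{\Omg_0}$ inside $V^{\Lam_1}$, together with the VOA automorphism $\hat\tau$ and Lemma 6.15, in exact analogy with the proof of Lemma 6.17. First I would observe that $\hat\tau$, acting on the group-algebra basis vectors by $\hat\tau(1\otimes e^{\lam}) = 1\otimes e^{\tau\lam}$, sends $R$ to
\begin{eqnarray*}
\hat\tau(R) &=& 1\otimes e^{\tau(-\lam_1+\lam_6)} + 1\otimes e^{\tau(\lam_3-\lam_5)} - 1\otimes e^{\tau(\lam_1-\lam_3+\lam_5-\lam_6)} \\
&=& 1\otimes e^{\lam_1-\lam_6} + 1\otimes e^{-\lam_3+\lam_5} - 1\otimes e^{-\lam_1+\lam_3-\lam_5+\lam_6} = \tau R,
\end{eqnarray*}
using that $\tau$ interchanges $\lam_1\leftrightarrow\lam_6$ and $\lam_3\leftrightarrow\lam_5$ and fixes $\lam_2,\lam_4$. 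Likewise $\hat\tau$ carries $V^{\Lam_1} = V_{P_1}$ to $V^{\Lam_6} = V_{P_2}$, since $\tau$ permutes the cosets $P_1\leftrightarrow P_2$.

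Next I would invoke the structural facts established earlier: $\hat\tau$ is a VOA automorphism of $V_P$ (so $\hat\tau\yvoz{v}\hat\tau = \yvoz{\hat\tau v}$, hence $\hat\tau\{v\}_n\hat\tau = \{\hat\tau v\}_n$ on component operators), and the coset conformal vector $\omg$ is $\hat\tau$-invariant because $\tau$ fixes the $F_4$-data used to build $\omg_{F_4}$ and $\omg_{E_6}$ — this invariance is noted in Chapter 5 and again before Lemma 6.15. Similarly, the operators $\{\otea{-\theta}\}_1$ and $\{\beta_i\}_0$ for $\beta_i\in\Delta_{F_4}$ are built from $\hat\tau$-fixed root data, so conjugating by $\hat\tau$ permutes them appropriately (in fact fixes the relevant ones, since $\theta$ and each $\beta_i$ is $\tau$-stable). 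Then, applying Lemma 6.15 with $v=\omg$: since $\{\omg\}_1\cdot R = \frac23 R$ in $V^{\Lam_1}$, we get $\{\omg\}_1\cdot\tau R = \{\omg\}_1\cdot\hat\tau R = \frac23\hat\tau R = \frac23\tau R$ in $V^{\Lam_6}$, giving the required $L(0)$-eigenvalue $h=\frac23$.

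To finish, I would transport each of the four HWV conditions (numbered 1--5 at the start of Chapter 6) across $\hat\tau$ in turn. Condition 1: from $\{\otea{-\theta}\}_1\cdot R = 0$ we get $\{\otea{-\theta}\}_1\cdot\tau R = \hat\tau(\{\hat\tau^{-1}\otea{-\theta}\}_1\cdot R) = \hat\tau(\{\otea{-\theta}\}_1\cdot R) = 0$ using $\hat\tau\otea{-\theta} = \otea{-\theta}$ since $\tau\theta = \theta$. Condition 2: for each $\beta_i\in\Delta_{F_4}$ one has $\hat\tau\{\beta_i\}_0\hat\tau = \{\beta_i\}_0$ (the operators $\{\beta_i\}_0$ are $\tau$-invariant by construction of the $F_4$ generators in Section 1.1), so $\{\beta_i\}_0\cdot\tau R = \hat\tau(\{\beta_i\}_0\cdot R) = 0$. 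Conditions 3 and 4 follow from Lemma 6.15 applied to $\{\omg\}_2$ and $\{\omg\}_0$ exactly as above, using the vanishing/eigenvalue statements already verified for $R$ in $V^{\Lam_1}$. Condition 5 is the weight check $Proj(\nu) = \Omg_0$ for each exponent $\nu$ of $\tau R$: since $Proj\circ\tau = Proj$ on $\fh^*$ (because $\tau$ fixes the projection, $\lam_1,\lam_6\mapsto\omg_4$ etc.), and the exponents of $R$ already satisfy $Proj = \Omg_0$, so do those of $\tau R$. I expect no real obstacle here — the only point requiring a little care is confirming that the generating operators $\{\otea{-\theta}\}_1$, $\{\beta_i\}_0$, and $\{\omg\}_n$ really are $\hat\tau$-invariant so that Lemma 6.15 (or its evident extension to the vanishing conditions) applies verbatim; all of these invariances are consequences of facts already recorded in Chapters 1 and 5, so the proof reduces to citing Lemma 6.12, Lemma 6.15, the $\hat\tau$-invariance of $\omg$, and the equalities $\hat\tau R = \tau R$, $\hat\tau(V^{\Lam_1}) = V^{\Lam_6}$.
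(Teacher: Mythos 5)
Your proposal is correct and takes essentially the same approach as the paper, whose entire proof is the one-line citation of Lemma 6.15 together with the calculation already carried out for $R$ in $V^{\Lambda_1}$; you have simply made explicit the $\hat\tau$-invariance checks ($\hat\tau R=\tau R$, $\hat\tau(V^{\Lambda_1})=V^{\Lambda_6}$, and the $\tau$-stability of $\theta$, the $\beta_i$, and $\omega$) that the paper leaves implicit. (Note also that the tensor factor in the lemma as stated should be $W^{\Omega_0}$ rather than $W^{\Omega_4}$, exactly as your condition-5 check correctly assumes.)
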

\begin{proof} The proof follows from Lemma 6.15 and the calculation in $V^{\Lam_1}.$
\end{proof}

\begin{thm} \label{ch6thm2}We have the following decompositions as direct sums of $Vir \otimes \Ff$-modules: 
\begin{eqnarray*}
V^{\Lam_1} &=& Vir\left(\frac{4}{5},\frac{2}{5}\right) \otimes W^{\Omg_0} \bigoplus Vir\left(\frac{4}{5},\frac{1}{15}\right)  \otimes W^{\Omg_4}, \\
V^{\Lam_6} &=& Vir\left(\frac{4}{5},\frac{2}{5}\right) \otimes W^{\Omg_0} \bigoplus Vir\left(\frac{4}{5},\frac{1}{15}\right)  \otimes W^{\Omg_4}
\end{eqnarray*}
where the highest weight vectors of the summands for $V^{\Lam_1}$ and $V^{\Lam_6}$ are, respectively, 
$$\begin{aligned}
&\otea{\lam_1} \mbox{ and } \otea{-\lam_1 + \lam_6} + \otea{\lam_3 - \lam_5} - \otea{\lam_1 - \lam_3 + \lam_5 - \lam_6},\\
&\mbox{ and } \\ 
&\otea{\lam_6} \mbox{ and } \otea{\lam_1 - \lam_6} + \otea{-\lam_3 + \lam_5} - \otea{-\lam_1 + \lam_3 - \lam_5 + \lam_6}.
\end{aligned}$$

\end{thm}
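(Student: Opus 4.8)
The plan is to deduce the two decompositions from material already in hand: the graded‑dimension identity of Theorem \ref{thm br2}, the two explicit highest‑weight‑vector computations in the lemmas immediately preceding the statement, and the vertex operator algebra automorphism $\hat{\tau}$ to pass from $V^{\Lam_1}$ to $V^{\Lam_6}$. First I would recall from Section \ref{s3.1} that, as a $\fat$‑module, $V^{\Lam_1}=V^{\Lam_1}_{\fat}(\Omg_0)\otimes_{\fah}W^{\Omg_0}\oplus V^{\Lam_1}_{\fat}(\Omg_4)\otimes_{\fah}W^{\Omg_4}$, where the two multiplicity spaces are the spaces of $\fat$‑highest weight vectors of the indicated weights, and that by Theorem \ref{thm: coset} each of them is a module for the coset Virasoro generated by $\omg$ at central charge $\tfrac45$. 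Their graded dimensions are the series $c^1(q)$ and $d^1(q)$, and Theorem \ref{thm br2} identifies these exactly as the minimal‑model characters $\chi_{5,6}^{1,3}$ and $\chi_{5,6}^{2,3}$. Thus the graded dimension of $V^{\Lam_1}_{\fat}(\Omg_0)$ equals the character of the irreducible $c=\tfrac45$ Virasoro module whose character is $\chi_{5,6}^{1,3}$, and likewise $V^{\Lam_1}_{\fat}(\Omg_4)$ matches $\chi_{5,6}^{2,3}$.

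Next I would upgrade this numerical coincidence to a module isomorphism. Each multiplicity space carries the restriction of the contravariant form on $V_P$ and is a positive‑energy $c=\tfrac45$ module; since $\tfrac45$ lies in the unitary discrete series it is completely reducible into irreducibles $Vir(\tfrac45,h)$. Because the minimal‑model characters at $c=\tfrac45$ are linearly independent, a completely reducible module whose character equals a single $\chi_{5,6}^{m,n}$ must be that one irreducible module. Hence $V^{\Lam_1}_{\fat}(\Omg_0)$ and $V^{\Lam_1}_{\fat}(\Omg_4)$ are each a single irreducible coset‑Virasoro module, with highest weights equal to the $\yvomn{\omg}{0}$‑eigenvalues computed in the two preceding lemmas. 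Those lemmas also furnish the generating vectors explicitly: $R$, whose $\fat$‑weight is $\Omg_0$, sits at the bottom of the $\Omg_0$‑component, and $\otea{\lam_1}$, whose $\fat$‑weight is $\Omg_4$, sits at the bottom of the $\Omg_4$‑component. The pairing of each vector with its summand is therefore forced by its (computable) $\fat$‑weight, and the exact match of characters shows no further summands can appear. Substituting the two identifications back into the $\fat$‑decomposition yields the asserted direct sum for $V^{\Lam_1}$.

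For $V^{\Lam_6}$ I would apply the automorphism $\hat{\tau}$. Since $\tau\lam_1=\lam_6$, $\hat{\tau}$ interchanges the cosets $P_1,P_2$ and so the modules $V^{\Lam_1},V^{\Lam_6}$; moreover the highest root $\theta$, the $\fat$‑simple‑root data $\beta_i$, and the explicit vector $\omg$ are all $\tau$‑invariant, so the operators $\yvomn{\omg}{n}$, $\yvomn{\beta_i}{0}$ and $\yvomn{\otea{-\theta}}{1}$ are $\hat{\tau}$‑equivariant. Applying the eigenvalue‑transfer lemma to these $\tau$‑invariant operators, the images $\hat{\tau}\,\otea{\lam_1}=\otea{\lam_6}$ and $\hat{\tau}R=\tau R$ are highest weight vectors in $V^{\Lam_6}$ with the same coset‑Virasoro weights and the same $\fat$‑weights as their preimages. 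Since $V^{\Lam_6}$ inherits the same branching coefficients $c^6=c^1$, $d^6=d^1$ from Theorem \ref{thm br2}, the argument of the previous paragraph gives the identical decomposition, now generated by $\otea{\lam_6}$ and $\tau R$.

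The main obstacle is the representation‑theoretic gluing step: promoting the graded‑dimension equality of Theorem \ref{thm br2} to an honest module isomorphism. This rests on complete reducibility of the multiplicity spaces as positive‑energy unitary $c=\tfrac45$ Virasoro modules, together with the linear independence of the ten minimal‑model characters, which together rule out any hidden summand of different highest weight. Everything else is already in place — the four highest‑weight‑vector verifications are carried out in the lemmas, and the passage to $V^{\Lam_6}$ is pure $\tau$‑bookkeeping through the transfer lemma — so once the gluing principle is invoked the two decompositions follow immediately.
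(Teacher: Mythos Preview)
Your proposal is correct and follows essentially the same approach as the paper: combine the character identity of Theorem \ref{thm br2}, the explicit HWV verifications in the lemmas immediately preceding the theorem, and the $\hat\tau$-transfer lemma to pass from $V^{\Lam_1}$ to $V^{\Lam_6}$. The paper leaves the ``gluing'' step (complete reducibility of the multiplicity spaces and linear independence of minimal-model characters) implicit, whereas you spell it out; this is a welcome clarification rather than a different argument.
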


%
%
%
%
%
\chapter*{Conclusions}

\addcontentsline{toc}{chapter}{Conclusions}

The main focus of this dissertation has been on the branching rules of affine Kac-Moody algebras, but the vertex operator theory played a key role in finding the HWVs in each $\fgt$-module $V^{\Lam_i}$ and in finding the conformal vectors.  We now make some observations about vertex operator theory associated to this decomposition.  The coset vertex algebra $\mathcal{C}$, defined as the commutant (see \cite{LL}, p.105) of $W^{\Omg_0}$ in $V^{\Lam_0}$, has Virasoro decomposition $\mathcal{C} = Vir\left(\frac{4}{5},0\right) \oplus Vir\left(\frac{4}{5},3\right)$.  In \cite{FM}(or \cite{Miy}) it is shown that if $\yvoz{v}\cdot w  \neq 0$ for some $v,w \in Vir\left(\frac{4}{5},3\right)$, then $\mathcal{C}$ is isomorphic to the Zamolodchikov $\mathcal{W}_3$-algebra.  Since $V^{\Lam_0}$ is a simple VOA, we have $\yvoz{v} \cdot w \neq 0$ for any non-zero vectors $v,w \in V^{\Lam_0}$, so $\yvoz{U} \cdot U \neq 0$ for the HWV $U \in Vir\left(\frac{4}{5},3\right)$ gives $\mathcal{C} \cong \mathcal{W}_3$.

The $\hat{\tau}$-fixed subspace of $V^{\Lam_0}$ forms a sub-VOA containing both $Vir\left(\frac{4}{5},0\right)$ and $W^{\Omg_0}$.  In fact, we can use the explicit HWVs in Theorem \ref{ch6thm1} to write the $\hat{\tau}$-eigenspace decomposition of $V^{\Lam_0}$.  The +1 $\hat{\tau}$-eigenspace is 
$$Vir\left(\frac{4}{5},0\right) \otimes W^{\Omg_0} \bigoplus Vir\left(\frac{4}{5},\frac{7}{5}\right) \oplus W^{\Omg_4} $$
and the -1 $\hat{\tau}$-eigenspace is 
$$Vir\left(\frac{4}{5},3\right) \otimes W^{\Omg_0} \bigoplus Vir\left(\frac{4}{5},\frac{2}{5}\right) \oplus W^{\Omg_4}. $$

We also note that the paper by \cite{KW} contains many applications of the coset Virasoro to branching rule decompositions with related results on character theory, but these authors do not work out the branching rule question we have considered in this dissertation.  The work of \cite{BG} has a table which contains many pairs of algebras, including our example, but they have no explicit results about HWVs or summands in the decomposition.



\fontsize{11}{11pt} \selectfont

\bibliographystyle{alpha}
\bibliography{references}

\end{document}